\newcommand{\e}{\varepsilon}
\newcommand{\NN}{\mathbb{N}}
\newcommand{\ZZ}{\mathbb{Z}}
\newcommand{\RR}{\mathbb{R}}
\newtheorem{thm}{Theorem}[section]
\newtheorem{cor}[thm]{Corollary}
\newtheorem{lem}[thm]{Lemma}
\newtheorem{prop}[thm]{Proposition}
\theoremstyle{definition}
\newtheorem{defn}[thm]{Definition}
\theoremstyle{remark}
\newtheorem{rem}[thm]{Remark}
\newcommand{\tD}{\widetilde{D}}
\renewcommand{\P}{\mathcal{P}}
\newcommand{\lelex}{\le_{\rm lex}}
\newcommand{\stab}{\mathrm{Stab}}
\renewcommand{\-}{\mathchar`-}
\renewcommand{\H}{\mathcal{H}}
\newcommand{\G}{\mathcal{G}}
\newcommand{\hd}{\widehat{d}}
\newcommand{\act}{\curvearrowright}
\newcommand{\inj}{\hookrightarrow}
\newcommand\blfootnote[1]{%
  \begingroup
  \renewcommand\thefootnote{}\footnote{#1}%
  \addtocounter{footnote}{-1}%
  \endgroup
}
\begin{document}

\title{Hyperfiniteness of boundary actions of acylindrically hyperbolic groups}

\author{Koichi Oyakawa}
\date{Vanderbilt University}

\maketitle

\vspace{-3mm}

\begin{abstract}
We prove that for any countable acylindrically hyperbolic group $G$, there exists a generating set $S$ of $G$ such that the corresponding Cayley graph $\Gamma(G,S)$ is hyperbolic, $|\partial\Gamma(G,S)|>2$, the natural action of $G$ on $\Gamma(G,S)$ is acylindrical, and the natural action of $G$ on the Gromov boundary $\partial\Gamma(G,S)$ is hyperfinite. This result broadens the class of groups that admit a non-elementary acylindrical action on a hyperbolic space with a hyperfinite boundary action.
\end{abstract}

\section{Introduction}
\blfootnote{\textbf{MSC} Primary: 20F65. Secondary: 03E15, 37A20.}
\blfootnote{\textbf{Key words and phrases}: acylindrically hyperbolic groups, hyperfiniteness, Borel equivalence relations.}
Hyperfiniteness is a property of countable Borel equivalence relations that measures their complexity. It is a classical topic in descriptive set theory that has been attracting people's interest for decades and its research is still active to this day. Because any countable Borel equivalence relation is the orbit equivalence relation of a Borel action of a countable group by the Feldman-Moore theorem (see Definition \ref{def:orbit equivalence} and \cite{FM}), people have investigated orbit equivalence relations of group actions. Historically, study on amenable groups preceded toward a long standing open problem asking whether all orbit equivalence relations of Borel actions of countable amenable groups are hyperfinite. Remarkable progress on this problem includes partial yet crucial results for $\ZZ^n$ in \cite{Weiss}, finitely generated groups with polynomial growth in \cite{JKL02}, countable Abelian groups in \cite{GJ}, and polycyclic groups in \cite{CJM+}.

On the other hand, there was not much progress made for non-amenable groups until very recently and this is the focus of this paper. To the best of my knowledge, the only result of hyperfiniteness in absence of measures in non-amenable case before 2010s was obtained by Dougherty, Jackson and Kechris in \cite{DJK94}, where they proved that the action of the free group $F_2$ on the Gromov boundary is hyperfinite. A breakthrough in this direction was achieved by Huang, Sabok, and Shinko in \cite{HSS} where they generalized this result to cubulated hyperbolic groups. In \cite{MS}, Marquis and Sabok succeeded in proving hyperfiniteness of boundary actions of hyperbolic groups in full generality. This was further generalized to finitely generated relatively hyperbolic groups by Karpinski in \cite{Kar}. Other important results in non-amenable case are \cite{PS} by Przytycki and Sabok, where they proved that actions of mapping class groups on the Gromov boundaries of the arc complex and the curve complex are hyperfinite, and \cite{NV} by Naryshkin and Vaccaro, where they proved that boundary actions of hyperbolic groups have finite Borel asymptotic dimension, which strengthen \cite{MS}.

Most of the above results in non-amenable case can be summarized by saying that the involved groups admit a non-elementary acylindrical action on a hyperbolic space with a hyperfinite action on the Gromov boundary. In this paper, we show that this is true for a much wider class of groups by proving the following theorem.
\begin{thm} \label{thm:main}
    For any countable acylindrically hyperbolic group $G$, there exists a generating set $S$ of $G$ such that the corresponding Cayley graph $\Gamma(G,S)$ is hyperbolic, $|\partial\Gamma(G,S)|>2$, the natural action of $G$ on $\Gamma(G,S)$ is acylindrical, and the natural action of $G$ on the Gromov boundary $\partial\Gamma(G,S)$ is hyperfinite.
\end{thm}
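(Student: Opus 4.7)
The plan is to combine results on ``good'' generating sets for acylindrically hyperbolic groups with a Borel coding argument on the boundary.

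First, I would invoke Osin's characterization theorem: a countable group is acylindrically hyperbolic if and only if it admits a (generally infinite) generating set $S$ such that $\Gamma(G,S)$ is hyperbolic and the $G$-action on $\Gamma(G,S)$ is non-elementary and acylindrical. Non-elementarity automatically yields $|\partial\Gamma(G,S)| > 2$; in fact the boundary is uncountable. Hence the first three conclusions of the theorem can be arranged by classical means. The subtlety is to choose $S$ compatibly with the hyperfiniteness argument: one wants $S$ to contain a virtually cyclic hyperbolically embedded subgroup $H \le G$ (which exists by Dahmani--Guirardel--Osin), so that loxodromic fixed points in $\partial\Gamma(G,S)$ are controlled by conjugates of $H$ and their pair stabilizers remain virtually cyclic.

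Second, the core task is hyperfiniteness of the boundary action. I would adapt the strategy of Marquis--Sabok (hyperbolic groups) and Karpinski (relatively hyperbolic groups) by writing the orbit equivalence relation on $\partial\Gamma(G,S)$ as an increasing union of finite Borel equivalence relations $F_n$. The approach is to Borel-associate to each boundary point $\xi$ a representative quasi-geodesic sequence $(g_k(\xi))_{k}$ of vertices in $G$ based at the identity and converging to $\xi$, and to declare $\xi\mathrel{F_n}\eta$ if, after discarding finitely many initial terms, the sequences $(g_k(\xi))$ and $(g_k(\eta))$ remain at $S$-distance at most $n$. Hyperbolicity should ensure that $\bigcup_n F_n$ recovers the full orbit equivalence relation (two rays fellow-travel iff they define the same boundary point), while acylindricity should force each $F_n$-class to be finite.

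The principal obstacle is non-properness of $\Gamma(G,S)$: because $S$ is typically infinite and the action is not cocompact, the standard tools used in the hyperbolic and relatively hyperbolic cases --- finite balls, finite shadows at infinity, compactness of the boundary --- are unavailable. The substitute must come from the structural rigidity implied by acylindricity: stabilizers of pairs of loxodromic fixed points are virtually cyclic, and the acylindricity parameter provides uniform control on the number of ``almost coincident'' orbits travelling together at a given scale. Translating these coarse-geometric inputs into genuine finiteness of Borel equivalence classes in the construction above --- and verifying that the Borel choice of the sequences $(g_k(\xi))$ can be made uniformly across non-loxodromic boundary points, where there is no canonical invariant pair to anchor the coding --- is where most of the technical work is expected to lie.
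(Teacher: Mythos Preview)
Your outline for the first three conclusions is fine and matches the paper: one takes a proper infinite hyperbolically embedded subgroup $H\hookrightarrow_h (G,X)$ and passes to the Osin generating set $Y\cup H$ with $Y=\{y\in G: S(1,y;D)=\emptyset\}$, so that $\Gamma(G,Y\sqcup H)$ is hyperbolic with a non-elementary acylindrical $G$-action.

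The hyperfiniteness argument you sketch, however, has a genuine gap. You propose relations $F_n$ declaring $\xi\mathrel{F_n}\eta$ when chosen representative sequences eventually stay at $S$-distance $\le n$, and you expect acylindricity to force each $F_n$-class to be finite. But acylindricity bounds the number of \emph{group elements} moving two far-apart points small distances; it does not bound the number of \emph{boundary points} whose chosen rays fellow-travel a given one. In a locally infinite Cayley graph there can be infinitely many geodesic rays within distance $n$ of a fixed ray, and nothing in your sketch pins the $F_n$-classes down to finitely many. The Marquis--Sabok and Karpinski arguments you cite rely essentially on local finiteness (finite balls, finite geodesic ray bundles), and you have not supplied a working substitute.

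What the paper actually does is quite different, both in the finiteness mechanism and in the overall scheme. It follows Naryshkin--Vaccaro rather than Marquis--Sabok: boundary points of $\Gamma(G,Y\sqcup\mathcal H)$ are represented by geodesic rays in the \emph{smaller} graph $\Gamma(G,X\sqcup\mathcal H)$, and one defines a Borel injection $\Phi\colon\partial\Gamma(G,Y\sqcup\mathcal H)\to (X\sqcup\mathcal H)^{\mathbb N}$ sending $\xi$ to the lexicographically least label of such a ray. The pullback of the tail equivalence relation is hyperfinite, and one shows each $E_G$-class contains only boundedly many classes of the pullback. The finiteness here comes not from acylindricity but from the \emph{local finiteness of the relative metric} $\hat d_\lambda$ in the definition of hyperbolic embedding: along any ray there is an infinite sequence of separating $H$-cosets, two rays to the same boundary point eventually penetrate the same cosets, and their entrance points into a fixed coset differ by $\hat d_\lambda\le 4C$, giving only finitely many possibilities. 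This coset-checkpoint structure is the replacement for local compactness, and it is entirely absent from your proposal.
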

Note that acylindrically hyperbolic groups do not need to be finitely generated. The new portion of Theorem \ref{thm:main} is the hyperfiniteness of the action on the Gromov boundary, while the other conditions were proved by Osin in \cite{Osin16}. Also, the generating set $S$ in Theorem \ref{thm:main} is the same as the one constructed in \cite[Theorem 5.4]{Osin16}. The class of acylindrically hyperbolic groups is broad and includes many examples of interest: non-elementary hyperbolic and relatively hyperbolic groups, all but finitely many mapping class groups of punctured closed surfaces, $Out(F_n)$ for any $n\ge 2$, directly indecomposable right angled Artin groups, non virtually cyclic graphical small cancellation groups including some Gromov monsters (see \cite{GS}), one relator groups with at least 3 generators, Higman group, most orientable 3-manifold groups (see \cite{MO}), and many others. Proving hyperfiniteness in this broad class faces some difficulties that didn't appear in previous results. For exmaple, for acylindrically hyperbolic groups, there is no canonical generating set in general, local compactness of geodesic ray bundles is lacking (see \cite[Section 1]{MS}), and elements of the Gromov boundary may not be represented by geodesic rays. We circumvent these difficulties by bringing a new insight on the Gromov boundaries of acylindrically hyperbolic groups, which we explain in Section \ref{sec:Gromov} and Section \ref{sec:appendix}, and by building on the work of Naryshkin and Vaccaro in \cite{NV}. In \cite{NV}, given a hyperbolic group $G$ with a finite symmetric generating set $S$, they constructed an injective Borel measurable map from $\partial G$ to $S^\NN$ that Borel reduces a finite index subequivalence relation of the orbit equivalence relation $E_G^{\partial G}$ to the tail equivalence relation $E_t(S)$, thereby hyperfiniteness of $E_t(S)$ implied hyperfiniteness of $E_G^{\partial G}$.

Moreover, Theorem \ref{thm:main} has the following application to topological amenability of group actions. Corollary \ref{cor:main} is interesting, because it contrasts with the fact that some Gromov monsters are acylindrically hyperbolic and these groups don't admit a topologically amenable action on any compact Hausdorff space as they're non-exact.

\begin{cor}\label{cor:main}
For any countable acylindrically hyperbolic group $G$, there exists a generating set $S$ of $G$ such that the corresponding Cayley graph $\Gamma(G,S)$ is hyperbolic, $|\partial\Gamma(G,S)|>2$, the natural action of $G$ on $\Gamma(G,S)$ is acylindrical, and the natural action of $G$ on the Gromov boundary $\partial\Gamma(G,S)$ is topologically amenable.
\end{cor}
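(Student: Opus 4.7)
The plan is to derive Corollary~\ref{cor:main} from Theorem~\ref{thm:main} by combining hyperfiniteness of the boundary orbit equivalence relation with amenability of point stabilizers, which together suffice for topological amenability of a continuous action of a countable discrete group on a compact metrizable space.

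First, I would take the generating set $S$ provided by Theorem~\ref{thm:main} and set $\partial := \partial\Gamma(G,S)$. That theorem already gives that $\Gamma(G,S)$ is hyperbolic with $|\partial|>2$, that $G\act\Gamma(G,S)$ is acylindrical, and that the orbit equivalence relation $E_G^{\partial}$ is hyperfinite. The next task is to verify that for every $\xi\in\partial$ the stabilizer $\stab_G(\xi)$ is amenable. Because $G\act\Gamma(G,S)$ is non-elementary and acylindrical, stabilizers of loxodromic fixed points on the boundary are virtually cyclic by standard structural results for acylindrical actions on hyperbolic spaces. For the potentially more exotic boundary points that are not loxodromic fixed points, one must work harder, but the geometric description of $\partial\Gamma(G,S)$ developed in Sections~\ref{sec:Gromov} and~\ref{sec:appendix} should give amenability for all $\xi$ uniformly.

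With amenable stabilizers and a hyperfinite orbit equivalence relation in hand, I would then execute the standard transfer to topological amenability. The Borel version of the Connes-Feldman-Weiss theorem converts Borel hyperfiniteness of $E_G^{\partial}$ into Borel amenability of this countable Borel equivalence relation; combined with amenability of every stabilizer, this yields amenability of the Borel transformation groupoid $\partial\rtimes G$ with respect to every quasi-invariant Borel probability measure on $\partial$. A theorem of Anantharaman-Delaroche for étale groupoids, applied to the translation groupoid of a continuous action of a countable discrete group on a second-countable compact space, then upgrades this measurewise amenability to topological amenability of $G\act\partial$.

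The step I expect to be most delicate is the uniform amenability of stabilizers of arbitrary boundary points, since in the acylindrically hyperbolic setting with Osin's generating set the Gromov boundary can contain points not fixed by any loxodromic element of $G$. Controlling these remaining boundary points is precisely what the new boundary analysis of the paper is designed to do, so I expect the argument to go through once that analysis is in place.
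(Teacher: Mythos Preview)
Your overall strategy matches the paper's, but you overcomplicate one step and introduce a technical gap in another.

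On stabilizers: you distinguish between loxodromic fixed points and ``more exotic'' boundary points, anticipating that the latter require the special boundary analysis of Sections~\ref{sec:Gromov} and~\ref{sec:appendix}. This is unnecessary. The paper's Lemma~\ref{lem:stabilizer of boundary points} shows that for \emph{any} isometric acylindrical action on a hyperbolic space, the stabilizer of \emph{every} boundary point is virtually cyclic. The argument applies Osin's trichotomy for acylindrical actions to the stabilizer subgroup $H=\stab_G(\xi)$: the case of infinitely many independent loxodromics is ruled out because $H$ fixes $\xi$; the loxodromic case gives virtually cyclic directly; and in the bounded-orbit case one uses acylindricity together with a $(1,\delta)$-quasi-geodesic ray to $\xi$ (Lemma~\ref{lem:rough geodesic}) to show $H$ is finite. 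None of this uses the path-representative machinery; acylindricity alone suffices.

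On the transfer to topological amenability: you invoke results for actions on \emph{compact} spaces, but the boundary $\partial\Gamma(G,S)$ here need not be compact, since the generating set $S=Y\cup H$ is typically infinite and the Cayley graph is not locally finite. The paper only establishes that $\partial\Gamma(G,S)$ is Polish (Lemma~\ref{lem:Gromov boundary is Polish}). The route you sketch (hyperfiniteness plus amenable stabilizers $\Rightarrow$ Borel amenability $\Rightarrow$ measurewise amenability $\Rightarrow$ topological amenability) is exactly the content of the paper's Theorem~\ref{thm:hyperfinite and amenable stabilizer imples topologically amenable}, but the paper is careful to cite the Polish-space version from \cite{FKSV} rather than compact-space versions; you should do the same.
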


The paper is organized as follows. In Section \ref{sec:prelim}, we discuss the necessary definitions and known results about hyperfinite Borel equivalence relations and acylindrically hyperbolic groups. In Section \ref{sec:Gromov}, we introduce a new way to represent elements of the Gromov boundary of an acylindrically hyperbolic group for a nice generating set by building on the work of Osin in \cite{Osin16}. In Section \ref{sec:main thm}, we give a proof of Theorem \ref{thm:main} by using techniques developed in Section \ref{sec:Gromov}. In Section \ref{sec:application to topologically amenable actions}, we introduce topological amenability of group actions and prove Corollary \ref{cor:main}. In Section \ref{sec:appendix}, we summarize more results on the Gromov boundaries of acylindrically hyperbolic groups that are not necessary in the proof of Theorem \ref{thm:main} but are of independent interest for possible future use. Section \ref{sec:appendix} is independent of Section \ref{sec:main thm} and Section \ref{sec:application to topologically amenable actions} and can be read as another continuation of Section \ref{sec:Gromov}.

\noindent\textbf{Acknowledgment.}
I would like to thank the anonymous referee for many helpful comments, which greatly improved exposition of this paper.

\section{Preliminary} \label{sec:prelim}

\subsection{Descriptive set theory} \label{subsec:descriptiveset theory}

In this section, we review concepts in descriptive set theory.

\begin{defn}\label{def:Polish}
    A \emph{Polish space} is a separable completely metrizable topological space.
\end{defn}

\begin{defn}
    A measurable space $(X,\mathcal{B})$ is called a \emph{standard Borel space}, if there exists a topology $\mathcal{O}$ on $X$ such that $(X,\mathcal{O})$ is a Polish space and $\mathcal{B}(\mathcal{O})=\mathcal{B}$ holds, where $\mathcal{B}(\mathcal{O})$ is the $\sigma$-algebra on $X$ generated by $\mathcal{O}$.
\end{defn}

\begin{defn}
    Let $X$ be a standard Borel space and $E$ be an equivalence relation on $X$. $E$ is called \emph{Borel}, if $E$ is a Borel subset of $X\times X$. $E$ is called \emph{countable} (resp. \emph{finite}), if for any $x\in X$, the set $\{ y\in X \mid (x,y)\in E \}$ is countable (resp. finite).
\end{defn}

\begin{rem}
    The word ``countable Borel equivalence relation" is often abbreviated to ``CBER".
\end{rem}

\begin{defn}
    Let $X$ be a standard Borel space. A countable Borel equivalence relation $E$ on $X$ is called \emph{hyperfinite}, if there exist finite Borel equivalence relations $(E_n)_{n=1}^\infty$ on $X$ such that $E_n\subset E_{n+1}$ for any $n\in\NN$ and $E=\bigcup_{n=1}^\infty E_n$.
\end{defn}

Definition \ref{def:orbit equivalence} and Definition \ref{def:tail equivalence} are two important examples of CBERs in this paper.

\begin{defn}\label{def:orbit equivalence}
     Suppose that a group $G$ acts on a set $S$. The equivalence relation $E_G^S$ on $S$ is defined as follows: for $x,y\in S$,
     \[
        (x,y) \in E_G^S \iff \exists\ g\in G {\rm ~s.t.~} gx=y.
     \]
     $E_G^S$ is called the \emph{orbit equivalence relation} on $S$.
\end{defn}

Lemma \ref{lem:orbit equivalence is cber} is straightforward, but we record the proof for convenience of readers.

\begin{lem}\label{lem:orbit equivalence is cber}
    Suppose that a countable group $G$ acts on a standard Borel space $S$ as Borel isomorphism, then $E_G^S$ is a CBER.
\end{lem}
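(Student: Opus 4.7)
The plan is to verify the two defining conditions of a CBER separately: each equivalence class must be countable, and $E_G^S$ must be a Borel subset of $S\times S$.

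For the first condition, I would simply observe that for any $x\in S$, the set $\{y\in S : (x,y)\in E_G^S\}$ is by definition the orbit $Gx=\{gx : g\in G\}$, whose cardinality is bounded by $|G|$ and hence is countable by the hypothesis on $G$.

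For the second condition, I would decompose
\[
E_G^S \;=\; \bigcup_{g\in G} \Gamma_g, \qquad \Gamma_g \;:=\; \{(x,gx) : x\in S\} \,\subset\, S\times S.
\]
Since $G$ is countable, it suffices to show that each $\Gamma_g$ is Borel in $S\times S$, and then $E_G^S$ will be a countable union of Borel sets. To see that $\Gamma_g$ is Borel, I would invoke the hypothesis that every $g\in G$ acts on $S$ as a Borel isomorphism, together with the standard descriptive set theoretic fact that the graph of a Borel map between standard Borel spaces is Borel: concretely, the map $\varphi_g:S\times S\to S\times S$ defined by $\varphi_g(x,y)=(gx,y)$ is Borel, the diagonal $\Delta_S\subset S\times S$ is Borel (being closed in any compatible Polish topology), and $\Gamma_g=\varphi_g^{-1}(\Delta_S)$.

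There is no real obstacle here; the only substantive ingredient is the fact that graphs of Borel maps are Borel, which is routine. The whole proof reduces to unpacking the definitions and applying countability of $G$ twice: once to bound orbit sizes and once to ensure that the union defining $E_G^S$ is a countable union of Borel sets.
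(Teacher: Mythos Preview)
Your proof is correct and follows essentially the same approach as the paper: both decompose $E_G^S$ as the countable union $\bigcup_{g\in G}\{(x,gx):x\in S\}$, observe that each graph is Borel because $g$ acts Borel measurably, and note that orbits are countable since $G$ is. You give slightly more detail on why the graph of a Borel map is Borel (via the diagonal), but the argument is the same.
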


\begin{proof}
    For any $g\in G$, the set $\mathrm{Graph}(g)$ defined by $\mathrm{Graph}(g)=\{(x,gx)\in S\times S \mid x\in S\}$ is Borel since $g\colon S \to S$ is Borel measurable. Since $G$ is countable and we have $E_G^S=\bigcup_{g\in G} \mathrm{Graph}(g)$, the set $E_G^S$ is also Borel, being the countable union of Borel sets. Finally, for any $x\in S$, the orbit equivalence class of $x$ is exactly $G x$, which is countable since $G$ is countable. Thus, $E_G^S$ is a CBER.
\end{proof}

Recall that any countable set $\Omega$ with the discrete topology is a Polish space. Hence, $\Omega^\NN$ endowed with the product topology is a Polish space.

\begin{defn}\label{def:tail equivalence}
        Let $\Omega$ be a countable set. The equivalence relation $E_t(\Omega)$ on $\Omega^\NN$ is defined as follows: for $w_0=(s_1,s_2,\cdots), w_1=(t_1,t_2,\cdots) \in \Omega^\NN$,
    \[
    (w_0,w_1)\in E_t(\Omega) \iff \exists n, \exists m \in\NN\cup\{0\} {\rm ~s.t.~}\forall i\in\NN, s_{n+i}=t_{m+i}.
    \]
    $E_t(\Omega)$ is called the \emph{tail equivalence relation} on $\Omega^\NN$.
\end{defn}

We list some facts needed for the proof of Theorem \ref{thm:main}. Proposition \ref{prop:DJK} is a particular case of \cite[Corollary 8.2]{DJK94}.

\begin{prop}\label{prop:DJK}{\rm (cf. \cite[Corollary 8.2]{DJK94})}
For any countable set $\Omega$, the tail equivalence relation $E_t(\Omega)$ on $\Omega^\NN$ is a hyperfinite CBER.
\end{prop}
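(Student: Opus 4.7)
The plan is to identify $E_t(\Omega)$ with the equivalence relation generated by a single countable-to-one Borel map, at which point the result will follow from a general theorem of Dougherty-Jackson-Kechris. Specifically, I would define the shift $\sigma \colon \Omega^\NN \to \Omega^\NN$ by $\sigma(s_1, s_2, s_3, \ldots) = (s_2, s_3, \ldots)$. Since $\Omega$ carries the discrete topology, $\sigma$ is continuous, hence Borel, and each fiber $\sigma^{-1}(w)$ is in natural bijection with $\Omega$ and therefore countable.

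Unpacking Definition \ref{def:tail equivalence}, the condition $(w_0, w_1) \in E_t(\Omega)$ is equivalent to the existence of $n, m \geq 0$ with $\sigma^n(w_0) = \sigma^m(w_1)$. This says precisely that $E_t(\Omega)$ is the smallest equivalence relation on $\Omega^\NN$ containing the graph $\{(w, \sigma(w)) : w \in \Omega^\NN\}$. Invoking \cite[Corollary 8.2]{DJK94}, which asserts that the equivalence relation generated by any countable-to-one Borel function on a standard Borel space is hyperfinite, then concludes the argument.

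The main obstacle, were one to aim at a self-contained proof rather than citing the general DJK theorem, lies in constructing an increasing sequence of \emph{finite} Borel subequivalences $E_n$ with union $E_t(\Omega)$. The naive candidate $F_n = \{(w, w') : \sigma^n(w) = \sigma^n(w')\}$ is Borel, transitive, and satisfies $F_n \subseteq F_{n+1}$, but each $F_n$-class is parametrized by $\Omega^n$ through the first $n$ coordinates, so it is countably infinite whenever $\Omega$ is. Truncating the initial $n$ positions to a finite sub-alphabet $\{\omega_1, \ldots, \omega_n\}$ breaks the nesting, because the entry $s_{n+1}$ lies beyond $F_n$'s control and may be an arbitrary letter. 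The DJK argument sidesteps this via a Lusin-Novikov decomposition of $\sigma$ into Borel pieces on which it is injective, reducing to the classical hyperfiniteness of the orbit equivalence of a Borel $\ZZ$-action, and then reassembles the finite approximations by a diagonal argument that simultaneously grows the shift depth and the alphabet range.
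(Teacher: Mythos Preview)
Your proposal is correct and matches the paper's approach exactly: the paper does not give a proof but simply states that the proposition is a particular case of \cite[Corollary 8.2]{DJK94}, and your argument spells out precisely how that citation applies---namely, that $E_t(\Omega)$ is the equivalence relation generated by the countable-to-one Borel shift $\sigma$. Your additional discussion of the naive attempt via $F_n$ and why it fails is accurate but goes beyond what the paper records.
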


\begin{prop}\label{prop:JKL}{\rm \cite[Proposition 1.3.(vii)]{JKL02}}
    Let $X$ be a standard Borel space and $E,F$ be countable Borel equivalence relations on $X$. If $E\subset F$, $E$ is hyperfinite, and every $F$-equivalence class contains only finitely many $E$-classes, then $F$ is hyperfinite.
\end{prop}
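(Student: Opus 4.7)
The plan is to construct an increasing sequence $(F_n)_{n=1}^\infty$ of finite Borel equivalence relations with $F_n \subseteq F_{n+1}$ and $\bigcup_n F_n = F$. The ingredients are the given hyperfinite exhaustion $E = \bigcup_n E_n$ by finite Borel equivalence relations, together with a Feldman--Moore decomposition $F = \bigcup_{m \in \NN} \mathrm{Graph}(f_m)$ of the CBER $F$, where each $f_m \colon X \to X$ is Borel.

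I would define $F_n$ to be the equivalence relation on $X$ generated by $E_n \cup \bigcup_{i \le n} \mathrm{Graph}(f_i)$. The nesting $F_n \subseteq F_{n+1}$ is immediate from $E_n \subseteq E_{n+1}$, and the equality $\bigcup_n F_n = F$ follows because every $(x,y) \in F$ satisfies $y = f_m(x)$ for some $m$ and hence lies in $F_m$, while conversely each generator used to build any $F_n$ already sits inside $F$.

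Two things must then be verified. First, the $F_n$-classes are finite: any $F_n$-class $C$ is contained in a single $F$-class $[x]_F$, which by hypothesis decomposes into finitely many $E$-classes and hence into finitely many $E_n$-classes, since $E_n \subseteq E$; because each $E_n$-class is itself finite, $C$ is a finite union of finite sets. Second, $F_n$ is Borel: writing $R$ for the reflexive-symmetric closure of the generating set and applying Feldman--Moore to $E_n$, one sees that $R$ is a countable union of graphs of Borel partial functions, whence each finite composition $R^k$ is also such a countable union, and therefore $F_n = \bigcup_{k \ge 0} R^k$ is Borel.

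The most delicate step is this last Borelness check, since in general the equivalence relation generated by a Borel relation is only analytic; the argument above sidesteps the issue by exploiting that each generator, including $E_n$, admits a countable-union-of-Borel-graphs presentation thanks to the CBER hypothesis. Once $F_n$ is in hand as a finite Borel equivalence relation, the sequence $(F_n)$ witnesses hyperfiniteness of $F$ directly from the definition.
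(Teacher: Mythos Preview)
The paper does not give its own proof of this proposition; it is cited from \cite{JKL02} and used as a black box. Evaluating your argument on its own merits, there is a genuine gap in the finiteness step.

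You argue that an $F_n$-class $C$ lies in a single $F$-class $[x]_F$, which contains finitely many $E$-classes, ``and hence finitely many $E_n$-classes, since $E_n\subseteq E$''. The inclusion goes the wrong way for this: each $E$-class is a union of $E_n$-classes, and an infinite $E$-class may split into infinitely many of them. Concretely, take $X=\NN$ with $E=F=\NN\times\NN$ and let $E_n$ have classes $\{k2^n,\dots,(k+1)2^n-1\}$ for $k\ge 0$, so that $E=\bigcup_n E_n$. One may choose a Feldman--Moore decomposition of $F$ in which the successor map $s(i)=i+1$ appears among the $f_m$; as soon as $s$ is among your generators, the resulting $F_n$ links every $E_n$-block to the next, so the $F_n$-class of $0$ is all of $\NN$. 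Thus your $F_n$ need not have finite classes. A correct proof needs an additional idea to control class sizes: for instance, first stratify $X$ by the $F$-invariant Borel function $x\mapsto |[x]_F/E|$, and on the stratum where this equals $k$ use Lusin--Novikov to choose Borel maps $h_1=\mathrm{id},h_2,\dots,h_k$ with $[x]_F=\bigcup_{j\le k}[h_j(x)]_E$; then set $x\,F_n\,y$ iff $x\,F\,y$ and $\{[h_j(x)]_{E_n}:j\le k\}=\{[h_j(y)]_{E_n}:j\le k\}$. This $F_n$ is a finite Borel equivalence relation (each class is contained in $\bigcup_{j\le k}[h_j(x)]_{E_n}$), the sequence is increasing because $E_n\subseteq E_{n+1}$, and $\bigcup_n F_n=F$ since $E=\bigcup_n E_n$.
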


\vspace{2mm}

\subsection{The Gromov boundary of a hyperbolic space}

In this section, we review the Gromov boundary of a hyperbolic space. For more on the Gromov boundary, readers are referred to \cite{BH}.

\begin{defn}
    Let $(S,d_S)$ be a metric space. For $x,y,z\in S$, we define $(x,y)_z^S$ by
\begin{align}\label{eq:gromov product}
    (x,y)_z^S=\frac{1}{2}\left( d_S(x,z)+d_S(y,z)-d_S(x,y) \right).    
\end{align}
\end{defn}

\begin{prop} \label{prop:hyp sp}
    For any geodesic metric space $(S,d_S)$, the following conditions are equivalent.
    \item[(1)]
    There exists $\delta\in\NN$ satisfying the following property. Let $x,y,z\in S$, and let $p$ be a geodesic path from $z$ to $x$ and $q$ be a geodesic path from $z$ to $y$. If two points $a\in p$ and $b\in q$ satisfy $d_S(z,a)=d_S(z,b)\le (x,y)_z^S$, then we have $d_S(a,b) \le \delta$.
    \item[(2)] 
    There exists $\delta\in\NN$ such that for any $w,x,y,z \in S$, we have
    \[
    (x,z)_w^S \ge \min\{(x,y)_w^S,(y,z)_w^S\} - \delta.
    \]
\end{prop}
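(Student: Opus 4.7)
The plan is to prove (1) $\Rightarrow$ (2) by an insertion argument on three geodesics meeting at a common vertex, and to prove (2) $\Rightarrow$ (1) by applying the four-point inequality twice. Throughout, I use that in a geodesic metric space, if $a$ lies on a geodesic from $u$ to $v$, then $(a,v)_u^S = d_S(u,a)$, a direct computation from \eqref{eq:gromov product} and $d_S(a,v) = d_S(u,v) - d_S(u,a)$.

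For (1) $\Rightarrow$ (2), take $w,x,y,z \in S$ and choose geodesics $[w,x]$, $[w,y]$, $[w,z]$. Set $t = \min\{(x,y)_w^S, (y,z)_w^S\}$ and let $a \in [w,x]$, $b \in [w,y]$, $c \in [w,z]$ be the three points at distance $t$ from $w$ (reducing to $t \ge 0$; the case $t < 0$ is trivial since Gromov products of triples at $w$ are bounded below by $-d_S(w,\cdot)$). Since $t \le (x,y)_w^S$, condition (1) applied to the geodesic pair $[w,x],[w,y]$ gives $d_S(a,b) \le \delta$, and since $t \le (y,z)_w^S$, applied to $[w,y],[w,z]$ it gives $d_S(b,c) \le \delta$, so $d_S(a,c) \le 2\delta$. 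Then the triangle inequality $d_S(x,z) \le d_S(x,a) + d_S(a,c) + d_S(c,z) = (d_S(w,x)-t) + d_S(a,c) + (d_S(w,z)-t)$ gives $(x,z)_w^S \ge t - \tfrac{1}{2}d_S(a,c) \ge t - \delta$, which is (2) with the same $\delta$.

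For (2) $\Rightarrow$ (1), assume (2) with constant $\delta$. Given $x,y,z$ and geodesics $[z,x],[z,y]$, let $a \in [z,x]$ and $b \in [z,y]$ with $d_S(z,a) = d_S(z,b) = s \le (x,y)_z^S$. The goal is to bound $d_S(a,b)$. Using $d_S(z,a) = d_S(z,b) = s$, one has $(a,b)_z^S = s - \tfrac{1}{2}d_S(a,b)$, so it suffices to show $(a,b)_z^S \ge s - 2\delta$. First apply (2) to the triple $a, x, y$ based at $z$: since $a \in [z,x]$ gives $(a,x)_z^S = s$, we get $(a,y)_z^S \ge \min\{s, (x,y)_z^S\} - \delta = s - \delta$. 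Next apply (2) to $a,y,b$ based at $z$: since $b \in [z,y]$ gives $(y,b)_z^S = s$, we get $(a,b)_z^S \ge \min\{s-\delta, s\} - \delta = s - 2\delta$. Substituting back yields $d_S(a,b) \le 4\delta$, establishing (1) with constant $4\delta$.

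No step here should pose a real obstacle; the only mildly delicate point is choosing the right triples in the (2) $\Rightarrow$ (1) direction so that each application of the four-point inequality collapses one Gromov product to an exact value $s$ via the geodesic identity $(a,v)_u^S = d_S(u,a)$. The mild inflation of the constant from $\delta$ to $4\delta$ is irrelevant since both conditions are stated as existence of some integer $\delta$.
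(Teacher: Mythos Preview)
Your proof is correct. The paper states this proposition without proof, treating it as a standard background fact (implicitly deferring to \cite{BH}), so there is no paper proof to compare against. One minor remark: your parenthetical about the case $t<0$ is unnecessary and slightly confused, since Gromov products are always nonnegative by the triangle inequality; you may simply delete that aside.
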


\begin{defn}
    A geodesic metric space $S$ is called \emph{hyperbolic}, if $S$ satisfies the equivalent conditions (1) and (2) in Proposition \ref{prop:hyp sp}. We call a hyperbolic space $\delta$-\emph{hyperbolic} with $\delta \in \NN$, if $\delta$ satisfies both of (1) and (2) in Proposition \ref{prop:hyp sp}. A connected graph $\Gamma$ is called \emph{hyperbolic}, if the geodesic metric space $(\Gamma,d_\Gamma)$ is hyperbolic, where $d_\Gamma$ is the graph metric of $\Gamma$.
\end{defn}

In the remainder of this section, suppose that $(S,d_S)$ is a hyperbolic geodesic metric space.

\begin{defn}\label{def:seq to infty}
    A sequence $(x_n)_{n=1}^\infty$ of elements of $S$ is said to \emph{converge to infinity}, if we have $\lim_{i,j\to\infty} (x_i,x_j)_o^S =\infty$ for some (equivalently any) $o\in S$. For two sequences $(x_n)_{n=1}^\infty,(y_n)_{n=1}^\infty$ in $S$ converging to infinity, we define the relation $\sim$ by $(x_n)_{n=1}^\infty \sim (y_n)_{n=1}^\infty$ if we have $\lim_{i,j\to\infty} (x_i,y_j)_o^S =\infty$ for some (equivalently any) $o\in S$.
\end{defn}

\begin{rem}
    It's not difficult to see that the relation $\sim$ in Definition \ref{def:seq to infty} is an equivalence relation by using the condition (2) of Proposition \ref{prop:hyp sp}.
\end{rem}

\begin{defn}
    The quotient set $\partial S$ is defined by
    \[
    \partial S = \{ {\rm sequences~in~ }S {\rm ~converging~to~infinity} \} / \sim
    \]
    and called \emph{Gromov boundary} of $S$.
\end{defn}

\begin{rem}
    The set $\partial S$ is sometimes called the sequential boundary of $S$. Note that $\partial S$ sometimes coincides with the geodesic boundary of $S$ (e.g. when $S$ is a proper metric space), but this is not the case in general.
\end{rem}

\begin{defn}
    For $o\in S$ and $\xi,\eta \in S\cup \partial S$, we define $(\xi,\eta)_o^S$ by
    \begin{equation}\label{eq:gromov prod for boundary}
        (\xi,\eta)_o^S=\sup\{ \liminf_{i,j\to\infty}(x_i,y_j)_o^S \mid \xi=[(x_n)_{n=1}^\infty], \eta=[(y_n)_{n=1}^\infty] \},
    \end{equation}
    where we define $\xi=[(x_n)_{n=1}^\infty]$ as follows. If $\xi \in \partial S$, then $(x_n)_{n=1}^\infty$ is a sequence in $S$ converging to infinity such that $\xi$ represents the equivalence class of $(x_n)_{n=1}^\infty$. If $\xi \in S$, then $(x_n)_{n=1}^\infty$ is constant with $x_n \equiv \xi$. We define $\eta=[(y_n)_{n=1}^\infty]$ in the same way.
\end{defn}

\begin{prop} \label{prop:gromov topo}
    For any hyperbolic geodesic metric space $(S,d_S)$, there exists a unique topology $\mathcal{O}_S$ on $S\cup\partial S$ such that the relative topology of $\mathcal{O}_S$ on $S$ coincides with the metric topology of $d_S$ and for any $\xi \in \partial S$ and $o\in S$, the sets $(U(o,\xi,n))_{n=1}^\infty$ defined by
    \[
    U(o,\xi,n)=\{ \eta \in S\cup\partial S \mid (\eta,\xi)_o^S > n \}
    \]
    form a neighborhood basis of $\mathcal{O}_S$ at $\xi$.
\end{prop}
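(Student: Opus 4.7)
The plan is to construct the topology by specifying neighborhood bases and then verify the neighborhood-system axioms. As preparation I would extend the thin-triangle condition of Proposition \ref{prop:hyp sp}(2) from $S$ to $S \cup \partial S$: for some constant $\delta'$ depending only on $\delta$, and for all $\xi, \eta, \zeta \in S \cup \partial S$ and any $o \in S$,
\begin{equation*}
(\xi, \zeta)_o^S \ge \min\{(\xi, \eta)_o^S, (\eta, \zeta)_o^S\} - \delta'.
\end{equation*}
This follows by picking representative sequences for the boundary entries, applying Proposition \ref{prop:hyp sp}(2) to their terms, and taking liminfs; the supremum in \eqref{eq:gromov prod for boundary} then absorbs the remaining slack into a uniform additive constant. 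I would likewise record that $|(\xi,\eta)_o^S - (\xi,\eta)_{o'}^S| \le d_S(o,o')$ for any two basepoints $o, o' \in S$, which decouples the definition from the choice of $o$.

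Next, I would declare the candidate topology by assigning to each $x \in S$ the metric open balls and to each $\xi \in \partial S$ the family $\{U(o, \xi, n)\}_{n \in \NN}$ for some fixed $o \in S$. The key axiom to verify is that for every basic neighborhood $U(o,\xi,n)$ of $\xi$ there exists a basic neighborhood $V$ of $\xi$ inside $U(o,\xi,n)$ such that $U(o,\xi,n)$ is a neighborhood of each point of $V$. Taking $V = U(o,\xi,n+\delta')$ does the job: given $\eta \in V$ one has $(\xi,\eta)_o^S > n+\delta'$, and the extended inequality then yields $U(o,\eta, n+\delta'+1) \subset U(o,\xi,n)$ when $\eta \in \partial S$, while a sufficiently small metric ball at $\eta$ works when $\eta \in S$ by the same estimate applied to constant sequences. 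Basepoint-independence of the resulting topology and its compatibility with the metric topology on $S$ both follow from the additive-constant bounds assembled in the first step, and uniqueness is automatic since a topology is determined by its neighborhood filters at each point.

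The main obstacle is the analytic preparatory step of extending the Gromov product and its thin-triangle inequality to the boundary. The definition in \eqref{eq:gromov prod for boundary} intertwines a supremum over representative sequences with a liminf, so one has to check that different representatives give Gromov products comparable up to an additive constant, and that the inequality survives passage through both operations with only a bounded loss. Once this groundwork is laid the topological verifications reduce to standard neighborhood-basis bookkeeping.
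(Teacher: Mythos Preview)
The paper does not supply its own proof of this proposition: it is stated as a background fact in the preliminaries, with the reader referred to \cite{BH} for details on the Gromov boundary. Your outline is the standard argument and is correct in spirit; the extension of the hyperbolic inequality to $S\cup\partial S$ and the basepoint-change estimate are exactly the ingredients that make the neighborhood-basis verification go through.
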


\begin{rem}
    When a group $G$ acts on $S$ isometrically, this action naturally extends to the homeomorphic action on $S\cup\partial S$.
\end{rem}

The following proposition is a variation of \cite[Proposition 3.21]{BH} and can be proved in the same way. Indeed, in the statement of \cite[Proposition 3.21]{BH}, the domain of $D$ below is $(\partial S)^2$.

\begin{prop}\label{prop:visual metric}
    For any $o\in S$, there exist a map $D \colon (S\cup\partial S)^2 \to [0,\infty)$ and constants $\e,\e'>0$ with $\e'\le \sqrt{2}-1$ satisfying the following three conditions.
    \begin{itemize}
        \item[(i)]
        $D(x,y)=D(y,x)$ for any $x,y \in S\cup\partial S$.
        \item[(ii)]
        $D(x,z) \le D(x,y)+D(y,z)$ for any $x,y,z \in S\cup\partial S$. 
        \item[(iii)]
        $(1-2\e')e^{-\e (x,y)_o^S} \le D(x,y) \le e^{-\e (x,y)_o^S}$ for any $x,y \in S\cup\partial S$.
    \end{itemize}
    For convenience, if $(x,y)_o^S=\infty$, then we define $e^{-\e (x,y)_o^S}=0$.
\end{prop}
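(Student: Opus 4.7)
The plan is to adapt the standard visual metric construction on $\partial S$ (as in Bridson--Haefliger, Proposition III.H.3.21) to the enlarged space $S\cup\partial S$. The first step is to promote the four-point inequality of Proposition \ref{prop:hyp sp}(2) to the extended Gromov product \eqref{eq:gromov prod for boundary}: namely, to show that there is a constant (one may take $2\delta$) such that for all $x,y,z\in S\cup\partial S$,
\[
(x,z)_o^S \ge \min\{(x,y)_o^S,(y,z)_o^S\} - 2\delta.
\]
This follows by choosing sequences representing the equivalence classes, applying condition (2) of Proposition \ref{prop:hyp sp} on $S$, and taking $\liminf$ appropriately; it is a routine but slightly fussy check because $(x,y)_o^S$ is defined via a supremum of $\liminf$'s.

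Next, I would choose $\e>0$ small enough that $e^{2\e\delta}\le\sqrt{2}$ and set $\e'=e^{2\e\delta}-1$, so $\e'\le\sqrt{2}-1$. Define the symmetric pseudo-distance
\[
\rho(x,y)=e^{-\e(x,y)_o^S},
\]
with the convention $\rho(x,x)=0$. The extended four-point inequality above translates directly into a Frink-type quasi-ultrametric inequality
\[
\rho(x,z)\le (1+\e')\max\{\rho(x,y),\rho(y,z)\}.
\]
Then define $D$ as the chain distance
\[
D(x,y)=\inf\Bigl\{\sum_{i=1}^{n}\rho(x_{i-1},x_i) : n\in\NN,\ x_0=x,\ x_n=y,\ x_i\in S\cup\partial S\Bigr\}.
\]
By construction, $D$ is symmetric and satisfies the triangle inequality, giving (i) and (ii). The upper bound $D(x,y)\le\rho(x,y)=e^{-\e(x,y)_o^S}$ in (iii) is immediate by taking the one-step chain.

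The core of the argument, and the step I would be most careful about, is the lower bound $(1-2\e')e^{-\e(x,y)_o^S}\le D(x,y)$. This is the classical Frink chain lemma: one proves by induction on the length $n$ of a chain $x=x_0,x_1,\dots,x_n=y$ that $\rho(x,y)\le(1+2\e')\sum_{i=1}^n\rho(x_{i-1},x_i)$. The inductive step splits the chain at the first index $k$ where the partial sum first exceeds half the total mass and applies the quasi-ultrametric inequality to the three pieces $x\to x_{k-1}$, $x_{k-1}\to x_k$, $x_k\to y$; since each piece has sum at most half of the total, induction applies. Taking the infimum over chains yields $\rho(x,y)\le(1+2\e')D(x,y)$, hence $D(x,y)\ge(1-2\e')\rho(x,y)$ after using $1/(1+2\e')\ge 1-2\e'$.

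The main obstacle is the promotion of the four-point inequality to $S\cup\partial S$: once this is in hand, the rest of the argument is the standard Frink metrization applied verbatim, since the chain construction and the induction only use the quasi-ultrametric inequality and symmetry of $\rho$, not any structure specific to $\partial S$.
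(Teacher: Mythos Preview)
Your approach is exactly the one the paper defers to: it does not give a proof but states that the proposition ``can be proved in the same way'' as \cite[Proposition 3.21]{BH}, which is precisely the Frink chain construction you outline.

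One small correction is needed, however: you should drop the convention $\rho(x,x)=0$. Condition (iii) requires $(1-2\e')e^{-\e(x,x)_o^S}\le D(x,x)$, and for $x\in S$ one has $(x,x)_o^S=d_S(o,x)<\infty$, so $D(x,x)$ must be strictly positive; indeed Remark~\ref{rem:D not metric} explicitly points out that $D$ is \emph{not} a metric on $S\cup\partial S$ for exactly this reason. With your convention, the one-step chain $x\to x$ forces $D(x,x)=0$, violating (iii) on the diagonal of $S$. If you simply keep $\rho(x,y)=e^{-\e(x,y)_o^S}$ everywhere (so $\rho(x,x)=e^{-\e d_S(o,x)}>0$ for $x\in S$), the Frink induction goes through verbatim and also yields the correct two-sided bound when $x=y$.
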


\begin{rem}\label{rem:D not metric}
    For any $x\in S$, we have $(1-2\e')e^{-\e d_S(o,x)} \le \inf_{y\in S\cup\partial S}D(x,y)$ by $\sup_{y \in S\cup\partial S} (x,y)_o^S \le d_S(o,x)$. Hence, the map $D$ in Proposition \ref{prop:visual metric} is not a metric on $(S\cup\partial S)^2$. However, $D$ is a metric on $(\partial S)^2$. This metric $D|_{(\partial S)^2}$ is called a \emph{visual metric} and the metric topology of $D|_{(\partial S)^2}$ on $(\partial S)^2$ coincides with the relative topology of $\mathcal{O}_S$ in Proposition \ref{prop:gromov topo}.
\end{rem}

\subsection{Hull-Osin's separating cosets of hyperbolically embedded subgroups} \label{subsec:HE subgroups}

In this section, we review hyperbolically embedded subgroups and Hull-Osin's separating cosets. The notion of separating cosets of hyperbolically embedded subgroups was first introduced by Hull and Osin in \cite{HO13} and further developed by Osin in \cite{Osin16}. There are two differences in the definition of separating cosets in \cite{HO13} and in \cite{Osin16}, though other terminologies and related propositions are mostly the same between them. This difference is explained in Remark \ref{difference}. With regards to this difference, we follow definitions and notations of \cite{Osin16} in our discussion. We begin with defining auxiliary concepts.

\begin{defn}
    Let $m,n\in\ZZ$ and let $\Gamma$ be a connected graph with the graph metric $d_\Gamma$. A \emph{path} $p$ in $\Gamma$ is a graph homomorphism from one of $[m,n]$, $[m,\infty)$, or $\RR$ to $\Gamma$, where each domain is considered as a graph with a vertex set $\ZZ\cap[m,n]$, $\ZZ\cap[m,\infty)$, and $\ZZ$ respectively. When we want to emphasize that the domain is $[m,\infty)$ (resp. $\RR$), we call $p$ an \emph{infinite path} (resp. \emph{bi-infinite path}). A \emph{subpath} $q$ of a path $p$ is a path obtained by restricting $p$ to a subset of the domain of $p$. For vertices $x$ and $y$ of $\Gamma$, a path $p$ \emph{from} $x$ \emph{to} $y$ is a path $p$ with the domain $[m,n]$ satisfying $p(m)=x$ and $p(n)=y$. We also denote the initial point $x$ of $p$ by $p_-$ and the terminal point $y$ by $p_+$. A path $p$ is called \emph{closed}, if $p_-=p_+$. Similarly, an infinite path $p$ \emph{from} $x$ is a path satisfying $p(m)=x$ and we denote the initial point $x$ of $p$ by $p_-$. A path $p$ is called \emph{geodesic}, if $p$ is a distance-preserving map from its domain to $(\Gamma,d_\Gamma)$. An infinite geodesic path is also called a \emph{geodesic ray}. Since geodesic paths are injective, we often identify their images with maps. For example, we will denote a geodesic ray $p$ by $p=(x_0,x_1,x_2,\cdots)$ where each $x_i$ is a vertex of $\Gamma$ and each pair $(x_i,x_{i+1})$ is adjacent. Also, for a geodesic path $p$, if $q$ is its subpath from $x$ to $y$ (resp. its infinite subpath from $x$), we denote $q$ by $p_{[x,y]}$ (resp. $p_{[x,\infty)}$).
\end{defn}

\begin{rem}
    For two paths $p$ and $q$ satisfying $p_+=q_-$, we can define the path $pq$ by concatenating $p$ and $q$. Also, for a path $p$ from $x$ to $y$, we can define the path $p^{-1}$ to be the path from $y$ to $x$ obtained by reversing the direction of $p$.
\end{rem}

\begin{rem}
    For vertices $x$ and $y$ of $\Gamma$, we sometimes denote a geodesic path from $x$ to $y$ by $[x,y]$, though it's not necessarily unique.
\end{rem}

\begin{defn}
Suppose that $G$ is a group, $X$ is a subset of $G$, and $\{H_\lambda\}_{\lambda\in\Lambda}$ is a collection of subgroups of $G$ such that the set $X\cup \bigcup_{\lambda\in\Lambda}H_\lambda$ generates $G$. We denote $\H = \bigsqcup_{\lambda \in \Lambda}(H_\lambda \setminus \{1\})$ and $X\sqcup\H$ to mean sets of labels. Note that these unions are disjoint as sets of labels, not as subsets of $G$. Let $\Gamma(G, X\sqcup\mathcal{H})$ be the Cayley graph of $G$ with respect to $X\sqcup\mathcal{H}$, which allows loops and multiple edges, that is, its vertex set is $G$ and its positive edge set is $G\times(X\sqcup\mathcal{H})$. The graph $\Gamma(G,X\sqcup\H)$ is called a \emph{relative Cayley graph}. For each $\lambda\in\Lambda$, we consider the Cayley graph $\Gamma(H_\lambda,H_\lambda\setminus\{1\})$, which is a subgraph of $\Gamma(G,X\sqcup\H)$, and define a metric $\hd_{\lambda}\colon H_\lambda \times H_\lambda \to [0,\infty]$ as follows. We say that a path $p$ in $\Gamma(G,X\sqcup\H)$ is $\lambda$-\emph{admissible}, if $p$ doesn't contain any edge of $\Gamma(H_\lambda,H_\lambda\setminus\{1\})$. Note that $p$ can contain an edge whose label is an element of $H_\lambda$ (e.g. the case when the initial vertex of the edge is not in $H_\lambda$) and also $p$ can pass vertices of $\Gamma(H_\lambda,H_\lambda\setminus\{1\})$. For $f,g\in H_\lambda$, we define $\hd_{\lambda}(f,g)$ to be the minimum of lengths of all $\lambda$-admissible paths from $f$ to $g$. If there is no $\lambda$-admissible path from $f$ to $g$, then we define $\hd_{\lambda}(f,g)$ by $\hd_{\lambda}(f,g)=\infty$. For convenience, we extend $\hd_{\lambda}$ to $\hd_{\lambda}\colon  G\times G \to [0,\infty]$ by defining $\hd_{\lambda}(f,g)=\hd_{\lambda}(1,f^{-1}g)$ if $f^{-1}g\in H_\lambda$ and $\hd_{\lambda}(f,g)=\infty$ otherwise. The metric $\hd_{\lambda}$ is called the \emph{relative metric}.
\end{defn}

\begin{defn}\label{def:hyp emb}
Suppose that $G$ is a group and $\{H_\lambda\}_{\lambda\in\Lambda}$ is a collection of subgroups of $G$. For a subset $X$ of $G$, $\{H_\lambda\}_{\lambda\in\Lambda}$ is said to be \emph{hyperbolically embedded in} $(G,X)$ (and denoted by $\{H_\lambda\}_{\lambda\in\Lambda} \inj_h (G,X)$), if it satisfies the two conditions below.
\begin{itemize}
    \item [(1)] The set $X\cup (\bigcup_{\lambda\in\Lambda}H_\lambda)$ generates $G$ and the graph $\Gamma(G,X\sqcup\H)$ is hyperbolic.
    \item [(2)] For any $\lambda\in\Lambda$, $(H_\lambda,\hd_{\lambda})$ is locally finite, i.e. for any $n\in\NN$, 
    $\{g\in H_\lambda \mid \hd_{\lambda}(1,g)\le n\}$ is finite.
\end{itemize}
A collection of subgroups $\{H_\lambda\}_{\lambda\in\Lambda}$ is also said to be \emph{hyperbolically embedded in} $G$ (and denoted by $\{H_\lambda\}_{\lambda\in\Lambda} \inj_h G$), if there exists a subset $X$ of $G$ such that $\{H_\lambda\}_{\lambda\in\Lambda} \inj_h (G,X)$.
\end{defn}

In the remainder of this section, suppose that $\{H_\lambda\}_{\lambda\in\Lambda}$ is hyperbolically embedded in $(G,X)$ as in Definition \ref{def:hyp emb}. We next prepare concepts to define separating cosets.

\begin{defn}\cite[Definition 4.1]{Osin16}
Suppose that $p$ is a path in the relative Cayley graph $\Gamma(G,X\sqcup\H)$. A subpath $q$ of $p$ is called an \emph{$H_\lambda$-subpath} if the labels of all edges of $q$ are in $H_\lambda$. In the case that $p$ is a closed path, $q$ can be a subpath of any cyclic shift of $p$. An $H_\lambda$-subpath $q$ of a path $p$ is called \emph{$H_\lambda$-component} if $q$ is not contained in any longer $H_\lambda$-subpath of $p$. In the case that $p$ is a closed path, we require that $q$ is not contained in any longer $H_\lambda$-subpath of any cyclic shift of $p$. Furthermore, by a \emph{component}, we mean an $H_\lambda$-component for some $H_\lambda$. Two $H_\lambda$-components $q_1$ and $q_2$ of a path $p$ are called \emph{connected}, if all vertices of $q_1$ and $q_2$ are in the same $H_\lambda$-coset. An $H_\lambda$-component $q$ of a path $p$ is called \emph{isolated}, if $q$ is not connected to any other $H_\lambda$-component of $p$.
\end{defn}

\begin{rem}
Note that all vertices of an $H_\lambda$-component lie in the same $H_\lambda$-coset.
\end{rem}

Proposition \ref{prop:C} is a particular case of \cite[Proposition 4.13]{DGO} and plays a crucial role in this paper.

\begin{prop}\emph{\cite[Lemma 2.4]{HO13}}\label{prop:C}
There exists a constant $C>0$ such that for any geodesic $n$-gon $p$ in $\Gamma(G,X\sqcup\H)$ and any isolated $H_\lambda$-component $a$ of $p$, we have
\[\hd_{\lambda}(a_-,a_+)\le nC.\]
\end{prop}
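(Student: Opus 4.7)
Plan. The strategy combines the $\delta$-hyperbolicity of $\Gamma(G, X \sqcup \H)$ with the isolation hypothesis on $a$ to produce a $\lambda$-admissible path from $a_-$ to $a_+$ of length at most $nC$. First, observe that since $p$ is a geodesic, no $H_\mu$-component of any side of $p$ can span more than one edge: two consecutive $H_\mu$-labeled edges $v_0 \to v_1 \to v_2$ could be replaced by the single edge labeled by their product in $H_\mu$, violating geodesicity. Hence $a$ is a single edge from $a_-$ to $a_+$ labeled by some $h \in H_\lambda \setminus \{1\}$, and more generally every component of every side of $p$ has length $1$.

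After left-translating by $a_-^{-1}$, we may assume $a_-, a_+ \in H_\lambda$, so that $\hd_\lambda(a_-, a_+)$ equals the length of a shortest path in $\Gamma(G, X \sqcup \H)$ from $a_-$ to $a_+$ that uses no $H_\lambda$-labeled edge between two vertices of $H_\lambda$. The isolation of $a$ says that every other $H_\lambda$-component of $p$ has endpoints outside $H_\lambda$, so the complementary path $q = p \setminus \{a\}$ is already $\lambda$-admissible, yielding the crude bound $\hd_\lambda(a_-, a_+) \le |p| - 1$. This is insufficient because $|p|$ is not controlled by $n$, so the task is to shorten $q$ using hyperbolicity.

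The key geometric input is that a geodesic $n$-gon in a $\delta$-hyperbolic space is slim: by inductive triangulation, every point on one side lies within $(n-2)\delta$ of the union of the remaining sides. Applying this to the side of $p$ containing $a$, choose points $u, v$ on other sides with $d(a_-, u), d(a_+, v) \le (n-2)\delta$, and form $\gamma$ by concatenating short geodesics $a_- \to u$, $u \to v$, $v \to a_+$ in $\Gamma(G, X \sqcup \H)$. Using $d(a_-, a_+) = 1$ together with the triangle inequality, $d(u, v) \le 2(n-2)\delta + 1$, so $|\gamma| = O(n\delta)$.

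The main obstacle is ensuring that $\gamma$ is $\lambda$-admissible: a priori the three geodesic pieces could contain forbidden edges, that is, $H_\lambda$-labeled edges between two vertices of $H_\lambda$. The resolution is that any such forbidden edge $e$, together with $a$ and the portions of $\gamma$ and $p$ that close it off, produces a geodesic sub-polygon with strictly fewer sides in which $a$ is still isolated (since isolation is inherited from $p$ because no other component of $p$ lies in the coset of $a$, and the new sides are geodesics whose components are single edges). An induction on $n$, with base case $n = 3$ in which slimness gives a triangle of constant perimeter and hence $\hd_\lambda(a_-, a_+)$ bounded by a universal constant depending only on $\delta$, then delivers a bound of the form $nC$; alternatively, each forbidden edge in $\gamma$ can be rerouted by a detour of length $O(\delta)$ leaving $H_\lambda$, at the cost of enlarging the constant $C$ but preserving the linear-in-$n$ bound.
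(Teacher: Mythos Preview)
The paper does not prove this proposition; it is quoted from \cite[Lemma 2.4]{HO13} and identified as a special case of \cite[Proposition 4.13]{DGO}, with no argument supplied. So there is no in-paper proof to compare against, but your sketch has a genuine gap worth naming.

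Your plan is to use $\delta$-slimness to produce a path $\gamma$ from $a_-$ to $a_+$ of length $O(n\delta)$ and then repair $\gamma$ to be $\lambda$-admissible. The entire difficulty lies in the repair step, and neither of your two proposed resolutions works. For the induction: a ``forbidden edge'' $e \subset \gamma$ is, by your own definition, an $H_\lambda$-labeled edge with both endpoints in the coset $a_-H_\lambda$. Any sub-polygon you assemble containing both $a$ and $e$ as sides (or as components of sides) then has $e$ as an $H_\lambda$-component connected to $a$, so $a$ is \emph{not} isolated in that sub-polygon and the induction hypothesis does not apply. Your parenthetical justification (``isolation is inherited from $p$ because no other component of $p$ lies in the coset of $a$, and the new sides are geodesics whose components are single edges'') is self-defeating: the new sides come from $\gamma$, and the forbidden edge $e$ is precisely a single-edge $H_\lambda$-component on such a side lying in the coset of $a$. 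For the detour alternative: rerouting a forbidden edge $e$ by a $\lambda$-admissible path of length $O(\delta)$ amounts to asserting $\hd_\lambda(e_-, e_+) = O(\delta)$ for an arbitrary $H_\lambda$-edge inside $H_\lambda$, which is a uniform instance of the very bound you are trying to establish; the argument is circular.

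The proof in \cite{DGO} does not argue directly from slim polygons. It passes through an equivalent characterization of hyperbolic embedding in terms of a relative linear isoperimetric inequality for $(G,\{H_\lambda\})$, and the bound on isolated components is then extracted from van Kampen diagrams filling the cycle $p$. Hyperbolicity of $\Gamma(G, X \sqcup \H)$ is used to establish that isoperimetric inequality, but the passage from hyperbolicity to the component bound is not the slim-triangle shortcut you outline.
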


In the remainder of section, we fix any constant $D>0$ with
\begin{equation}\label{eq:D>3C}
D \ge 3C.
\end{equation}

We can now define separating cosets.

\begin{defn}\cite[Definition 4.3]{Osin16}
\label{separating cosets}
A path $p$ in $\Gamma(G,X\sqcup\H)$ is said to \emph{penetrate} a coset $xH_\lambda$ for some $\lambda\in\Lambda$, if $p$ decomposes into $p_1ap_2$, where $p_1,p_2$ are possibly trivial, $a$ is an $H_\lambda$-component, and $a_-\in xH_\lambda$. Note that if $p$ is geodesic, $p$ penetrates any coset of $H_\lambda$ at most once. In this case, $a$ is called the \emph{component of} $p$ \emph{corresponding to} $xH_\lambda$ and also the vertices $a_-$ and $a_+$ are called the \emph{entrance and exit points} of $p$ and are denoted by $p_{in}(xH_\lambda)$ and $p_{out}(xH_\lambda)$ respectively. If in addition we have $\hd_{\lambda}(a_-,a_+)>D$, then $p$ is said to \emph{essentially penetrates} $xH_\lambda$. For $f,g\in G$ and $\lambda\in\Lambda$, if there exists a geodesic path from $f$ to $g$ in $\Gamma(G,X\sqcup\H)$ which essentially penetrates an $H_\lambda$-coset $xH_\lambda$, then $xH_\lambda$ is called an $(f,g;D)$-\emph{separating coset}. The set of all $(f,g;D)$-separating cosets of subgroups from the collection $\{H_\lambda\}_{\lambda\in\Lambda}$ is denoted by $S(f,g;D)$.
\end{defn}

\begin{rem}\label{rem:component of geodesic}
    If a geodesic path $p$ penetrates an $H_\lambda$-coset $xH_\lambda$, then the component $a$ of $p$ corresponding to $xH_\lambda$ consists of a single edge and is isolated in $p$ by minimality of the length of $p$.
\end{rem}

\begin{rem}\label{difference}
First, in \cite[Definition 3.1]{HO13}, whenever $f,g\in G$ are in the same $H_\lambda$-coset $xH_\lambda$ for some $\lambda\in\Lambda$, $xH_\lambda$ is included in $S(f,g;D)$, but in our Definition \ref{separating cosets}, $S(f,g;D)$ can be empty even in this case. Secondly, in \cite[Definition 3.1]{HO13}, separating cosets are considered for each subgroup $H_\lambda$ separately, being denoted by $S_\lambda(f,g;D)$, but in our Definition \ref{separating cosets}, we consider separating cosets of all subgroups from the collection $\{H_\lambda\}_{\lambda\in\Lambda}$ all together.
\end{rem}

The following lemma is immediate from the above definition.

\begin{lem}
For any $f,g,h\in G$ and any $\lambda\in\Lambda$, the following hold.
\begin{itemize}
    \item [(a)]
    $S(f,g;D)=S(g,f;D)$.
    \item[(b)]
    $S(hf,hg;D)=\{hxH_\lambda \mid xH_\lambda\in S(f,g;D)\}$.
\end{itemize}
\end{lem}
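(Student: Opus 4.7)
The plan is to deduce both statements from the fact that the relative Cayley graph $\Gamma(G,X\sqcup\H)$ carries a natural free left action of $G$ by label-preserving graph automorphisms, together with the observation that reversing a path is a symmetry at the level of paths.

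For part (a), I would start with an $xH_\lambda \in S(f,g;D)$ and pick a geodesic path $p$ from $f$ to $g$ in $\Gamma(G,X\sqcup\H)$ that essentially penetrates $xH_\lambda$. The reversed path $p^{-1}$ is a geodesic from $g$ to $f$. If $a$ is the $H_\lambda$-component of $p$ corresponding to $xH_\lambda$ (a single edge, by Remark \ref{rem:component of geodesic}), then the reversed edge $a^{-1}$ is an $H_\lambda$-component of $p^{-1}$ with $(a^{-1})_- = a_+$ and $(a^{-1})_+ = a_-$, both still lying in $xH_\lambda$. Since $\hd_\lambda$ is a metric it is symmetric, so $\hd_\lambda((a^{-1})_-, (a^{-1})_+) = \hd_\lambda(a_+, a_-) = \hd_\lambda(a_-, a_+) > D$, showing $xH_\lambda \in S(g,f;D)$. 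The reverse inclusion is identical.

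For part (b), the map $\Gamma(G,X\sqcup\H) \to \Gamma(G,X\sqcup\H)$ given by left multiplication $y \mapsto hy$ is a label-preserving graph automorphism, so it sends geodesic paths to geodesic paths and $H_\lambda$-subpaths to $H_\lambda$-subpaths, preserving the property of being an $H_\lambda$-component. It also sends the coset $xH_\lambda$ to $hxH_\lambda$. Finally, the relative metric is left-invariant: by the extension $\hd_\lambda(u,v) = \hd_\lambda(1, u^{-1}v)$ for $u^{-1}v \in H_\lambda$, one has $\hd_\lambda(hu, hv) = \hd_\lambda(1, u^{-1}v) = \hd_\lambda(u,v)$. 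Thus a geodesic $p$ from $f$ to $g$ essentially penetrates $xH_\lambda$ if and only if $hp$ is a geodesic from $hf$ to $hg$ essentially penetrating $hxH_\lambda$, giving the claimed equality of sets.

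There is no real obstacle here, since everything is essentially bookkeeping with the definitions; the only mild subtlety worth double-checking is the left-invariance of $\hd_\lambda$ under the extended definition on $G\times G$, which is why I would state that step explicitly rather than appeal to $G$-equivariance of the Cayley graph alone.
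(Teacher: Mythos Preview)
Your proposal is correct and matches the paper's approach: the paper simply declares the lemma ``immediate from the above definition'' and gives no proof, and what you have written is exactly the routine unpacking of Definition~\ref{separating cosets} via path reversal and the left $G$-action on $\Gamma(G,X\sqcup\H)$ that makes it immediate.
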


We list some results on separating cosets so that readers have a better understanding.

\begin{lem}{\rm (cf. \cite[Lemma 4.5]{Osin16})}\label{lem:lem4.5 osin}
For any $f,g\in G$ and any $xH_\lambda \in S(f,g;D)$, every path in $\Gamma(G,X\sqcup\H)$ connecting $f$ to $g$ and composed of at most 2 geodesic segments penetrates $xH_\lambda$.
\end{lem}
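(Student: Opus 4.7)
The plan is to argue by contradiction via Proposition \ref{prop:C} applied to the closed path obtained by concatenating a geodesic $p$ that witnesses the essential penetration of $xH_\lambda$ with the reverse of a hypothetical path $q$ that fails to penetrate $xH_\lambda$. Since $xH_\lambda\in S(f,g;D)$, fix a geodesic $p$ from $f$ to $g$ that decomposes as $p=p_1 a p_2$, where $a$ is an $H_\lambda$-component with $a_-\in xH_\lambda$ and $\hd_\lambda(a_-,a_+)>D$. Suppose $q$ is a path from $f$ to $g$ made of at most $2$ geodesic segments which does not penetrate $xH_\lambda$.

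Next I would verify that $a$ remains an isolated $H_\lambda$-component of the closed path $r:=p q^{-1}$. The path $r$ is a geodesic $n$-gon with $n\le 3$, since $p$ is one geodesic and $q^{-1}$ contributes at most two more sides. To see that $a$ is still an $H_\lambda$-component of $r$, note that in $p$ the edges flanking $a$ are not $H_\lambda$-edges by maximality; the only way $a$ could be extended in $r$ is if an $H_\lambda$-edge of $q^{-1}$ meets $a$ at $f$ or $g$, but this would produce an $H_\lambda$-component of $q$ with an endpoint in $xH_\lambda$, contradicting the assumption that $q$ does not penetrate $xH_\lambda$. For the same reason, no $H_\lambda$-component of $q^{-1}$ has its vertices in $xH_\lambda$; and since $p$ is geodesic, it penetrates $xH_\lambda$ at most once, so no other component of $p$ lies in $xH_\lambda$ either. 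Therefore $a$ is isolated in $r$.

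Finally, applying Proposition \ref{prop:C} to the geodesic $n$-gon $r$ and its isolated $H_\lambda$-component $a$ yields
\[
\hd_\lambda(a_-,a_+)\le nC\le 3C\le D,
\]
using the standing hypothesis \eqref{eq:D>3C}. This contradicts $\hd_\lambda(a_-,a_+)>D$, so $q$ must in fact penetrate $xH_\lambda$.

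The only delicate step is the bookkeeping in the second paragraph: making sure that ``$q$ does not penetrate $xH_\lambda$'' really rules out any merging of $a$ with $H_\lambda$-edges of $q^{-1}$ at the junctions $f$ and $g$, and simultaneously rules out any other component of $r$ lying in $xH_\lambda$. Once isolation is secured, Proposition \ref{prop:C} delivers the contradiction mechanically; everything else is unpacking Definition \ref{separating cosets}.
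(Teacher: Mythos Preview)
Your argument is correct and is exactly the standard proof: form the geodesic $n$-gon $pq^{-1}$ with $n\le 3$, check that the essentially penetrated component $a$ stays isolated, and invoke Proposition~\ref{prop:C} together with $D\ge 3C$ to reach a contradiction. The paper does not supply its own proof of this lemma---it simply cites \cite[Lemma~4.5]{Osin16}, whose proof is precisely the one you wrote.
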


The following lemma makes $S(f,g;D)$ into a totally ordered set.

\begin{lem}\emph{\cite[Lemma 4.6]{Osin16}} \label{lem:lem4.6 osin}
Let $f,g\in G$ and suppose that a geodesic path $p$ from $f$ to $g$ penetrates a coset $xH_\lambda$ for some $\lambda\in \Lambda$ and decomposes into $p=p_1ap_2$, where $p_1,p_2$ are possibly trivial and $a$ is a component corresponding to $xH_\lambda$. Then, we have $d_{X\cup\mathcal{H}}(f,a_-)=d_{X\cup\mathcal{H}}(f,xH_\lambda)$.
\end{lem}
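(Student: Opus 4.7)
The plan is to verify both inequalities separately. The direction $d_{X\cup\mathcal{H}}(f,a_-)\ge d_{X\cup\mathcal{H}}(f,xH_\lambda)$ is immediate from $a_-\in xH_\lambda$, so all the work lies in proving $d_{X\cup\mathcal{H}}(f,a_-)\le d_{X\cup\mathcal{H}}(f,xH_\lambda)$. Since $p_1$ is a subpath of a geodesic it is itself a geodesic, so $d_{X\cup\mathcal{H}}(f,a_-)=|p_1|$, and I would argue by contradiction: assume there exists $y\in xH_\lambda$ with $d_{X\cup\mathcal{H}}(f,y)<|p_1|$ and fix a geodesic $q$ from $f$ to $y$.

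The key observation is that $y$ and $a_+$ both lie in the coset $xH_\lambda$, so $y^{-1}a_+\in H_\lambda$. Hence in the relative Cayley graph $\Gamma(G,X\sqcup\mathcal{H})$ the vertices $y$ and $a_+$ are connected either by a single edge labelled by $y^{-1}a_+\in H_\lambda\setminus\{1\}$ (when $y\ne a_+$) or coincide (when $y=a_+$). Concatenating $q$ with this short bridge and then with $p_2$ produces a path from $f$ to $g$ whose length we can compare to $|p|$. Using Remark \ref{rem:component of geodesic}, which gives $|a|=1$, we have $|p|=|p_1|+1+|p_2|$, while in the case $y\ne a_+$ the new path has length $|q|+1+|p_2|<|p_1|+1+|p_2|=|p|$ and in the case $y=a_+$ it has length $|q|+|p_2|<|p_1|+|p_2|<|p|$. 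Either way this contradicts the assumption that $p$ is geodesic, so no such $y$ exists.

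The proof is essentially a one-step length comparison, and the only subtlety to address is splitting into the cases $y=a_+$ and $y\ne a_+$; in the latter one uses that an $H_\lambda$-labeled edge exists in $\Gamma(G,X\sqcup\mathcal{H})$ between any two distinct elements of the same $H_\lambda$-coset. No hyperbolicity, separating-coset machinery, or appeal to Proposition \ref{prop:C} is needed for this particular statement; the result is a purely combinatorial consequence of the structure of the relative Cayley graph.
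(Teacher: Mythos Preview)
Your argument is correct. The paper does not supply its own proof of this lemma; it simply cites \cite[Lemma 4.6]{Osin16}. Your shortening argument via the single $H_\lambda$-edge from $y$ to $a_+$ is exactly the natural proof and matches the one in Osin's paper, so there is nothing to add.
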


\begin{defn}\cite[Definition 4.7]{Osin16} \label{def:Def4.7 Osin}
Given any $f,g\in G$, a relation $\preceq$ on the set $S(f,g;D)$ is defined as follows: for any $C_1,C_2 \in S(f,g;D)$,
\[C_1 \preceq C_2 \iff d_{X\cup\mathcal{H}}(f,C_1)\le d_{X\cup\mathcal{H}}(f,C_2).\]
\end{defn}

\begin{lem}\emph{\cite[Lemma 4.8]{Osin16}}\label{lem:lem4.8 osin}
    For any $f,g\in G$, the relation $\preceq$ is a linear order on $S(f,g;D)$ and any geodesic path $p$ in $\Gamma(G,X\sqcup\H)$ from $f$ to $g$ penetrates all $(f,g;D)$-separating cosets according to the order $\preceq$. In particular, $S(f,g;D)$ is finite. That is,
    \[
    S(f,g;D)=\{C_1 \preceq C_2 \preceq \cdots \preceq C_n\}
    \]
    for some $n\in\NN$ and $p$ decomposes into $p=p_1a_1\cdots p_na_np_{n+1}$, where $a_i$ is the component of $p$ corresponding to $C_i$ for each $i\in\{1,\cdots, n\}$.
\end{lem}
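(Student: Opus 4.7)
The plan is to derive all three parts of the statement from Lemmas \ref{lem:lem4.5 osin} and \ref{lem:lem4.6 osin}, together with the basic observation that along a geodesic path distinct vertices lie at distinct distances from the initial vertex.

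First I verify that $\preceq$ is a linear order. Reflexivity, transitivity, and totality are immediate because $\preceq$ pulls back the standard order on $\ZZ_{\ge 0}$ along the map $C \mapsto d_{X\cup\H}(f, C)$. Antisymmetry is the only nontrivial property: suppose $C_1, C_2 \in S(f,g;D)$ satisfy $d_{X\cup\H}(f, C_1) = d_{X\cup\H}(f, C_2)$, and fix any geodesic $p$ from $f$ to $g$. By Lemma \ref{lem:lem4.5 osin}, $p$ penetrates both $C_1$ and $C_2$; call the corresponding components $a_1$ and $a_2$, each of which is a single edge by Remark \ref{rem:component of geodesic}. Lemma \ref{lem:lem4.6 osin} gives $d_{X\cup\H}(f, a_{i,-}) = d_{X\cup\H}(f, C_i)$ for $i=1,2$, so $a_{1,-}$ and $a_{2,-}$ are two vertices of the geodesic $p$ at the same distance from $f$, which forces $a_{1,-} = a_{2,-}$. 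Then $a_1$ and $a_2$ are both equal to the unique edge of $p$ immediately following this common vertex, so they share a label belonging to a unique $H_\lambda$, and $a_{1,-} \in C_1 \cap C_2$ yields $C_1 = C_2$.

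For the penetration-order claim, enumerate $S(f,g;D) = \{C_1, \ldots, C_n\}$ in strictly increasing $\preceq$-order (which by antisymmetry is a strict order on the values $d_{X\cup\H}(f, C_i)$). For any geodesic $p$ from $f$ to $g$, as we traverse $p$ the entrance points of penetrated cosets appear at strictly increasing distances from $f$, and by Lemma \ref{lem:lem4.6 osin} these distances are exactly the values $d_{X\cup\H}(f, C_i)$; so the cosets must be penetrated in the $\preceq$-order. Finiteness then follows since each separating coset contributes at least one edge to $p$, giving $|S(f,g;D)| \le d_{X\cup\H}(f,g) < \infty$. The most delicate step is the antisymmetry argument: because $\Gamma(G, X \sqcup \H)$ admits loops and multiple edges, one must really use that a fixed geodesic determines a unique outgoing edge at each of its vertices, so that matching entrance vertices forces the component edges themselves to coincide as labelled edges, and hence their labels to lie in a common $H_\lambda$.
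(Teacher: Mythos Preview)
The paper does not supply its own proof of this lemma; it is quoted verbatim from \cite[Lemma 4.8]{Osin16}. Your argument is correct and is essentially the same as the one in \cite{Osin16}: penetration of every separating coset by any geodesic comes from Lemma~\ref{lem:lem4.5 osin}, the entrance-point formula from Lemma~\ref{lem:lem4.6 osin} identifies $d_{X\cup\H}(f,C)$ with the position of $p_{in}(C)$ along $p$, and antisymmetry then follows because a geodesic has a unique vertex at each distance from its origin and a unique outgoing edge at that vertex, whose label lies in a single $H_\lambda$ by disjointness of $\H$. The remaining claims about order of penetration, finiteness, and the decomposition of $p$ are immediate consequences, exactly as you indicate.
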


\subsection{Acylindrically hyperbolic group}
Theorem \ref{thm:AH group} is a simplified version of \cite[Theorem 1.2]{Osin16}. For more details on acylindrically hyperbolic groups, readers are referred to \cite{Osin16}.

\begin{thm} \label{thm:AH group}
    For any group $G$, the following conditions are equivalent.
    \begin{itemize}
        \item[$(AH_1)$]
        There exists a generating set $X$ of $G$ such that the corresponding Cayley graph $\Gamma(G,X)$ is hyperbolic, $|\partial\Gamma(G,X)|>2$, and the natural action of $G$ on $\Gamma(G,X)$ is acylindrical.
        \item[$(AH_4)$]
        $G$ contains a proper infinite hyperbolically embedded subgroup.
    \end{itemize}
\end{thm}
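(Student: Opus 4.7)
My plan is to treat this as a citation-backed result, since the author has explicitly labeled it a simplified version of Osin's Theorem 1.2 in \cite{Osin16}; a self-contained proof would largely reproduce material from that paper. I will sketch the two implications separately and flag where the real work is hidden.

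For $(AH_4) \Rightarrow (AH_1)$, suppose $\{H_\lambda\}_{\lambda\in\Lambda} \inj_h (G,Y)$ where some $H_{\lambda_0}$ is proper and infinite. I would set $X := Y \cup \bigcup_{\lambda\in\Lambda} (H_\lambda\setminus\{1\})$ as a generating set of $G$. The ordinary Cayley graph $\Gamma(G,X)$ is obtained from the relative Cayley graph $\Gamma(G,Y\sqcup\H)$ by identifying parallel edges, hence it is quasi-isometric to $\Gamma(G,Y\sqcup\H)$ and therefore hyperbolic by hypothesis. Acylindricity of $G \act \Gamma(G,X)$ would be deduced from the machinery of Hull--Osin's separating cosets and Proposition \ref{prop:C} (a sufficiently long path in $\Gamma(G,X)$ that fails to move far in the Cayley metric must fellow-travel a common geodesic and essentially penetrate the same $H_\lambda$-cosets, forcing the local finiteness of $\hd_\lambda$ to bound the number of elements involved). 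The condition $|\partial\Gamma(G,X)|>2$ follows because $H_{\lambda_0}$ infinite contributes at least one boundary point, and any element of $G \setminus H_{\lambda_0}$ that is loxodromic with respect to this action produces two additional fixed points, ruling out the elementary cases.

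For $(AH_1) \Rightarrow (AH_4)$, I would invoke Bowditch's dichotomy for acylindrical actions on hyperbolic spaces: an acylindrical action with $|\partial|>2$ is non-elementary and contains a loxodromic element $g \in G$. The next step is to pass to the elementary closure $E(g)$, the unique maximal virtually cyclic subgroup containing $g$; this is proper (since the action is non-elementary) and infinite (since $g$ has infinite order by acylindricity). The heart of the argument is to verify $E(g) \inj_h G$, which I would do by citing \cite[Theorem 6.8]{DGO} (the WPD condition for $g$ follows from acylindricity, and WPD for the whole elementary closure gives the hyperbolic embedding).

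The main obstacle in a self-contained presentation would be this last step, namely producing a hyperbolically embedded virtually cyclic subgroup from a single WPD loxodromic element. Checking condition (2) of Definition \ref{def:hyp emb} — local finiteness of the relative metric $\hd_{E(g)}$ — requires a careful analysis of how paths in $\Gamma(G,X)$ avoiding edges of $E(g)$ can witness coincidences in the $E(g)$-coset, which is exactly what the acylindricity of the action rules out in a quantitative way. Since this is fully carried out in \cite{DGO} and \cite{Osin16}, I would in the end defer to \cite[Theorem 1.2]{Osin16} rather than reprove it.
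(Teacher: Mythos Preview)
The paper does not prove Theorem~\ref{thm:AH group}; it is quoted verbatim as a simplified restatement of \cite[Theorem 1.2]{Osin16}, and your decision to treat it as a citation-backed result matches the paper exactly. Your sketch of $(AH_1)\Rightarrow(AH_4)$ via a loxodromic element and its elementary closure, with the hyperbolic embedding coming from \cite{DGO}, is the standard route and is correct.

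There is, however, a genuine gap in your sketch of $(AH_4)\Rightarrow(AH_1)$. You propose to take the generating set $X = Y \cup \bigcup_\lambda (H_\lambda\setminus\{1\})$ coming directly from the hypothesis $\{H_\lambda\}\inj_h(G,Y)$, and then argue acylindricity of $G\act\Gamma(G,X)$ ``from the machinery of Hull--Osin's separating cosets.'' But the action on the \emph{original} relative Cayley graph $\Gamma(G,Y\sqcup\H)$ is not acylindrical in general; this is precisely the obstacle that Osin's proof overcomes. The separating-coset machinery is not used to verify acylindricity for the given $Y$, but rather to \emph{construct a new, larger relative generating set} $Y' = \{y\in G \mid S(1,y;D)=\emptyset\}$ (this is the paper's Theorem~\ref{thm:osin}, i.e.\ \cite[Theorem 5.4]{Osin16}), and it is only on $\Gamma(G,Y'\sqcup\H)$ that acylindricity is established. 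Your parenthetical about fellow-travelling and local finiteness of $\hd_\lambda$ is pointing at the right ingredients, but without the passage to $Y'$ the argument does not go through. Since you ultimately defer to \cite[Theorem 1.2]{Osin16} anyway, this does not affect the validity of your proposal, but the sketch as written misrepresents where the work lies.
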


\begin{defn} \label{def:AH group}
    A group $G$ is called \emph{acylindrically hyperbolic}, if $G$ satisfies the equivalent conditions $(AH_1)$ and $(AH_4)$ from Theorem \ref{thm:AH group}.
\end{defn}

The following results were obtained in \cite[Section 5]{Osin16} in proving the implication $(AH_4)\Rightarrow(AH_1)$ in Theorem \ref{thm:AH group}. In Theorem \ref{thm:osin} below, we consider separating cosets for $\{H_\lambda\}_{\lambda\in\Lambda} \inj_h (G,X)$ and the metric $d_{\Gamma(G,Y\sqcup\H)}(\cdot,\cdot)$ is denoted by $d_{Y\cup\H}(\cdot,\cdot)$ for brevity.

\begin{thm}[{\rm cf. \cite[Theorem 5.4, Lemma 5.10]{Osin16}}]\label{thm:osin}
Suppose that $G$ is a group, $X$ is a subset of $G$, and $\{H_\lambda\}_{\lambda\in\Lambda}$ is a collection of subgroups of $G$ hyperbolically embedded in $(G,X)$. Let $C>0$ as in Proposition \ref{prop:C} and let $D>0$ satisfy $D \ge 3C$ as in \eqref{eq:D>3C}. We also define the subset $Y$ of $G$ by
\begin{align}\label{eq:Y}
    Y=\{y \in G \mid S(1,y;D)= \emptyset\}.
\end{align}
Then, we have for any $f,g \in G$,
\begin{equation}\label{eq:qi}
    \frac{1}{2}(d_{Y\cup\H}(f,g)-1) \le |S(f,g;D)| \le 3 d_{Y\cup\H}(f,g).
\end{equation}
If in addition $\Lambda$ is finite, then the following hold.
    \begin{itemize}
        \item[(a)]
        $\{H_\lambda\}_{\lambda\in\Lambda} \inj_h (G,Y)$.
        \item[(b)]
        The action of $G$ on $\Gamma(G,Y\sqcup\H)$ is acylindrical.
    \end{itemize}
\end{thm}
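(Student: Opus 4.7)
The plan is to first establish the quasi-isometry estimate \eqref{eq:qi}, and then use it together with the separating-coset machinery of Section~2.3 to deduce (a) and (b).

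For the \emph{lower} bound $\tfrac{1}{2}(d_{Y\cup\H}(f,g)-1)\le|S(f,g;D)|$, I would take a geodesic $p$ from $f$ to $g$ in $\Gamma(G,X\sqcup\H)$ and apply Lemma \ref{lem:lem4.8 osin} to decompose it as $p=p_1 a_1 p_2 a_2\cdots p_m a_m p_{m+1}$ with $m=|S(f,g;D)|$; each $a_i$ is a single $\H$-edge by Remark \ref{rem:component of geodesic}. Each intermediate piece $p_i$ is itself geodesic and cannot essentially penetrate any coset, since any such coset would lie in $S(f,g;D)$ but not among $C_1,\dots,C_m$, contradicting Lemma \ref{lem:lem4.8 osin}. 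Hence $(p_i)_-^{-1}(p_i)_+\in Y$, so each $p_i$ costs at most one edge in $\Gamma(G,Y\sqcup\H)$; together with the $m$ single $\H$-edges $a_i$, the triangle inequality gives $d_{Y\cup\H}(f,g)\le 2m+1$. For the \emph{upper} bound, the core step is a triangle-type inequality $|S(f,g;D)|\le|S(f,h;D)|+|S(h,g;D)|+K$ with $K\le 2$; I would prove it by feeding the $2$-segment path $f\to h\to g$ into Lemma \ref{lem:lem4.5 osin}, so that every $C\in S(f,g;D)$ is penetrated by one of the two geodesic segments, and then bounding the number of non-essential penetrations via Proposition \ref{prop:C}. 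Iterating this along a $\Gamma(G,Y\sqcup\H)$-geodesic $g_0,g_1,\dots,g_n$ from $f$ to $g$, together with the edge-level bound $|S(g_i,g_{i+1};D)|\le 1$ (zero for $Y$-edges by definition of $Y$, at most one for $\H$-edges since the only candidate is $g_i H_\lambda$), yields $|S(f,g;D)|\le 3n$.

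Assuming $\Lambda$ is finite, for (a) I would first verify $X\subseteq Y$: the single-edge geodesic $1\to x$ with $x\in X$ has no $\H$-component, so $S(1,x;D)=\emptyset$ and $x\in Y$. Hence $Y\cup\bigcup_\lambda H_\lambda$ still generates $G$. Hyperbolicity of $\Gamma(G,Y\sqcup\H)$ I would obtain by noting that \eqref{eq:qi} makes the counting function $|S(\cdot,\cdot;D)|$ a coarse substitute for $d_{Y\cup\H}$, and this counting function inherits a tree-like structure from the linear order $\preceq$ on separating cosets (Lemma \ref{lem:lem4.8 osin}); the four-point condition in Proposition \ref{prop:hyp sp}(2) can then be verified by tracing how $S(f,g;D)$, $S(f,h;D)$, and $S(g,h;D)$ overlap. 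For local finiteness of $(H_\lambda,\hd_\lambda^Y)$, any $\lambda$-admissible $Y\cup\H$-path from $1$ to $g\in H_\lambda$ can be inflated, edge by edge, into a $\lambda$-admissible $X\cup\H$-path of linearly bounded length (each $Y$-edge is replaced by its $X\cup\H$-geodesic, which avoids $H_\lambda$-components by the decomposition used for the lower bound), so local finiteness transfers from $\{H_\lambda\}\inj_h(G,X)$.

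For (b), acylindricity of $G\act\Gamma(G,Y\sqcup\H)$, I would use \eqref{eq:qi} and the equivariance $S(gf,gh;D)=g\cdot S(f,h;D)$ to translate the defining inequality into a combinatorial statement: when $d_{Y\cup\H}(x,y)\ge R$ but $d_{Y\cup\H}(x,gx), d_{Y\cup\H}(y,gy)\le\varepsilon$, we have $|S(x,y;D)|\gtrsim R$ while $|S(x,gx;D)|, |S(y,gy;D)|\lesssim\varepsilon$, which forces $g$ to fix setwise all but $O(\varepsilon)$ elements of a long $\preceq$-ordered chain of $H_\lambda$-cosets. Combined with finiteness of $\Lambda$, local finiteness of $(H_\lambda,\hd_\lambda^Y)$ from part (a), and a stabilizer-counting argument for such chains, this yields the uniform bound $N$. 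The hardest steps I anticipate are the triangle-type inequality underlying the upper bound of \eqref{eq:qi}, where essential versus non-essential penetration requires careful bookkeeping, and the stabilizer-counting in (b), where one must extract genuine control over $g$ from approximate fixing of a long chain.
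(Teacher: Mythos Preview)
The paper does not prove Theorem~\ref{thm:osin}; it is quoted from \cite[Theorem~5.4, Lemma~5.10]{Osin16} and used as a black box, so there is no in-paper proof to compare against. I will therefore assess your sketch on its own merits.

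Your argument for \eqref{eq:qi} is essentially correct and matches Osin's approach. The lower bound via the decomposition $p=p_1a_1\cdots p_ma_mp_{m+1}$ and the observation $(p_i)_-^{-1}(p_i)_+\in Y$ is exactly right. For the upper bound, the triangle-type inequality $|S(f,g;D)|\le|S(f,h;D)|+|S(h,g;D)|+2$ does hold, but the clean mechanism is not ``bounding non-essential penetrations via Proposition~\ref{prop:C}''; rather, one uses the order $\preceq$ on $S(f,g;D)$ together with Lemma~\ref{lem:sep coset inclustion}: if $[f,h]$ penetrates $C_{i^*}$ then all $C_j$ with $j<i^*$ lie in $S(f,h;D)$, and symmetrically for $[h,g]$. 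Iterating along a $\Gamma(G,Y\sqcup\H)$-geodesic then gives $3d_{Y\cup\H}(f,g)$ as you say. (The paper itself proves a boundary analogue of this triangle inequality in Proposition~\ref{prop:F4}.)

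Your sketch for (a) has a genuine gap in the local-finiteness step. Replacing a $Y$-edge $a\to b$ by an $X\cup\H$-geodesic does \emph{not} yield a $\lambda$-admissible path: the condition $a^{-1}b\in Y$ only says that no coset is \emph{essentially} penetrated, so the geodesic may well contain $H_\lambda$-edges with $\hd_\lambda\le D$, and these destroy $\lambda$-admissibility. Worse, the $X\cup\H$-length of an element of $Y$ is unbounded, so even after detouring around each $H_\lambda$-component (at cost $\le D$ per component) you get no uniform bound on $\hd_\lambda^X(1,g)$ in terms of $\hd_\lambda^Y(1,g)$. Osin's actual argument for local finiteness is different and more delicate; see \cite[\S5]{Osin16}. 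For hyperbolicity, your four-point-condition idea is plausible but vague; note that the paper later (Lemma~\ref{lem:Haus}) gives a clean proof via Bowditch's criterion (Proposition~\ref{prop:Bow}), taking $X\cup\H$-geodesics as the guessed geodesics in $\Gamma(G,Y\sqcup\H)$.

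Your outline for (b) captures the right intuition---$g$ must nearly fix a long $\preceq$-chain of cosets, and local finiteness of $\hd_\lambda$ then bounds the possibilities---but the ``stabilizer-counting'' step you flag as hardest is indeed where all the work lies, and nothing in your sketch indicates how to carry it out; in \cite{Osin16} this takes several pages.
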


\begin{rem}\label{rem:finite collection}
    In \cite[Section 5]{Osin16}, the condition $|\Lambda|<\infty$ is assumed in all lemmas for proving $(AH_4)\Rightarrow(AH_1)$ including \cite[Lemma 5.10]{Osin16}, which corresponds to the inequality \eqref{eq:qi} in Theorem \ref{thm:osin}. However, the condition $|\Lambda|<\infty$ is not used in the proof of \cite[Lemma 5.10]{Osin16}, so we omit it in \eqref{eq:qi} for our discussion in Section \ref{sec:Gromov} and Section \ref{sec:appendix}. Actually, the condition $|\Lambda|<\infty$ is not used in the proof of \cite[Theorem 5.4 (a)]{Osin16} either.
\end{rem}

\begin{lem}{\rm \cite[Lemma 5.12]{Osin16}}\label{lem:non-elementary}
    Let $G$ be a group, $H$ a subgroup of $G$, $X$ a subset of $G$. If $H$ is non-degenerate (i.e. $H$ is a proper infinite subgroup of $G$) and $H\inj_h (G,X)$, then the action of $G$ on $\Gamma(G,X\sqcup H)$ is non-elementary (i.e. $|\partial \Gamma(G,X\sqcup H)|>2$).
\end{lem}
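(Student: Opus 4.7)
The plan is to produce a loxodromic isometry $g \in G$ for the action on $\Gamma := \Gamma(G, X\sqcup H)$ by combining a deep element of $H$ with an element outside $H$, then upgrade to two loxodromic elements with disjoint fixed-point pairs on $\partial\Gamma$, which forces $|\partial\Gamma| > 2$.

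First, fix $C > 0$ as in Proposition~\ref{prop:C} and $D \ge 3C$ as in \eqref{eq:D>3C}. Since $\hd = \hd_H$ is locally finite on $H$ (by hyperbolic embedding) and $|H| = \infty$, pick $h \in H$ with $\hd(1, h) > D$. Since $H$ is a proper subgroup of $G$, pick $t \in G \setminus H$, and set $g := ht$. Consider the path $p_n$ in $\Gamma$ from $1$ to $g^n$ spelled by $(ht)^n$; it has length $2n$. Its $n$ $h$-edges are $H$-components, each of $\hd$-length $\hd(1,h) > D$, lying in the pairwise distinct cosets $(ht)^i H$ for $0 \le i < n$ (distinctness uses $t \notin H$). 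Hence each $h$-edge is isolated in $p_n$. Closing $p_n$ with a geodesic $q$ from $1$ to $g^n$ and applying Proposition~\ref{prop:C} to the resulting geodesic polygon (together with an iterative refinement: short-cutting any isolated deep $h$-edge reduces the polygon size), one shows that $p_n$ is a quasi-geodesic with constants independent of $n$. Consequently $d_\Gamma(1, g^n) \ge n/K$, and $g$ acts loxodromically on $\Gamma$ with fixed points $g^{\pm\infty} \in \partial\Gamma$.

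Second, I would promote this to two independent loxodromics. The group $H$ acts on $\Gamma$ with bounded orbits (each $h' \in H$ displaces $1$ by at most $1$), so every element of $H$ is elliptic. If every $h' \in H$ preserved $\{g^{+\infty}, g^{-\infty}\}$ setwise, then an index-$\le 2$ subgroup $H_0 \le H$ would fix both boundary points; since elliptic isometries fixing two points of $\partial\Gamma$ lie in a bounded neighborhood of any bi-infinite quasi-geodesic axis of $g$, combining this with local finiteness of $\hd$ forces $H_0$ to be finite, contradicting $|H_0| = \infty$. Hence some $h' \in H$ moves the pair $\{g^{+\infty}, g^{-\infty}\}$, and $h' g h'^{-1}$ is a loxodromic with a fixed-point pair disjoint from $g$'s, yielding $|\partial\Gamma| \ge 4 > 2$.

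The main obstacle is making the quasi-geodesicity of $p_n$ in the first step precise: a naive application of Proposition~\ref{prop:C} gives a polygon-length-dependent bound that grows with $n$, so one needs an iterative argument to extract the separating cosets for $(1, g^n; D)$ out of $p_n$. A cleaner alternative, sidestepping boundary dynamics in the second step, is to choose two deep elements $h_1, h_2 \in H$ and run a ping-pong-style argument on $\{h_1 t, h_2 t\}$, using Proposition~\ref{prop:C} to ensure the coset-separation needed for two independent loxodromics.
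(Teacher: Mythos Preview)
The paper does not give its own proof of this lemma; it is cited directly from \cite[Lemma 5.12]{Osin16}. I will therefore assess your argument on its own terms.

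Your strategy is reasonable, but both steps are more incomplete than you indicate.

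In step~1 you correctly identify the problem: applying Proposition~\ref{prop:C} to the $(2n{+}1)$-gon $p_n q^{-1}$ yields only $\hd \le (2n{+}1)C$ on isolated components, which is useless against the fixed choice $\hd(1,h)>D\ge 3C$. Your proposed ``iterative refinement'' is vague. One clean route is to choose $\hd(1,h)$ large relative to a constant depending only on $C$ and $d_{X\cup\H}(1,t)$ (not merely $>3C$) and run a local-to-global argument; another is to show directly that each coset $(ht)^iH$ lies in $S(1,g^n;D)$ and invoke \eqref{eq:qi}. Either works, but neither is the one-line fix your sketch suggests.

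In step~2 there is a gap you do not flag. You claim that if $H_0\le H$ fixes $g^{\pm\infty}$ then ``combining [bounded displacement along the axis] with local finiteness of $\hd$ forces $H_0$ to be finite.'' But bounded displacement is measured in $d_\Gamma$, and every $h'\in H$ already satisfies $d_\Gamma(1,h')\le 1$, so a $d_\Gamma$-bound near the basepoint tells you nothing. To extract a $\hd$-bound you must compare how the axis and its $h'$-translate penetrate the coset $H$: both are bi-infinite quasi-geodesics with the same endpoints, the first with $H$-component $[1,h]$ and the second with $H$-component $[h',h'h]$, and a Proposition~\ref{prop:C}-style argument on a suitable polygon (anchored by the neighbouring separating cosets $g^{-1}H$ and $gH$, in the spirit of Lemma~\ref{lem:bounded by 4C}) is needed to bound $\hd(1,h^{-1}h'h)$ uniformly. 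Only then does local finiteness of $\hd$ give $|H_0|<\infty$. This is exactly the substantive use of the hyperbolic embedding in step~2, and your sketch elides it entirely.
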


% non-degenerate subgroup in \cite{DGO} probably means infinite proper subgroup. Ask Denis.

\section{Path representatives of the Gromov boundary} \label{sec:Gromov}

Throughout this section, suppose that $G$ is a group, $X$ is a subset of $G$, and $\{H_\lambda\}_{\lambda\in\Lambda}$ is a collection of subgroups of $G$ hyperbolically embedded in $(G,X)$. Let $C>0$ as in Proposition \ref{prop:C} and fix $D>0$ satisfying $D\ge 3C$ as in \eqref{eq:D>3C}. We also define the subset $Y$ of $G$ by $Y=\{y \in G \mid S(1,y;D)= \emptyset\}$ as in Theorem \ref{thm:osin}.

In this section, we will show that elements of the Gromov boundary of the Cayley graph $\Gamma(G,Y\sqcup\H)$ are represented by nice geodesic rays in $\Gamma(G,X\sqcup\H)$ (see Proposition \ref{prop:existence}). The nice geodesic rays are characterized by penetrating infinitely many cosets deeply enough (see Lemma \ref{lem:seq to infinity}). By using these path representatives of boundary points, we will extend the notion of Hull-Osin's separating cosets to allow a point in the Gromov boundary (see Definition \ref{def:sep coset for vertex and boundry points}). We will also investigate the relation between the path representatives and the topology of $\partial\Gamma(G,Y\sqcup\H)$ (see Proposition \ref{prop:for cts} and Proposition \ref{prop:cts from X to Y}).

For brevity, we will denote $d_{\Gamma(G,X\sqcup\H)}(\cdot,\cdot)$ and $(\cdot,\cdot)_\cdot^{\Gamma(G,X\sqcup\H)}$ by $d_{X\cup\H}(\cdot,\cdot)$ and $(\cdot,\cdot)_\cdot^{X\cup\H}$ respectively (see \eqref{eq:gromov product}). This will be the same for $\Gamma(G,Y\sqcup\H)$ as well. We also emphasize that we will consider separating cosets and relative metrics for $\{H_\lambda\}_{\lambda\in\Lambda} \inj_h (G,X)$, hence we use the notations $S(\cdot,\cdot;D)$ and $\hd_{\lambda}(\cdot,\cdot)$ without including $X$ in them.

As preparation of our discussion, we list auxiliary results that immediately follow from Section \ref{subsec:HE subgroups} but have not been recorded explicitly. They're from Lemma \ref{lem:inclusion} up to Lemma \ref{lem:sep coset inclustion}.

\begin{lem} \label{lem:inclusion}
    Let $p$ be a geodesic path in $\Gamma(G,X\sqcup\H)$ between two vertices and $q$ be a subpath of $p$, then we have $S(q_-,q_+;D) \subset S(p_-,p_+;D)$.
\end{lem}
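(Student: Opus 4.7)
The plan is to exploit a simple splicing argument. Fix any $xH_\lambda \in S(q_-,q_+;D)$; by Definition \ref{separating cosets} there exists a geodesic $q'$ from $q_-$ to $q_+$ in $\Gamma(G,X\sqcup\H)$ that essentially penetrates $xH_\lambda$ via an $H_\lambda$-component $a$ (a single edge, by Remark \ref{rem:component of geodesic}) with $a_-\in xH_\lambda$ and $\hd_\lambda(a_-,a_+)>D$. Decompose the given geodesic as $p=r_1 q r_2$, where $r_1$ runs from $p_-$ to $q_-$ and $r_2$ from $q_+$ to $p_+$, and form the spliced path $p':=r_1 q' r_2$. Since $|q'|=|q|$, one has
\[
|p'|=|r_1|+|q'|+|r_2|=|r_1|+|q|+|r_2|=|p|=d_{X\cup\H}(p_-,p_+),
\]
so $p'$ is itself a geodesic from $p_-$ to $p_+$.

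It remains to show that the edge $a$ is still an $H_\lambda$-component of $p'$; for then $p'$ witnesses that $xH_\lambda \in S(p_-,p_+;D)$ through the same single edge $a$, with unchanged endpoints in $xH_\lambda$ and unchanged relative distance $\hd_\lambda(a_-,a_+)>D$. The only way $a$ could lose maximality upon being spliced into $p'$ is if, at the seam $q_-$ or $q_+$, the new neighbor of $a$ (lying in $r_1$ or $r_2$) is another edge labeled by an element of $H_\lambda$. This is the step I expect to be the only real obstacle.

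I would dispatch it by the following standard geodesicity observation: no geodesic in $\Gamma(G,X\sqcup\H)$ contains two consecutive edges whose labels both lie in a single $H_\lambda$, since the composition of those labels lies in $H_\lambda\cup\{1\}$, so the length-$2$ subpath could be replaced by a single $H_\lambda$-edge or trimmed to a point, contradicting minimality of length. Applied to the geodesic $p'$, this rules out any $H_\lambda$-labeled neighbor of $a$ in $p'$, so $a$ is still maximal as an $H_\lambda$-subpath of $p'$ and hence an $H_\lambda$-component of $p'$. This completes the inclusion $S(q_-,q_+;D)\subset S(p_-,p_+;D)$. Everything beyond the seam verification is routine concatenation-of-geodesics bookkeeping.
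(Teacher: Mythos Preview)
Your proof is correct and follows exactly the same splicing argument as the paper: decompose $p=p_1qp_2$, replace $q$ by a geodesic $\alpha$ that essentially penetrates $xH_\lambda$, and observe that $p_1\alpha p_2$ is again a geodesic from $p_-$ to $p_+$ essentially penetrating $xH_\lambda$. The only difference is that you carefully verify the seam issue (that $a$ remains an $H_\lambda$-component in the spliced path), whereas the paper simply asserts it; your consecutive-edges observation is precisely the content of Remark~\ref{rem:component of geodesic}, so the extra care is harmless but not strictly needed.
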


\begin{proof}
    Let $p=p_1qp_2$ be decomposition of $p$ into subpaths $p_1$, $q$, and $p_2$. For any $H_\lambda$-coset $xH_\lambda \in S(q_-,q_+;D)$, there exists a geodesic path $\alpha$ in $\Gamma(G,X\sqcup\H)$ from $q_-$ to $q_+$ that essentially penetrates $xH_\lambda$. Since $p_1\alpha p_2$ is a geodesic path in $\Gamma(G,X\sqcup\H)$ from $p_-$ to $p_+$ that essentially penetrates $xH_\lambda$, we have $xH_\lambda\in S(p_-,p_+;D)$.
\end{proof}

Lemma \ref{lem:bounded by 3C} means that if two geodesic paths from the same point penetrate the same coset, then their entrance points are close.

\begin{lem}\label{lem:bounded by 3C}
    Let $o\in G$ and suppose that $B$ is an $H_\lambda$-coset for some $\lambda\in\Lambda$ and that $p,q$ are (possibly infinite) geodesic paths from $o$ in $\Gamma(G,X\sqcup\H)$ that penetrate $B$. Then, we have $\hd_{\lambda}(p_{in}(B),q_{in}(B)) \le 3C$.
\end{lem}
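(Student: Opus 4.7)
The plan is to build a geodesic triangle whose third side is a single edge connecting $p_{in}(B)$ to $q_{in}(B)$, verify that this edge is an isolated $H_\lambda$-component, and then apply Proposition \ref{prop:C} with $n = 3$.

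First, decompose $p = p_1 a p_2$, where $a$ is the $H_\lambda$-component of $p$ corresponding to $B$, so that $p_{in}(B) = a_-$ and the subpath $p_1$ is a geodesic from $o$ to $p_{in}(B)$. Decompose $q = q_1 b q_2$ analogously. If $p_{in}(B) = q_{in}(B)$, the conclusion is immediate since $\hd_\lambda$ vanishes, so I would assume $p_{in}(B) \neq q_{in}(B)$. Because both vertices lie in the same $H_\lambda$-coset $B$, the element $p_{in}(B)^{-1} q_{in}(B)$ lies in $H_\lambda \setminus \{1\}$ and labels a single edge $e$ in $\Gamma(G, X \sqcup \H)$ from $p_{in}(B)$ to $q_{in}(B)$. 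The concatenation $T = p_1 \cdot e \cdot q_1^{-1}$ is then a closed path composed of three geodesic sides, i.e.\ a geodesic $3$-gon.

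Next I would verify that $e$ is an \emph{isolated} $H_\lambda$-component of $T$. Since $e$ is a single edge labeled by an element of $H_\lambda$, it is itself an $H_\lambda$-component of $T$; the issue is ruling out that $e$ is connected to an $H_\lambda$-component of $p_1$ or $q_1^{-1}$ lying in $B$. Suppose $p_1$ had an $H_\lambda$-component whose vertices lie in $B$; being a subpath of the geodesic $p$, this would produce a second penetration of $B$ by $p$, contradicting the observation in Definition \ref{separating cosets} that a geodesic path penetrates any $H_\lambda$-coset at most once. The same argument applies to $q_1$, and hence to $q_1^{-1}$. Therefore every $H_\lambda$-component of $T$ other than $e$ lies in a different $H_\lambda$-coset, and $e$ is isolated.

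Finally, applying Proposition \ref{prop:C} to the geodesic $3$-gon $T$ and the isolated $H_\lambda$-component $e$ yields
\[
\hd_\lambda(p_{in}(B), q_{in}(B)) = \hd_\lambda(e_-, e_+) \le 3C,
\]
as required. Note that the hypothesis that $p, q$ may be infinite causes no trouble: only the finite subpaths $p_1, q_1$ from $o$ to the respective entrance points enter the argument. The main (mild) obstacle in this proof is the bookkeeping that rules out a hidden connection of $e$ to a component of $p_1$ or $q_1$; once one invokes the single-penetration property of geodesics, everything else is a routine application of the standard isolated-component estimate.
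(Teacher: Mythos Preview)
Your proposal is correct and follows essentially the same approach as the paper: form the geodesic triangle $p_{[o,p_{in}(B)]}\, e\, (q_{[o,q_{in}(B)]})^{-1}$, observe that $e$ is an isolated $H_\lambda$-component (the paper cites Remark~\ref{rem:component of geodesic} for this, while you spell out the single-penetration argument directly), and apply Proposition~\ref{prop:C} with $n=3$.
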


\begin{proof}
    Let $x=p_{in}(B)$ and $y=q_{in}(B)$ for brevity and let $e$ be the edge in $\Gamma(G,X\sqcup\H)$ from $x$ to $y$ whose label is in $H_\lambda$. Since $p,q$ are geodesic in $\Gamma(G,X\sqcup\H)$ and penetrates $B$, $e$ is an isolated component in the geodesic triangle $p_{[o,x]}e(q_{[o,y]})^{-1}$ by Remark \ref{rem:component of geodesic}. This implies $\hd_\lambda(x,y)\le 3C$ by Proposition \ref{prop:C}.
\end{proof}

Lemma \ref{lem:distance of cosets} means that the distance between two cosets can be measured by a geodesic path penetrating both of them.

\begin{lem}\label{lem:distance of cosets}
    Suppose that $p$ is a (possibly infinite) geodesic path in $\Gamma(G,X\sqcup\H)$ from $p_-\in G$. If $p$ penetrates two distinct cosets $C_0,C_1$ satisfying $d_{X\cup\H}(p_-,C_0)< d_{X\cup\H}(p_-,C_1)$, then we have
    \[
    d_{X\cup\H}(p_{out}(C_0),p_{in}(C_1))=d_{X\cup\H}(C_0,C_1).
    \]
\end{lem}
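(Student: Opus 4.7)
The plan is to exploit that $p$ is a geodesic and modify it in a small neighborhood to compare its behavior between the two cosets with any candidate short route from $C_0$ to $C_1$.

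First, I would use Lemma \ref{lem:lem4.8 osin} (applied to a sufficiently long finite subpath of $p$) together with the hypothesis $d_{X\cup\H}(p_-,C_0)<d_{X\cup\H}(p_-,C_1)$ to deduce that $p$ penetrates $C_0$ strictly before $C_1$. This lets me write a decomposition $p=p_1\,a_0\,p_2\,a_1\,p_3$, where $a_0$ and $a_1$ are the single-edge components of $p$ corresponding to $C_0$ and $C_1$ (using Remark \ref{rem:component of geodesic}), and $p_3$ is possibly trivial or infinite. Since any subpath of a geodesic is itself geodesic, the middle piece $p_2$ satisfies $d_{X\cup\H}(p_{out}(C_0),p_{in}(C_1))=|p_2|$. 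The inequality $|p_2|\ge d_{X\cup\H}(C_0,C_1)$ is then immediate from $p_{out}(C_0)\in C_0$ and $p_{in}(C_1)\in C_1$.

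For the reverse inequality, pick $u\in C_0$ and $v\in C_1$ realizing $d_{X\cup\H}(C_0,C_1)$, and let $\gamma$ be a geodesic from $u$ to $v$. Any two distinct vertices of a common $H_\lambda$-coset are joined by a single edge in $\Gamma(G,X\sqcup\H)$ with label in $H_\lambda$, so there exist edges (or trivial paths) $e_0$ from $p_{in}(C_0)$ to $u$ and $e_1$ from $v$ to $p_{out}(C_1)$ with labels in $H_{\lambda_0}$ and $H_{\lambda_1}$ respectively. I would then form the rerouted path $q=p_1\,e_0\,\gamma\,e_1\,p_3$, which connects the same endpoints (or tail) as $p$ and has length at most $|p_1|+1+d_{X\cup\H}(C_0,C_1)+1+|p_3|$. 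Comparing lengths of suitable finite truncations of $q$ and $p$ and using that $p$ is geodesic yields $|p_2|\le d_{X\cup\H}(C_0,C_1)$, giving the desired equality.

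The main obstacle is bookkeeping rather than conceptual: ensuring the rerouting really does produce a legitimate path of the claimed length (in particular that $e_0$ and $e_1$ exist whether or not the relevant endpoints coincide), and handling the case when $p$ is infinite so that the length comparison is carried out on a common finite prefix of $p$ and $q$ ending at $p_{out}(C_1)$. Once these are dispatched, the two inequalities fit together immediately.
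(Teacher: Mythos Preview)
Your argument is correct and essentially identical to the paper's: both decompose $p$ around the two components and compare the geodesic segment $a_0\,p_2\,a_1$ against a rerouted path through points realizing $d_{X\cup\H}(C_0,C_1)$, which is exactly the triangle-inequality chain the paper writes out directly (phrased there as a contradiction). One small correction: Lemma~\ref{lem:lem4.8 osin} concerns \emph{separating} cosets and does not apply to arbitrary penetrated cosets, but the ordering you need follows immediately from Lemma~\ref{lem:lem4.6 osin} (or Lemma~\ref{lem:distance of S gamma} in the infinite case) together with the hypothesis $d_{X\cup\H}(p_-,C_0)<d_{X\cup\H}(p_-,C_1)$.
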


\begin{proof}
    Let $p=p_1ap_2bp_3$ be decomposition of $p$ into subpaths such that $a,b$ are components of $p$ corresponding to $C_0,C_1$ respectively. By $a_+\in C_0$ and $b_-\in C_1$, we have $d_{X\cup\H}(C_0,C_1) \le d_{X\cup\H}(a_+,b_-)$. Suppose for contradiction that there exist $x\in C_0, y\in C_1$ such that $d_{X\cup\H}(x,y) < d_{X\cup\H}(a_+,b_-)$. By $x,a_-\in C_0$ and $y,b_+\in C_1$, we have $d_{X\cup\H}(a_-,x)\le 1$ and $d_{X\cup\H}(y,b_+) \le 1$. This implies
    \begin{align*}
            d_{X\cup\H}(a_-,b_+)
        &\le d_{X\cup\H}(a_-,x)+d_{X\cup\H}(x,y)+d_{X\cup\H}(y,b_+)\\
        &< 1+d_{X\cup\H}(a_+,b_-)+1
        =d_{X\cup\H}(a_-,b_+),
    \end{align*}
    which is a contradiction. Hence, we also have $d_{X\cup\H}(a_+,b_-) \le d_{X\cup\H}(C_0,C_1)$.
\end{proof}

\begin{figure}[htbp]
  % Requires \usepackage{graphicx}

\begin{minipage}[c]{0.5\hsize}
\begin{center}
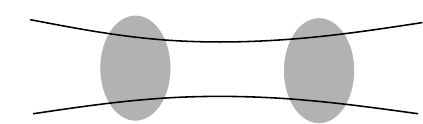
 \caption{Proof of Lemma \ref{lem:bounded by 4C}} 
 \label{Fig:Lem3_4}
\end{center}
\end{minipage} 
\hfill %Don't put empty line here!
\begin{minipage}[c]{0.5\hsize}
\begin{center}
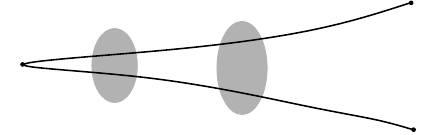
 \caption{Proof of Lemma \ref{lem:sep coset inclustion}}
 \label{Fig:Lem3_5}
\end{center}
\end{minipage} 

\end{figure}

Lemma \ref{lem:bounded by 4C} is analogous to Lemma \ref{lem:bounded by 3C}.

\begin{lem}\label{lem:bounded by 4C}
    Suppose that $C_0,C_1$ are cosets of $H_{\lambda_0}, H_{\lambda_1}$ respectively with $C_0\neq C_1$ and that $p,q$ are (possibly infinite) geodesic paths in $\Gamma(G,X\sqcup\H)$ from $p_-,q_- \in G$ respectively that penetrate $C_0$ and $C_1$ satisfying $d_{X\cup\H}(p_-,C_0) < d_{X\cup\H}(p_-,C_1)$ and $d_{X\cup\H}(q_-,C_0) < d_{X\cup\H}(q_-,C_1)$. Then, we have $\hd_{\lambda_0}(p_{out}(C_0),q_{out}(C_0)) \le 4C$ and $\hd_{\lambda_1}(p_{in}(C_1),q_{in}(C_1)) \le 4C$.
\end{lem}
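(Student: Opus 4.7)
The plan is to form a geodesic quadrilateral in $\Gamma(G,X\sqcup\H)$ whose two ``short'' sides are exactly the edges whose length we want to bound, and to apply Proposition~\ref{prop:C} with $n=4$.

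Set $x_0 = p_{out}(C_0)$, $y_0 = q_{out}(C_0)$, $x_1 = p_{in}(C_1)$, $y_1 = q_{in}(C_1)$. Since $x_0,y_0\in C_0$ and $x_1,y_1\in C_1$, there are single edges $e_0$ from $x_0$ to $y_0$ labeled in $H_{\lambda_0}$ and $e_1$ from $x_1$ to $y_1$ labeled in $H_{\lambda_1}$. I would then consider the closed path
\[
    P \;=\; p_{[x_0,x_1]} \cdot e_1 \cdot (q_{[y_0,y_1]})^{-1} \cdot e_0^{-1}.
\]
Each of its four sides is geodesic in $\Gamma(G,X\sqcup\H)$: the two ``long'' sides are subpaths of the geodesics $p$ and $q$, while $e_0$ and $e_1$ are single edges. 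So $P$ is a geodesic $4$-gon, and Proposition~\ref{prop:C} bounds the $\hd_\lambda$-length of any isolated $H_\lambda$-component of $P$ by $4C$.

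The main technical step is verifying that $e_0^{-1}$ is an isolated $H_{\lambda_0}$-component of $P$, and symmetrically that $e_1$ is an isolated $H_{\lambda_1}$-component. I would first check that $e_0^{-1}$ is a component at all: the edge of $p$ leaving $x_0 = p_{out}(C_0)$ cannot be in $H_{\lambda_0}$, because otherwise its endpoint would again lie in $C_0$, extending the $H_{\lambda_0}$-component of $p$ corresponding to $C_0$ and contradicting maximality; the same holds at $y_0$ on $q$. For isolation, I would show that $p_{[x_0,x_1]}$ visits $C_0$ only at $x_0$: any other vertex $v\in C_0$ on this subpath would produce a single $H_{\lambda_0}$-edge from $x_0$ to $v$, contradicting either geodesity of $p$ (if the $p$-distance from $x_0$ to $v$ exceeds $1$) or the fact just established that the first edge of $p_{[x_0,x_1]}$ is not in $H_{\lambda_0}$ (if that distance equals $1$). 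The same applies to $q_{[y_0,y_1]}$ at $y_0$; and $e_1$ has endpoints in $C_1\neq C_0$, so even when $\lambda_1=\lambda_0$ it cannot be connected to $e_0^{-1}$. Proposition~\ref{prop:C} then gives $\hd_{\lambda_0}(x_0,y_0)\le 4C$, and the symmetric argument applied to $e_1$ (swapping the roles of $C_0$ and $C_1$, using that $x_1 = p_{in}(C_1)$ means the edge of $p$ arriving at $x_1$ is not in $H_{\lambda_1}$) yields $\hd_{\lambda_1}(x_1,y_1)\le 4C$.

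I do not anticipate a serious obstacle beyond the coset bookkeeping sketched above; the degenerate cases $x_0=y_0$ or $x_1=y_1$ are immediate since the corresponding relative distance then vanishes.
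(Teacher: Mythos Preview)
Your proof is correct and follows essentially the same approach as the paper: you form the same geodesic quadrilateral from $p_{[x_0,x_1]}$, $q_{[y_0,y_1]}$, and the two $H_\lambda$-edges $e_0,e_1$, verify that $e_0$ and $e_1$ are isolated components (the paper invokes Remark~\ref{rem:component of geodesic} for this, while you spell out the same reasoning directly), and apply Proposition~\ref{prop:C} with $n=4$.
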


\begin{proof}
     Let $x_0=p_{out}(C_0), x_1=p_{in}(C_1), y_0=q_{out}(C_0), y_1=q_{in}(C_1)$ for brevity and let $e_0,e_1$ be edges in $\Gamma(G,X\sqcup\H)$ such that $e_0$ is from $x_0$ to $y_0$ with its label in $H_{\lambda_0}$ and $e_1$ is from $x_1$ to $y_1$ with its label in $H_{\lambda_1}$. Since the subpaths $p_{[x_0,x_1]},q_{[y_0,y_1]}$ don't penetrate $C_0$ nor $C_1$ by Remark \ref{rem:component of geodesic} and we have $C_0\neq C_1$, $e_0$ and $e_1$ are isolated components of the geodesic quadrilateral $e_0 q_{[y_0,y_1]} e_1^{-1} (p_{[x_0,x_1]})^{-1}$. This implies $\hd_{\lambda_0}(x_0,y_0)\le 4C$ and $\hd_{\lambda_1}(x_1,y_1)\le 4C$ by Proposition \ref{prop:C}.
\end{proof}

Lemma \ref{lem:sep coset inclustion} is useful to find separating cosets of a pair of elements in $G$.

\begin{lem} \label{lem:sep coset inclustion}
    Let $o,x,y\in G$ and $S(o,x;D)=\{C_1\preceq C_2\preceq \cdots\preceq C_n\}$. If a geodesic path $q$ in $\Gamma(G,X\sqcup\H)$ from $o$ to $y$ penetrates $C_i$ for some $i\in\{1,\cdots,n\}$, then we have $C_j\in S(o,y;D)$ for any $j$ with $j<i$.
\end{lem}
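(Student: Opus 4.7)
The plan is to show that for each $j<i$, there exists a geodesic from $o$ to $y$ in $\Gamma(G,X\sqcup\H)$ that essentially penetrates $C_j$, by ``splicing together'' a geodesic from $o$ to $x$ with the given geodesic $q$ at the common separating coset $C_i$.

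First, fix a geodesic path $p$ from $o$ to $x$ in $\Gamma(G,X\sqcup\H)$. By Lemma \ref{lem:lem4.8 osin}, $p$ essentially penetrates $C_1,\ldots,C_n$ in the order $\preceq$, so I can write $p=p_0a_1p_1a_2\cdots a_np_n$ where each $a_k$ is the component of $p$ corresponding to $C_k$. Let $b$ be the component of $q$ corresponding to $C_i$, so that $q=q_1bq_2$ with $b_-=q_{in}(C_i)$ and $b_+=q_{out}(C_i)$. By Remark \ref{rem:component of geodesic}, both $a_i$ and $b$ are single edges. Using Lemma \ref{lem:lem4.6 osin} applied to $p$ and to $q$, I get $d_{X\cup\H}(o,a_i^-)=d_{X\cup\H}(o,C_i)=d_{X\cup\H}(o,b_-)$, and since $b$ has length $1$, $d_{X\cup\H}(o,b_+)=d_{X\cup\H}(o,C_i)+1$. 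In particular $a_i^-\neq b_+$, and since both vertices lie in the same coset $C_i$ there is an edge $e$ in $\Gamma(G,X\sqcup\H)$ from $a_i^-$ to $b_+$ labeled by an element of $H_{\lambda_i}$.

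Define the spliced path
\[
\eta \;=\; p_{[o,a_i^-]}\cdot e\cdot q_{[b_+,y]}.
\]
The key computation is that $\eta$ has length exactly $d_{X\cup\H}(o,y)$: the first piece has length $d_{X\cup\H}(o,C_i)$, the edge $e$ contributes $1$, and $q_{[b_+,y]}$ has length $d_{X\cup\H}(o,y)-d_{X\cup\H}(o,C_i)-1$ since $q$ is geodesic and $b$ has length $1$. The $+1$ from $e$ cancels exactly the $-1$ created by skipping $b$, giving total length $d_{X\cup\H}(o,y)$, so $\eta$ is a geodesic from $o$ to $y$.

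Finally, for any $j<i$, the component $a_j$ sits strictly inside $p_{[o,a_i^-]}$ (strictly before $a_i$), so the edges of $\eta$ immediately around $a_j$ agree with those of $p$. Since $a_j$ is a maximal $H_{\lambda_j}$-subpath of $p$, the same holds in $\eta$, making $a_j$ the component of $\eta$ corresponding to $C_j$; the essential penetration $\hd_{\lambda_j}(a_j^-,a_j^+)>D$ is inherited from $p$. Hence $\eta$ essentially penetrates $C_j$, and $C_j\in S(o,y;D)$. The main place requiring care is the length bookkeeping in the definition of $\eta$: one must use that $q$ is geodesic \emph{and} that $a_i^-\neq b_+$ (forcing the edge $e$, rather than a direct concatenation) in order to land on exactly $d_{X\cup\H}(o,y)$ and not one more.
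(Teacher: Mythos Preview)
Your splicing construction is exactly the paper's approach, but there is a genuine gap in your invocation of Lemma \ref{lem:lem4.8 osin}. That lemma guarantees only that every geodesic from $o$ to $x$ \emph{penetrates} each $C_k$; it does \emph{not} say the penetration is essential. The statement $C_j\in S(o,x;D)$ means that \emph{some} geodesic from $o$ to $x$ essentially penetrates $C_j$, and different $j$ may require different witnessing geodesics. So when you fix a single $p$ at the outset and later claim ``the essential penetration $\hd_{\lambda_j}(a_j^-,a_j^+)>D$ is inherited from $p$,'' that inequality is unjustified: your fixed $p$ might penetrate $C_j$ with $\hd_{\lambda_j}$-length at most $D$.

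The fix is immediate and is precisely what the paper does: argue one $j$ at a time. For a given $j<i$, choose a geodesic $p$ from $o$ to $x$ that \emph{essentially} penetrates $C_j$ (such $p$ exists by the definition of $C_j\in S(o,x;D)$); by Lemma \ref{lem:lem4.8 osin} this $p$ also penetrates $C_i$, and your splice $p_{[o,a_-]}\,e\,q_{[b_+,y]}$ (with $a$ the component of $p$ at $C_i$) is then a geodesic from $o$ to $y$ that essentially penetrates $C_j$. Everything else in your argument---the length bookkeeping via Lemma \ref{lem:lem4.6 osin}, the observation $a_-\neq b_+$, and the identification of the component at $C_j$ in the spliced path---is correct.
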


\begin{proof}
    Let $q$ penetrate $C_i$ and $j\in\NN$ satisfy $j<i$. By $C_j\in S(o,x;D)$, their exists a geodesic path $p$ in $\Gamma(G,X\sqcup\H)$ from $o$ to $x$ that essentially penetrates $C_j$. Note that $p$ penetrates $C_i$ by Lemma \ref{lem:lem4.8 osin}. Let $p=p_1 a p_2$, $q=q_1 b q_2$ be decomposition of $p,q$ into subpaths such that $a,b$ are $H_\lambda$-component of $p,q$ corresponding to $C_i$ respectively. Let $e$ be the edge from $a_-$ to $b_+$ in $\Gamma(G,X\sqcup\H)$ whose label is in $H_\lambda$. Since we have $d_{X\cup\H}(o,a_-)=d_{X\cup\H}(o,b_-)=d_{X\cup\H}(o,C_i)$ by Lemma \ref{lem:lem4.6 osin}, the path $p_1eq_2$ from $o$ to $y$ is geodesic in $\Gamma(G,X\sqcup\H)$ and essentially penetrates $C_j$. This implies $C_j \in S(o,y;D)$.
\end{proof}

First of all, we verify hyperbolicity of $\Gamma(G,Y\sqcup\H)$. Lemma \ref{lem:Haus} is straightforward from \cite[Proposition 3.1]{Bow}, which is stated below in a simplified way, but we write down the proof for completeness. Also, Lemma \ref{lem:Haus} (a) is actually known by \cite[Lemma 5.6]{Osin16} since its proof doesn't use the condition $|\Lambda|<\infty$ (see Remark \ref{rem:finite collection}). Lemma \ref{lem:Haus} (b) is new and plays an important role in this paper together with the inequality \eqref{eq:qi} in Theorem \ref{thm:osin}.

\begin{prop}[{\rm cf. \cite[Proposition 3.1]{Bow}}]\label{prop:Bow}
Given $h\ge0$, there exists $k(h)\ge0$ with the following property. Suppose that $\Gamma$ is a connected graph and that for each pair of vertices $x,y\in\Gamma$, we have associated a connected subgraph $\mathcal{L}(x,y)$ of $\Gamma$ with $x,y \in \mathcal{L}(x,y)$ satisfying (1) and (2) below. (Here, we define $\mathcal{N}(A,h)=\{v\in\Gamma \mid \exists w\in A {\rm ~s.t.~} d_{\Gamma}(v,w)\le h \}$.)
\begin{itemize}
    \item[(1)]
    For any vertices $x,y,z\in\Gamma$, $\mathcal{L}(x,y) \subset \mathcal{N}(\mathcal{L}(x,z)\cup \mathcal{L}(z,y),h)$.
    \item[(2)]
    For any vertices $x,y \in\Gamma$ with $d_{\Gamma}(x,y) \le 1$, the diameter of $\mathcal{L}(x,y)$ in $\Gamma$ is at most $h$.
\end{itemize}
Then, $\Gamma$ is $k(h)$-hyperbolic and for any two vertices $x,y\in \Gamma$, the Hausdorff distance between $\mathcal{L}(x,y)$ and any geodesic path in $\Gamma$ from $x$ to $y$ is bounded above by $k(h)$.
\end{prop}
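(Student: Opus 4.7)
The plan is to derive both conclusions from the observation that the subgraphs $\mathcal{L}(x,y)$ already behave like \emph{fat geodesics}: hypothesis (1) is itself a slim-triangle condition for the $\mathcal{L}$-intervals, and the real work is to show that genuine geodesic paths in $\Gamma$ stay within a Hausdorff distance depending only on $h$ of these subgraphs. Once that approximation is established, hyperbolicity of $\Gamma$ follows by chaining it with (1).

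The key step is an auxiliary lemma: there is a constant $k_0 = k_0(h)$ such that for every pair $x, y \in \Gamma$ and every geodesic path $p$ in $\Gamma$ from $x$ to $y$, both $p \subset \mathcal{N}(\mathcal{L}(x,y), k_0)$ and $\mathcal{L}(x,y) \subset \mathcal{N}(p, k_0)$ hold. I would approach this via induction on $n = d_\Gamma(x, y)$, using a midpoint bisection. If $m$ is the midpoint of $p$, then hypothesis (1) gives $\mathcal{L}(x,y) \subset \mathcal{N}(\mathcal{L}(x,m) \cup \mathcal{L}(m,y), h)$, while the inductive hypothesis applied to the two halves of $p$ relates $\mathcal{L}(x,m)$ and $\mathcal{L}(m,y)$ to the corresponding sub-geodesics of $p$. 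The base case $n = 1$ is handled directly by hypothesis (2), which says $\mathcal{L}(x,y)$ has diameter at most $h$. The converse inclusion is obtained symmetrically, showing that each vertex of $p$ is pinned near $\mathcal{L}(x,y)$ via the same slim-triangle structure.

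With this Hausdorff approximation in place, the slim-triangle property for $\mathcal{L}$-intervals transfers to geodesic triangles of $\Gamma$: for geodesics $p_{xy}, p_{yz}, p_{zx}$ and a vertex $u \in p_{xy}$, pick $v \in \mathcal{L}(x,y)$ within $k_0$ of $u$; by hypothesis (1), $v$ lies within $h$ of $\mathcal{L}(x,z) \cup \mathcal{L}(y,z)$, which in turn lies within $k_0$ of $p_{zx} \cup p_{yz}$ by the approximation lemma. Chaining these bounds gives $\Gamma$ a slim-triangle constant, hence $k(h)$-hyperbolicity for an explicit $k(h)$ depending only on $h$.

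The main obstacle is extracting the constant $k_0(h)$ \emph{independent of} $n$. A naive iteration of hypothesis (1) along the vertices of $p$ accumulates an error of order $nh$, and even the bisection strategy, if the inductive constant is allowed to drift, yields only $O(h \log n)$. The subtlety, following Bowditch's original argument, is to set up the induction so that slimness of the $\mathcal{L}$-triangle at the midpoint $m$ \emph{absorbs}, rather than compounds, the inductive error contributed at each recursive step, stabilizing $k_0$ at a constant depending only on $h$.
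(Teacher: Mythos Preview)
The paper does not supply a proof of this proposition: it is stated as a simplified form of \cite[Proposition 3.1]{Bow} with the remark that it ``can be proved in the same way,'' and is then used as a black box. Your sketch follows precisely the Bowditch strategy the paper defers to, so at the level the paper engages with this statement, your proposal is aligned with it.

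One comment on content: your final paragraph correctly isolates the genuine difficulty (preventing the bisection from producing an $O(h\log n)$ constant), but the phrase ``absorbs, rather than compounds, the inductive error'' is doing a lot of work without saying how. In Bowditch's actual argument the two containments are handled asymmetrically: one direction (points of $\mathcal{L}(x,y)$ are close to the geodesic) really does come out logarithmic in $d_\Gamma(x,y)$ via bisection, while the reverse containment (points of the geodesic are close to $\mathcal{L}(x,y)$) is obtained by a separate maximal-distance argument that gives a bound depending only on $h$. These are then combined, together with the slim-triangle property (1) applied at a carefully chosen scale, to upgrade the logarithmic bound to a uniform one. If you intend to flesh this out, that asymmetry is where the substance lies; a symmetric induction as you describe will not close.
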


\begin{lem} \label{lem:Haus}
The following hold.
\begin{itemize}
    \item[(a)]
    We have $X\subset Y$ and $\Gamma(G,Y\sqcup\H)$ is hyperbolic.
    \item[(b)]
    There exists $M_X \in\NN$ such that for any $x,y \in G$, any geodesic path $\alpha$ in $\Gamma(G,X\sqcup\H)$ from $x$ to $y$, and any geodesic path $\beta$ in $\Gamma(G, Y\sqcup\H)$ from $x$ to $y$, the Hausforff distance betwen $\alpha$ and $\beta$ in $\Gamma(G,Y\sqcup\H)$ is bounded above by $M_X$.
\end{itemize}
\end{lem}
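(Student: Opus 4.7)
The plan is to apply Bowditch's criterion (Proposition \ref{prop:Bow}) to $\Gamma(G,Y\sqcup\H)$ after fixing, for each pair $x,y\in G$, a geodesic $\gamma_{x,y}$ in $\Gamma(G,X\sqcup\H)$ from $x$ to $y$, and letting $\mathcal{L}(x,y)$ be its image as a connected subgraph of $\Gamma(G,Y\sqcup\H)$ (well defined because $X\subset Y$, shown below). Both parts of the lemma then drop out of a single application: (a) is directly the output of Proposition \ref{prop:Bow}, and (b) combines the Hausdorff estimate furnished by that proposition with a thin-triangle reduction from an arbitrary geodesic $\alpha$ in $\Gamma(G,X\sqcup\H)$ to the chosen $\gamma_{x,y}$.

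I first show $X\subset Y$: for any $x\in X$ with $x\ne 1$, every geodesic in $\Gamma(G,X\sqcup\H)$ from $1$ to $x$ has length exactly $1$ and is therefore a single edge, which either has label in $X$ (and has no $H_\lambda$-component) or has label in some $H_\lambda$ (an $H_\lambda$-component that cannot essentially penetrate $H_\lambda$, since the $X$-labeled edge supplies a $\lambda$-admissible path of length $1\le D$, forcing $\hd_\lambda(1,x)\le 1$). Hence $S(1,x;D)=\emptyset$, so $x\in Y$; and $1\in Y$ trivially. In particular $d_{Y\cup\H}\le d_{X\cup\H}$ on $G\times G$. I then verify Bowditch's two hypotheses. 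For condition (2), if $d_{Y\cup\H}(x,y)\le 1$ then \eqref{eq:qi} gives $|S(x,y;D)|\le 3$; Lemma \ref{lem:inclusion} transfers this bound to every pair of vertices $u,v$ on $\gamma_{x,y}$, and the lower half of \eqref{eq:qi} yields $d_{Y\cup\H}(u,v)\le 7$, so the $d_{Y\cup\H}$-diameter of $\mathcal{L}(x,y)$ is at most $7$. For condition (1), $\Gamma(G,X\sqcup\H)$ is $\delta$-hyperbolic for some $\delta\in\NN$, so the geodesic triangle $\gamma_{x,y}\cup\gamma_{x,z}\cup\gamma_{z,y}$ is $\delta$-thin with respect to $d_{X\cup\H}$, and $d_{Y\cup\H}\le d_{X\cup\H}$ transfers the inclusion to $\Gamma(G,Y\sqcup\H)$. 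Proposition \ref{prop:Bow} applied with $h=\max(\delta,7)$ then establishes (a) and simultaneously produces a constant $k(h)$ bounding the Hausdorff distance in $\Gamma(G,Y\sqcup\H)$ between $\gamma_{x,y}$ and any geodesic $\beta$ in $\Gamma(G,Y\sqcup\H)$ from $x$ to $y$.

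To upgrade this to (b) for an arbitrary geodesic $\alpha$ in $\Gamma(G,X\sqcup\H)$ from $x$ to $y$, I apply condition (1) of Proposition \ref{prop:hyp sp} inside $\Gamma(G,X\sqcup\H)$ to the two geodesics $\alpha$ and $\gamma_{x,y}$, which share both endpoints: for each vertex $a\in\alpha$ the vertex $a'\in\gamma_{x,y}$ at the same $d_{X\cup\H}$-distance from $x$ satisfies $d_{X\cup\H}(a,a')\le\delta$, hence $d_{Y\cup\H}(a,a')\le\delta$. Setting $M_X=k(h)+\delta$ and invoking the triangle inequality for Hausdorff distances finishes (b). The main obstacle is condition (2) of Proposition \ref{prop:Bow}: the hypothesis $d_{Y\cup\H}(x,y)\le 1$ places no a priori bound on the $d_{X\cup\H}$-length of $\gamma_{x,y}$, so only the separating-coset estimate \eqref{eq:qi} together with Lemma \ref{lem:inclusion} can convert this into the required uniform $d_{Y\cup\H}$-diameter bound on $\mathcal{L}(x,y)$.
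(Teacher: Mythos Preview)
Your proof is correct and follows essentially the same route as the paper's: apply Proposition~\ref{prop:Bow} to the family of $\Gamma(G,X\sqcup\H)$-geodesics $\gamma_{x,y}$, then for (b) combine the resulting Hausdorff bound with $\delta_X$-thinness of bigons in $\Gamma(G,X\sqcup\H)$ to set $M_X=k(h)+\delta_X$. The only difference is in verifying condition~(2) of Proposition~\ref{prop:Bow}: the paper obtains diameter at most $2$ by arguing $S(x,z;D)=\emptyset$ directly for each vertex $z\in\gamma_{x,y}$, whereas you route through both inequalities of \eqref{eq:qi} together with Lemma~\ref{lem:inclusion} to get the weaker (but equally sufficient) bound $7$.
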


\begin{proof}
    If $x\in X$ and $x\neq 1$, then the edge in $\Gamma(G,X\sqcup\H)$ from $1$ to $x$ with the label $x\in X$ is geodesic in $\Gamma(G,X\sqcup\H)$. Since this path consisting of one edge has no $H_\lambda$-component for any $\lambda\in\Lambda$, we have $S(1,x;D)=\emptyset$ by Lemma \ref{lem:lem4.8 osin}. This implies $x\in Y$, hence $X\subset Y$. We will check the two conditions in Proposition \ref{prop:Bow} considering $\Gamma=\Gamma(G,Y\sqcup\H)$. Note that $\Gamma(G,X\sqcup\H)$ is a subgraph of $\Gamma(G,Y\sqcup\H)$ by $X\subset Y$. For each pair $(x,y)$ of elements of $G$, fix a geodesic path $\gamma_{x,y}$ in $\Gamma(G,X\sqcup\H)$ from $x$ to $y$ and define $\mathcal{L}(x,y)=\gamma_{x,y}$. Since $\Gamma(G,X\sqcup\H)$ is $\delta_X$-hyperbolic with $\delta_X\in \NN$ (see Definition \ref{def:hyp emb} (1)), Proposition \ref{prop:Bow} (1) is satisfied with $h=\delta_X$. Next, let $x,y\in G$ satisfy $d_{Y\cup\H}(x,y)\le 1$, then $S(x,y;D)=\emptyset$. Hence, for any vertex $z\in \gamma_{x,y}$, we have $S(x,z;D)=\emptyset$ by Lemma \ref{lem:inclusion}. This implies $d_{Y\cup\H}(x,z)\le 1$. Hence, the diameter of $p_{x,y}$ in $\Gamma(G,Y\sqcup\H)$ is at most 2, which verifies Proposition \ref{prop:Bow} (2). Since both conditions in Proposition \ref{prop:Bow} are satisfied with $h=\max\{\delta_X,2\}$, the graph $\Gamma(G,Y\sqcup\H)$ is $k(h)$-hyperbolic. Also, let $x,y \in G$ and let $\alpha$ be a geodesic path in $\Gamma(G,X\sqcup\H)$ from $x$ to $y$, and $\beta$ be a geodesic path in $\Gamma(G, Y\sqcup\H)$ from $x$ to $y$. By Proposition \ref{prop:Bow}, the Hausdorff distance between $\beta$ and $\gamma_{x,y}$ in $\Gamma(G,Y\sqcup\H)$ is at most $k(h)$ and by $\delta_X$-hyperbolicity of $\Gamma(G,X\sqcup\H)$, the Hausdorff distance between $\alpha$ and $\gamma_{x,y}$ in $\Gamma(G,Y\sqcup\H)$ is at most $\delta_X$. Thus, the statement (b) holds with $M_X=k(h)+\delta
    _X$.
\end{proof}

In the remainder of this section, let $\Gamma(G,Y\sqcup\H)$ (resp. $\Gamma(G,X\sqcup\H)$) be $\delta_Y$-hyperbolic (resp. $\delta_X$-hyperbolic) with $\delta_X,\delta_Y \in \NN$.

\begin{rem}
    Note that $\delta_Y$ and $M_X$ depend only on $X$ by the proof of Lemma \ref{lem:Haus}.
\end{rem}

The point of Theorem \ref{thm:osin} \eqref{eq:qi} and Lemma \ref{lem:Haus} (b) is that we can deal with geodesic paths in $\Gamma(G,X\sqcup\H)$ as if they are quasi-geodesic in $\Gamma(G,Y\sqcup\H)$, though they're actually not. This will become clear by results from Lemma \ref{lem:gromov prod in Y} up to Lemma \ref{lem:penetration in triangle} below. Lemma \ref{lem:gromov prod in Y} is mostly applied to two elements $g_1,g_2$ except in the proof of Proposition \ref{prop:existence}.

\begin{lem}\label{lem:gromov prod in Y}
    Let $R\in\NN$, $(g_i)_{i\in\NN},o \in G$ and suppose that $p_i$ is a geodesic path in $\Gamma(G,X\sqcup\H)$ from $o$ to $g_i$ for each $i\in\NN$. If $(g_i,g_j)_o^{Y\cup\H}\ge R$ for any $i,j\in\NN$, then there exist vertices $v_i \in p_i$ for each $i\in\NN$ such that for any $i,j\in\NN$, we have
    \[
    d_{Y\cup\H}(v_i,v_j)\le \delta_Y+2M_X
    ~~~and~~~
    d_{Y\cup\H}(o,v_i) \ge R-M_X.
    \]
\end{lem}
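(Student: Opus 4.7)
The plan is to reduce the statement to hyperbolicity of $\Gamma(G,Y\sqcup\H)$ applied to geodesics in that graph, and then pull the result back to the given geodesics $p_i$ in $\Gamma(G,X\sqcup\H)$ via the Hausdorff estimate from Lemma \ref{lem:Haus}(b). This is the natural way to use the slogan mentioned right before the statement: geodesics in $\Gamma(G,X\sqcup\H)$ behave like quasi-geodesics in $\Gamma(G,Y\sqcup\H)$.

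First, taking $i=j$ in the hypothesis gives $d_{Y\cup\H}(o,g_i)=(g_i,g_i)_o^{Y\cup\H}\ge R$, so for each $i\in\NN$ we may pick a geodesic $\beta_i$ from $o$ to $g_i$ in $\Gamma(G,Y\sqcup\H)$ and choose a vertex $a_i$ on $\beta_i$ with $d_{Y\cup\H}(o,a_i)=R$; this is legitimate because $R\in\NN$ and $\beta_i$ has length at least $R$. Next, applying condition (1) of Proposition \ref{prop:hyp sp} in the $\delta_Y$-hyperbolic graph $\Gamma(G,Y\sqcup\H)$ with basepoint $o$, endpoints $g_i,g_j$, and the points $a_i\in\beta_i$, $a_j\in\beta_j$ satisfying $d_{Y\cup\H}(o,a_i)=d_{Y\cup\H}(o,a_j)=R\le (g_i,g_j)_o^{Y\cup\H}$ yields
\[
d_{Y\cup\H}(a_i,a_j)\le \delta_Y.
\]

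Now I transfer from $\beta_i$ to $p_i$. Since $p_i$ is a geodesic in $\Gamma(G,X\sqcup\H)$ from $o$ to $g_i$ and $\beta_i$ is a geodesic in $\Gamma(G,Y\sqcup\H)$ between the same endpoints, Lemma \ref{lem:Haus}(b) gives that the Hausdorff distance between $p_i$ and $\beta_i$, measured in $\Gamma(G,Y\sqcup\H)$, is at most $M_X$. In particular, for each $i$ there exists a vertex $v_i$ of $p_i$ with $d_{Y\cup\H}(v_i,a_i)\le M_X$.

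The two required inequalities are then immediate from the triangle inequality in $\Gamma(G,Y\sqcup\H)$:
\[
d_{Y\cup\H}(v_i,v_j)\le d_{Y\cup\H}(v_i,a_i)+d_{Y\cup\H}(a_i,a_j)+d_{Y\cup\H}(a_j,v_j)\le M_X+\delta_Y+M_X,
\]
and
\[
d_{Y\cup\H}(o,v_i)\ge d_{Y\cup\H}(o,a_i)-d_{Y\cup\H}(a_i,v_i)\ge R-M_X.
\]
There is no genuine obstacle here; the only mild point to check is that $\beta_i$ is long enough to support the choice of $a_i$, which I handled above by evaluating the Gromov product hypothesis at $i=j$. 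All other ingredients (hyperbolicity of $\Gamma(G,Y\sqcup\H)$ and the Hausdorff bound $M_X$) are provided directly by Lemma \ref{lem:Haus}.
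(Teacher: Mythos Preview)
Your proof is correct and follows essentially the same approach as the paper's own proof: choose geodesics in $\Gamma(G,Y\sqcup\H)$, pick points at distance $R$ from $o$, use $\delta_Y$-hyperbolicity to bound their pairwise distances, then transfer to $p_i$ via Lemma~\ref{lem:Haus}(b) and finish with the triangle inequality. The only difference is cosmetic (your $a_i,\beta_i$ are the paper's $w_i,[o,g_i]$), and you make explicit the small observation that $d_{Y\cup\H}(o,g_i)\ge R$ so that $a_i$ exists, which the paper leaves implicit.
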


\begin{proof}
    For each $i \in\NN$, let $[o,g_i]$ be a geodesic path in $\Gamma(G,Y\sqcup\H)$ from $o$ to $g_i$ and $w_i \in [o,g_i]$ be a vertex satisfying $d_{Y\cup \H} (o,w_i)=R$. By $(g_i,g_j)_o^{Y\cup\H}\ge R$, we have $d_{Y\cup\H}(w_i,w_j) \le \delta_Y$ for any $i,j \in \NN$. By Lemma \ref{lem:Haus} (b), for each $i\in\NN$, there exists a vertex $v_i \in p_i$ such that $d_{Y\cup\H}(v_i,w_i) \le M_X$. We have for any $i,j\in\NN$,
    \begin{align*}
        &d_{Y\cup\H}(v_i,v_j)
        \le d_{Y\cup\H}(v_i,w_i) + d_{Y\cup\H}(w_i,w_j) + d_{Y\cup\H}(w_j,v_j)
        \le \delta_Y + 2M_X \\
        {\rm and~~~~~}
        &d_{Y\cup\H}(o, v_i)
        \ge d_{Y\cup\H}(o, w_i) - d_{Y\cup\H}(w_i,v_i)
        \ge R-M_X.
    \end{align*}
\end{proof}

Lemma \ref{lem:estimate gromov prod in Y} below can be considered as the converse of Lemma \ref{lem:gromov prod in Y}.

\begin{lem} \label{lem:estimate gromov prod in Y}
    Let $o,x,y \in G$ and suppose that $p,q$ are geodesic paths in $\Gamma(G,X\sqcup\H)$ such that $p$ is from $o$ to $x$ and $q$ is from $o$ to $y$. If there exist vertices $v\in p$ and $w\in q$ such that $d_{Y\cup\H}(o,v) \ge R$ and $d_{Y\cup\H}(v,w) \le K$ with some $R,K\in\NN$, then we have $(x,y)_o^{Y\cup\H}\ge R-(K+3M_X) - 2\delta_Y$.
\end{lem}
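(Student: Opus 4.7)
The plan is to transfer the hypothesis from the $X\cup\H$-geodesics $p,q$ to actual $Y\cup\H$-geodesics via Lemma \ref{lem:Haus}(b), and then exploit the hyperbolicity of $\Gamma(G,Y\sqcup\H)$ through Proposition \ref{prop:hyp sp}(2). Concretely, let $\alpha$ be a geodesic in $\Gamma(G,Y\sqcup\H)$ from $o$ to $x$ and $\beta$ be a geodesic in $\Gamma(G,Y\sqcup\H)$ from $o$ to $y$. By Lemma \ref{lem:Haus}(b), the Hausdorff distance in $\Gamma(G,Y\sqcup\H)$ between $p$ and $\alpha$ (resp. $q$ and $\beta$) is at most $M_X$, so I can pick vertices $v'\in\alpha$ and $w'\in\beta$ with $d_{Y\cup\H}(v,v')\le M_X$ and $d_{Y\cup\H}(w,w')\le M_X$.

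The next step is to collect three elementary estimates. First, the triangle inequality gives $d_{Y\cup\H}(v',w')\le K+2M_X$ and $d_{Y\cup\H}(o,v')\ge R-M_X$; combining these yields $d_{Y\cup\H}(o,w')\ge d_{Y\cup\H}(o,v')-d_{Y\cup\H}(v',w')\ge R-K-3M_X$. Second, because $v'$ lies on the $Y\cup\H$-geodesic $\alpha$ from $o$ to $x$, the equality $d_{Y\cup\H}(o,x)=d_{Y\cup\H}(o,v')+d_{Y\cup\H}(v',x)$ holds, which translates into $(x,v')_o^{Y\cup\H}=d_{Y\cup\H}(o,v')\ge R-M_X$; analogously $(w',y)_o^{Y\cup\H}=d_{Y\cup\H}(o,w')\ge R-K-3M_X$. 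Third, applying the Gromov product formula \eqref{eq:gromov product} to $v',w'$ and the bounds just obtained gives
\[
(v',w')_o^{Y\cup\H}\ge \tfrac{1}{2}\bigl((R-M_X)+(R-K-3M_X)-(K+2M_X)\bigr)=R-K-3M_X.
\]

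Finally, I will invoke Proposition \ref{prop:hyp sp}(2) twice in $\Gamma(G,Y\sqcup\H)$:
\[
(x,y)_o^{Y\cup\H}\ge \min\{(x,v')_o^{Y\cup\H},(v',y)_o^{Y\cup\H}\}-\delta_Y,
\]
\[
(v',y)_o^{Y\cup\H}\ge \min\{(v',w')_o^{Y\cup\H},(w',y)_o^{Y\cup\H}\}-\delta_Y.
\]
Together these give $(x,y)_o^{Y\cup\H}\ge \min\{(x,v')_o^{Y\cup\H},(v',w')_o^{Y\cup\H},(w',y)_o^{Y\cup\H}\}-2\delta_Y$, and substituting the three lower bounds above yields the desired inequality $(x,y)_o^{Y\cup\H}\ge R-(K+3M_X)-2\delta_Y$.

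The proof is essentially bookkeeping; there is no deep obstacle. The only step that requires a little care is making sure the approximation error is tracked consistently, since the hypothesis is expressed with $X\cup\H$-geodesics while the conclusion is about the $Y\cup\H$-Gromov product, so one must pass through geodesics in $\Gamma(G,Y\sqcup\H)$ rather than trying to compute $d_{Y\cup\H}(o,v)+d_{Y\cup\H}(v,x)=d_{Y\cup\H}(o,x)$, which is false in general because $p$ is only a geodesic in the $X\cup\H$-metric.
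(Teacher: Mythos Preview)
Your proof is correct and follows essentially the same approach as the paper: pass to $Y\cup\H$-geodesics via Lemma~\ref{lem:Haus}(b), push the bounds through the triangle inequality, and then apply the four-point condition twice. The only cosmetic difference is that the paper bounds $(v',w')_o^{Y\cup\H}$ via the inequality $(a,b)_o\ge d(o,a)-d(a,b)$ rather than the full Gromov product formula, but the resulting estimate is identical.
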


\begin{proof}
    Take geodesic paths $[o,x],[o,y]$ in $\Gamma(G,Y\sqcup\H)$. By Lemma \ref{lem:Haus} (b), there exist vertices $a\in [o,x], b \in [o,y]$ such that $d_{Y\cup\H}(a,v) \le M_X$ and $d_{Y\cup\H}(b,w) \le M_X$. We have
    \begin{align*}
        d_{Y\cup\H}(a,b)
        &\le d_{Y\cup\H}(a,v) + d_{Y\cup\H}(v,w) + d_{Y\cup\H}(w,b)
        \le K + 2M_X, \\
        d_{Y\cup\H}(o,a)
        &\ge d_{Y\cup\H}(o,v)- d_{Y\cup\H}(a,v)
        \ge R-M_X, \\
        {\rm and~~~~~}
        d_{Y\cup\H}(o,b)
        &\ge d_{Y\cup\H}(o,v)- (d_{Y\cup\H}(v,w) + d_{Y\cup\H}(w,b))
        \ge R-(K+M_X).
    \end{align*}
    This implies $(a,b)_o^{Y\cup\H} \ge d_{Y\cup\H}(o,a)-d_{Y\cup\H}(a,b) \ge R-(K+3M_X)$. Note $(x,a)_o^{Y\cup\H}=d_{Y\cup\H}(o,a)$ and $(y,b)_o^{Y\cup\H}=d_{Y\cup\H}(o,b)$ since $[o,a], [o,b]$ are geodesic in $\Gamma(Y\cup\H)$. Hence,
    \begin{align*}
        (x,y)_o^{Y\cup\H}
        &\ge \min\{(x,a)_o^{Y\cup\H}, (a,b)_o^{Y\cup\H}, (b,y)_o^{Y\cup\H} \}-2\delta_Y \\
        &\ge R-(K+3M_X) - 2\delta_Y.
    \end{align*}
\end{proof}

Lemma \ref{lem:penetration in triangle} describes slim triangle property of $\Gamma(G,Y\sqcup\H)$ using separating cosets.

\begin{lem}\label{lem:penetration in triangle}
    Let $o,x,y \in G$ and $S(o,x;D)=\{C_1\preceq C_2\preceq \cdots\preceq C_n\}$. If $q$ is a geodesic path in $\Gamma(G,X\sqcup\H)$ from $o$ to $y$ and $i\in\{1,\cdots,n\}$ satisfies $3d_{Y\cup\H}(x,y)+1 \le n-i$, then $q$ penetrates $C_i$.
\end{lem}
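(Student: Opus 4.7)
The plan is to argue by contradiction: assume $q$ does not penetrate $C_i$, and derive $n-i \le 3\,d_{Y\cup\H}(x,y)$, contradicting the hypothesis. The first step is to amplify the assumption by showing that $q$ fails to penetrate \emph{any} $C_j$ with $j \ge i$. Indeed, if $q$ penetrated $C_j$ for some $j > i$, then Lemma \ref{lem:sep coset inclustion} would give $C_i \in S(o,y;D)$, and Lemma \ref{lem:lem4.8 osin} would then force $q$ to penetrate $C_i$, a contradiction.

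I then fix a geodesic $r$ in $\Gamma(G, X\sqcup\H)$ from $y$ to $x$, so that $p, q, r$ form a geodesic $3$-gon. For each $j \in \{i, i+1, \dots, n\}$, write $C_j$ as an $H_{\lambda_j}$-coset and let $a_j$ be the component of $p$ corresponding to $C_j$; by the definition of a separating coset, $\hd_{\lambda_j}((a_j)_-,(a_j)_+) > D \ge 3C$, so Proposition \ref{prop:C} prevents $a_j$ from being isolated in the triangle. Any other $H_{\lambda_j}$-component of the triangle connected to $a_j$ must lie in $C_j$; since $q$ contributes no such component (by the first step), one must come from $r$. Hence $r$, and equivalently $r^{-1}$ viewed as a geodesic from $x$ to $y$, penetrates $C_j$ for every $j \ge i$, and in particular penetrates $C_i$.

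The key step is now to reapply Lemma \ref{lem:sep coset inclustion} with the roles of $o$ and $x$ swapped: I take $(o,x,y,q) \mapsto (x, o, y, r^{-1})$. The set $S(x,o;D) = S(o,x;D) = \{C_1,\dots,C_n\}$ is unchanged, but the linear order from Definition \ref{def:Def4.7 Osin} is now measured from $x$, and since $p$ traverses the $C_j$ in order from $o$ to $x$, this reverses the order: $d_{X\cup\H}(x,C_n) \le \dots \le d_{X\cup\H}(x,C_1)$. Because $r^{-1}$ penetrates $C_i$, Lemma \ref{lem:sep coset inclustion} yields $C_j \in S(x,y;D)$ for every $C_j$ strictly preceding $C_i$ in this $x$-order, i.e.\ for every $j \in \{i+1, \dots, n\}$. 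Therefore $|S(x,y;D)| \ge n-i$.

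Combining this with the upper bound $|S(x,y;D)| \le 3\,d_{Y\cup\H}(x,y)$ from \eqref{eq:qi} of Theorem \ref{thm:osin} gives $n - i \le 3\,d_{Y\cup\H}(x,y)$, contradicting the hypothesis $3\,d_{Y\cup\H}(x,y) + 1 \le n-i$. The main subtlety I anticipate is the order-reversal bookkeeping when invoking Lemma \ref{lem:sep coset inclustion} with swapped endpoints; it is crucial here that this lemma only requires $r^{-1}$ to \emph{penetrate} $C_i$ (not essentially penetrate), which is precisely what the triangle argument with Proposition \ref{prop:C} supplies, so no strengthening to essential penetration of the $c_j$'s is needed.
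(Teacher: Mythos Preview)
Your overall architecture matches the paper's: once you establish that $r$ (equivalently $r^{-1}$) penetrates $C_i$, your Step 3 is exactly the paper's endgame --- apply Lemma~\ref{lem:sep coset inclustion} with the base point swapped to $x$, obtain $\{C_{i+1},\dots,C_n\}\subset S(x,y;D)$, and combine with \eqref{eq:qi} for the contradiction. The paper reaches ``$r$ penetrates $C_i$'' in one line via Lemma~\ref{lem:lem4.5 osin}: the concatenation $qr$ is a path from $o$ to $x$ made of two geodesic segments, so it must penetrate $C_i\in S(o,x;D)$; if $q$ does not, then $r$ does.

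Your Step 2, however, has a gap. You fix a single geodesic $p$ from $o$ to $x$ and assert that for each $j\ge i$ the component $a_j$ of $p$ in $C_j$ satisfies $\hd_{\lambda_j}((a_j)_-,(a_j)_+)>D$. The definition of a separating coset only guarantees that \emph{some} geodesic from $o$ to $x$ essentially penetrates $C_j$; Lemma~\ref{lem:lem4.8 osin} forces every geodesic to penetrate $C_j$, but not essentially. So your non-isolation claim for $a_j$ is unjustified for an arbitrary fixed $p$. The fix is painless: you only ever use Step~2's conclusion at $j=i$, so choose $p$ to essentially penetrate $C_i$ (such a $p$ exists by definition) and run the triangle argument for that single index --- at which point your Step~1 becomes unnecessary, since the contradiction hypothesis already says $q$ misses $C_i$. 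This collapses exactly to the paper's one-line appeal to Lemma~\ref{lem:lem4.5 osin}.
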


\begin{proof}
     Let $i \in \{1,\cdots,n\}$ satisfy $3d_{Y\cup\H}(x,y)+1 \le n-i$ and take a geodesic path $r$ in $\Gamma(G,X\sqcup\H)$ from $y$ to $x$. Since the path $qr$ is from $o$ to $x$ and composed of two geodesic segments, one of $q$ or $r$ penetrates $C_i$ by Lemma \ref{lem:lem4.5 osin}. Suppose for contradiction that $r$ penetrates $C_i$. This implies $C_j\in S(x,y;D)$ for any $j\in \{i+1,\cdots,n\}$ by applying Lemma \ref{lem:sep coset inclustion} to $r^{-1}$ and $S(x,o;D)=\{C_n \preceq \cdots \preceq C_1\}$. This and \eqref{eq:qi} imply
     \[
     3d_{Y\cup\H}(x,y)+1 \le n-i \le |S(x,y;D)| \le 3d_{Y\cup\H}(x,y).
     \]
     This is a contradiction. Hence, $q$ penetrates $C_i$.
\end{proof}

Hull-Osin's separating cosets have been defined only for a pair of group elements. Now, we define separating cosets for geodesic rays. This notion is useful to clarify nice geodesic rays in $\Gamma(G,X\sqcup\H)$ mentioned at the beginning of this section.

\begin{defn}\label{def:S(gamma;D)}
    Let $\gamma=(x_0,x_1,\cdots)$ be a geodesic ray in $\Gamma(G,X\sqcup\H)$. We define
    \[
    S(\gamma;D)=\bigcup_{n=0}^\infty S(x_0,x_n;D)
    \]
    and call an element in $S(\gamma;D)$ a $(\gamma;D)$-\emph{separating coset}.
\end{defn}

\begin{rem}\label{rem:increasing}
    By Lemma \ref{lem:inclusion}, we have $S(x_0,x_{n-1};D) \subset S(x_0,x_n;D)$ for any $n\in\NN$. This implies
    \[
    |S(\gamma;D)|=\lim_{n\to\infty} |S(x_0,x_{n};D)|=\sup_{n\in\NN} |S(x_0,x_{n};D)|.
    \]
\end{rem}

We collect basic properties of separating cosets for geodesic rays from Lemma \ref{lem:geod ray penetrate} up to Lemma \ref{lem: sep coset ordered}.

\begin{lem} \label{lem:geod ray penetrate}
    Suppose that $\gamma$ is a geodesic ray in $\Gamma(G,X\sqcup\H)$, then $\gamma$ penetrates all $(\gamma;D)$-separating cosets exactly once.
\end{lem}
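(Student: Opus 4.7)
The plan is to split the claim into two parts: (a) every coset in $S(\gamma;D)$ is actually penetrated by $\gamma$, and (b) any coset is penetrated by $\gamma$ at most once. Both parts should follow quite directly from what has already been established for finite geodesic paths, applied to finite subpaths of the ray.

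For part (a), I would start by fixing an arbitrary $C \in S(\gamma;D)$. By Definition \ref{def:S(gamma;D)} there is some $n \in \NN$ with $C \in S(x_0,x_n;D)$. Since $\gamma=(x_0,x_1,\ldots)$ is a geodesic ray, the subpath $\gamma_{[x_0,x_n]}$ is a geodesic path in $\Gamma(G,X\sqcup\H)$ from $x_0$ to $x_n$. Lemma \ref{lem:lem4.8 osin} says that every geodesic path from $x_0$ to $x_n$ penetrates every coset in $S(x_0,x_n;D)$, so $\gamma_{[x_0,x_n]}$ (and hence $\gamma$ itself) penetrates $C$.

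For part (b), I would argue by contradiction: suppose $\gamma$ contained two distinct $H_\lambda$-components $a$ and $b$ both lying in the same coset $C=xH_\lambda$. Choose $N \in \NN$ large enough so that both $a$ and $b$ are subpaths of $\gamma_{[x_0,x_N]}$. Then $\gamma_{[x_0,x_N]}$ is a finite geodesic path that penetrates $C$ at two distinct components, contradicting the observation recorded in Definition \ref{separating cosets} that a geodesic path penetrates any coset at most once.

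Neither step looks like a real obstacle; the whole content of the lemma is just to observe that separating cosets for a ray, as defined via finite truncations, inherit the basic existence/uniqueness behavior from the finite-segment theory of Lemma \ref{lem:lem4.8 osin}. The only minor care needed is to make sure that when passing from $\gamma_{[x_0,x_n]}$ back to $\gamma$, the notion of ``penetration'' is preserved, which it is because components of $\gamma_{[x_0,x_n]}$ in $C$ are automatically components of $\gamma$ in $C$ (by Remark \ref{rem:component of geodesic} they consist of a single edge, so no enlargement inside $\gamma$ is possible).
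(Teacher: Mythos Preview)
Your proposal is correct and follows essentially the same approach as the paper: for existence you invoke Lemma \ref{lem:lem4.8 osin} on the finite truncation $\gamma_{[x_0,x_n]}$ exactly as the paper does, and for uniqueness you reduce to the finite-segment fact in Definition \ref{separating cosets}, whereas the paper phrases the same geodesicity argument directly on the ray by shortening. The content is identical; your version is slightly more explicit about why penetration passes from $\gamma_{[x_0,x_n]}$ to $\gamma$.
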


\begin{proof}
    Let $\gamma=(x_0,x_1,\cdots)$ and $B\in S(\gamma;D)$. There exists $n\in \NN$ such that $B\in S(x_0,x_n;D)$. By Lemma \ref{lem:lem4.8 osin}, $\gamma_{[x_0,x_n]}$ penetrates $B$, hence so does $\gamma$. If $\gamma$ penetrates an $H_\lambda$-coset more than once, then $\gamma$ can be shortened by an edge whose label is in $H_\lambda$, which contradicts that $\gamma$ is geodesic in $\Gamma(G,X\sqcup\H)$.
\end{proof}

Note that in Lemma \ref{lem:geod ray penetrate}, the fact that $\gamma$ penetrates all $(\gamma;D)$-separating cosets doesn't trivially follow from the definition of $S(\gamma;D)$, because by Definition \ref{def:S(gamma;D)}, an $H_\lambda$-coset $C$ is in $S(\gamma;D)$ if and only if there exist $n\in\NN$ and a geodesic path $p$ in $\Gamma(G,X\sqcup\H)$ from $x_0$ to $x_n$ such that $p$ essentially penetrates $C$, and $p$ may not be a subpath of $\gamma$.

Lemma \ref{lem:distance of S gamma} is analogous to Lemma \ref{lem:lem4.6 osin}.

\begin{lem} \label{lem:distance of S gamma}
    Suppose that a geodesic ray $\gamma$ in $\Gamma(G,X\sqcup\H)$ penetrates an $H_\lambda$-coset $B$, then we have $d_{X\cup\H}(\gamma_-,\gamma_{in}(B))=d_{X\cup\H}(\gamma_-,B)$.
\end{lem}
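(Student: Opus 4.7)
The plan is to reduce this directly to Lemma \ref{lem:lem4.6 osin} (Osin's Lemma 4.6), which is the analogous statement for finite geodesic paths. The key observation is that $\gamma_{in}(B)$ is, by definition of $\gamma$ penetrating $B$, a vertex of $\gamma$, so the portion of $\gamma$ up to just past the entrance edge into $B$ is a finite geodesic subpath to which the earlier lemma applies verbatim.

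More precisely, I would write $\gamma = (x_0, x_1, x_2, \ldots)$ with $\gamma_- = x_0$, and let the $H_\lambda$-component of $\gamma$ corresponding to $B$ be the edge from $x_m$ to $x_{m+1}$ for some $m \in \NN\cup\{0\}$, so that $\gamma_{in}(B) = x_m$. Consider the finite subpath $p := \gamma_{[x_0, x_{m+1}]}$. Since $\gamma$ is geodesic in $\Gamma(G, X\sqcup\H)$, so is $p$, and by construction $p$ decomposes as $p_1 a p_2$ where $p_1 = \gamma_{[x_0, x_m]}$, $a$ is the single edge from $x_m$ to $x_{m+1}$ (an $H_\lambda$-component of $p$ corresponding to $B$), and $p_2$ is trivial.

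Applying Lemma \ref{lem:lem4.6 osin} to $p$ with $f = x_0 = \gamma_-$ and $g = x_{m+1}$ then yields
\[
d_{X\cup\H}(\gamma_-, \gamma_{in}(B)) = d_{X\cup\H}(x_0, a_-) = d_{X\cup\H}(x_0, B) = d_{X\cup\H}(\gamma_-, B),
\]
which is the claim. There is no real obstacle here; the only thing to verify carefully is that the candidate edge from $x_m$ to $x_{m+1}$ is genuinely the component of $p$ (and not part of a longer $H_\lambda$-subpath), but this follows because $p$ is a subpath of the geodesic $\gamma$ and the component of $\gamma$ corresponding to $B$ was already specified to be exactly this edge; equivalently, by Remark \ref{rem:component of geodesic}, any coset penetrated by a geodesic contributes a single edge as its component.
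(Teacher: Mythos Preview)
Your proof is correct and follows essentially the same approach as the paper: both truncate $\gamma$ to a finite geodesic subpath that still penetrates $B$ and then invoke Lemma~\ref{lem:lem4.6 osin}. The only cosmetic difference is that the paper takes any $x_n$ with $d_{X\cup\H}(\gamma_-,\gamma_{out}(B)) < d_{X\cup\H}(\gamma_-,x_n)$, whereas you take $x_n = x_{m+1} = \gamma_{out}(B)$ directly.
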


\begin{proof}
    Let $\gamma=(x_0,x_1,\cdots)$. Take $n\in \NN$ satisfying $d_{X\cup\H}(\gamma_-,\gamma_{out}(B))<d_{X\cup\H}(\gamma_-,x_n)$, then we have $d_{X\cup\H}(\gamma_-,B)=d_{X\cup\H}(\gamma_-,(\gamma_{[x_0,x_n]})_{in}(B))=d_{X\cup\H}(\gamma_-,\gamma_{in}(B))$ by Lemma \ref{lem:lem4.6 osin}.
\end{proof}

As in Definition \ref{def:Def4.7 Osin}, we can align separating cosets for a geodesic ray based on the order of their penetration.

\begin{defn}\label{def:order on S gamma}
    Given a geodesic ray $\gamma$ in $\Gamma(G,X\sqcup\H)$, a relation $\preceq$ on the set $S(\gamma;D)$ is defined as follows: for any $C_1,C_2 \in S(\gamma;D)$,
\[C_1 \preceq C_2 \iff d_{X\cup\H}(\gamma_-,C_1)\le d_{X\cup\H}(\gamma_-,C_2).\]
\end{defn}

\begin{rem}
     By Lemma \ref{lem:geod ray penetrate} and Lemma \ref{lem:distance of S gamma}, the relation $\preceq$ in Definition \ref{def:order on S gamma} is a well-order on $S(\gamma;D)$. We will denote $S(\gamma;D)=\{C_1\preceq C_2\preceq \cdots\}$ considering this order.
\end{rem}

In Lemma \ref{lem: sep coset ordered}, given a finite collection of separating cosets of a geodesic ray, we find how long subpath of the geodesic ray we have to take to contain them.

\begin{lem}\label{lem: sep coset ordered}
    Let $\gamma=(x_0,x_1,\cdots)$ be a geodesic ray in $\Gamma(G,X\sqcup\H)$ and $S(\gamma;D)=\{C_1\preceq C_2\preceq \cdots\}$. For any $N,\ell\in\NN$ satisfying $d_{X\cup\H}(x_0,\gamma_{out}(C_{N+1})) \le d_{X\cup\H}(x_0,x_\ell)$, we have $\{C_1,\cdots,C_N\} \subset S(x_0,x_\ell;D)$. Moreover, letting $S(x_0,x_\ell;D)=\{B_1\preceq\cdots\preceq B_N\preceq\cdots\}$, we have $B_n=C_n$ for any $n\in\{1,\cdots,N\}$.
\end{lem}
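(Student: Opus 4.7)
The plan is to reduce both assertions to results already established in the paper: Lemma~\ref{lem:sep coset inclustion} for the inclusion, and Lemma~\ref{lem:lem4.5 osin} for the ordering statement. First I would name $m:=d_{X\cup\H}(x_0,\gamma_{out}(C_{N+1}))$, so the hypothesis reads $m\le\ell$ and $\gamma_{out}(C_{N+1})=x_m$. By Lemma~\ref{lem:geod ray penetrate} combined with Remark~\ref{rem:component of geodesic}, the component of $\gamma$ corresponding to $C_{N+1}$ is the single edge from $x_{m-1}$ to $x_m$; since $m\le\ell$ this edge is contained in $\gamma_{[x_0,x_\ell]}$, and because $\gamma$ is geodesic no edge neighbouring it can carry an $H_\lambda$-label, so the same edge is the $H_\lambda$-component of $\gamma_{[x_0,x_\ell]}$ at $C_{N+1}$. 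In particular $\gamma_{[x_0,x_\ell]}$ penetrates $C_{N+1}$.

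For the first claim, using Remark~\ref{rem:increasing} I would pick $n\in\NN$ large enough that $C_1,\dots,C_{N+1}$ all lie in $S(x_0,x_n;D)$. Writing $S(x_0,x_n;D)=\{B'_1\preceq\cdots\preceq B'_k\}$ and $C_{N+1}=B'_j$, the strict inequalities $d_{X\cup\H}(x_0,C_i)<d_{X\cup\H}(x_0,C_{N+1})$ for $i\le N$ place each such $C_i$ strictly before $B'_j$ in this order, so $C_i=B'_{j_i}$ for some $j_i<j$. Applying Lemma~\ref{lem:sep coset inclustion} with $o=x_0$, $x=x_n$, $y=x_\ell$, and $q=\gamma_{[x_0,x_\ell]}$, which was just shown to penetrate $B'_j=C_{N+1}$, then yields $B'_i\in S(x_0,x_\ell;D)$ for every $i<j$, so in particular $\{C_1,\dots,C_N\}\subset S(x_0,x_\ell;D)$.

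For the refinement, let $S(x_0,x_\ell;D)=\{B_1\preceq\cdots\preceq B_K\}$, which is finite by Lemma~\ref{lem:lem4.8 osin} and has size at least $N$ by the previous step. I would take any $B\in S(x_0,x_\ell;D)$ with $d_{X\cup\H}(x_0,B)\le d_{X\cup\H}(x_0,C_N)$ and show $B\in\{C_1,\dots,C_N\}$. Lemma~\ref{lem:lem4.5 osin} applied to the single geodesic segment $\gamma_{[x_0,x_\ell]}$ forces it to penetrate $B$, so $B\in S(\gamma;D)$, and therefore $B=C_k$ for some $k$; since distinct elements of $S(\gamma;D)$ have strictly different distances from $x_0$, the hypothesis on $B$ forces $k\le N$. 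Hence the first $N$ elements of $S(x_0,x_\ell;D)$ under $\preceq$ are exactly $C_1,\dots,C_N$, i.e.\ $B_n=C_n$ for $n\in\{1,\dots,N\}$. The only delicate point in the whole argument is verifying that $\gamma_{[x_0,x_\ell]}$ itself still penetrates $C_{N+1}$, which is precisely where the hypothesis $m\le\ell$ enters; the rest is bookkeeping with the well-order $\preceq$ and the monotonicity of $S(x_0,x_n;D)$ in $n$.
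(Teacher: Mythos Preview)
Your argument is essentially the paper's: both verify that $\gamma_{[x_0,x_\ell]}$ penetrates $C_{N+1}$, invoke Lemma~\ref{lem:sep coset inclustion} against some $S(x_0,x_n;D)$ containing $C_1,\dots,C_{N+1}$, and then use the inclusion $S(x_0,x_\ell;D)\subset S(\gamma;D)$ together with the distance order to match up the first $N$ cosets. The paper organises the last step as an induction on minimality ($B_1\preceq C_1$ and $C_1\preceq B_1$, then repeat), whereas you do it in one stroke by showing $\{B\in S(x_0,x_\ell;D):d_{X\cup\H}(x_0,B)\le d_{X\cup\H}(x_0,C_N)\}=\{C_1,\dots,C_N\}$; these are the same idea.

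One slip to fix: your sentence ``Lemma~\ref{lem:lem4.5 osin} applied to the single geodesic segment $\gamma_{[x_0,x_\ell]}$ forces it to penetrate $B$, so $B\in S(\gamma;D)$'' is a non sequitur---penetrating a coset does not make it a separating coset. The conclusion $B\in S(\gamma;D)$ is immediate for a simpler reason: $B\in S(x_0,x_\ell;D)\subset\bigcup_n S(x_0,x_n;D)=S(\gamma;D)$ by Definition~\ref{def:S(gamma;D)}. You can drop the appeal to Lemma~\ref{lem:lem4.5 osin} entirely there.
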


\begin{proof}
    By $d_{X\cup\H}(x_0,\gamma_{out}(C_{N+1})) \le d_{X\cup\H}(x_0,x_\ell)$, the subpath $\gamma_{[x_0,x_\ell]}$ penetrates $C_{N+1}$. Take $k\in\NN$ such that $\{C_1,\cdots,C_{N+1}\}\subset S(x_0,x_k;D)$. By applying Lemma \ref{lem:sep coset inclustion} to $x_0,x_k,\gamma_{[x_0,x_\ell]}$, we have $\{C_1,\cdots,C_N\}\subset S(x_0,x_\ell;D)$. Let $S(x_0,x_\ell;D)=\{B_1\preceq\cdots\preceq B_N\preceq\cdots\}$. We have $B_1\preceq C_1$ by $C_1\in S(x_0,x_\ell;D)$ and by minimality of $B_1$ in $(S(x_0,x_\ell;D),\preceq)$. On the other hand, by minimality of $C_1$ in $(S(\gamma;D),\preceq)$, we also have $C_1\preceq B_1$, hence $B_1=C_1$. Repeating the same argument inductively, we can see $B_n=C_n$ for any $n\in\{1,\cdots,N\}$.  
\end{proof}

We can now characterize nice geodesic rays in $\Gamma(G,X\sqcup\H)$ and study their properties. Lemma \ref{lem:seq to infinity} lists several equivalent conditions of Definition \ref{def:nice geodesic ray}.

\begin{defn}\label{def:nice geodesic ray}
    We say that a geodesic ray $\gamma=(x_0,x_1,\cdots)$ in $\Gamma(G,X\sqcup\H)$ \emph{converges to infinity in} $\Gamma(G,Y\sqcup\H)$, if the sequence $(x_n)_{n=1}^\infty$ in $G$ converges to infinity in $\Gamma(G,Y\sqcup\H)$.
\end{defn}

\begin{lem}\label{lem:seq to infinity}
    Suppose that $\gamma=(x_0,x_1,\cdots)$ is a geodesic ray in $\Gamma(G,X\sqcup\H)$, then (i)-(vi) below are all equivalent.
    \begin{itemize}
        \item[(i)]
        The geodesic ray $\gamma$ converges to infinity in $\Gamma(G,Y\sqcup\H)$.
        \item[(ii)] 
        There exists a subsequence $(x_{n_k})_{k=1}^\infty$ that converges to infinity in $\Gamma(G,Y\sqcup\H)$.
        \item[(iii)]
        $|S(\gamma;D)|=\infty$.
        \item[(iv)]
        There exists a subsequence $(x_{n_k})_{k=1}^\infty$ such that $\lim_{k\to\infty} |S(x_0,x_{n_k};D)| = \infty$.
        \item[(v)]
        $\lim_{n\to\infty} d_{Y\cup\H}(x_0,x_n) =\infty$.
        \item[(vi)]
        There exists a subsequence $(x_{n_k})_{k=1}^\infty$ such that $\lim_{k\to\infty} d_{Y\cup\H}(x_0,x_{n_k}) =\infty$.
    \end{itemize}
\end{lem}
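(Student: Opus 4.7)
The plan is to establish the cycle of implications
\[
\text{(i)} \Rightarrow \text{(ii)} \Rightarrow \text{(vi)} \Rightarrow \text{(iv)} \Rightarrow \text{(iii)} \Rightarrow \text{(v)} \Rightarrow \text{(i)}.
\]
The implication (i) $\Rightarrow$ (ii) is immediate by taking $n_k=k$. For (ii) $\Rightarrow$ (vi), if $(x_{n_k})_{k=1}^\infty$ converges to infinity in $\Gamma(G,Y\sqcup\H)$, then by Definition \ref{def:seq to infty} the Gromov products $(x_{n_k},x_{n_\ell})_{x_0}^{Y\cup\H}$ diverge as $k,\ell\to\infty$; since each such product is bounded above by $d_{Y\cup\H}(x_0,x_{n_k})$, the distances along the subsequence must diverge.

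The three middle implications are numerical and follow from the quasi-isometric estimate \eqref{eq:qi} in Theorem \ref{thm:osin} together with the monotonicity recorded in Remark \ref{rem:increasing}. Concretely, (vi) $\Rightarrow$ (iv) uses the lower bound $|S(x_0,x_{n_k};D)|\ge\frac{1}{2}(d_{Y\cup\H}(x_0,x_{n_k})-1)$; (iv) $\Rightarrow$ (iii) uses the identity $|S(\gamma;D)|=\sup_n|S(x_0,x_n;D)|$ from Remark \ref{rem:increasing}, which together with nondecreasingness forces $|S(\gamma;D)|=\infty$; and (iii) $\Rightarrow$ (v) combines the same monotonicity (so $|S(x_0,x_n;D)|\to\infty$) with the upper bound $d_{Y\cup\H}(x_0,x_n)\ge|S(x_0,x_n;D)|/3$ from \eqref{eq:qi}.

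The only step that uses geometric input rather than bookkeeping is (v) $\Rightarrow$ (i), which I handle using Lemma \ref{lem:estimate gromov prod in Y}. Given indices $i\le j$, set $p=\gamma_{[x_0,x_i]}$ and $q=\gamma_{[x_0,x_j]}$, which are geodesic paths in $\Gamma(G,X\sqcup\H)$ from $x_0$ to $x_i$ and to $x_j$ respectively; choose $v=x_i\in p$ and $w=x_i\in q$, so that $d_{Y\cup\H}(v,w)=0$ and $d_{Y\cup\H}(x_0,v)=d_{Y\cup\H}(x_0,x_i)$. Applying Lemma \ref{lem:estimate gromov prod in Y} with $K=0$ and $R=d_{Y\cup\H}(x_0,x_i)$ yields
\[
(x_i,x_j)_{x_0}^{Y\cup\H}\ \ge\ d_{Y\cup\H}(x_0,x_i)-3M_X-2\delta_Y,
\]
and the symmetric choice when $j\le i$ gives the analogous lower bound in terms of $d_{Y\cup\H}(x_0,x_{\min(i,j)})$. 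Under hypothesis (v) this lower bound tends to infinity as $i,j\to\infty$, so $\gamma$ converges to infinity in $\Gamma(G,Y\sqcup\H)$, which is (i).

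No step of the argument presents a genuine obstacle; the work has already been absorbed into Theorem \ref{thm:osin} and Lemma \ref{lem:estimate gromov prod in Y}. The main point to watch is the witness choice $v=w=x_{\min(i,j)}$ in the final step, which exploits the fact that the geodesic rays $p$ and $q$ share an initial segment so that $K=0$ in Lemma \ref{lem:estimate gromov prod in Y}; this is what allows a pure ``distance-diverges'' hypothesis to promote to the Gromov-product-diverges conclusion needed for convergence to infinity.
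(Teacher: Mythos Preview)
Your proof is correct and follows essentially the same approach as the paper: the numerical equivalences are handled via the quasi-isometric estimate \eqref{eq:qi} and the monotonicity of Remark \ref{rem:increasing}, while the one geometric step (v) $\Rightarrow$ (i) is obtained by applying Lemma \ref{lem:estimate gromov prod in Y} with $v=w=x_N$ for a suitable index $N$ on the shared initial segment of $\gamma$. The only cosmetic difference is that the paper organizes the implications as several short equivalences rather than a single cycle, but the content is identical.
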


\begin{proof}
    (i)$\Rightarrow$(ii) and (v)$\Rightarrow$(vi) are trivial. (iii)$\Rightarrow$(iv) follows from Remark \ref{rem:increasing}. (i)$\Rightarrow$(v) and (ii)$\Rightarrow$(vi) follow from the definition of the Gromov product (see \eqref{eq:gromov product}). (iii)$\Leftrightarrow$(v) and (iv)$\Leftrightarrow$(vi) follow from \eqref{eq:qi}. We are left to show (iv)$\Rightarrow$(iii) and (v)$\Rightarrow$(i).
    
    \underline{(iv)$\Rightarrow$(iii)}
    By Remark \ref{rem:increasing}, we have
    \[
    \lim_{k\to\infty} |S(x_0,x_{n_k};D)|=\sup_{k\in\NN} |S(x_0,x_{n_k};D)|
    =\sup_{n\in\NN} |S(x_0,x_n;D)|=\lim_{n\to\infty} |S(x_0,x_n;D)|.
    \]
    Thus, $\lim_{k\to\infty} |S(x_0,x_{n_k};D)| = \infty$ implies $|S(\gamma;D)|=\lim_{n\to\infty} |S(x_0,x_n;D)| = \infty$.

    \underline{(v)$\Rightarrow$(i)}
    For any $R\in\NN$, define $R_1=R+3M_X+2\delta_Y$. By (v), there exists $N\in\NN$ such that $d_{Y\cup\H}(x_0,x_N) \ge R_1$. Hence, for any $n,m\ge N$, we have $(x_n,x_m)_{x_0}^{Y\cup\H}\ge R_1-3M_X-2\delta_Y=R$ by Lemma \ref{lem:estimate gromov prod in Y} applied to $o=x_0, x=x_n, y=x_m, v=x_N, w=x_N$. This implies $\liminf_{n,m\to\infty}(x_n,x_m)_{x_0}^{Y\cup\H} \ge \inf_{n,m\ge N} (x_n,x_m)_{x_0}^{Y\cup\H} \ge R$ for any $R\in\NN$. Thus, $\lim_{n,m\to\infty}(x_n,x_m)_{x_0}^{Y\cup\H}=\infty$.
\end{proof}

In Definition \ref{def:Y-lim} below, we summarize notations related to limits of the nice geodesic rays, which we will use in what follows.

\begin{defn}\label{def:Y-lim}
    For a sequence $(x_n)_{n=1}^\infty$ of elements of $G$ that is convergent in $\Gamma(G,Y\sqcup\H)\cup\partial\Gamma(G,Y\sqcup\H)$, we denote its limit point by $Y\-\lim_{n\to\infty}x_n$. For a geodesic ray $\gamma=(x_0,x_1,\cdots)$ in $\Gamma(G,X\sqcup\H)$ such that the sequence $(x_n)_{n=1}^\infty$ is convergent in $\Gamma(G,Y\sqcup\H)\cup\partial\Gamma(G,Y\sqcup\H)$, we also denote its limit point by $Y\-\lim\gamma$. Note that when we write $Y\-\lim_{n\to\infty}x_n \in \partial\Gamma(G,Y\sqcup\H)$ for a sequence $(x_n)_{n=1}^\infty$ in $G$, it implicitly means that $(x_n)_{n=1}^\infty$ converges to infinity in $\Gamma(G,Y\sqcup\H)$ and its limit point in $\partial\Gamma(G,Y\sqcup\H)$ is $Y\-\lim_{n\to\infty}x_n$. This is the same for $Y\-\lim \gamma \in \partial\Gamma(G,Y\sqcup\H)$ as well. We use the notations $X\-\lim_{n\to\infty}x_n$ and $X\-\lim \gamma$ similarly.
\end{defn}

\begin{figure}[htbp]
  % Requires \usepackage{graphicx}
  \begin{center}
 \hspace{0mm} 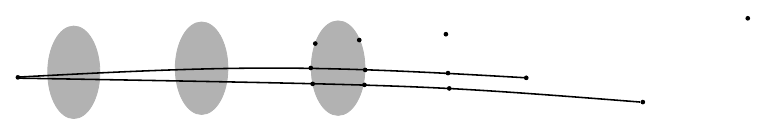
  \end{center}
   \vspace{-3mm}
  \caption{Proof of Proposition \ref{prop:existence}}
  \label{Prop3_22}
\end{figure}

We will now show that any element of $G$ and any point in $\partial\Gamma(G,Y\sqcup\H)$ can be connected by a geodesic ray in $\Gamma(G,X\sqcup\H)$, and by using this we will extend the notion of Hull-Osin's separating cosets to allow a point in the Gromov boundary. We emphasize that in Proposition \ref{prop:existence}, the path $\gamma$ below is not necessarily geodesic nor quasi-geodesic in $\Gamma(G,Y\sqcup\H)$.

\begin{prop}\label{prop:existence}
    For any $o\in G$ and any $\xi \in \partial\Gamma(G,Y\sqcup\H)$, there exists a geodesic ray $\gamma$ in $\Gamma(G,X\sqcup\H)$ from $o$ such that $Y\-\lim\gamma=\xi \in \partial \Gamma(G,Y\sqcup\H)$.
\end{prop}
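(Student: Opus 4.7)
The plan is to pick a sequence $(g_n)_{n=1}^\infty$ in $G$ representing $\xi$---so that $(g_i,g_j)_o^{Y\cup\H}\to\infty$ as $i,j\to\infty$---and to build $\gamma$ as a diagonal limit of the geodesic paths $\gamma_n$ in $\Gamma(G,X\sqcup\H)$ from $o$ to $g_n$. After passing to a subsequence we may assume $(g_i,g_j)_o^{Y\cup\H}\ge\min\{i,j\}$. Writing $S(o,g_n;D)=\{C_1^{(n)}\prec\cdots\prec C_{k_n}^{(n)}\}$, the lower bound of \eqref{eq:qi} yields $k_n\to\infty$, and Lemma \ref{lem:lem4.6 osin} shows the order on $S(o,g_n;D)$ is strict in the values $d_{X\cup\H}(o,\cdot)$. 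Let $a_j^{(n)},b_j^{(n)}$ be the entry and exit vertices of $\gamma_n$ at $C_j^{(n)}$, an $H_{\lambda_j^{(n)}}$-coset.

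The heart of the argument is a diagonal extraction yielding a fixed sequence of cosets $(B_j)_{j=1}^\infty$ together with common entry and exit vertices $(a_j,b_j)_{j=1}^\infty$ along $\gamma_n$ for all $n$ large (depending on $j$). To stabilize the cosets, apply Lemma \ref{lem:gromov prod in Y} to the tail $(g_n)_{n\ge R}$ to obtain vertices $v_n\in\gamma_n$ with $d_{Y\cup\H}(v_n,v_m)\le\delta_Y+2M_X$ and $d_{Y\cup\H}(o,v_n)\ge R-M_X$; by \eqref{eq:qi}, the geodesic subpaths $\gamma_n|_{[o,v_n]}$ carry at least $(R-M_X-1)/2$ separating cosets each. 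Lemma \ref{lem:penetration in triangle} applied to $\gamma_n|_{[o,v_n]}$ and to $q=\gamma_m|_{[o,v_m]}$ forces $q$ to penetrate the first many of these, Lemma \ref{lem:sep coset inclustion} pushes the earlier ones into $S(o,v_m;D)\subset S(o,g_m;D)$ (via Lemma \ref{lem:inclusion}), and by symmetry the same holds with $n,m$ swapped. The strict distance-ordering then forces an initial segment of $S(o,g_n;D)$ and $S(o,g_m;D)$ of length proportional to $R$ to agree as unordered sets, and hence as ordered sequences; iterated diagonalization produces $(B_j)_j$.

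With the $B_j$ fixed, Lemma \ref{lem:bounded by 3C} gives $\hd_{\lambda_j}(a_j^{(n)},a_j^{(m)})\le 3C$ and Lemma \ref{lem:bounded by 4C} (applied to the consecutive pair $(B_j,B_{j+1})$) gives $\hd_{\lambda_j}(b_j^{(n)},b_j^{(m)})\le 4C$. The local finiteness of $(H_{\lambda_j},\hd_{\lambda_j})$ (Definition \ref{def:hyp emb}) then permits a further diagonal extraction with $a_j^{(n)}=a_j$ and $b_j^{(n)}=b_j$ for all $n$ large. I now define $\gamma$ as the concatenation of arbitrarily chosen geodesic paths in $\Gamma(G,X\sqcup\H)$ from $o$ to $a_1$, from $b_j$ to $a_{j+1}$ for each $j\ge 1$, together with the single $H_{\lambda_j}$-edges from $a_j$ to $b_j$. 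The initial segment of this concatenation ending at $b_j$ has length $d_{X\cup\H}(o,b_j)$, because the prefix of $\gamma_n$ up to $b_j$ for any $n$ large in the extracted subsequence realizes exactly the same passage; hence $\gamma$ is a geodesic ray.

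Finally, applying Lemma \ref{lem:sep coset inclustion} to the subpath $\gamma_n|_{[o,a_j]}$ shows $\{B_1,\ldots,B_{j-1}\}\subset S(o,a_j;D)$, so the upper bound of \eqref{eq:qi} gives $d_{Y\cup\H}(o,a_j)\ge(j-1)/3\to\infty$, and Lemma \ref{lem:seq to infinity} yields that $\gamma$ converges to infinity in $\Gamma(G,Y\sqcup\H)$. To identify $Y\-\lim\gamma$ with $\xi$, let $\gamma(k)$ be any vertex of $\gamma$ past $a_J$ and take $n$ in the extracted subsequence with $a_J^{(n)}=a_J$; Lemma \ref{lem:estimate gromov prod in Y} with $v=w=a_J$ (lying on both $\gamma$ and $\gamma_n$) gives $(\gamma(k),g_n)_o^{Y\cup\H}\ge(J-1)/3-3M_X-2\delta_Y\to\infty$ as $J\to\infty$, hence $Y\-\lim\gamma=Y\-\lim_{n\to\infty}g_n=\xi$. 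The main obstacle is the coset stabilization step of the second paragraph: the graph $\Gamma(G,X\sqcup\H)$ is not locally finite, and the only finiteness available must be extracted from the structure of the hyperbolically embedded subgroups through $\hd_\lambda$ via the combined use of Lemmas \ref{lem:gromov prod in Y}, \ref{lem:penetration in triangle} and \ref{lem:sep coset inclustion}.
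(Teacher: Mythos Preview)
Your proposal is correct and follows essentially the same diagonal-extraction strategy as the paper: use Lemmas \ref{lem:gromov prod in Y} and \ref{lem:penetration in triangle} to make all $\gamma_n$ penetrate a common coset, invoke Lemma \ref{lem:bounded by 3C} and the local finiteness of $\hd_\lambda$ to extract a subsequence with a common vertex, iterate, and concatenate. The paper organizes the extraction slightly differently---at each inductive stage it fixes a single coset from $S(o,v_{01};D)$ and directly stabilizes one entrance point $a_n$, rather than first stabilizing the whole initial segment of cosets and then the entry/exit points---and it concatenates subpaths $\gamma_{nn}|_{[a_{n-1},a_n]}$ of the diagonal sequence rather than freshly chosen geodesics, but both variations are inessential. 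One small point worth tightening: your argument literally shows the initial segments of $S(o,v_n;D)$ agree across $n$, and to upgrade this to $S(o,g_n;D)$ you should observe (via Lemma \ref{lem:sep coset inclustion} applied to $\gamma_n|_{[o,v_n]}$ and $S(o,g_n;D)$) that an initial segment of $S(o,v_n;D)$ is in fact an initial segment of $S(o,g_n;D)$; once this is said the coset sequence $(B_j)_j$ is determined outright and the only diagonalization needed is on the entry/exit points.
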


\begin{proof}
Let $(g_n)_{n=1}^\infty$ be a sequence of elements of $G$ that converges to $\xi$.
For each $n\in\NN$, take a geodesic path $\gamma_n$ in $\Gamma(G,X\sqcup\H)$ from $o$ to $g_n$. Since $\lim_{i,j\to \infty} (g_i,g_j)_o^{Y\cup\H} = \infty$, there exists a subsequence $(g_{0k})_{k=1}^\infty$ of $(g_n)_{n=1}^\infty$ such that
\begin{equation}\label{eq:g0k}
    \inf_{k,\ell \in\NN}(g_{0k},g_{0\ell})_o^{Y\cup\H} \ge 2(3(\delta_Y + 2M_X)+4) + 1 + M_X.
\end{equation}
By Lemma \ref{lem:gromov prod in Y} and \eqref{eq:g0k}, there exist vertices $v_{0k} \in \gamma_{0k}$ for each $k \in\NN$ satisfying
$d_{Y\cup\H}(v_{0k},v_{0\ell}) \le \delta_Y + 2M_X$ and $d_{Y\cup\H}(o, v_{0\ell}) \ge 2(3(\delta_Y + 2M_X)+4) + 1$ for any $k,\ell\in\NN$. This and \eqref{eq:qi} imply for any $k,\ell \in \NN$,
\begin{equation}\label{eq:upper bd}
3d_{Y\cup\H}(v_{0k},v_{0\ell}) + 4 \le 3(\delta_Y + 2M_X) + 4 \le |S(o,v_{0\ell};D)|.
\end{equation}
Let $m=|S(o,v_{01};D)|$ and $S(o,v_{01};D)=\{C_1\preceq C_2\preceq \cdots \preceq C_m\}$. Since \eqref{eq:upper bd} implies $3d_{Y\cup\H}(v_{0k},v_{01})+1 \le m-3$, the path $\gamma_{0k}$ penetrates $C_1$, $C_2$, and $C_3$ for any $k\in\NN$ by Lemma \ref{lem:penetration in triangle}. Let $C_3$ be an $H_\lambda$-coset. For each $k \in \NN$, let $a_{0k-}$ and $a_{0k+}$ be the entrance and exit points of $\gamma_{0k}$ in $C_3$. Since $\gamma_{0k}$ is geodesic in $\Gamma(G,X\sqcup\H)$ for each $k\in\NN$, we have
\[
\hd_{\lambda}(a_{01-},a_{0k-}) \le 3C
\]
by Lemma \ref{lem:bounded by 3C}. Since the relative metric $\hd_{\lambda}$ is locally finite (see Definition \ref{def:hyp emb}), the set $\{a_{0k-} \mid k\in\NN\}$ is finite. Hence, there exist $a_1 \in \{a_{0k-} \mid k\in\NN\}$ and a subsequence $(g_{1'k})_{k=1}^\infty$ of $(g_{0k})_{k=1}^\infty$ such that $a_1 \in\gamma_{1'k}$ for any $k\in\NN$. Note $S(o,a_1;D) \ge 1$ since we have $C_1 \in S(o,a_1;D)$ by Lemma \ref{lem:sep coset inclustion}. By $\lim_{k,\ell\to \infty} (g_{1'k},g_{1'\ell})_o^{Y\cup\H} = \infty$, there exists a subsequence $(g_{1k})_{k=1}^\infty$ of $(g_{1'k})_{k=1}^\infty$ such that
\begin{equation}\label{eq:g1k}
    \inf_{k,\ell \in\NN}(g_{1k},g_{1\ell})_o^{Y\cup\H} \ge 2(3(\delta_Y + 2M_X) + |S(o,a_1;D)|+4) + 1 + M_X.
\end{equation}
Note $a_1 \in \gamma_{1k}$ for any $k\in\NN$. By the same argument as $(g_{0k})_{k=1}^\infty$, we can see that there exist $a_2 \in G$ and a subsequence $(g_{2'k})_{k=1}^\infty$ of $(g_{1k})_{k=1}^\infty$ such that $a_2 \in \gamma_{2'k}$ for any $k\in\NN$ and $|S(o,a_2;D)| \ge |S(o,a_1;D)| +1$. The latter inequality comes from the term $|S(o,a_1;D)|+4$ in \eqref{eq:g1k}, which corresponds to $4$ in \eqref{eq:g0k}. By repeating this argument, we can see that there exist $a_1,a_2,\cdots \in G$ and a sequence of subsequences $(g_{1k})_{k=1}^\infty \supset (g_{2k})_{k=1}^\infty \supset \cdots$ satisfying (i) and (ii) below for any $n\in \NN$.
\begin{itemize}
    \item[(i)]
    $\{a_1,\cdots,a_n\}\subset \gamma_{nk}$ for any $k\in\NN$.
    \item[(ii)]
    $|S(o,a_{n+1};D)| \ge |S(o,a_n;D)|+1$.
\end{itemize}
Take the diagonal sequence $(g_{kk})_{k=1}^\infty$, then for any $n\in\NN$, $(g_{kk})_{k=n}^\infty$ is a subsequence of $(g_{nk})_{k=1}^\infty$. Hence, $\{a_1,\cdots,a_n\}\subset \gamma_{nn}$ for any $n\in \NN$. Note that (ii) and Lemma \ref{lem:inclusion} imply $d_{X\cup\H}(o,a_n) < d_{X\cup\H}(o,a_{n+1})$ for any $n\in\NN$. Define the path $\gamma \colon [0,\infty) \to \Gamma(G,X\sqcup\H)$ by
\[
\gamma=\bigcup_{n=1}^\infty \gamma_{nn[a_{n-1},a_n]},
\]
where we define $a_0$ by $a_0=o$ for convenience. the path $\gamma$ is a geodesic ray since for any $n \in \NN$, we have
\[
\sum_{i=1}^n |\gamma_{ii[a_{i-1},a_i]}|
= \sum_{i=1}^n d_{X\cup\H}(a_{i-1},a_i)
= \sum_{i=1}^n |\gamma_{nn[a_{i-1},a_i]}|
= |\gamma_{nn[o,a_n]}|
=d_{X\cup\H}(o,a_n).
\]
Since we have $\lim_{n\to\infty} |S(o,a_n;D)| = \infty$ by (ii) and $(a_n)_{n=1}^\infty$ is a subsequence of $\gamma$, the path $\gamma$ converges to infinity in $\Gamma(G,Y\sqcup\H)$ by Lemma \ref{lem:seq to infinity} and we have $Y\-\lim \gamma = Y\-\lim_{n\to\infty} a_n$.

Finally, we will show $Y\-\lim \gamma = \xi \in \partial\Gamma(G,Y\sqcup\H)$. Since we have $\xi= Y\-\lim_{n\to\infty} g_n =Y\-\lim_{n\to\infty} g_{nn}$, it's enough to show $Y\-\lim_{n\to\infty} a_n=Y\-\lim_{n\to\infty} g_{nn} \in \partial\Gamma(G,Y\sqcup\H)$. For any $R\in\NN$, there exists $N\in\NN$ such that $d_{Y\cup\H}(o,a_N)\ge R+3M_X+2\delta_Y$ since we have $\lim_{n\to\infty} d_{Y\cup\H}(o,a_n)=\infty$ by Lemma \ref{lem:seq to infinity}. Hence, for any $m,n\ge N$, we have $(a_m,g_{nn})_o^{Y\cup\H} \ge R$ by Lemma \ref{lem:estimate gromov prod in Y} applied to $x=a_m, y=g_{nn}, v=a_N, w=a_N$. This implies $\lim_{m,n\to\infty} (a_m,g_{nn})_o^{Y\cup\H} = \infty$, hence $Y\-\lim_{n\to\infty} a_n=Y\-\lim_{n\to\infty} g_{nn}$ in $\partial\Gamma(G,Y\sqcup\H)$.
\end{proof}

We next show that as the limit points of geodesic rays in $\Gamma(G,X\sqcup\H)$ get closer to one another in $\partial\Gamma(G,Y\sqcup\H)$, the geodesic rays have more common separating cosets. Proposition \ref{prop:for cts} has two important corollaries, Corollary \ref{cor:separating coset of ray} and Corollary \ref{cor:for hyperfinite}.

\begin{prop} \label{prop:for cts}
    Let $o\in G$, $\xi \in \partial\Gamma(G,Y\sqcup\H)$ and suppose that $\alpha$ is a geodesic ray in $\Gamma(G,X\sqcup\H)$ from $o$ such that $Y\-\lim \alpha = \xi \in \partial\Gamma(G,Y\sqcup\H)$. Let $S(\alpha,D)=\{C_1\preceq C_2 \preceq \cdots\}$. Then, for any $N\in\NN$, there exists an open neighborhood $U$ of $\xi$ in $\partial\Gamma(G,Y\sqcup\H)$ such that any geodesic ray $\beta$ in $\Gamma(G,X\sqcup\H)$ from $o$ satisfying $Y\-\lim \beta \in U$ penetrates $C_n$ for any $n\in \{1,\cdots,N\}$.
\end{prop}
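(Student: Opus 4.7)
The plan is to fix a vertex $x_\ell$ on $\alpha$ deep enough that $C_1, \dots, C_M$ (for some $M$ fixed slightly larger than $N$) form the initial segment of $S(o, x_\ell; D)$ under $\preceq$, then shrink $U$ so that every geodesic ray $\beta$ in $\Gamma(G, X\sqcup\H)$ from $o$ with $Y\-\lim \beta \in U$ has a vertex $y_k$ on it within $Y\cup\H$-distance $K$ of $x_\ell$ for a fixed constant $K$, and finally invoke Lemma~\ref{lem:penetration in triangle} to force every such $\beta$ to penetrate $C_1, \dots, C_N$.

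Concretely, set $K := 2M_X + \delta_Y$ and $M := N + 3K + 2$. Since $\alpha$ converges to infinity in $\Gamma(G, Y\sqcup\H)$, Lemma~\ref{lem:seq to infinity} gives $|S(\alpha; D)| = \infty$, so $C_{M+1}$ exists in $S(\alpha; D)$ and, by Lemma~\ref{lem:geod ray penetrate}, $\alpha$ penetrates it; pick $\ell \in \NN$ with $d_{X\cup\H}(o, \alpha_{out}(C_{M+1})) \le d_{X\cup\H}(o, x_\ell)$. Lemma~\ref{lem: sep coset ordered} then ensures $C_1, \dots, C_M$ are the first $M$ elements of $S(o, x_\ell; D)$ under $\preceq$. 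Setting $L_0 := d_{Y\cup\H}(o, x_\ell)$, define
\[
U := \{\eta \in \partial\Gamma(G, Y\sqcup\H) \mid (\eta, \xi)_o^{Y\cup\H} > L_0 + M_X + 3\delta_Y + 1\},
\]
which is an open neighborhood of $\xi$ by Proposition~\ref{prop:gromov topo}.

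The crux is producing, for each $\eta \in U$ and each $\beta = (y_0, y_1, \dots)$ in $\Gamma(G, X\sqcup\H)$ from $o$ with $Y\-\lim \beta = \eta$, a vertex $y_k$ on $\beta$ with $d_{Y\cup\H}(x_\ell, y_k) \le K$. Taking representing sequences $(x_n)$ on $\alpha$ and $(y_n)$ on $\beta$, a standard $2\delta_Y$-slack argument from the four-point condition of Proposition~\ref{prop:hyp sp}(2) gives $\liminf_{i,j}(x_i, y_j)_o^{Y\cup\H} \ge (\eta, \xi)_o^{Y\cup\H} - 2\delta_Y > L_0 + M_X + \delta_Y + 1$, so one can choose $i_0 > \ell$ and $j_0$ with $(x_{i_0}, y_{j_0})_o^{Y\cup\H} > L_0 + M_X$. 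On a $Y\cup\H$-geodesic triangle $o, x_{i_0}, y_{j_0}$ with sides $p$ from $o$ to $x_{i_0}$ and $q$ from $o$ to $y_{j_0}$, Lemma~\ref{lem:Haus}(b) pulls $x_\ell$ onto $p$ at some $u$ with $d_{Y\cup\H}(o, u) \le L_0 + M_X < (x_{i_0}, y_{j_0})_o^{Y\cup\H}$; Proposition~\ref{prop:hyp sp}(1) then yields $u' \in q$ with $d_{Y\cup\H}(u, u') \le \delta_Y$; and Lemma~\ref{lem:Haus}(b) pushes $u'$ back onto $\beta_{[o, y_{j_0}]}$, producing $y_k$ with $d_{Y\cup\H}(u', y_k) \le M_X$. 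Hence $d_{Y\cup\H}(x_\ell, y_k) \le 2M_X + \delta_Y = K$.

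Finally, $\beta_{[o, y_k]}$ is a geodesic in $\Gamma(G, X\sqcup\H)$ from $o$ to $y_k$, and for every $n \in \{1, \dots, N\}$ the inequality $3d_{Y\cup\H}(x_\ell, y_k) + 1 \le 3K + 1 \le M - N \le |S(o, x_\ell; D)| - n$ holds, so Lemma~\ref{lem:penetration in triangle} forces $\beta_{[o, y_k]}$ to penetrate the $n$-th coset of $S(o, x_\ell; D)$, which by the choice of $\ell$ is $C_n$; thus $\beta$ penetrates $C_1, \dots, C_N$. The main obstacle is the third paragraph: converting the hypothesis that $\xi, \eta$ are close in the topology of $\partial\Gamma(G, Y\sqcup\H)$ into $Y\cup\H$-proximity of concrete vertices on the $X\cup\H$-geodesic rays. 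The plan handles this by shuttling between the two Cayley graphs using Lemma~\ref{lem:Haus}(b) and exploiting the slim triangle property in $\Gamma(G, Y\sqcup\H)$, once the boundary Gromov product has been converted to a bound on a Gromov product of actual vertices via the standard four-point inequality.
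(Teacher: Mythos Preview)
Your proof is correct and follows essentially the same strategy as the paper: find vertices $x_\ell\in\alpha$ and $y_k\in\beta$ that are close in $d_{Y\cup\H}$, then invoke Lemma~\ref{lem:penetration in triangle} together with Lemma~\ref{lem: sep coset ordered} to force $\beta$ through $C_1,\dots,C_N$. The only organizational difference is that you fix $x_\ell$ in advance (depending only on $\alpha$ and $N$) and then choose $U$ accordingly, whereas the paper lets the close pair of vertices depend on $\beta$ via Lemma~\ref{lem:gromov prod in Y}, which packages your third-paragraph shuttle argument (Lemma~\ref{lem:Haus}(b), slim triangle, Lemma~\ref{lem:Haus}(b) again) into a single statement. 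One cosmetic point: Proposition~\ref{prop:gromov topo} only guarantees that the sets $\{\eta:(\eta,\xi)_o^{Y\cup\H}>R\}$ form a neighborhood \emph{basis}, not that they are open; the paper handles this by passing to an open subset at the end, and you should do the same.
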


\begin{proof}
    Given $N\in\NN$, define $R\in\NN$ by
    \begin{equation}\label{eq:R in cts}
            R=2(3(\delta_Y+2M_X)+N+2)+1+M_X+2\delta_Y.
    \end{equation}
    Let $\eta \in \partial\Gamma(G,Y\sqcup\H)$ satisfy $(\xi,\eta)_o^{Y\cup\H}> R$ (see \eqref{eq:gromov prod for boundary}) and $\beta$ be a geodesic ray in $\Gamma(G,X\sqcup\H)$ from $o$ such that $Y\-\lim \beta =\eta$. Let $\alpha=(x_0,x_1,\cdots)$ and $\beta=(y_0,y_1,\cdots)$. It's not difficult to see from \eqref{eq:gromov prod for boundary} that $(\xi,\eta)_o^{Y\cup\H}> R$ implies
    \[
    \liminf_{i,j\to\infty} (x_i,y_j)_o^{Y\cup\H} > R-2\delta_Y.
    \]
    Hence, there exists $k\in\NN$ such that $(x_k,y_k)_o^{Y\cup\H} > R-2\delta_Y$. By Lemma \ref{lem:gromov prod in Y} applied to $o,x_k,y_k,\alpha_{[o,x_k]},\beta_{[o,y_k]}$, there exist $\ell,m$ with $\ell,m \le k$ such that $d_{Y\cup\H} (x_\ell,y_m) \le \delta_Y+2M_X$ and $d_{Y\cup\H} (o,x_\ell) \ge R-2\delta_Y-M_X$. This implies, together with \eqref{eq:qi} in Theorem \ref{thm:osin} and \eqref{eq:R in cts},
    \begin{equation} \label{eq:for xell}
        3d_{Y\cup\H} (x_\ell,y_m)+N+2
        \le 3(\delta_Y+2M_X)+N+2
        \le |S(o,x_\ell;D)|.
    \end{equation}
    Let $L=|S(o,x_\ell;D)|$ and $S(o,x_\ell;D)=\{B_1\preceq\cdots\preceq B_L\}$. By Lemma \ref{lem:penetration in triangle}, $\beta_{[y_0,y_m]}$ penetrates $B_n$ for any $n\in \{1,\cdots,N\}$ since \eqref{eq:for xell} implies $3d_{Y\cup\H} (x_\ell,y_m)+1\le L-N$. Meanwhile, suppose $d_{X\cup\H}(o,x_\ell) \le d_{X\cup\H}(o,\alpha_{out}(C_{N+1}))$ for contradiction, then we have $\alpha_{[o,x_\ell]} \subset \alpha_{[o,\alpha_{out}(C_{N+1})]}$. This implies $|S(o,x_\ell;D)| \le |S(o,\alpha_{out}(C_{N+1});D)| \le N+1$ by Lemma \ref{lem:inclusion}. This contradicts that we get $N+2\le|S(o,x_\ell;D)|$ by \eqref{eq:for xell}. Thus, we have $d_{X\cup\H}(o,\alpha_{out}(C_{N+1})) < d_{X\cup\H}(o,x_\ell)$. Hence, we have $B_n=C_n$ for any $n\in\{1,\cdots,N\}$ by Lemma \ref{lem: sep coset ordered}. Thus, $\beta$ penetrates $C_n$ for any $n\in \{1,\cdots,N\}$. Finally, since the set $V=\{\eta\in \partial\Gamma(G,Y\sqcup\H) \mid (\xi,\eta)_o^{Y\cup\H}> R \}$ is a neighborhood of $\xi$, there exists an open set $U$ of $\partial\Gamma(G,Y\sqcup\H)$ such that $\xi\in U$ and $U\subset V$. This $U$ satisfies the statement for $N$.
\end{proof}

Corollary \ref{cor:separating coset of ray} is the final step to extend the notion of Hull-Osin's separating cosets, which is done in Definition \ref{def:sep coset for vertex and boundry points}.

\begin{cor}\label{cor:separating coset of ray}
    Let $o\in G$ and suppose that $\alpha,\beta$ are geodesic rays in $\Gamma(G,X\sqcup\H)$ from $o$ such that $Y\-\lim \alpha = Y\-\lim \beta \in \partial\Gamma(G,Y\sqcup\H)$. Then, we have $S(\alpha;D)=S(\beta;D)$.
\end{cor}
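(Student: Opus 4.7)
The plan is to prove $S(\alpha;D)\subset S(\beta;D)$; the reverse inclusion then follows by exchanging the roles of $\alpha$ and $\beta$. Write $S(\alpha;D)=\{C_1\preceq C_2\preceq\cdots\}$ in the well-order of Definition \ref{def:order on S gamma}. Since $Y\-\lim\alpha=\xi\in\partial\Gamma(G,Y\sqcup\H)$, Lemma \ref{lem:seq to infinity} gives $|S(\alpha;D)|=\infty$, so every $C_n$ admits successors $C_{n+1}$ and $C_{n+2}$ in $S(\alpha;D)$.

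Fix $n\in\NN$ and $C_n\in S(\alpha;D)$. The first step uses the common boundary point $\xi$ to force $\beta$ to penetrate the \emph{next} coset $C_{n+1}$. Apply Proposition \ref{prop:for cts} to $\alpha$ with $n+1$ in the role of $N$: this produces an open neighborhood $U$ of $\xi$ in $\partial\Gamma(G,Y\sqcup\H)$ such that every geodesic ray from $o$ in $\Gamma(G,X\sqcup\H)$ whose $Y$-limit lies in $U$ penetrates each of $C_1,\ldots,C_{n+1}$. Since $Y\-\lim\beta=\xi\in U$, the ray $\beta$ penetrates $C_{n+1}$. Choosing $m\in\NN$ sufficiently large, the geodesic subpath $\beta_{[o,y_m]}$ in $\Gamma(G,X\sqcup\H)$ from $o$ to $y_m$ also penetrates $C_{n+1}$.

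The second step converts this mere penetration of $C_{n+1}$ into essential penetration of $C_n$ by some geodesic to $y_m$. Using $|S(\alpha;D)|=\infty$, choose $\ell\in\NN$ with $d_{X\cup\H}(o,x_\ell)\ge d_{X\cup\H}(o,\alpha_{out}(C_{n+2}))$; Lemma \ref{lem: sep coset ordered} then asserts that the first $n+1$ elements of $S(o,x_\ell;D)$ in the order $\preceq$ are precisely $C_1\preceq\cdots\preceq C_{n+1}$, so in particular $C_{n+1}$ is the $(n+1)$-st element of $S(o,x_\ell;D)$. Applying Lemma \ref{lem:sep coset inclustion} with $x=x_\ell$, $y=y_m$, $q=\beta_{[o,y_m]}$, and $i=n+1$ yields $C_j\in S(o,y_m;D)$ for every $j<n+1$. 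In particular $C_n\in S(o,y_m;D)\subset S(\beta;D)$, which completes the inclusion.

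The main obstacle is that Proposition \ref{prop:for cts} only guarantees that $\beta$ \emph{penetrates}---not essentially---the cosets in $S(\alpha;D)$, whereas membership in $S(\beta;D)$ demands that some geodesic from $o$ to a vertex of $\beta$ \emph{essentially} penetrate the coset, i.e., with relative metric $\hd_\lambda$-distance exceeding $D$. A direct attempt to lower-bound $\hd_\lambda(\beta_{in}(C_n),\beta_{out}(C_n))$ by comparing $\beta_{[o,y_m]}$ to an essentially penetrating geodesic via Lemma \ref{lem:bounded by 3C} or Lemma \ref{lem:bounded by 4C} runs the triangle inequality in the wrong direction, losing an unwanted additive constant. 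The device of looking one coset ahead at $C_{n+1}$ and then invoking Lemma \ref{lem:sep coset inclustion} sidesteps this entirely by producing the essential penetration of $C_n$ as a free consequence of the non-essential penetration of $C_{n+1}$.
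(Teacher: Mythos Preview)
Your proof is correct and follows essentially the same approach as the paper: use Proposition \ref{prop:for cts} to force $\beta$ to penetrate $C_{n+1}$, then invoke Lemma \ref{lem:sep coset inclustion} against a suitable $S(o,x_\ell;D)$ to upgrade this to $C_n\in S(o,y_m;D)\subset S(\beta;D)$. The paper's version is marginally shorter because it takes $k$ with $\{C_1,\ldots,C_{n+1}\}\subset S(o,x_k;D)$ directly from the definition of $S(\alpha;D)$ rather than going through Lemma \ref{lem: sep coset ordered} and $C_{n+2}$, but the logic is identical.
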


\begin{proof}
    Let $\alpha=(x_0,x_1,\cdots)$, $\beta=(y_0,y_1,\cdots)$, and $S(\alpha;D)=\{C_1 \preceq C_2\preceq\cdots\}$. By Proposition \ref{prop:for cts}, $\beta$ penetrates $C_n$ for any $n\in\NN$ since $Y\-\lim \beta$ is obviously contained in any open neighborhood of $Y\-\lim \alpha$. For any $n\in\NN$, there exists $k\in \NN$ such that $\{C_1, \cdots,C_{n+1}\} \subset S(o,x_k;D)$. Let $\ell\in\NN$ satisfy $d_{X\cup\H}(o,\beta_{out}(C_{n+1}))\le d_{X\cup\H}(o,y_\ell)$. By Lemma \ref{lem:sep coset inclustion}, we have $\{C_1, \cdots,C_n\} \subset S(o,y_\ell;D)\subset S(\beta;D)$. This implies $S(\alpha;D) \subset S(\beta;D)$. Similarly, we can also see $S(\beta;D) \subset S(\alpha;D)$.
\end{proof}

\begin{defn}\label{def:sep coset for vertex and boundry points}
    Given $x\in G$ and $\xi \in \partial\Gamma(G,Y\sqcup\H)$, we take a geodesic ray $\gamma$ in $\Gamma(G,X\sqcup\H)$ satisfying $\gamma_-=x$ and $Y\-\lim \gamma=\xi$ and define the set of cosets $S(x,\xi;D)$ by
    \[
    S(x,\xi;D)=S(\gamma;D).
    \]
    We call an element of $S(x,\xi;D)$ a $(x,\xi;D)$-\emph{separating coset}.
\end{defn}

\begin{rem}
    By Proposition \ref{prop:existence} and Corollary \ref{cor:separating coset of ray}, Definition \ref{def:sep coset for vertex and boundry points} is well-defined, that is, $\gamma$ above exists and $S(x,\xi;D)$ doesn't depend on $\gamma$. Also, $S(x,\xi;D)$ is exactly the set of all cosets that are essentially penetrated by some geodesic ray $\gamma$ in $\Gamma(G,X\sqcup\H)$ from $x$ with $Y\-\lim \gamma=\xi$.
\end{rem}

We will next show in Corollary \ref{cor:for hyperfinite} that two geodesic rays in $\Gamma(G,X\sqcup\H)$ having the same limit in $\partial\Gamma(G,Y\sqcup\H)$ penetrate the same separating cosets after going far enough. This will play an important role to show hyperfiniteness in the proof of Theorem \ref{thm:main}. We begin with an auxiliary lemma. Lemma \ref{lem:well-known} is a well-known fact, but we write down a sketch of the proof for completeness.

\begin{lem}\label{lem:well-known}
    Suppose that $\Gamma$ is a connected graph, $x$ is a vertex, and $\beta=(y_0,y_1,\cdots)$ is a geodesic ray. Then, there exists $k\in\NN$ such that for any geodesic path $p$ from $x$ to $y_k$, the path $p\beta_{[y_k,\infty)}$ is a geodesic ray.
\end{lem}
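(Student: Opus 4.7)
The plan is to track how the quantity $d_\Gamma(x,y_n)-n$ behaves as $n$ increases, show it stabilizes, and take $k$ to be a stabilization index. The concatenation $p\beta_{[y_k,\infty)}$ will then automatically be geodesic because its length matches the distance to every vertex it hits.

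First, set $a_n=d_\Gamma(x,y_n)-n$ for $n\in\NN\cup\{0\}$. Since $(y_n,y_{n+1})$ is an edge, the triangle inequality gives $d_\Gamma(x,y_{n+1})\le d_\Gamma(x,y_n)+1$, so $a_{n+1}\le a_n$; thus $(a_n)$ is non-increasing. On the other hand, because $\beta$ is geodesic, $d_\Gamma(x,y_n)\ge d_\Gamma(y_0,y_n)-d_\Gamma(x,y_0)=n-d_\Gamma(x,y_0)$, giving $a_n\ge -d_\Gamma(x,y_0)$. A non-increasing $\ZZ$-valued sequence that is bounded below must eventually be constant, so there exists $k\in\NN$ with $a_n=a_k$ for all $n\ge k$. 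Equivalently,
\[
d_\Gamma(x,y_n)=d_\Gamma(x,y_k)+(n-k)\quad\text{for all }n\ge k.
\]

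Now fix any geodesic path $p$ from $x$ to $y_k$, and let $\ell=d_\Gamma(x,y_k)=|p|$. Write $p=(z_0,z_1,\ldots,z_\ell)$ with $z_0=x$ and $z_\ell=y_k$, and consider the concatenated path $q=p\beta_{[y_k,\infty)}=(z_0,\ldots,z_\ell=y_k,y_{k+1},y_{k+2},\ldots)$; I need to check $q$ is a geodesic ray, i.e.\ that distances in $\Gamma$ between any two vertices of $q$ equal the distance between their positions in $q$. The cases where both vertices lie in $p$ or both lie in $\beta_{[y_k,\infty)}$ are immediate from $p$ and $\beta$ being geodesic. For a mixed pair $z_i$ and $y_{k+j}$, whose positions in $q$ are $i$ and $\ell+j$, the upper bound $d_\Gamma(z_i,y_{k+j})\le(\ell-i)+j$ follows by passing through $y_k$, while the lower bound follows from the stabilization identity: $\ell+j=d_\Gamma(x,y_{k+j})\le d_\Gamma(x,z_i)+d_\Gamma(z_i,y_{k+j})=i+d_\Gamma(z_i,y_{k+j})$, so $d_\Gamma(z_i,y_{k+j})\ge \ell+j-i$.

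No real obstacle arises; the only subtlety is ensuring that after stabilization the concatenated path is geodesic not merely up to each $y_n$ but between any two of its vertices, and this is handled by the mixed-pair computation above.
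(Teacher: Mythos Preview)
Your proof is correct and follows essentially the same approach as the paper: your sequence $a_n=d_\Gamma(x,y_n)-n$ is precisely the negative of the paper's $f(n)=n-d_\Gamma(y_n,x)$, so both arguments track the same eventually-constant integer sequence. If anything, your version is more complete, since the paper only sketches the final step while you explicitly verify the mixed-pair distances in the concatenation.
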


\begin{proof}
    Define the map $f \colon \NN\to\ZZ$ by $f(n)=d_\Gamma(y_n,y_0)-d_\Gamma(y_n,x)$. Since $\beta$ is geodesic, we have $f(n)=n-d_\Gamma(y_n,x), \forall n\in\NN$. This and $|d_\Gamma(y_{n+1},x)-d_\Gamma(y_n,x)|\le 1, \forall n\in\NN$ imply that $f$ is non-decreasing. The map $f$ is also bounded above by $d_\Gamma(y_0,x)$. Hence, there exists $k\in\NN$ such that $f(n)=f(k)$ for any $n$ with $n\ge k$. Since this implies $d_\Gamma(y_n,x)=d_\Gamma(y_n,y_k)+d_\Gamma(y_k,x)$ for any $n$ with $n\ge k$, this $k$ satisfies the statement.
\end{proof}

\begin{figure}[htbp]
  % Requires \usepackage{graphicx}
  \begin{center}
 \hspace{0mm} 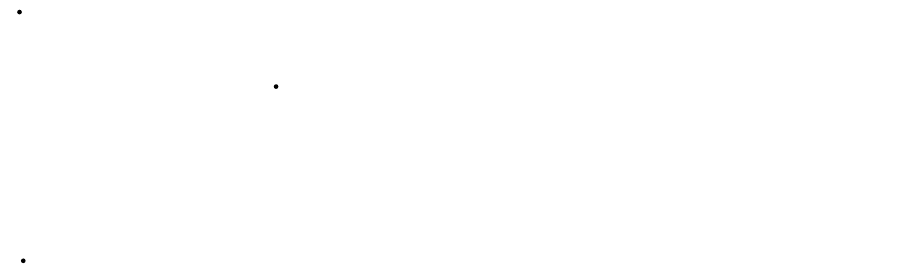
  \end{center}
   \vspace{-3mm}
  \caption{Proof of Corollary \ref{cor:for hyperfinite}}
  \label{Cor3_28}
\end{figure}

\begin{cor} \label{cor:for hyperfinite}
    Suppose that $\alpha,\beta$ are geodesic rays in $\Gamma(G,X\sqcup\H)$ that converge to infinity in $\Gamma(G,Y\sqcup\H)$ and let $S(\alpha;D)=\{C_1\preceq C_2\preceq\cdots\}$. If $Y\-\lim\alpha=Y\-\lim\beta$ in $\partial\Gamma(G,Y\sqcup\H)$, then there exists $N\in\NN$ such that for any $n\ge N$, $\beta$ penetrates $C_n$ satisfying $d_{X\cup\H}(\beta_-,C_n)<d_{X\cup\H}(\beta_-,C_{n+1})$ and we have, when $C_n$ is an $H_\lambda$-coset,
    \begin{align} \label{eq:for hyperfinite}
        \hd_{\lambda}(\alpha_{in}(C_n), \beta_{in}(C_n)) \le 4C
        {\rm ~~~and~~~}
        \hd_{\lambda}(\alpha_{out}(C_n), \beta_{out}(C_n)) \le 4C.
    \end{align}
\end{cor}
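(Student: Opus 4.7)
My plan is to reduce to Lemma~\ref{lem:bounded by 4C}, which requires a common basepoint, by splicing $\beta$'s tail onto a geodesic issued from $\alpha_-$. Writing $\beta = (y_0, y_1, \ldots)$, I will first invoke Lemma~\ref{lem:well-known} in $\Gamma(G,X\sqcup\H)$ to obtain $k \in \NN$ such that for any geodesic $p$ from $\alpha_-$ to $y_k$ the concatenation $\gamma := p \cdot \beta_{[y_k, \infty)}$ is a genuine geodesic ray from $\alpha_-$. Since $\gamma$ and $\beta$ agree past $y_k$, $Y\-\lim \gamma = Y\-\lim \beta = Y\-\lim \alpha$, so Corollary~\ref{cor:separating coset of ray} applied to the rays $\alpha$ and $\gamma$ at the common basepoint $\alpha_-$ gives $S(\gamma;D) = S(\alpha;D) = \{C_1 \preceq C_2 \preceq \cdots\}$; the two linear orders agree because each is defined by $d_{X\cup\H}(\alpha_-, \cdot)$ via Lemma~\ref{lem:distance of S gamma}.

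Because $\alpha$ converges to infinity in $\Gamma(G,Y\sqcup\H)$, Lemma~\ref{lem:seq to infinity} yields $|S(\alpha;D)| = \infty$, and hence $d_{X\cup\H}(\alpha_-, C_n) \to \infty$ by Lemma~\ref{lem:distance of S gamma}. I will then pick $N \ge 2$ with $d_{X\cup\H}(\alpha_-, C_{N-1}) > d_{X\cup\H}(\alpha_-, y_k)$, so that for every $n \ge N-1$ the coset $C_n$ is penetrated by $\gamma$ strictly inside the subpath $\beta_{[y_k, \infty)}$; consequently $\gamma_{in}(C_n) = \beta_{in}(C_n)$ and $\gamma_{out}(C_n) = \beta_{out}(C_n)$, and in particular $\beta$ penetrates each such $C_n$. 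Since $\beta$ hits $C_{N-1}, C_N, C_{N+1}, \ldots$ in that order along its tail, applying Lemma~\ref{lem:distance of S gamma} to $\beta$ yields $d_{X\cup\H}(\beta_-, C_n) < d_{X\cup\H}(\beta_-, C_{n+1})$ for all $n \ge N$.

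Finally, for each $n \ge N$, the rays $\alpha$ and $\gamma$ share the basepoint $\alpha_-$ and both penetrate the distinct cosets $C_{n-1}$ and $C_n$ with $d_{X\cup\H}(\alpha_-, C_{n-1}) < d_{X\cup\H}(\alpha_-, C_n)$, so Lemma~\ref{lem:bounded by 4C} applied to the pair $(C_{n-1}, C_n)$ delivers $\hd_\lambda(\alpha_{in}(C_n), \gamma_{in}(C_n)) \le 4C$, which is the entrance bound after substituting $\gamma_{in}(C_n) = \beta_{in}(C_n)$; the analogous application to the pair $(C_n, C_{n+1})$ gives $\hd_\lambda(\alpha_{out}(C_n), \beta_{out}(C_n)) \le 4C$. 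The only real difficulty is bookkeeping: guaranteeing that $\alpha$ and $\gamma$ order the cosets $C_{n-1}, C_n, C_{n+1}$ identically along the way, which is precisely what sharing the basepoint $\alpha_-$ buys us via the order $\preceq$ on $S(\alpha;D) = S(\gamma;D)$.
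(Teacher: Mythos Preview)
Your proof is correct and follows essentially the same route as the paper: splice $\beta$'s tail onto a geodesic from $\alpha_-$ via Lemma~\ref{lem:well-known}, invoke Corollary~\ref{cor:separating coset of ray} to identify $S(\gamma;D)=S(\alpha;D)$, choose $N$ so that all $C_n$ with $n\ge N-1$ are penetrated past $y_k$, and then apply Lemma~\ref{lem:bounded by 4C} to consecutive pairs $(C_{n-1},C_n)$ and $(C_n,C_{n+1})$. The one cosmetic difference is that Lemma~\ref{lem:bounded by 4C} does \emph{not} actually require a common basepoint, so the paper applies it directly to $\alpha$ and $\beta$ (after establishing the $\beta$-ordering) rather than to $\alpha$ and $\gamma$ followed by the substitution $\gamma_{in/out}(C_n)=\beta_{in/out}(C_n)$; both variants are equally valid.
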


\begin{proof}
    Let $\beta=(y_0,y_1,\cdots)$. By applying Lemma \ref{lem:well-known} to $\Gamma(G,X\sqcup\H)$, there exist $k\in\NN$ and a geodesic path $p$ in $\Gamma(G,X\sqcup\H)$ from $\alpha_-$ to $y_k$ such that the path $\gamma$ defined by $\gamma=p\beta_{[y_k,\infty)}$ is a geodesic ray in $\Gamma(G,X\sqcup\H)$. By $\gamma_{[y_k,\infty)}=\beta_{[y_k,\infty)}$, we have $Y\-\lim \gamma=Y\-\lim \beta=Y\-\lim\alpha$. This implies $S(\gamma;D)=S(\alpha;D)$ by Corollary \ref{cor:separating coset of ray}. Hence, $\gamma$ penetrates $C_n$ for any $n\in\NN$ by Lemma \ref{lem:geod ray penetrate}. Note that $|S(\gamma;D)|=\infty$ implies $\lim_{n\to\infty} d_{X\cup\H}(\gamma_-, \gamma_{in}(C_n))=\infty$. Hence, there exists $N\ge 2$ such that $d_{X\cup\H}(\gamma_-, \gamma_{in}(C_{N-1}))>d_{X\cup\H}(\gamma_-, y_k)$. By $\gamma_{[y_k,\infty)}=\beta_{[y_k,\infty)}$, the path $\beta$ penetrates $C_n$ for any $n\ge N-1$. Let $n\ge N$ and $C_n$ be an $H_\lambda$-coset. By $C_{n-1}\preceq C_n$ and $\gamma_-=\alpha_-$, we have $d_{X\cup\H}(\gamma_-, C_{n-1})<d_{X\cup\H}(\gamma_-, C_n)$. This implies
    \begin{align*}
        d_{X\cup\H}(\beta_-, C_n)-d_{X\cup\H}(\beta_-, C_{n-1})
        &=d_{X\cup\H}(y_k, \beta_{in}(C_n))-d_{X\cup\H}(y_k, \beta_{in}(C_{n-1})) \\
        &=d_{X\cup\H}(y_k, \gamma_{in}(C_n))-d_{X\cup\H}(y_k, \gamma_{in}(C_{n-1})) \\
        &=d_{X\cup\H}(\gamma_-, C_n)-d_{X\cup\H}(\gamma_-, C_{n-1})
        >0.
    \end{align*}
    Hence, we have $\hd_{\lambda}(\alpha_{in}(C_n), \beta_{in}(C_n)) \le 4C$ by applying Lemma \ref{lem:bounded by 4C} to $C_{n-1},C_n,\alpha,\beta$. Similarly, we have $\hd_{\lambda}(\alpha_{out}(C_n), \beta_{out}(C_n)) \le 4C$ by applying Lemma \ref{lem:bounded by 4C} to $C_n,C_{n+1},\alpha,\beta$.
\end{proof}

Finally, we show that if the limit points in $\partial\Gamma(G,X\sqcup\H)$ of geodesic rays in $\Gamma(G,X\sqcup\H)$ are convergent, then their limit points in $\partial\Gamma(G,Y\sqcup\H)$ are also convergent. Proposition \ref{prop:cts from X to Y} can be considered as opposite to Proposition \ref{prop:for cts}. This will become clear in Proposition \ref{prop:homeo}.

\begin{prop}\label{prop:cts from X to Y}
Let $o\in G$ and suppose that $\alpha$ is a geodesic ray in $\Gamma(G,X\sqcup\H)$ from $o$ converging to infinity in $\Gamma(G,Y\sqcup\H)$. For any open neighborhood $U$ of $Y\-\lim \alpha$ in $\partial\Gamma(G,Y\sqcup\H)$, there exists an open neighborhood $V$ of $X\-\lim \alpha$ in $\partial\Gamma(G,X\sqcup\H)$ such that if a geodesic ray $\beta$ in $\Gamma(G,X\sqcup\H)$ from $o$ converging to infinity in $\Gamma(G,Y\sqcup\H)$ satisfies $X\-\lim \beta \in V$, then we have $Y\-\lim \beta \in U$.
\end{prop}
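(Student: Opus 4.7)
The plan is to translate a neighborhood condition in $\partial\Gamma(G,X\sqcup\H)$ into fellow-travelling of geodesic rays in $\Gamma(G,X\sqcup\H)$, then apply Lemma \ref{lem:estimate gromov prod in Y} to convert the fellow-travelling into a lower bound on the $Y\sqcup\H$-Gromov product of the boundary limits. Write $\alpha=(x_0,x_1,\dots)$ and set $\xi_Y:=Y\-\lim\alpha$, $\xi_X:=X\-\lim\alpha$. By Proposition \ref{prop:gromov topo}, choose $R\in\NN$ with $U(o,\xi_Y,R)\subset U$. Since $\alpha$ converges to infinity in $\Gamma(G,Y\sqcup\H)$, Lemma \ref{lem:seq to infinity}(v) provides $N\in\NN$ with $d_{Y\cup\H}(o,x_N)\ge R+\delta_X+3M_X+2\delta_Y+1$. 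Define $V:=U(o,\xi_X,N+2\delta_X+1)$, an open neighborhood of $\xi_X$ in $\partial\Gamma(G,X\sqcup\H)$.

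Let $\beta=(y_0,y_1,\dots)$ be a geodesic ray from $o$ in $\Gamma(G,X\sqcup\H)$ converging to infinity in $\Gamma(G,Y\sqcup\H)$ with $\eta_X:=X\-\lim\beta\in V$. The first key step is to show $d_{X\cup\H}(x_N,y_N)\le\delta_X$. Using \eqref{eq:gromov prod for boundary}, pick representing sequences $(a_k)\to\xi_X$ and $(b_\ell)\to\eta_X$ converging to infinity in $\Gamma(G,X\sqcup\H)$ with $\liminf_{k,\ell\to\infty}(a_k,b_\ell)_o^{X\cup\H}>N+2\delta_X+1$. Since $(x_i)\sim(a_k)$ and $(y_j)\sim(b_\ell)$ as sequences in $\Gamma(G,X\sqcup\H)$ (both converge to the same boundary point), the Gromov products $(x_i,a_k)_o^{X\cup\H}$ and $(y_j,b_\ell)_o^{X\cup\H}$ tend to infinity. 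A double application of Proposition \ref{prop:hyp sp}(2) yields
\[
(x_i,y_j)_o^{X\cup\H}\ge\min\{(x_i,a_k)_o^{X\cup\H},\,(a_k,b_\ell)_o^{X\cup\H},\,(b_\ell,y_j)_o^{X\cup\H}\}-2\delta_X,
\]
so choosing $k,\ell$ growing suitably with $i,j$ gives $\liminf_{i,j\to\infty}(x_i,y_j)_o^{X\cup\H}\ge N+1$. In particular, for all sufficiently large $i,j\ge N$ we have $(x_i,y_j)_o^{X\cup\H}\ge N=d_{X\cup\H}(o,x_N)=d_{X\cup\H}(o,y_N)$, and Proposition \ref{prop:hyp sp}(1) applied to the geodesics $\alpha_{[o,x_i]},\beta_{[o,y_j]}$ in $\Gamma(G,X\sqcup\H)$ with $a=x_N$, $b=y_N$ forces $d_{X\cup\H}(x_N,y_N)\le\delta_X$, as claimed.

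By Lemma \ref{lem:Haus}(a), $X\subset Y$, whence $d_{Y\cup\H}(x_N,y_N)\le d_{X\cup\H}(x_N,y_N)\le\delta_X$. For each sufficiently large $i,j\ge N$, Lemma \ref{lem:estimate gromov prod in Y} applied to the geodesics $\alpha_{[o,x_i]},\beta_{[o,y_j]}$ in $\Gamma(G,X\sqcup\H)$ with $v=x_N$, $w=y_N$, and constants $d_{Y\cup\H}(o,x_N),\,K=\delta_X$ gives
\[
(x_i,y_j)_o^{Y\cup\H}\ge d_{Y\cup\H}(o,x_N)-(\delta_X+3M_X)-2\delta_Y\ge R+1.
\]
Taking $\liminf$ and invoking \eqref{eq:gromov prod for boundary} for the sequences $(x_n)_{n=1}^\infty,(y_n)_{n=1}^\infty$, which converge to infinity in $\Gamma(G,Y\sqcup\H)$ by hypothesis, yields $(\xi_Y,Y\-\lim\beta)_o^{Y\cup\H}\ge R+1>R$, so $Y\-\lim\beta\in U(o,\xi_Y,R)\subset U$, as desired.

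The main step requiring care is the first one: passing from the boundary-Gromov-product condition $(\xi_X,\eta_X)_o^{X\cup\H}>N+2\delta_X+1$, which by definition is a supremum over representing sequences, to a uniform $\liminf$ lower bound on $(x_i,y_j)_o^{X\cup\H}$ for the specific sequences coming from the rays $\alpha,\beta$. This is a routine hyperbolic thin-quadrilateral argument via Proposition \ref{prop:hyp sp}(2) combined with the equivalence-class description of $\partial\Gamma(G,X\sqcup\H)$, but needs to be spelled out carefully. Once this is in hand, everything else is a direct combination of the tools already developed.
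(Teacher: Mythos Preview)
Your proof is correct and follows essentially the same approach as the paper: choose $N$ so that $d_{Y\cup\H}(o,x_N)$ is large, use the $X$-boundary closeness to force $d_{X\cup\H}(x_N,y_N)\le\delta_X$, then apply Lemma~\ref{lem:estimate gromov prod in Y} to bound the $Y$-Gromov product from below. The only cosmetic difference is that you spell out the ``$\liminf_{i,j}(x_i,y_j)_o^{X\cup\H}>N$'' step via Proposition~\ref{prop:hyp sp}(2), which the paper leaves as ``not difficult to see''; one minor caveat is that the sets $U(o,\xi,n)$ of Proposition~\ref{prop:gromov topo} are stated only to form a neighborhood basis, not necessarily to be open, so strictly speaking you should take $V$ to be an open set contained in $U(o,\xi_X,N+2\delta_X+1)$ rather than that set itself.
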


\begin{proof}
    Let $\alpha=(x_0,x_1,\cdots)$. For any open neighborhood $U$ of $Y\-\lim \alpha$ in $\partial\Gamma(G,Y\sqcup\H)$, there exists $R\in\NN$ such that 
    \begin{equation} \label{eq:open U}
        \{ \eta \in \partial\Gamma(G,Y\sqcup\H) \mid (Y\-\lim \alpha, \eta)_o^{Y\cup\H}\ge R \}\subset U
    \end{equation}
    by Proposition \ref{prop:gromov topo}. We define $R_1=R+\delta_X+3M_X+2\delta_Y$. Since we have $\lim_{n\to\infty}d_{Y\cup\H}(o,x_n)=\infty$ by Lemma \ref{lem:seq to infinity}, there exists $N\in\NN$ such that
    \[
    d_{Y\cup\H}(o,x_{N}) \ge R_1.
    \]
    By Proposition \ref{prop:gromov topo}, there exists an open neighborhood $V$ of $X\-\lim \alpha$ in $\partial\Gamma(G,X\sqcup\H)$ such that
    \begin{equation}\label{eq:open V}
        V \subset \{ \eta' \in \partial\Gamma(G,X\sqcup\H) \mid (X\-\lim \alpha, \eta')_o^{X\cup\H}>N+2\delta_X \}.
    \end{equation}
    We show that $V$ satisfies the statement. Let $\beta=(y_0,y_1,\cdots)$ be a geodesic ray in $\Gamma(G,X\sqcup\H)$ from $o$ converging to infinity in $\Gamma(G,Y\sqcup\H)$ such that $X\-\lim \beta \in V$. By \eqref{eq:open V}, it's not difficult to see $\liminf_{i,j\to\infty}(x_i,y_j)_o^{X\cup\H}>N$. Since $\alpha$ and $\beta$ are geodesic rays in $\Gamma(G,X\sqcup\H)$ from $o$, this implies
    \[
    d_{Y\cup\H}(x_N,y_N) \le d_{X\cup\H}(x_N,y_N) \le \delta_X.
    \]
    Hence, for any $n,m\ge N$, we have $(x_{n},y_{m})_o^{Y\cup\H} \ge R_1-(\delta_X+3M_X)-2\delta_Y=R$ by applying Lemma \ref{lem:estimate gromov prod in Y} to $\alpha_{[o,x_n]}, \beta_{[o,y_m]},x=x_n,y=y_m,v=x_N,w=y_N$. This implies
    \[
    (Y\-\lim \alpha,Y\-\lim \beta)_o^{Y\cup\H} \ge \liminf_{i,j\to\infty} (x_i,y_j)_o^{Y\cup\H} \ge R.
    \]
    Hence, we have $Y\-\lim \beta \in U$ by \eqref{eq:open U}.
\end{proof}

\section{Proof of main theorem} \label{sec:main thm}
In this section, we prove Theorem \ref{thm:main}. We follow the approach of \cite{NV}, where they gave another proof of the fact that the action of any hyperbolic group on its Gromov boundary is hyperfinite. Their approach goes as follows. Given a hyperbolic group $G$ with a finite symmetric generating set $S$, we fix a well-order on $S$. This order induces the lexicographic order $\lelex$ on $S^\NN$, which enables us to pick for $\xi\in\partial G$, a geodesic ray from $1$ to $\xi$ in the Cayley graph $\Gamma(G,S)$ whose label is the minimum in $(S^\NN,\lelex)$ among all such geodesic rays. This defines an injective Borel measurable map from $\partial G$ to $S^\NN$ that Borel reduces a finite index subequivalence relation of the orbit equivalence relation $E_G^{\partial G}$ to the tail equivalence relation $E_t(S)$, thereby hyperfiniteness of $E_t(S)$ implies hyperfiniteness of $E_G^{\partial G}$.

We first verify in Lemma \ref{lem:Gromov boundary is Polish} that the Gromov boundary is a Polish space (see Definition \ref{def:Polish}). Actually, we can show slightly more. It's interesting to know whether the statement of Lemma \ref{lem:Gromov boundary is Polish} holds for any completely metrizable hyperbolic space, that is, whether $S\cup\partial S$ is completely metrizable for any completely metrizable hyperbolic space $S$. For a graph $\Gamma$, we denote its vertex set by $V(\Gamma)$. The proof below uses that $V(\Gamma)$ is discrete.

\begin{lem}\label{lem:Gromov boundary is Polish}
For any hyperbolic graph $\Gamma$, the topological space $V(\Gamma)\cup \partial \Gamma$ is completely metrizable. If in addition $V(\Gamma)$ is countable, then $V(\Gamma)\cup \partial \Gamma$ is Polish.
\end{lem}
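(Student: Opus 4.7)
The plan is to build a complete metric on $V(\Gamma)\cup\partial\Gamma$ directly from the map $D$ produced by Proposition \ref{prop:visual metric}, applied to $S=\Gamma$ with some fixed basepoint $o\in V(\Gamma)$. By Remark \ref{rem:D not metric}, $D$ is not a metric on $\Gamma\cup\partial\Gamma$, but I want to upgrade it to an honest metric $\rho$ on $V(\Gamma)\cup\partial\Gamma$, verify that it induces the correct topology, and check completeness.

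First I would check positivity: for distinct $x,y \in V(\Gamma)\cup\partial\Gamma$, one has $D(x,y) > 0$. For two boundary points this is exactly Remark \ref{rem:D not metric}; in every other case at least one point, say $x$, is a vertex, and the bound $(x,y)_o^\Gamma \le d_\Gamma(o,x) < \infty$ (for $y \in \partial\Gamma$ this uses $(x,z)_o^\Gamma \le d_\Gamma(o,x)$ for each $z$ in a defining sequence of $y$) gives $D(x,y) \ge (1-2\e')e^{-\e d_\Gamma(o,x)} > 0$ via Proposition \ref{prop:visual metric}(iii). Setting $\rho(x,x)=0$ and $\rho(x,y)=D(x,y)$ for $x \ne y$ then defines a metric: symmetry is immediate, and the triangle inequality reduces to Proposition \ref{prop:visual metric}(ii) by a short case analysis. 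To see that $\rho$ matches the subspace topology of $V(\Gamma)\cup\partial\Gamma$ inside $\Gamma\cup\partial\Gamma$: the positivity estimate shows that at each vertex $v$, $\{v\}$ is $\rho$-open (with any radius less than $(1-2\e')e^{-\e d_\Gamma(o,v)}$), matching the fact that the $d_\Gamma$-ball of radius $1/2$ around $v$ is an $\mathcal{O}_\Gamma$-neighborhood of $v$ that meets $V(\Gamma)\cup\partial\Gamma$ only at $v$; at each $\xi \in \partial\Gamma$, the two inequalities of Proposition \ref{prop:visual metric}(iii) render the $\rho$-balls $\{y : D(\xi,y)<r\}$ and the neighborhoods $U(o,\xi,n)\cap(V(\Gamma)\cup\partial\Gamma)$ of Proposition \ref{prop:gromov topo} mutually cofinal, via $n \asymp -\tfrac{1}{\e}\log r$.

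For completeness, let $(x_n)$ be $\rho$-Cauchy. If some $z$ occurs infinitely often as a term, the Cauchy condition forces $\rho(x_n,z) \to 0$. Otherwise, after passing to a subsequence of pairwise distinct terms, $D(x_n,x_m) \to 0$. For each $n$ I pick a vertex $y_n$ with $D(y_n,x_n) \to 0$: if $x_n$ is itself a vertex then $d_\Gamma(o,x_n) \to \infty$ (forced by $(x_n,x_m)_o^\Gamma \le d_\Gamma(o,x_n)$ and $D(x_n,x_m) \to 0$), so $D(x_n,x_n) \le e^{-\e d_\Gamma(o,x_n)} \to 0$ and $y_n = x_n$ works; if $x_n \in \partial\Gamma$ I pick $y_n$ from a defining sequence of $x_n$, replacing interior points by adjacent vertices (which shifts Gromov products by at most $1/2$). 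Then $D(y_n,y_m) \to 0$ by the triangle inequality, so $(y_n,y_m)_o^\Gamma \to \infty$ by Proposition \ref{prop:visual metric}(iii), hence $(y_n)$ converges to infinity in $\Gamma$ and defines some $\xi \in \partial\Gamma$. Using $(y_j)$ as a representative sequence, $(y_n,\xi)_o^\Gamma \ge \liminf_j(y_n,y_j)_o^\Gamma \to \infty$ as $n \to \infty$, hence $D(y_n,\xi) \to 0$ and $\rho(x_n,\xi) \le D(x_n,y_n) + D(y_n,\xi) \to 0$.

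Finally, when $V(\Gamma)$ is countable, $V(\Gamma)$ is dense in $V(\Gamma)\cup\partial\Gamma$, since every $\xi \in \partial\Gamma$ is the Gromov-limit, hence by the topology comparison the $\rho$-limit, of any vertex-valued defining sequence; so the space is separable and, being completely metrizable, is Polish. The main technical hurdle is ensuring that $\rho$, assembled from a non-metric $D$, is simultaneously a metric, induces the correct topology, and is complete; all three properties rest on the single observation that $(v,y)_o^\Gamma \le d_\Gamma(o,v)$ whenever $v$ is a vertex.
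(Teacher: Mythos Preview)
Your proof is correct and follows essentially the same approach as the paper: you define the same metric $\rho$ (the paper calls it $\tD$) by zeroing out $D$ on the diagonal, verify it induces the right topology using discreteness of $V(\Gamma)$ and the visual-metric estimates, and prove completeness by approximating a Cauchy sequence with vertices and showing the Gromov products go to infinity. The only differences are organizational---you split cases on the $x_n$ before choosing the approximating vertices $y_n$, while the paper chooses $y_n$ first by density and then splits on whether $(y_n)$ has a constant subsequence---and that you spell out the positivity and triangle-inequality verifications that the paper leaves implicit.
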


\begin{proof}
Fix a vertex $o\in \Gamma$ and take the map $D \colon (\Gamma\cup\partial\Gamma)^2 \to [0,\infty)$ and constants $\e,\e'>0$ as in Proposition \ref{prop:visual metric}. We define the map $\tD \colon (V(\Gamma)\cup\partial\Gamma)^2 \to [0,\infty)$ by $\tD(x,y)=D(x,y)$ if $x\neq y$ and $\tD(x,y)=0$ if $x=y$. By Proposition \ref{prop:visual metric}, it's not difficult to see that $\tD$ is a metric and the metric topology of $\tD$ coincides with the relative topology of $\mathcal{O}_\Gamma$ on $V(\Gamma)\cup \partial \Gamma$ (see Proposition \ref{prop:gromov topo}). Here, we used discreteness of $(V(\Gamma), \mathcal{O}_\Gamma|_{V(\Gamma)})$ since the metric topology of $\tD$ on $V(\Gamma)$ is discrete by Remark \ref{rem:D not metric}. We will show that $\tD$ is complete. Let $(x_n)_{n=1}^\infty$ be a Cauchy sequence of $(V(\Gamma)\cup\partial\Gamma,\tD)$. Since $V(\Gamma)$ is dense in $V(\Gamma)\cup\partial\Gamma$, we can take for each $n\in \NN$, a vertex $y_n\in V(\Gamma)$ such that $\tD(x_n,y_n)\le \frac{1}{n}$. The sequence $(y_n)_{n=1}^\infty$ in $V(\Gamma)$ is also a Cauchy sequence in the metric $\tD$ and it's enough to show that $(y_n)_{n=1}^\infty$ is convergent. If there exists a constant subsequence $(y_{n_k})_{k=1}^\infty$ (i.e. $y_{n_k}\equiv y$ for some $y\in V(\Gamma)$), then $(y_n)_{n=1}^\infty$ converges to $y$ in $\tD$. If there is no constant subsequence of $(y_n)_{n=1}^\infty$, then there exists a subsequence $(y_{n_k})_{k=1}^\infty$ whose elements are all distinct. This implies $\tD(y_{n_k},y_{n_\ell})=D(y_{n_k},y_{n_\ell})$ for any $k\neq\ell$. Since this implies $\lim_{k,\ell\to\infty} (y_{n_k},y_{n_\ell})_o^\Gamma=\infty$, the sequence $(y_{n_k})_{k=1}^\infty$ converges to some $y \in \partial\Gamma$ in $\mathcal{O}_\Gamma$. Hence, $(y_n)_{n=1}^\infty$ converges to $y$ as well. Thus, $V(\Gamma)\cup \partial \Gamma$ is completely metrizable with $\tD$. If $V(\Gamma)$ is countable, $V(\Gamma)$ is a countable dense subset of $V(\Gamma)\cup \partial \Gamma$. Hence, $V(\Gamma)\cup \partial \Gamma$ is Polish.
\end{proof}

From here up to Lemma \ref{lem:Phi cts}, suppose that $G$ is a countable group, $X$ is a subset of $G$, and $\{H_\lambda\}_{\lambda\in\Lambda}$ is a countable collection of subgroups of $G$ hyperbolically embedded in $(G,X)$. Let $C>0$ as in Proposition \ref{prop:C} and fix $D>0$ satisfying $D\ge 3C$ as in \eqref{eq:D>3C}. We also define the subset $Y$ of $G$ by $Y=\{y \in G \mid S(1,y;D)= \emptyset\}$ as in Theorem \ref{thm:osin}. The difference from Section \ref{sec:Gromov} is that we assume $G$ and $\Lambda$ are countable.

Since $X\sqcup\H$ is countable, we fix a well-order $\le$ on $X\sqcup\H$ by some injection $X\sqcup\H \inj \NN$. The lexicographic order $\lelex$ on $(X\sqcup\H)^\NN$ is naturally defined from the order $\le$ on $X\sqcup\H$, that is, for $w_0=(s_1,s_2,\cdots), w_1=(t_1,t_2,\cdots) \in (X\sqcup\H)^\NN$, we have
\[
w_0 <_{\rm lex} w_1 \iff \exists n\in\NN {\rm ~s.t.~} \left(\bigwedge_{i<n} s_i=t_i \right) \wedge s_n<t_n.
\]
Similarly, we define the lexicographic order $\lelex$ on $(X\sqcup\H)^n$ for each $n\in\NN$.
Note that $(X\sqcup\H)^\NN$ becomes a Polish space with the product topology as mentioned in Section \ref{subsec:descriptiveset theory}.

As suggested at the beginning of this section, the important step of the proof of Theorem \ref{thm:main} is for a boundary point $\xi\in \partial\Gamma(G,Y\sqcup\H)$, picking one geodesic ray in $\Gamma(G,X\sqcup\H)$ from $1$ to $\xi$ in a Borel way. This is done by reading the labels of all geodesic rays from $1$ to $\xi$ and comparing these labels by the lexicographic order defined above. Definition \ref{def:gamma w} and Definition \ref{def:G(xi)} are for setting up notations for the labeling.

\begin{defn}\label{def:gamma w}
    For $w=(s_1,s_2,\cdots)\in (X\sqcup\H)^\NN$, we define the infinite path $\gamma^w$ from $1\in G$ in $\Gamma(G,X\sqcup\H)$ by $\gamma^w=(1,x_{w,1},x_{w,2},\cdots)$, where the $n$-th vertex $x_{w,n}$ is defined by
    \[
    x_{w,n}=s_1\cdots s_n
    \]
    for each $n\in\NN$.
\end{defn}

\begin{defn}\label{def:G(xi)}
    For $\xi \in \partial\Gamma(G,Y\sqcup\H)$, we define the subset $\G(\xi)$ of $(X\sqcup\H)^\NN$ by
    \[
    \G(\xi)=\{w\in (X\sqcup\H)^\NN \mid \gamma^w {\rm ~is~geodesic~in~}\Gamma(G,X\sqcup\H) {\rm ~and~} Y\-\lim \gamma^w=\xi \in \partial\Gamma(G,Y\sqcup\H)\}.
    \]
\end{defn}

\begin{rem}
    Note that the condition $Y\-\lim \gamma^w=\xi$ in Definition \ref{def:G(xi)} implicitly requires that $\gamma^w$ converges to infinity in $\Gamma(G,Y\sqcup\H)$ (see Definition \ref{def:Y-lim}). Also, for any $\xi \in \partial\Gamma(G,Y\sqcup\H)$, the set $\G(\xi)$ is nonempty by Proposition \ref{prop:existence}.
\end{rem}

We will show that picking one geodesic ray from $\G(\xi)$ is possible in Corollary \ref{cor:minimum exists}. Lemma \ref{lem:closed} is the auxiliary lemma for this.

\begin{lem}\label{lem:closed}
    For any $\xi \in \partial\Gamma(G,Y\sqcup\H)$, the set $\G(\xi)$ is closed in $(X\sqcup\H)^\NN$.
\end{lem}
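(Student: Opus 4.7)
The plan is to prove closure by showing that whenever $w^{(k)}\to w$ in $(X\sqcup\H)^\NN$ with each $w^{(k)}\in\G(\xi)$, one has $w\in\G(\xi)$. By Definition \ref{def:G(xi)} this amounts to verifying (a) that $\gamma^w$ is geodesic in $\Gamma(G,X\sqcup\H)$ and (b) that $Y\-\lim\gamma^w=\xi$ in $\partial\Gamma(G,Y\sqcup\H)$.

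Part (a) is immediate from the product topology: for each $n\in\NN$, the first $n$ coordinates of $w^{(k)}$ eventually equal those of $w$, so the first $n$ vertices of $\gamma^w$ coincide with those of the geodesic $\gamma^{w^{(k)}}$ for all sufficiently large $k$. Hence every finite initial segment of $\gamma^w$ is geodesic in $\Gamma(G,X\sqcup\H)$, and so is $\gamma^w$.

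The heart of the argument is (b). Since every $\gamma^{w^{(k)}}$ is a geodesic ray from $1$ with $Y\-\lim=\xi$, Corollary \ref{cor:separating coset of ray} gives a single set of separating cosets $S_\xi:=S(\gamma^{w^{(k)}};D)=\{C_1\prec C_2\prec\cdots\}$, which is infinite by Lemma \ref{lem:seq to infinity}. Set $n_j=d_{X\cup\H}(1,C_j)$; by Lemma \ref{lem:distance of S gamma} this is the position along any $\gamma^{w^{(k)}}$ at which the entry into $C_j$ occurs, independent of $k$, and strict monotonicity of $\preceq$ combined with $|S_\xi|=\infty$ forces $n_j\to\infty$. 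For each $j$, pick $k=k(j)$ large enough that $w^{(k)}$ agrees with $w$ on the first $n_j$ coordinates; then
\[
x_{w,n_j}=x_{w^{(k)},n_j}=\gamma^{w^{(k)}}_{in}(C_j)\in C_j.
\]
Since $x_{w^{(1)},n_j}=\gamma^{w^{(1)}}_{in}(C_j)\in C_j$ as well, the two vertices differ by an element of the relevant $H_\lambda$, giving $d_{Y\cup\H}(x_{w,n_j},x_{w^{(1)},n_j})\le 1$.

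Now the subsequence $(x_{w^{(1)},n_j})_{j=1}^\infty$ of $\gamma^{w^{(1)}}$ has indices tending to infinity, so it converges in $\partial\Gamma(G,Y\sqcup\H)$ to $Y\-\lim\gamma^{w^{(1)}}=\xi$. A routine Gromov-product estimate using the bound $d_{Y\cup\H}(x_{w,n_j},x_{w^{(1)},n_j})\le 1$ transfers this to $x_{w,n_j}\to\xi$. Applying Lemma \ref{lem:seq to infinity}(ii) to the subsequence $(x_{w,n_j})_j$ of $\gamma^w$, we conclude that $\gamma^w$ converges to infinity in $\Gamma(G,Y\sqcup\H)$ and $Y\-\lim\gamma^w=\xi$, so $w\in\G(\xi)$. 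The main technical hurdle is the diagonal identification of $x_{w,n_j}$ as an entry vertex of $C_j$: it rests on the $k$-independence of both the separating-coset set $S_\xi$ (Corollary \ref{cor:separating coset of ray}) and the entry position $n_j$ (Lemma \ref{lem:distance of S gamma}), which together ensure that matching $w$ on the first $n_j$ coordinates automatically places $x_{w,n_j}$ in the prescribed coset $C_j$.
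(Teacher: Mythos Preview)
Your proof is correct. Both your argument and the paper's begin the same way: verify that $\gamma^w$ is geodesic by the product-topology argument, and then exploit the common set $S(1,\xi;D)=\{C_1\preceq C_2\preceq\cdots\}$ of separating cosets shared by all the $\gamma^{w^{(k)}}$. Where the approaches diverge is in how $Y\-\lim\gamma^w=\xi$ is established. The paper first shows $|S(\gamma^w;D)|=\infty$ (via Lemma \ref{lem: sep coset ordered}), so that $\gamma^w$ converges to infinity in $\Gamma(G,Y\sqcup\H)$; it then observes that $X\-\lim\gamma^{w^{(k)}}\to X\-\lim\gamma^w$ in $\partial\Gamma(G,X\sqcup\H)$ and invokes Proposition \ref{prop:cts from X to Y} to transport this convergence to $\partial\Gamma(G,Y\sqcup\H)$, where the constant sequence $Y\-\lim\gamma^{w^{(k)}}\equiv\xi$ forces the limit. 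You instead bypass Proposition \ref{prop:cts from X to Y} entirely: by identifying $x_{w,n_j}$ as a point of $C_j$, hence within $d_{Y\cup\H}$-distance $1$ of the corresponding entry vertex on the fixed ray $\gamma^{w^{(1)}}$, you obtain a subsequence of $\gamma^w$ converging to $\xi$ directly, after which Lemma \ref{lem:seq to infinity} (ii)$\Rightarrow$(i) together with uniqueness of limits finishes the job. Your route is more elementary and self-contained; the paper's route is shorter once Proposition \ref{prop:cts from X to Y} is available and also foreshadows the homeomorphism recorded later in Proposition \ref{prop:homeo}.
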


\begin{proof}
    Note that $(X\sqcup\H)^\NN$ is metrizable. Suppose that a sequence $(w_i)_{i=1}^\infty$ in $\G(\xi)$ converges to $w\in (X\sqcup\H)^\NN$. It's straightforward to see that $\gamma^w$ is geodesic in $\Gamma(G,X\sqcup\H)$. We will first show $|S(\gamma^w;D)|=\infty$. Let $S(1,\xi;D)=\{C_1\preceq C_2 \preceq \cdots\}$ (see Definition \ref{def:sep coset for vertex and boundry points}). Since $w_i\in \G(\xi)$ implies $S(\gamma^{w_i};D)=S(1,\xi;D)$ for any $i\in\NN$, we have for any $i,n\in\NN$,
    \begin{align}\label{eq:out}
        d_{X\cup\H}(1,\gamma^{w_i}_{out}(C_n))
        =d_{X\cup\H}(1,\gamma^{w_i}_{in}(C_n))+1
        =d_{X\cup\H}(1,C_n)+1
    \end{align}
    by Lemma \ref{lem:distance of S gamma}. For any $N\in \NN$, we define $k$ by $k=d_{X\cup\H}(1,C_{N+1})+1$. Since $(w_i)_{i=1}^\infty$ converges to $w$, there exists $I\in\NN$ such that $x_{w_I,m}=x_{w,m}$ for any $m\in\{1,\cdots,k\}$ (see Definition \ref{def:gamma w}). This implies
    $
    \{C_1,\cdots,C_N\} \subset S(1,x_{w_I,k};D)=S(1,x_{w,k};D) \subset S(\gamma^w;D)
    $
    by \eqref{eq:out} and Lemma \ref{lem: sep coset ordered}. Hence, we have $|S(\gamma^w;D)|=\infty$ since $N\in\NN$ is arbitrary. Since $(w_i)_{i=1}\infty$ converges to $w$, $(X\-\lim\gamma^{w_i})_{i=1}\infty$ converges to $X\-\lim\gamma^w$ in $\partial\Gamma(G,X\sqcup\H)$. Hence, $(Y\-\lim\gamma^{w_i})_{i=1}\infty$ converges to $Y\-\lim\gamma^w$ in $\partial\Gamma(G,Y\sqcup\H)$ by Proposition \ref{prop:cts from X to Y}. This implies $Y\-\lim\gamma^w=\xi$ by $Y\-\lim\gamma^{w_i}=\xi, \forall i\in\NN$. Thus, we have $w\in \G(\xi)$.
\end{proof}

\begin{cor}\label{cor:minimum exists}
    For any $\xi \in \partial\Gamma(G,Y\sqcup\H)$, the element $\min_{\lelex} \G(\xi)$ exists.
\end{cor}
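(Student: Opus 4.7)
The plan is a greedy construction: build the candidate minimum coordinate-by-coordinate using the well-order on $X \sqcup \H$, then invoke the fact that $\G(\xi)$ is closed (Lemma \ref{lem:closed}) to conclude that the limit sequence still lies in $\G(\xi)$. By Proposition \ref{prop:existence}, $\G(\xi)$ is nonempty, so the construction can be initiated.

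More precisely, I would define $s_1, s_2, \ldots$ and nested subsets $\G(\xi) = \G_0 \supset \G_1 \supset \G_2 \supset \cdots$ inductively as follows. Having constructed $s_1,\ldots,s_{n-1}$ and $\G_{n-1}$ with $\G_{n-1} \neq \emptyset$, the set $A_n = \{w'_n \mid w' \in \G_{n-1}\}$ is a nonempty subset of $X \sqcup \H$; since $\leq$ is a well-order on $X \sqcup \H$, the minimum $s_n = \min A_n$ exists, and I set
\[
\G_n = \{ w' \in \G_{n-1} \mid w'_n = s_n \},
\]
which is nonempty by the choice of $s_n$. Define $w = (s_1,s_2,\ldots) \in (X\sqcup\H)^\NN$.

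To see that $w \in \G(\xi)$, pick any $w^{(n)} \in \G_n$ for each $n$; then $w^{(n)}$ agrees with $w$ on the first $n$ coordinates, so $w^{(n)} \to w$ in the product topology on $(X\sqcup\H)^\NN$. Since each $w^{(n)} \in \G(\xi)$ and $\G(\xi)$ is closed by Lemma \ref{lem:closed}, we get $w \in \G(\xi)$.

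Finally, I would verify that $w = \min_{\lelex}\G(\xi)$. Suppose $w' \in \G(\xi)$ with $w' \neq w$, and let $n$ be the least index with $w'_n \neq w_n$. Then $w'_i = s_i$ for all $i < n$, so by induction $w' \in \G_{n-1}$, and thus $w'_n \in A_n$. By minimality, $s_n \leq w'_n$; combined with $w'_n \neq s_n$ this gives $s_n < w'_n$, so $w <_{\rm lex} w'$. This exhibits $w$ as the minimum. The only substantive ingredients are the nonemptiness from Proposition \ref{prop:existence} and the closedness from Lemma \ref{lem:closed}, both already in hand; there is no real obstacle beyond carefully carrying out the induction.
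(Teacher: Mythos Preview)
Your proof is correct and follows essentially the same approach as the paper: a greedy coordinate-by-coordinate construction using the well-order on $X\sqcup\H$, followed by an appeal to Lemma~\ref{lem:closed} to show the limit sequence lies in $\G(\xi)$, and a straightforward verification of minimality. The paper's argument differs only in notation (writing $\G(\xi)_n$ and $s_{\xi,n}$ in place of your $\G_n$ and $s_n$).
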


\begin{proof}
    For each $n\in \NN$, we define the element $s_{\xi,n}\in X\sqcup\H$ and the subset $\G(\xi)_n$ of $\G(\xi)$ inductively as follows:
    \begin{align*}
    s_{\xi,1}&=\min \{ s_1 \in (X\sqcup\H,\le) \mid \exists w\in \G(\xi) {\rm ~s.t.~} w=(s_1,s_2,\cdots) \}, \\
    \G(\xi)_1&=\{ w\in \G(\xi) \mid w=(s_{\xi,1},s_2,\cdots) \}, \\
    s_{\xi,n+1}&=\min \{ s_{n+1} \in (X\sqcup\H,\le) \mid \exists w\in \G(\xi)_n {\rm ~s.t.~} w=(s_{\xi,1},\cdots,s_{\xi,n},s_{n+1},\cdots) \}, \\
    \G(\xi)_{n+1}&=\{ w\in \G(\xi)_n \mid w=(s_{\xi,1},\cdots,s_{\xi,n},s_{\xi,n+1},\cdots) \}.
    \end{align*}
    Note that each $\G(\xi)_n$ is nonempty since $\G(\xi)$ is nonempty. We define the element $w_\xi\in (X\sqcup\H)^\NN$ by $w_\xi=(s_{\xi,1},s_{\xi,2},s_{\xi,3},\cdots)$ and take an element $w_n \in \G(\xi)_n$ for each $n\in\NN$. Since $(w_n)_{n=1}\infty$ converges to $w_\xi$ in $(X\sqcup\H)^\NN$, we have $w_\xi\in \G(\xi)$ by Lemma \ref{lem:closed}. Since $w_\xi \in\bigcap_{n=1}^\infty \G(\xi)_n$, we have $w_\xi \lelex w$ for any $w\in \G(\xi)$.
\end{proof}

Definition \ref{def:Phi} below is well-defined by Corollary \ref{cor:minimum exists}.

\begin{defn} \label{def:Phi}
    We define the map $\Phi \colon \partial\Gamma(G,Y\sqcup\H) \to (X\sqcup\H)^\NN$ by
    \[
    \Phi(\xi) = \min\nolimits_{\lelex} \G(\xi).
    \]
    For each $\xi \in \partial\Gamma(G,Y\sqcup\H)$, we denote $\Phi(\xi)=(s_{\xi,1},s_{\xi,2},s_{\xi,3},\cdots)$.
\end{defn}

We will show that the map $\Phi(\xi)$ is injective and continuous in Lemma \ref{lem:Phi inj} and Lemma \ref{lem:Phi cts}. This will finish the step of picking a geodesic ray for a boundary point.

\begin{lem}\label{lem:Phi inj}
    The map $\Phi \colon \partial\Gamma(G,Y\sqcup\H) \to (X\sqcup\H)^\NN $ is injective.
\end{lem}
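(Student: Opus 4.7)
The plan is to observe that injectivity of $\Phi$ follows essentially from the defining property of $\G(\xi)$: the $Y$-limit is already encoded in the sequence $w$ via the associated path $\gamma^w$. Concretely, I would argue by contradiction. Suppose $\xi,\eta\in\partial\Gamma(G,Y\sqcup\H)$ satisfy $\Phi(\xi)=\Phi(\eta)$, and call this common value $w=(s_1,s_2,\dots)\in(X\sqcup\H)^\NN$.

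By Corollary \ref{cor:minimum exists}, $\Phi(\xi)=\min_{\lelex}\G(\xi)$ is an element of $\G(\xi)$, and similarly $\Phi(\eta)\in\G(\eta)$. Since $\gamma^w=(1,x_{w,1},x_{w,2},\dots)$ is uniquely determined by $w$ through Definition \ref{def:gamma w} (namely $x_{w,n}=s_1\cdots s_n$), we have one and the same geodesic ray $\gamma^w$ in $\Gamma(G,X\sqcup\H)$. By Definition \ref{def:G(xi)}, membership $w\in\G(\xi)$ asserts $Y\-\lim\gamma^w=\xi$, and $w\in\G(\eta)$ asserts $Y\-\lim\gamma^w=\eta$. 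But the $Y$-limit of a sequence in $\Gamma(G,Y\sqcup\H)\cup\partial\Gamma(G,Y\sqcup\H)$ is unique once it exists (see Definition \ref{def:Y-lim}), so $\xi=Y\-\lim\gamma^w=\eta$, contradicting $\xi\neq\eta$.

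This proof is essentially a one-line bookkeeping argument, so I do not anticipate any obstacle: the only point to be slightly careful about is to invoke the uniqueness of limits in the topology $\mathcal{O}_{\Gamma(G,Y\sqcup\H)}$ on $\Gamma(G,Y\sqcup\H)\cup\partial\Gamma(G,Y\sqcup\H)$ from Proposition \ref{prop:gromov topo}, which is Hausdorff (as follows from Proposition \ref{prop:visual metric} and Lemma \ref{lem:Gromov boundary is Polish}). Everything else is immediate from the definitions introduced just above the lemma.
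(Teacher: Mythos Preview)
Your proof is correct and takes essentially the same approach as the paper: both arguments use that $\Phi(\xi)\in\G(\xi)$ implies $\xi=Y\-\lim\gamma^{\Phi(\xi)}$, so $\Phi(\xi)=\Phi(\eta)$ forces $\xi=\eta$. The paper states this in a single line without the contradiction framing, but the content is identical.
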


\begin{proof}
    This follows since we have $\xi=Y\-\lim \gamma^{\Phi(\xi)}$ for any $\xi \in \partial\Gamma(G,Y\sqcup\H)$.
\end{proof}

Recall that we put the discrete topology on the countable set $X\sqcup\H$ and the product topology on $(X\sqcup\H)^\NN$.

\begin{lem} \label{lem:Phi cts}
    The map $\Phi \colon \partial\Gamma(G,Y\sqcup\H) \to (X\sqcup\H)^\NN$ is continuous.
\end{lem}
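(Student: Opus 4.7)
My plan is to establish sequential continuity of $\Phi$ at each $\xi_0\in\partial\Gamma(G,Y\sqcup\H)$. Let $(\eta_n)_n$ converge to $\xi_0$, write $\Phi(\xi_0)=(s_1,s_2,\ldots)$, let $\gamma=\gamma^{\Phi(\xi_0)}$, and put $v_k:=s_1\cdots s_k$ so that $v_k$ is the $k$-th vertex of $\gamma$. Since the product topology on $(X\sqcup\H)^\NN$ is determined coordinatewise, it suffices to prove by strong induction on $k\ge 0$ that $\Phi(\eta_n)(i)=s_i$ for every $i\le k$ for all $n$ sufficiently large; the base case is vacuous.

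For the inductive step I may assume, after truncating, that $\gamma^{\Phi(\eta_n)}$ has $v_0,\ldots,v_{k-1}$ as its first $k$ vertices, so $\gamma^{\Phi(\eta_n)}(k)=v_{k-1}t_n$ for $t_n:=\Phi(\eta_n)(k)\in X\sqcup\H$. Suppose $t_n\neq s_k$ infinitely often, pass to a subsequence with $t_n\neq s_k$ throughout, and argue by contradiction in the two subcases $t_n<s_k$ and $t_n>s_k$ infinitely often (in the well-order on $X\sqcup\H$).

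In the first subcase, because the well-order on $X\sqcup\H$ arises from an injection into $\NN$ the set $\{t:t<s_k\}$ is finite, so pigeonhole yields a further subsequence with $t_n\equiv t$ for a fixed $t<s_k$, and every $\gamma^{\Phi(\eta_n)}$ then passes through $u:=v_{k-1}t$ at position $k$. I mimic the diagonal-on-shared-separating-coset-entries extraction from the proof of Proposition \ref{prop:existence} applied to the tails $\gamma^{\Phi(\eta_n)}|_{[u,\infty)}$. Using $\eta_n\to\xi_0$ together with Gromov's inequality in $\Gamma(G,Y\sqcup\H)$ and the fact that $\gamma^{\Phi(\eta_n)}(\ell)\to\eta_n$ in $\partial\Gamma(G,Y\sqcup\H)$ as $\ell\to\infty$, I can choose diagonal targets $h_m=\gamma^{\Phi(\eta_{n_m})}(k+\ell_m)$ satisfying $h_m\to\xi_0$ in $\partial\Gamma(G,Y\sqcup\H)$. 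Running the extraction on the geodesic segments $\gamma^{\Phi(\eta_{n_m})}|_{[u,h_m]}$ produces a geodesic ray $\delta$ from $u$ in $\Gamma(G,X\sqcup\H)$ with $Y\-\lim\delta=\xi_0$, every vertex of which lies on some $\gamma^{\Phi(\eta_{n_m})}$; since the latter are geodesics from $1$ through $u$, each $\delta(j)$ is at distance $k+j$ from $1$, so the concatenation $(1,v_1,\ldots,v_{k-1},u)\cup\delta$ is a geodesic ray from $1$ with $Y$-limit $\xi_0$ whose $k$-th label is $t<s_k$, contradicting the lex-minimality of $s_k=\Phi(\xi_0)(k)$.

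In the second subcase, the plan is dual: I construct, for some large $n$, a geodesic ray from $1$ through $v_k$ with $Y$-limit $\eta_n$, which forces $\Phi(\eta_n)(k)\le s_k$ and contradicts $t_n>s_k$. Applying Proposition \ref{prop:for cts} to $\gamma|_{[v_k,\infty)}$ (with $o=v_k$ and $\xi=\xi_0$), for $n$ large any geodesic ray from $v_k$ to $\eta_n$ in $\Gamma(G,X\sqcup\H)$ penetrates as many initial separating cosets of $\gamma|_{[v_k,\infty)}$ as we wish, and by Lemma \ref{lem:bounded by 3C} together with the local finiteness of the relative metrics $\hd_\lambda$ the corresponding entry points lie in finite sets. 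A diagonal extraction analogous to the first subcase then produces a geodesic ray $\mu$ from $v_k$ to $\eta_n$ whose vertices all lie at distance $k+j$ from $1$, so that $(1,v_1,\ldots,v_k)\cup\mu$ is a geodesic ray from $1$ in $\Gamma(G,X\sqcup\H)$ with $Y$-limit $\eta_n$. I expect the main obstacle to be this last construction, specifically arranging the diagonal so that the selected diagonal-target vertices (and not just the separating cosets themselves) lie on $\Gamma(G,X\sqcup\H)$-geodesics from $1$ passing through $v_k$; this will use Lemmas \ref{lem:distance of cosets} and \ref{lem:bounded by 4C} to align exit points of the varying rays with those of $\gamma$ and to bound between-coset distances.
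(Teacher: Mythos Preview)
Your approach differs substantially from the paper's, which avoids induction and the two subcases entirely. The paper fixes $N$ with $d_{X\cup\H}(1,C_N)>k$ (where $S(1,\xi_0;D)=\{C_1\preceq C_2\preceq\cdots\}$), uses Proposition~\ref{prop:for cts} with $o=1$ to find a neighborhood $U$ of $\xi_0$ such that every $w\in\G(\eta)$ with $\eta\in U$ has $\gamma^w$ penetrating $C_1,\ldots,C_N$, and then shows by a single swap at $C_N$ that the sets of length-$m$ prefixes of $\G(\xi_0)$ and $\G(\eta)$ coincide, where $m=d_{X\cup\H}(1,C_N)$. Since both $\gamma^{w_1}$ (for $w_1\in\G(\eta)$) and $\gamma^{w_2}$ (for $w_2\in\G(\xi_0)$) are geodesics \emph{from $1$} that enter $C_N$ at distance exactly $m$, splicing the initial segment of one onto the tail of the other at $C_N$ stays geodesic from $1$; this gives $\Phi(\xi_0)$ and $\Phi(\eta)$ the same first $m\ge k$ letters in one stroke.

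Your first subcase is essentially correct: the diagonal extraction from the tails $\gamma^{\Phi(\eta_{n})}|_{[u,\infty)}$ does produce a geodesic ray $\delta$ from $u$ to $\xi_0$ whose vertices lie on geodesics from $1$ through $u$, so the concatenation is geodesic from $1$ and contradicts lex-minimality of $\Phi(\xi_0)$.

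The gap is in your second subcase, and it is real. A diagonal extraction over the rays $\beta_n$ (from $v_k$ to $\eta_n$) as $n\to\infty$ converges to $\xi_0$, not to any particular $\eta_n$, so it cannot produce a ray with $Y$-limit $\eta_n$. And for a single fixed $n$ there is no sequence to extract from. More fundamentally, the rays $\beta_n$ are geodesic from $v_k$, not from $1$, so there is no reason the concatenation $(1,v_1,\ldots,v_k)\cup\beta_n$ is geodesic: Lemmas~\ref{lem:distance of cosets} and~\ref{lem:bounded by 4C} control inter-coset distances and entry/exit points, but they do not force $d_{X\cup\H}(1,\beta_n(j))=k+j$. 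The fix is exactly the paper's move: apply Proposition~\ref{prop:for cts} with $o=1$ rather than $o=v_k$, so that the ray you are swapping with $\gamma$ is $\gamma^{\Phi(\eta_n)}$ itself (a geodesic from $1$), and swap at a coset $C_N$ with $d_{X\cup\H}(1,C_N)>k$. This immediately yields a geodesic from $1$ through $v_k$ to $\eta_n$, and in fact handles both subcases simultaneously, making the induction and case split unnecessary.
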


\begin{proof}
    It's enough to show that for any $\xi \in \partial\Gamma(G,Y\sqcup\H)$ and any $k\in\NN$, there exists an open neighborhood $U$ of $\xi$ in $\partial\Gamma(G,Y\sqcup\H)$ such that $s_{\eta,n}=s_{\xi,n}$ for any $\eta \in U$ and any $n\in \{1,\cdots,k\}$. Indeed, given $\xi\in \partial\Gamma(G,Y\sqcup\H)$, the open sets $\{V_k\}_{k=1}^\infty$ defined by $V_k=\{(s_n)_{n=1}^\infty \in (X\sqcup\H)^\NN \mid s_n=s_{\xi,n}, \forall n\in \{1,\cdots,k\} \}$ form a neighborhood basis of $\Phi(\xi)$. Hence, for any open neighborhood $V$ of $\Phi(\xi)$, there exists $k\in \NN$ such that $V_k \subset V$. For this $k$, we will be able to take an open neighborhood $U$ of $\xi$ such that $\Phi(U)\subset V_k$. This will imply continuity of $\Phi$ at $\xi$. Hence, we will get continuity of $\Phi$ since $\xi \in \partial\Gamma(G,Y\sqcup\H)$ is arbitrary.
    
    Let $S(1,\xi;D)=\{C_1\preceq C_2 \preceq\cdots\}$ (see Definition \ref{def:sep coset for vertex and boundry points}). By $\lim_{i\to\infty}d_{X\cup\H}(1,C_i)=\infty$, there exists $N\in\NN$ such that $d_{X\cup\H}(1,C_N)> k$. By Proposition \ref{prop:for cts}, there exists an open neighborhood $U$ of $\xi$ in $\partial\Gamma(G,Y\sqcup\H)$ such that for any $\eta\in U$ and any $w \in \G(\eta)$, the path $\gamma^w$ penetrates $C_i$ for any $i\in\{1,\cdots,N\}$. Let $\eta\in U$. We define $m$ by $m=d_{X\cup\H}(1,C_N)$ for brevity and let $C_N$ be an $H_\lambda$-coset. We claim
    \begin{equation}\label{eq:xi eta}
         \begin{split}
         &\{(s_1,\cdots,s_m) \in (X\sqcup\H)^m \mid \exists w\in\G(\eta) {\rm ~s.t.~} w=(s_1,\cdots,s_m,\cdots) \} \\
        =& \{(s_1,\cdots,s_m) \in (X\sqcup\H)^m \mid \exists w\in\G(\xi) {\rm ~s.t.~} w=(s_1,\cdots,s_m,\cdots) \}.    
         \end{split}
    \end{equation}
    Indeed, for any $w_1\in \G(\eta)$ and $w_2\in \G(\xi)$, let $e_1,e_2$ be the edges in $\Gamma(G,X\sqcup\H)$ whose labels are in $H_\lambda$ such that $e_1$ is from $\gamma^{w_1}_{in}(C_N)$ to $\gamma^{w_2}_{out}(C_N)$ and $e_2$ is from $\gamma^{w_2}_{in}(C_N)$ to $\gamma^{w_1}_{out}(C_N)$. We have $x_{w_1,m}=\gamma^{w_1}_{in}(C_N)$, $x_{w_1,m+1}=\gamma^{w_1}_{out}(C_N)$, $x_{w_2,m}=\gamma^{w_2}_{in}(C_N)$, and $x_{w_2,m+1}=\gamma^{w_2}_{out}(C_N)$ by Lemma \ref{lem:lem4.6 osin}. Hence, the paths $\alpha_1,\alpha_2$ defined by
    \[
    \alpha_1=\gamma^{w_1}_{[1,x_{w_1,m}]}e_1\gamma^{w_2}_{[x_{w_2,m+1},\infty)}
    {\rm ~~and~~}
    \alpha_2=\gamma^{w_2}_{[1,x_{w_2,m}]}e_2\gamma^{w_1}_{[x_{w_1,m+1},\infty)}
    \]
    are geodesic in $\Gamma(G,X\sqcup\H)$. By $Y\-\lim\alpha_1=Y\-\lim\gamma^{w_2}=\xi$ and $Y\-\lim\alpha_2=Y\-\lim\gamma^{w_1}=\eta$, we have $\alpha_1\in \G(\xi)$ and $\alpha_2 \in \G(\eta)$. This implies \eqref{eq:xi eta}. By $\eqref{eq:xi eta}$, we have $s_{\eta,n}=s_{\xi,n}$ for any $n\in \{1,\cdots,m\}$.
\end{proof}

\begin{figure}[htbp]
  % Requires \usepackage{graphicx}
  %\begin{center}
 \hspace{0mm} 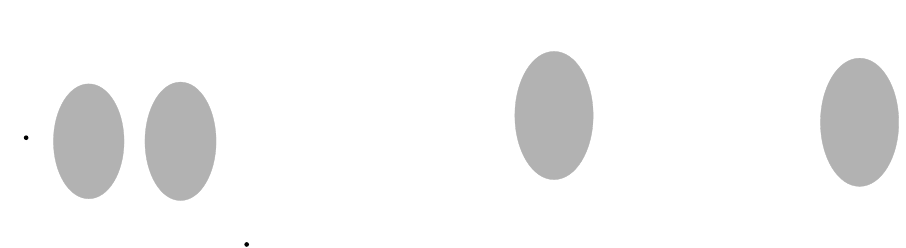
  %\end{center}
   \vspace{-3mm}
  \caption{Proof of Proposition \ref{prop:hyperfinite}}
  \label{Prop4_10}
\end{figure}

We are now ready to show hyperfiniteness of the boundary action in Proposition \ref{prop:hyperfinite}, which is essentially the proof of Theorem \ref{thm:main}. The difference of the conditions in Proposition \ref{prop:hyperfinite} from those at the beginning of this section is that we further assume that $\Lambda$ is finite.

\begin{prop}\label{prop:hyperfinite}
    Suppose that $G$ is a countable group, $X$ is a subset of $G$, and $\{H_\lambda\}_{\lambda\in\Lambda}$ is a finite collection of subgroups of $G$ hyperbolically embedded in $(G,X)$. Let $C>0$ as in Proposition \ref{prop:C} and fix $D>0$ satisfying $D\ge 3C$. We also define the subset $Y$ of $G$ by $Y=\{y \in G \mid S(1,y;D)= \emptyset\}$ as in \eqref{eq:Y}. Then, the orbit equivalence relation $E_G$ on $\partial\Gamma(G,Y\sqcup\H)$ induced by the action $G\act \partial\Gamma(G,Y\sqcup\H)$ is a hyperfinite CBER.
\end{prop}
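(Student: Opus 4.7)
The plan is to adapt Naryshkin--Vaccaro's strategy, using the injective continuous map $\Phi \colon \partial\Gamma(G, Y\sqcup\H) \to (X\sqcup\H)^\NN$ from Definition \ref{def:Phi} to exhibit a hyperfinite subequivalence relation of $E_G$ of finite index. First, by Lemma \ref{lem:Gromov boundary is Polish} together with countability of $G$, $\partial\Gamma(G, Y\sqcup\H)$ is a Polish (hence standard Borel) space, and the $G$-action by homeomorphism is Borel, making $E_G$ a CBER by Lemma \ref{lem:orbit equivalence is cber}. The countable alphabet $X\sqcup\H$ gives the Polish space $(X\sqcup\H)^\NN$ on which $E_t(X\sqcup\H)$ is hyperfinite by Proposition \ref{prop:DJK}.

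Define the equivalence relation $E \subset \partial\Gamma(G, Y\sqcup\H)^2$ by $\xi \, E \, \eta \iff \Phi(\xi) \, E_t(X\sqcup\H) \, \Phi(\eta)$; this $E$ is Borel since $\Phi$ is Borel. For the inclusion $E \subset E_G$: if the tails of $\Phi(\xi)$ and $\Phi(\eta)$ agree past positions $m$ and $n$, then the corresponding tails of the rays $\gamma^{\Phi(\xi)}$ and $\gamma^{\Phi(\eta)}$, shifted to start at $1$ by $x_{\xi, m}^{-1}$ and $x_{\eta, n}^{-1}$ respectively, are identical labeled rays, so their limits in $\partial\Gamma(G, Y\sqcup\H)$ coincide: $x_{\xi, m}^{-1}\xi = x_{\eta, n}^{-1}\eta$, whence $\eta = (x_{\eta, n} x_{\xi, m}^{-1})\xi \in G\xi$. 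For hyperfiniteness of $E$: write $E_t(X\sqcup\H) = \bigcup_n F_n$ with each $F_n$ a finite Borel equivalence relation, and pull back via $\Phi$ to obtain $E_n = \{(\xi, \eta) : (\Phi(\xi), \Phi(\eta)) \in F_n\}$. Each $E_n$ is Borel, and injectivity of $\Phi$ (Lemma \ref{lem:Phi inj}) forces each $E_n$-class to inject into an $F_n$-class, so $E_n$ is finite, and $E = \bigcup_n E_n$ is hyperfinite.

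The remaining and hardest step is to show that every $E_G$-class decomposes into only finitely many $E$-classes, after which Proposition \ref{prop:JKL} applied to $E \subset E_G$ yields hyperfiniteness of $E_G$. The plan is as follows: fix $\xi$, and for each $g \in G$ compare the two rays $\alpha_g = g\gamma^{\Phi(\xi)}$ (from $g$ to $g\xi$, with label $\Phi(\xi)$) and $\beta_g = \gamma^{\Phi(g\xi)}$ (from $1$ to $g\xi$, with label $\Phi(g\xi)$), both of whose limits in $\partial\Gamma(G, Y\sqcup\H)$ equal $g\xi$. By Corollary \ref{cor:for hyperfinite}, there is $N = N(g)$ such that $\beta_g$ penetrates the tail of the $(\alpha_g; D)$-separating cosets $\{gC_n\}$ (where $\{C_n\}_{n=1}^\infty = S(\gamma^{\Phi(\xi)}; D)$) with entry and exit vertices within $\hd_\lambda$-distance $4C$ of those of $\alpha_g$. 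The local finiteness of $\hd_\lambda$ (Definition \ref{def:hyp emb}(2)) together with finiteness of $\Lambda$ yields a uniform bound $M$ on the number of vertices in any $H_\lambda$-coset within $\hd_\lambda$-distance $4C$ of a fixed vertex; combining this uniform bound with the lex-minimality built into Definition \ref{def:Phi} will limit the possible tails of $\Phi(g\xi)$ up to $E_t(X\sqcup\H)$ to a finite set as $g$ ranges over $G$. Establishing this uniform bound on tails, using Corollary \ref{cor:for hyperfinite} to pin down the entry vertices and exploiting lex-minimality to pin down the labels between consecutive separating cosets, is the main technical hurdle of the proof.
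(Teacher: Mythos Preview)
Your approach is essentially the paper's, with one small improvement: you directly verify $E \subset E_G$ (your argument is correct), whereas the paper plays it safe by setting $R_1 = (\Phi\times\Phi)^{-1}(E_t(X\sqcup\H)) \cap E_G$ without checking the inclusion.

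For the finite-index step, which you leave as a plan, the paper carries it out by contradiction with an explicit bound and a framing that makes the pigeonhole transparent. Rather than fixing $\xi$ and varying $g$ (so that the relevant cosets $gC_n$ move with $g$), the paper translates everything to a common target: set $K = M^2$ where $M = \max_{\lambda\in\Lambda} |\{h \in H_\lambda : \hd_\lambda(1,h) \le 4C\}|$, suppose $\xi_0,\ldots,\xi_K$ lie in one $E_G$-class with pairwise non-tail-equivalent $\Phi$-values, and choose $g_i$ with $g_i\xi_i = \xi_0$ (so $g_0=1$). Now all the rays $g_i\gamma^{\Phi(\xi_i)}$ converge to the \emph{same} point $\xi_0$, so Corollary~\ref{cor:for hyperfinite} yields a single $N_0$ beyond which every one of them penetrates the fixed cosets $C_n \in S(1,\xi_0;D)$ with entry points within $\hd$-distance $4C$ of those of $\gamma^{\Phi(\xi_0)}$. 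Pairwise non-tail-equivalence gives some $k$ such that the length-$k$ label segments starting at the respective $C_{N_0}$-entry positions are pairwise distinct; pick $N_1$ with $d_{X\cup\H}(C_{N_0},C_{N_1}) > k$. The pair of entry points into $(C_{N_0},C_{N_1})$ can take at most $M^2 = K$ values, so by pigeonhole two indices $i\neq j$ share both entry points. Splicing the $[C_{N_0},C_{N_1}]$-segment of $g_i\gamma^{\Phi(\xi_i)}$ into $g_j\gamma^{\Phi(\xi_j)}$ (and vice versa) gives a geodesic ray from $g_j$ to $\xi_0$ (resp.\ from $g_i$ to $\xi_0$), so its $g_j^{-1}$-translate lies in $\G(\xi_j)$; lex-minimality of $\Phi(\xi_j)$ and $\Phi(\xi_i)$ then forces the two spliced label segments to agree, contradicting the choice of $k$. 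Your sketch contains all the right ingredients, but this ``common target'' framing is what lets the pigeonhole and the splice-then-lex-minimality argument go through cleanly.
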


\begin{proof}
    Since $G$ is countable, $\partial\Gamma(G,Y\sqcup\H)$ is Polish being a closed subset of the Polish space $G\cup\Gamma(G,Y\sqcup\H)$ by Lemma \ref{lem:Gromov boundary is Polish}. Since $\partial\Gamma(G,Y\sqcup\H)$ is a Polish space and $G$ is a countable group acting on $\partial\Gamma(G,Y\sqcup\H)$ homeomorphically, $E_G$ is a CBER by Lemma \ref{lem:orbit equivalence is cber}. We will show that $E_G$ is hyperfinite. Define the subsets $R,R_1$ of $(\partial\Gamma(G,Y\sqcup\H))^2$ by $R=(\Phi\times\Phi)^{-1}(E_t(X\sqcup\H))$ and $R_1=R\cap E_G$ (see Definition \ref{def:tail equivalence} for $E_t(X\sqcup\H)$). Since $E_t(X\sqcup\H)$ is hyperfinite by Proposition \ref{prop:DJK} and the map $\Phi$ is an injective Borel measurable map by Lemma \ref{lem:Phi inj} and Lemma \ref{lem:Phi cts}, $R$ is a hyperfinite CBER on $\partial\Gamma(G,Y\sqcup\H)$. Hence, $R_1$ is also hyperfinite. We define the constant $K$ by
    \begin{equation}\label{eq:K}
            K=\left( \max_{\lambda\in\Lambda} |\{h\in H_\lambda \mid \hd_{\lambda}(1,h)\le 4C \}| \right)^2.
    \end{equation}
    Note $K<\infty$, since $\Lambda$ is finite by our assumption and each $\hd_{\lambda}$ is locally finite. We will show that each $E_G$-class is composed of at most $K$ equivalence classes of $R_1$. This implies that $E_G$ is hyperfinite by Proposition \ref{prop:JKL}. Suppose for contradiction that there exist $\xi_0,\xi_1,\cdots,\xi_K \in \partial\Gamma(G,Y\sqcup\H)$ such that $(\xi_i,\xi_j)\in E_G \setminus R_1$ for any distinct $i,j\in\{0,1,\cdots,K\}$ (i.e. $i\neq j$). For each $i\in \{0,1,\cdots,K\}$, there exists $g_i \in G$ such that $g_i\xi_i = \xi_0$ by $(\xi_i,\xi_0)\in E_G$. We take $g_0=1$. Let
    $
    S(\gamma^{\Phi(\xi_0)};D)=\{C_1\preceq C_2\preceq\cdots\}.
    $
    By Corollary \ref{cor:for hyperfinite}, there exists $N_0\in\NN$ such that for any $i\in \{0,1,\cdots,K\}$ and any $n\ge N_0$, the path $g_i\gamma^{\Phi(\xi_i)}$ penetrates $C_n$ and satisfies \eqref{eq:for hyperfinite}. Define $m_i$ by
    \[
    m_i= d_{X\cup\H}(g_i,(g_i \gamma^{\Phi(\xi_i)})_{in}(C_{N_0}))
    \]
    for each $i\in\{0,1,\cdots,K\}$. Note $m_0=d_{X\cup\H}(1,C_{N_0})$. For any distinct $i,j\in\{0,1,\cdots,K\}$, $(\xi_i,\xi_j)\notin R_1$ implies $(\Phi(\xi_i),\Phi(\xi_j))\notin E_t(X\sqcup\H)$. Hence, there exists $k\in\NN$ such that
    \begin{align} \label{eq:tail}
        (s_{\xi_i,m_i+1},\cdots,s_{\xi_i,m_i+k}) \neq (s_{\xi_j,m_j+1},\cdots,s_{\xi_j,m_j+k})
    \end{align}
    for any distinct $i,j\in\{0,1,\cdots,K\}$. On the other hand, there exists $N_1\in \NN$ such that $d_{X\cup\H}(1,\gamma_{in}^{\Phi(\xi_0)}(C_{N_1})) > m_0+k$ by $\lim_{n\to\infty}d_{X\cup\H}(1,\gamma_{in}^{\Phi(\xi_0)}(C_n)) = \infty$. Define $\ell\in\NN$ by $\ell=d_{X\cup\H}(C_{N_0},C_{N_1})+1$. By Lemma \ref{lem:distance of cosets}, we have
    \begin{align*}
        d_{X\cup\H}((g_i\gamma^{\Phi(\xi_i)})_{in}(C_{N_0}),(g_i\gamma^{\Phi(\xi_i)})_{in}(C_{N_1}))
        = \ell
        {\rm ~~and~~}
        d_{X\cup\H}(g_i,(g_i\gamma^{\Phi(\xi_i)})_{in}(C_{N_1}))=m_i+\ell
    \end{align*}
    for any $i\in\{0,1,\cdots,K\}$. In particular, $m_0+\ell=d_{X\cup\H}(1,\gamma_{in}^{\Phi(\xi_0)}(C_{N_1})) > m_0+k$ implies $\ell > k$. By \eqref{eq:for hyperfinite}, the set
    \[
    \Big\{\Big((g_i\gamma^{\Phi(\xi_i)})_{in}(C_{N_0}),(g_i\gamma^{\Phi(\xi_i)})_{in}(C_{N_1})\Big) \in G\times G \;\Big|\; i=0,1,\cdots,K \Big\}
    \]
    has at most $K$ elements (see \eqref{eq:K}). Hence, by Pigeonhole principle, there exist distinct $i,j\in\{0,1,\cdots,K\}$ such that 
    \begin{align} \label{eq:endpoints}
    (g_i\gamma^{\Phi(\xi_i)})_{in}(C_{N_0})=(g_j\gamma^{\Phi(\xi_j)})_{in}(C_{N_0})
    {\rm ~~and~~}
    (g_i\gamma^{\Phi(\xi_i)})_{in}(C_{N_1})=(g_j\gamma^{\Phi(\xi_j)})_{in}(C_{N_1}).
    \end{align}
    By \eqref{eq:endpoints}, minimality of $\Phi(\xi_i)$ in $(\G(\xi_i), \lelex)$, and minimality of $\Phi(\xi_j)$ in $(\G(\xi_j), \lelex)$, we have
    \begin{equation}\label{eq:sxii coincides sxij}
        (s_{\xi_i,m_i+1},\cdots,s_{\xi_i,m_i+\ell}) = (s_{\xi_j,m_j+1},\cdots,s_{\xi_j,m_j+\ell}).
    \end{equation}
    Indeed, suppose for contradiction that \eqref{eq:sxii coincides sxij} doesn't hold. We assume without loss of generality, $(s_{\xi_i,m_i+1},\cdots,s_{\xi_i,m_i+\ell}) <_{{\rm lex}} (s_{\xi_j,m_j+1},\cdots,s_{\xi_j,m_j+\ell})$ and define $a,b$ by $a=(g_i\gamma^{\Phi(\xi_i)})_{in}(C_{N_0})$ and $b=(g_i\gamma^{\Phi(\xi_i)})_{in}(C_{N_1})$ for brevity. By \eqref{eq:endpoints}, the path
    \[
    \alpha=(g_j\gamma^{\Phi(\xi_j)})_{[g_j,a]} \cdot (g_i\gamma^{\Phi(\xi_i)})_{[a,b]} \cdot (g_j\gamma^{\Phi(\xi_j)})_{[b,\infty)}
    \]
    is well-defined. Since $\alpha$ is a geodesic ray in $\Gamma(G,X\sqcup\H)$ from $g_j$ satisfying $Y\-\lim \alpha= Y\-\lim g_j\gamma^{\Phi(\xi_j)}=g_j \xi_j$, the path $g_j^{-1}\alpha$ is a geodesic ray from 1 with $Y\-\lim g_j^{-1}\alpha= \xi_j$. Hence, we have $\mathbf{Lab}(\alpha)=\mathbf{Lab}(g_j^{-1}\alpha)\in \G(\xi_j)$, where $\mathbf{Lab}$ denotes labels of paths in $\Gamma(G,X\sqcup\H)$. On the other hand, we have $\mathbf{Lab}(\alpha)<_{{\rm lex}} \mathbf{Lab}(g_j\gamma^{\Phi(\xi_j)})=\Phi(\xi_j)$ by $(s_{\xi_i,m_i+1},\cdots,s_{\xi_i,m_i+\ell}) <_{{\rm lex}} (s_{\xi_j,m_j+1},\cdots,s_{\xi_j,m_j+\ell})$. This contradicts that $\Phi(\xi_j)$ is minimal in $(\G(\xi_j), \lelex)$. Hence, we have $(s_{\xi_i,m_i+1},\cdots,s_{\xi_i,m_i+\ell}) \ge_{{\rm lex}} (s_{\xi_j,m_j+1},\cdots,s_{\xi_j,m_j+\ell})$. We can also show the converse inequality from minimality of $\Phi(\xi_i)$ in $(\G(\xi_i), \lelex)$. Thus, we get
    \eqref{eq:sxii coincides sxij}, which contradicts \eqref{eq:tail} by $\ell>k$.
\end{proof}

\begin{proof}[Proof of Theorem \ref{thm:main}]
    By Theorem \ref{thm:AH group} $(AH_4)$, there exist a proper infinite subgroup $H$ and a subset $X$ of $G$ such that $H\inj_h (G,X)$. Let $C>0$ as in Proposition \ref{prop:C} and fix $D>0$ satisfying $D\ge 3C$. We also define the subset $Y$ of $G$ by $Y=\{y \in G \mid S(1,y;D)= \emptyset\}$ as in \eqref{eq:Y}. By Theorem \ref{thm:osin} and Lemma \ref{lem:non-elementary}, the Caylay graph $\Gamma(G,Y\sqcup H)$ is hyperbolic, $|\partial\Gamma(G,Y\sqcup H)|>2$, and the action of $G$ on $\Gamma(G,Y\sqcup H)$ is acylindrical. By Proposition \ref{prop:hyperfinite}, the orbit equivalece relation $E_G$ induced by the action of $G$ on $\partial\Gamma(G,Y\sqcup H)$ is hyperfinite. Thus, $S=Y\cup H$ is a generator of $G$ satisfying the statement of Theorem \ref{thm:main}.
\end{proof}

\section{Application to topologically amenable actions}\label{sec:application to topologically amenable actions}

In this short section, by applying Theorem \ref{thm:main} we will prove that any countable acylindrically hyperbolic group admits a topologically amenable action on a Polish space (see Theorem \ref{thm:topologically amenable action}). We begin with introducing some facts about topologically amenable actions and stabilizers of boundary points for a group acting on a hyperbolic space. For more on topologically amenable actions, readers are referred to \cite{BO08}.

\begin{defn}\label{def:topologically amenable action}
    Suppose that $G$ is a countable group and $X$ is a Polish space. A homeomorphic action
$G\act X$ is called \emph{topologically amenable} if for any finite subset $S$ of $G$, any compact set $K$ of $X$,
and any $\e>0$, there exists a continuous map $p\colon X\to\mathrm{Prob}(G)$ such that
\[
\max_{g\in S}\sup_{x\in K}\|p(gx)-g\cdot p(x)\|_1<\e.
\]
\end{defn}

Theorem \ref{thm:hyperfinite and amenable stabilizer imples topologically amenable} immediately follows from Theorem A.1.1 and Theorem A.3.1 of \cite{FKSV} and connects hyperfiniteness and topological amenability of group actions. Note that in Theorem A.3.1 of \cite{FKSV}, the condition that the Polish space is $\sigma$-compact is used only to show that topological amenability implies Borel amenability.

\begin{thm}\label{thm:hyperfinite and amenable stabilizer imples topologically amenable}
    Let $G \act X$ be a homeomorphic action of a countable group $G$ on a
Polish space $X$. If $E_G^X$ is hyperfinite and for any $x\in X$, its stabilizer $\stab_G(x)=\{g\in G \mid gx=x\}$ is amenable, then $G\act X$ is topologically amenable.
\end{thm}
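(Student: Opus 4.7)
The plan is to deduce the statement as an immediate consequence of the two results of \cite{FKSV} cited in the paragraph above, in two steps. First, I would upgrade hyperfiniteness of $E_G^X$ to Borel amenability of the action $G\act X$ itself. Hyperfiniteness of a CBER is equivalent to its Borel amenability, a classical fact in descriptive set theory: one obtains Borel maps $\mu_n : X \to \mathrm{Prob}(X)$ supported on $E_G^X$-classes with $\|\mu_n(x) - \mu_n(y)\|_1 \to 0$ whenever $(x,y) \in E_G^X$. Amenability of every stabilizer $\stab_G(x)$ is exactly the missing ingredient that lets Theorem A.1.1 of \cite{FKSV} promote these equivalence-relation-level measures to Borel maps $p_n : X \to \mathrm{Prob}(G)$ satisfying $\|p_n(gx) - g\cdot p_n(x)\|_1 \to 0$ pointwise for every $g\in G$ and $x\in X$; the lift from $X$-supported measures to $G$-supported measures uses amenable stabilizers to choose, in a Borel way, almost-invariant means on the fibers of the orbit maps $G \twoheadrightarrow Gx$.

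Second, I would invoke Theorem A.3.1 of \cite{FKSV} to upgrade Borel amenability of the action to the topological amenability of Definition \ref{def:topologically amenable action}. As the author points out in the sentence just before the statement, $\sigma$-compactness of $X$ is needed in \cite{FKSV} only for the converse implication (topological $\Rightarrow$ Borel), so for our direction it is enough that $X$ be Polish. Concretely, this step produces, for each finite $S\subset G$, compact $K\subset X$, and $\varepsilon > 0$, a continuous map $p : X \to \mathrm{Prob}(G)$ with $\max_{g\in S}\sup_{x\in K}\|p(gx) - g\cdot p(x)\|_1 < \varepsilon$, which is exactly the defining condition in Definition \ref{def:topologically amenable action}.

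The main obstacle, handled inside \cite[Theorem A.3.1]{FKSV}, is the Borel-to-continuous smoothing in the second step. On a general Polish space $X$ that need not be locally compact, one cannot simply convolve a Borel map with a bump function; the standard fix is a partition-of-unity argument subordinate to a Borel cover by open sets on which the Borel map $p_n$ has small $\ell^1$-oscillation, engineered so that approximate equivariance survives uniformly on the prescribed compact set $K$ for all $g\in S$. Once both cited results are in hand, the theorem follows with no further work on the present paper's side; the contribution of this section is then the combination of Theorem \ref{thm:main} with this abstract principle to obtain the topological amenability stated in Corollary \ref{cor:main}.
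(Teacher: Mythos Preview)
Your proposal is correct and matches the paper's own proof almost exactly: both invoke \cite[Theorem A.1.1]{FKSV} to pass from hyperfiniteness plus amenable stabilizers to Borel amenability, and then \cite[Theorem A.3.1]{FKSV} to reach topological amenability. The only cosmetic difference is that the paper inserts the trivial intermediate observation that Borel amenability implies measure-amenability before applying Theorem A.3.1 (which is stated for measure-amenable actions), whereas you phrase the second step as going directly from Borel amenability; the content is identical.
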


\begin{proof}
     Hyperfiniteness of $E_G^X$ and amenability of stabilizers imply Borel amenability of the action $G\act X$ by \cite[Theorem A.1.1]{FKSV}. Borel amenability trivially implies measure-amenability by their definitions. Measure-amenability implies topological amenability by \cite[Theorem A.3.1]{FKSV}.
\end{proof}

We will next show that boundary stabilizers of a group acting acylindrically on a hyperbolic space are amenable in Lemma \ref{lem:stabilizer of boundary points}. Lemma \ref{lem:rough geodesic} is auxiliary for Lemma \ref{lem:stabilizer of boundary points}. Both of these lemmas should be well-known to experts, but I will record the sketch of proofs for convenience of readers. Note that the $(1,\delta)$-quasi-geodesic ray $p$ in Lemma \ref{lem:rough geodesic} may not be continuous.

\begin{lem}\label{lem:rough geodesic}
    Suppose that $(S,d_S)$ is a $\delta$-hyperbolic geodesic metric space with $\delta\in\NN$, then for any $o\in S$ and $\xi \in \partial S$, there exists a $(1,\delta)$-quasi-geodesic ray $p$ from $o$ to $\xi$ i.e. $p\colon [0,\infty)\to S$ satisfies $p(0)=o$, $\sup_{s,t\in [0,\infty)}|d_S(p(s),p(t))-|s-t||\le \delta$, and $\lim_{s\to\infty}p(s)=\xi$.
\end{lem}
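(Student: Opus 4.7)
The plan is to build $p$ by following geodesics from $o$ toward a sequence approximating $\xi$, and to control distances between points chosen on different geodesics using condition~(1) of Proposition~\ref{prop:hyp sp}. First, fix any sequence $(y_n)_{n=1}^\infty$ in $S$ representing $\xi$, so $\lim_{i,j\to\infty}(y_i,y_j)_o^S=\infty$. By a standard subsequence extraction, we may replace it by a sequence $(x_n)_{n=1}^\infty$ satisfying $(x_n,x_m)_o^S \ge n$ for all $m \ge n$; since $(x_n,x_{n+1})_o^S \le d_S(o,x_n)$, this also forces $d_S(o,x_n) \ge n$. For each $n$, fix a geodesic $\gamma_n \colon [0, d_S(o,x_n)] \to S$ from $o$ to $x_n$ and set $n(t) = \lceil t \rceil + 1$ for $t \ge 0$. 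Define $p(t) = \gamma_{n(t)}(t)$; this is well-defined because $d_S(o,x_{n(t)}) \ge n(t) > t$, with $p(0)=o$ and $d_S(o,p(t))=t$.

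For the quasi-geodesic estimate, let $0 \le s \le t$ and set $b = \gamma_{n(t)}(s)$. Since $n(s) \le n(t)$, the arrangement above gives $(x_{n(s)}, x_{n(t)})_o^S \ge n(s) \ge s$, so condition~(1) of Proposition~\ref{prop:hyp sp} applied to $z=o$, $x=x_{n(s)}$, $y=x_{n(t)}$ together with the geodesics $\gamma_{n(s)}, \gamma_{n(t)}$ at the common distance $s$ from $o$ yields $d_S(p(s),b) \le \delta$. Combined with $d_S(b,p(t)) = t-s$ and the triangle inequality, one obtains
\[
t - s = |d_S(o,p(t)) - d_S(o,p(s))| \le d_S(p(s),p(t)) \le d_S(p(s),b) + d_S(b,p(t)) \le \delta + (t-s),
\]
which is exactly $\bigl| d_S(p(s), p(t)) - |s-t| \bigr| \le \delta$.

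Finally, to see that $p(t) \to \xi$ in the topology of Proposition~\ref{prop:gromov topo}, note that $(p(t), x_{n(t)})_o^S = t$ because $p(t)$ lies on $\gamma_{n(t)}$ at distance $t$ from $o$. For any $k \ge n(t)$, the inequality $(x_{n(t)},x_k)_o^S \ge n(t)$ and condition~(2) of Proposition~\ref{prop:hyp sp} give
\[
(p(t), x_k)_o^S \ge \min\{(p(t), x_{n(t)})_o^S,\, (x_{n(t)}, x_k)_o^S\} - \delta \ge t - \delta.
\]
Taking $\liminf_{k\to\infty}$ in \eqref{eq:gromov prod for boundary}, with $p(t)$ represented by the constant sequence, yields $(p(t),\xi)_o^S \ge t - \delta$, so $p(t)$ eventually enters every basic neighborhood $U(o,\xi,N)$ of $\xi$. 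The only delicate point is the bookkeeping of the subsequence indices; neither properness of $S$ nor any compactness argument is needed.
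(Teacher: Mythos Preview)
Your proof is correct and follows essentially the same approach as the paper's: both select a subsequence with $(x_n,x_m)_o^S\ge n$, follow geodesics $\gamma_n$ from $o$ to $x_n$, define $p$ by switching geodesics at integer parameters, and control $d_S(p(s),p(t))$ via condition~(1) of Proposition~\ref{prop:hyp sp} by comparing the point $p(s)$ to the point at distance $s$ on the geodesic carrying $p(t)$. The only differences are cosmetic bookkeeping in the indexing.
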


\begin{proof}
    Let $(x_n)_{n=1}^\infty$ be a sequence of points in $S$ that converges to $\xi$. By taking a subsequence of $(x_n)_{n=1}^\infty$ if necessary, we may assume that for any $n,m\in\NN$ with $n\le m$, $(x_n,x_m)_o^S\ge n$ holds since we have $\lim_{n,m\to\infty}(x_n,x_m)_o^S=\infty$. For each $n\in\NN$, take a geodesic path $p_n$ from $o$ to $x_n$. Note that $(x_n,x_m)_o^S\ge n$ implies $d_S(o,x_n)\ge n$ for any $n\in\NN$. Define the map $p\colon [0,\infty) \to S$ as follows. For each $n\in\NN$, $p$ isometrically maps $[n-1,n)$ to the subpath $q_n$ of $p_n$ satisfying $d_S(o,q_{n-})=n-1$. By $d_S(o,q_{1-})=0$, we have $p(0)=o$. Let $s,t\ge0$ with $s\le t$, then there exist $n,m\in\NN$ such that $s\in [n-1,n)$ and $t\in [m-1,m)$. We have $n\le m$. Let $a\in S$ be the unique point on $p_m$ satisfying $d_S(o,a)=s$. Since $a$ and $p(t)$ are both on the geodesic path $p_m$, we have $d_S(a,p(t))=d_S(o,p(t))-d_S(o,a)=t-s$. By $(x_n,x_m)_o^S\ge n>s$, we have $d_S(p(s),a)\le \delta$. Hence, we have
    \[
    |d_S(p(s),p(t))-|s-t||=|d_S(p(s),p(t))-d_S(a,p(t))|\le d_S(p(s),a) \le \delta
    \]
    for any $s,t\ge0$ with $s\le t$. It's not difficult to show that for any $N\in\NN$, if $n,m\ge N$, then $(x_n,p(m))_o^S\ge N-\delta$. This implies $\lim_{s\to\infty}p(s)=\xi$.
\end{proof}

\begin{lem}\label{lem:stabilizer of boundary points}
    Suppose that $(S,d_S)$ is a $\delta$-hyperbolic geodesic metric space with $\delta\in\NN$ and a group $G$ acts on $S$ isometrically and acylindrically. Then, for any $\xi \in \partial S$, the stabilizer $\stab_G(\xi)$ of $\xi$ is virtually cyclic.
\end{lem}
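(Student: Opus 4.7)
The plan is to invoke the structure theory of elementary subgroups for acylindrical actions on a hyperbolic space, combined with Lemma \ref{lem:rough geodesic}. Two standard inputs from that theory are: (a) every element of $G$ is either elliptic or loxodromic, with no parabolic isometries (see \cite[Theorem 1.1]{Osin16}); and (b) for every loxodromic $g \in G$, the setwise stabilizer $\stab_G(\{g^+, g^-\})$ of its pair of boundary fixed points is virtually cyclic with $\langle g \rangle$ as a finite-index subgroup --- this is the elementary closure of $g$, a standard consequence of acylindricity (see \cite{Osin16} and references therein).

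First I would split the argument into two cases according to whether $\stab_G(\xi)$ contains a loxodromic element. In the presence of a loxodromic $g_0 \in \stab_G(\xi)$, let $\eta \in \partial S$ denote its other fixed point (distinct from $\xi$). For any $h \in \stab_G(\xi)$, the conjugate $hg_0h^{-1}$ is a loxodromic element of $\stab_G(\xi)$ with fixed points $\{\xi, h\eta\}$. Since $g_0$ and $hg_0h^{-1}$ are then two loxodromic elements of $G$ sharing the boundary fixed point $\xi$, the standard rigidity fact that two loxodromic elements of $G$ with one common Gromov boundary fixed point must share both forces $h\eta = \eta$. Therefore $\stab_G(\xi) \subseteq \stab_G(\{\xi, \eta\})$, which is virtually cyclic by (b).

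In the remaining case, every element of $\stab_G(\xi)$ is elliptic, and I would argue that $\stab_G(\xi)$ is in fact finite by combining Lemma \ref{lem:rough geodesic} with acylindricity. Specifically, take a $(1,\delta)$-quasi-geodesic ray $p$ from $o$ to $\xi$; for each $g \in \stab_G(\xi)$ the translated ray $gp$ also ends at $\xi$ and, since $g$ is elliptic (so there is no asymptotic translation along $p$), fellow-travels $p$ for large $t$, with $d_S(p(t), gp(t))$ controlled in terms of $d_S(o, go)$. Another structural input --- that in an acylindrical action any subgroup with unbounded orbit contains a loxodromic element --- forces $\stab_G(\xi)$ to have bounded orbit, after which acylindricity applied to two points on $p$ at large distance yields $|\stab_G(\xi)| < \infty$. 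The main obstacle is the rigidity fact used in the loxodromic case; although classical, a self-contained derivation needs the WPD property of loxodromic elements together with the translation-length gap available in acylindrical actions, so the cleanest route is to cite \cite{Osin16} rather than reproduce the argument.
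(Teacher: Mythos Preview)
Your proposal is correct and follows essentially the same strategy as the paper: both reduce to Osin's classification \cite[Theorem 1.1]{Osin16} for acylindrical actions and then, in the bounded-orbit case, use the $(1,\delta)$-quasi-geodesic ray from Lemma \ref{lem:rough geodesic} together with acylindricity to deduce finiteness. The only difference is in the loxodromic case: the paper applies the trichotomy directly to $H=\stab_G(\xi)$ (case (c) is excluded since independent loxodromics cannot share a boundary fixed point, and case (b) already says $H$ is virtually cyclic), whereas you pass through the elementary closure $\stab_G(\{\xi,\eta\})$ via the shared-fixed-point rigidity. Your route is slightly longer and imports an extra standard fact, but it is equally valid; the paper's direct use of the trichotomy is a bit cleaner since it avoids invoking the elementary-closure machinery separately.
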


\begin{proof}
    Let $\xi \in \partial S$ and $H=\stab_G(\xi)$. Since the action $G \act S$ is acylindrical, so is $H\act S$. Hence, by \cite[Theorem 1.1]{Osin16} $H$ satisfies exactly one of the following three conditions: (a) $H$ has bounded orbits,
(b) $H$ is virtually cyclic and contains a loxodromic element, (c) $H$ contains infinitely many independent loxodromic elements. Since $H$ fixes $\xi$, (c) cannot occur. We will show that in case (a), $H$ is finite (hence virtually cyclic). Fix $o\in S$ and define $\e$ by $\e=\sup_{g\in H}d(o,go)<\infty$. By Morse lemma (see \cite[Chapter III.H, Theorem 1.7]{BH}), there exists a constant $K(\delta)>0$ such that for any $(1,\delta)$-quasi-geodesic path $q$ and any geodesic path $q'$ from $q_-$ to $q_+$, the Hausdorff distance between $q$ and $q'$ is at most $K(\delta)$. Define $\e'$ by 
$\e'=\e+4K(\delta)+7\delta$. Since $H\act S$ is acylindrical, there exist $R,M\in\NN$ such that for any $x,y\in S$ with $d_S(x,y)\ge R$,
\begin{equation}\label{eq:acylinrical}
    |\{g\in H \mid d_S(x,gx)\le \e' {\rm ~and~} d_S(y,gy)\le \e'\}|\le M.
\end{equation}
By Lemma \ref{lem:rough geodesic}, there exists a $(1,\delta)$-quasi-geodesic ray $p$ from $o$ to $\xi$. Let $g\in H$, then by $g\xi=\xi$ the path $gp$ is a $(1,\delta)$-quasi-geodesic ray from $go$ to $\xi$. Take a real number $s>0$ satisfying $d_S(o,p(s))>\max\{\e+K(\delta),R\}$, then by Morse lemma and $d_S(o,p(s))> d_S(o,go)+K(\delta)$, there exists $t>0$ such that $d_S(p(s),gp(t))\le 2K(\delta)+2\delta$. This implies
\begin{align*}
d_S(gp(s),gp(t))
&=d_S(p(s),p(t))
\le |s-t|+\delta
\le |d_S(o,p(s))-d_S(go,gp(t))| + 3\delta \\
&\le d_S(o,go)+d_S(p(s),gp(t)) + 3\delta
\le \e+ 2K(\delta)+2\delta+ 3\delta.
\end{align*}
Hence, we have $d_S(o,go)\le \e'$ and $d_S(p(s),gp(s))\le d_S(p(s),gp(t))+d_S(gp(t),gp(s))\le \e'$ for any $g\in H$. By $d_S(o,p(s))>R$ and \eqref{eq:acylinrical}, this implies $|H|\le M$.
\end{proof}

Now, we show topological amenability of the boundary action. We restate Corollary \ref{cor:main} as Theorem \ref{thm:topologically amenable action} here. This is an immediate corollary of the above facts and Theorem \ref{thm:main}.

\begin{thm}\label{thm:topologically amenable action}
For any countable acylindrically hyperbolic group $G$, there exists a generating set $S$ of $G$ such that the corresponding Cayley graph $\Gamma(G,S)$ is hyperbolic, $|\partial\Gamma(G,S)|>2$, the natural action of $G$ on $\Gamma(G,S)$ is acylindrical, and the natural action of $G$ on the Gromov boundary $\partial\Gamma(G,S)$ is topologically amenable.
\end{thm}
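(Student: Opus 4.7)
The plan is to combine Theorem \ref{thm:main} with the general criterion Theorem \ref{thm:hyperfinite and amenable stabilizer imples topologically amenable}, using Lemma \ref{lem:stabilizer of boundary points} to verify the stabilizer hypothesis. First, I would apply Theorem \ref{thm:main} to the countable acylindrically hyperbolic group $G$ to obtain a generating set $S$ such that $\Gamma(G,S)$ is hyperbolic, $|\partial\Gamma(G,S)|>2$, the natural action $G\act \Gamma(G,S)$ is acylindrical, and the orbit equivalence relation $E_G^{\partial\Gamma(G,S)}$ on the Gromov boundary is hyperfinite. This already takes care of all conditions on the Cayley graph side of the statement.

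Next, I would verify the hypotheses of Theorem \ref{thm:hyperfinite and amenable stabilizer imples topologically amenable} for the action $G\act \partial\Gamma(G,S)$. Since $G$ is countable, the Cayley graph $\Gamma(G,S)$ is a hyperbolic graph with countable vertex set, so by Lemma \ref{lem:Gromov boundary is Polish} the space $V(\Gamma(G,S))\cup \partial\Gamma(G,S)$ is Polish; then $\partial\Gamma(G,S)$ is Polish as a closed subset. The action of $G$ on $\partial\Gamma(G,S)$ is by homeomorphisms, as remarked after Proposition \ref{prop:gromov topo}. Hyperfiniteness of $E_G^{\partial\Gamma(G,S)}$ is the output of Theorem \ref{thm:main}. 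Finally, for any $\xi\in\partial\Gamma(G,S)$, Lemma \ref{lem:stabilizer of boundary points} applied to the acylindrical isometric action $G\act \Gamma(G,S)$ (which is $\delta$-hyperbolic for some $\delta\in\NN$) yields that $\stab_G(\xi)$ is virtually cyclic, hence amenable.

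With all hypotheses verified, Theorem \ref{thm:hyperfinite and amenable stabilizer imples topologically amenable} immediately gives that $G\act \partial\Gamma(G,S)$ is topologically amenable, completing the proof. There is no real obstacle here: the work is all contained in Theorem \ref{thm:main} (which provides the hyperfiniteness, by far the hardest input) and in the two lemmas about Polishness of the boundary and virtual cyclicity of boundary stabilizers under acylindricity; the corollary is just a matter of assembling these inputs into the framework of Theorem \ref{thm:hyperfinite and amenable stabilizer imples topologically amenable}.
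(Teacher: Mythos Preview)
Your proposal is correct and follows essentially the same approach as the paper: apply Theorem \ref{thm:main} to obtain the generating set $S$, invoke Lemma \ref{lem:stabilizer of boundary points} to get amenable (virtually cyclic) boundary stabilizers, and then feed hyperfiniteness plus amenable stabilizers into Theorem \ref{thm:hyperfinite and amenable stabilizer imples topologically amenable}. The paper's proof is more terse and omits the explicit verification that $\partial\Gamma(G,S)$ is Polish, but your added detail on this point is harmless and correct.
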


\begin{proof}
    Take the generating set $S$ of $G$ in Theorem \ref{thm:main}. Since the action $G \act \Gamma(G,S)$ is acylindrical, the stabilizer $\stab_G(\xi)$ is amenable for any $\xi\in\partial\Gamma(G,S)$ by Lemma \ref{lem:stabilizer of boundary points}. This and hyperfiniteness of the action $G\act\partial\Gamma(G,S)$ imply that $G\act\partial\Gamma(G,S)$ is topologically amenable by Theorem \ref{thm:hyperfinite and amenable stabilizer imples topologically amenable}.
\end{proof}

\section{Appendix (more on path representatives)} \label{sec:appendix}
This section is continuation of Section \ref{sec:Gromov}. $G,X,\{H_\lambda\}_{\lambda\in\Lambda},C,D,Y$ are the same as were defined at the beginning of Section \ref{sec:Gromov}. We will list more results on path representatives of the Gromov boundary $\partial\Gamma(G,Y\sqcup\H)$ for possible future use.

After recording the result that $\partial\Gamma(G,Y\sqcup\H)$ is homeomorphic to a certain subset of $\partial\Gamma(G,X\sqcup\H)$ (see Proposition \ref{prop:homeo}), which was essentially proved in Section \ref{sec:Gromov}, we will first show that any two distinct points of $\partial\Gamma(G,Y\sqcup\H)$ can be connected by a bi-infinite geodesic path in $\Gamma(G,X\sqcup\H)$ (see Proposition \ref{prop:existence for two boundary points}). By using this path representative, we will extend the notion of Hull-Osin's separating cosets to pairs of points in $\partial\Gamma(G,Y\sqcup\H)$ (see Definition \ref{def:sep coset for two boundary points}). Finally, we will verify that this generalization of separating cosets to boundary points satisfies similar properties to those of Hull-Osin's separating cosets (see Lemma \ref{lem:penetrated by bi-infinite geodesic} and Proposition \ref{prop:F4}).

In Definition \ref{def:A homeomorphic to Gromov boundary}, we define the set of limit points in $\partial\Gamma(G,X\sqcup\H)$ of all nice geodesic rays in $\Gamma(G,X\sqcup\H)$. This set turns out to be homeomorphic to $\partial\Gamma(G,Y\sqcup\H)$.

\begin{defn}\label{def:A homeomorphic to Gromov boundary}
    We define the subset $A$ of $\partial\Gamma(G,X\sqcup\H)$ by
    \[
    A=\{\xi' \in \partial\Gamma(G,X\sqcup\H) \mid \exists \gamma\colon {\rm geodesic~ray~in~} \Gamma(G,X\sqcup\H) {\rm ~s.t.~}|S(\gamma;D)|=\infty \wedge X\-\lim \gamma=\xi' \}.
    \]
\end{defn}

For $\xi'\in A$, take a geodesic ray $\gamma$ in $\Gamma(G,X\sqcup\H)$ such that $|S(\gamma;D)|=\infty$ and $ X\-\lim \gamma=\xi'$. The geodesic ray $\gamma$ converges to infinity in $\Gamma(G,Y\sqcup\H)$ by Lemma \ref{lem:seq to infinity} and the limit point $Y\-\lim \gamma$ is independent of $\gamma$ taken for $\xi'$ by Proposition \ref{prop:cts from X to Y} and Lemma \ref{lem:well-known}. Hence, this defines the well-defined map $\Psi \colon A \to \partial\Gamma(G,Y\sqcup\H)$ by
\[
\Psi(\xi')=Y\-\lim \gamma.
\]
\begin{prop}\label{prop:homeo}
    The map $\Psi$ is a homeomorphism from $A$ to $\partial\Gamma(G,Y\sqcup\H)$.
\end{prop}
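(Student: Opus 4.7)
The plan is to verify in turn that $\Psi$ is surjective, injective, continuous, and has a continuous inverse; well-definedness was already established in the paragraph preceding the statement. Surjectivity is immediate from Proposition~\ref{prop:existence}: given $\xi\in\partial\Gamma(G,Y\sqcup\H)$ and any basepoint $o\in G$, that result supplies a geodesic ray $\gamma$ in $\Gamma(G,X\sqcup\H)$ from $o$ with $Y\-\lim\gamma=\xi$; then $|S(\gamma;D)|=\infty$ by Lemma~\ref{lem:seq to infinity}, and by hyperbolicity $\gamma$ converges to some $\xi'\in\partial\Gamma(G,X\sqcup\H)$, placing $\xi'$ in $A$ with $\Psi(\xi')=\xi$.

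For injectivity, suppose $\Psi(\xi_1')=\Psi(\xi_2')=:\xi$ with witnessing rays $\gamma_1,\gamma_2$. I will first apply Lemma~\ref{lem:well-known} to replace $\gamma_2$ by a geodesic ray $\tilde\gamma_2$ starting at $o:=(\gamma_1)_-$ and sharing an infinite tail with $\gamma_2$; this preserves both $X\-\lim$ and $Y\-\lim$. Corollary~\ref{cor:for hyperfinite} applied to $\gamma_1$ and $\tilde\gamma_2$ then produces $N$ such that, for every $n\ge N$, both rays penetrate the common separating coset $C_n$ with entrance points in the same $H_\lambda$-coset, so that $d_{X\cup\H}(\gamma_{1,in}(C_n),\tilde\gamma_{2,in}(C_n))\le 1$; Lemma~\ref{lem:distance of S gamma} moreover places both entrance points at distance $d_{X\cup\H}(o,C_n)$ from $o$, a quantity tending to infinity with $n$. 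A slim-triangle estimate in the $\delta_X$-hyperbolic graph $\Gamma(G,X\sqcup\H)$ then forces $(X\-\lim\gamma_1,\,X\-\lim\tilde\gamma_2)_o^{X\cup\H}\to\infty$, hence $\xi_1'=\xi_2'$.

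Continuity of $\Psi$ comes straight from Proposition~\ref{prop:cts from X to Y}: given $\xi'\in A$ with witness $\gamma$ from $o$ and an open neighborhood $U$ of $\Psi(\xi')=Y\-\lim\gamma$, that proposition yields an open $V\subset\partial\Gamma(G,X\sqcup\H)$ around $\xi'=X\-\lim\gamma$ such that every geodesic ray from $o$ converging to infinity in $\Gamma(G,Y\sqcup\H)$ with $X\-\lim$ in $V$ has $Y\-\lim$ in $U$. For any $\eta'\in V\cap A$, Lemma~\ref{lem:well-known} lets me relocate an arbitrary witness for $\eta'$ to the basepoint $o$ while preserving its limits, so the proposition yields $\Psi(\eta')\in U$.

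For continuity of $\Psi^{-1}$, fix $\xi$ with witness $\gamma$ from $o$ and an open neighborhood $V$ of $\xi':=X\-\lim\gamma$; pick $R$ with $\{\eta':(\eta',\xi')_o^{X\cup\H}>R\}\subset V$, and then $N$ with $d_{X\cup\H}(o,C_N)\ge R+2\delta_X+1$, where $S(\gamma;D)=\{C_1\preceq C_2\preceq\cdots\}$. Proposition~\ref{prop:for cts} supplies an open $U\ni\xi$ such that every geodesic ray $\beta$ from $o$ with $Y\-\lim\beta\in U$ penetrates $C_1,\dots,C_N$; the same slim-triangle calculation used for injectivity then gives $(X\-\lim\beta,\xi')_o^{X\cup\H}>R$, so $\Psi^{-1}(\eta)\in V$ for every $\eta\in U$. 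The main obstacle will be precisely this slim-triangle packaging, which converts the $\hd_\lambda$-closeness from Corollary~\ref{cor:for hyperfinite} and the penetration control from Proposition~\ref{prop:for cts} into convergence of $X$-Gromov products; once isolated, the same estimate drives both the injectivity of $\Psi$ and the continuity of $\Psi^{-1}$ in parallel.
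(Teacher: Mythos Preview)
Your proof is correct and follows the same approach as the paper, which simply cites Proposition~\ref{prop:existence} for surjectivity, Corollary~\ref{cor:for hyperfinite} for injectivity, Proposition~\ref{prop:cts from X to Y} for continuity of $\Psi$, and Proposition~\ref{prop:for cts} for continuity of $\Psi^{-1}$. Your additional use of Lemma~\ref{lem:well-known} to relocate basepoints, and the explicit slim-triangle packaging in $\Gamma(G,X\sqcup\H)$, are exactly the details one needs to bridge those propositions (stated for a fixed basepoint $o$) to the basepoint-free definition of $A$; the paper leaves this implicit, having already invoked the same relocation trick in the well-definedness paragraph preceding the proposition.
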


\begin{proof}
    Injectivity and surjectivity of $\Psi$ follow from Corollary \ref{cor:for hyperfinite} and Proposition \ref{prop:existence} respectively. Continuity of $\Psi$ and $\Psi^{-1}$ follow from Proposition \ref{prop:cts from X to Y} and Proposition \ref{prop:for cts} respectively.
\end{proof}

\begin{rem}
    By Proposition \ref{prop:homeo} and the Luzin-Souslin Theorem (see \cite[Corollary 15.2]{Kec95}) for $\Psi^{-1}$, the set $A$ is Borel in $\partial\Gamma(G,X\sqcup\H)$. It's interesting to know whether the geodesic boundary is Borel or not in $\partial\Gamma(G,X\sqcup\H)$. Recall that the geodesic boundary of a geodesic hyperbolic metric space $S$ is the set of all points in the Gromov boundary $\partial S$ that can be realized as the limit point of a geodesic ray in $S$.
\end{rem}

We will now begin our discussion to extend the notion of Hull-Osin's separating cosets to pairs of boundary points, which completes in Definition \ref{def:sep coset for two boundary points}. Definition \ref{def:endpoints of bi-infinite geodesic} sets up notations for the endpoints of a bi-infinite geodesic path.

\begin{defn}\label{def:endpoints of bi-infinite geodesic}
    Suppose that $\gamma=(\cdots,x_{-1},x_0,x_1,\cdots)$ is a bi-infinite geodesic path in $\Gamma(G,X\sqcup\H)$ such that the sequences $(x_n)_{n=1}^\infty$ and $(x_n)_{n=-1}^{-\infty}$ converge to infinity in $\Gamma(G,Y\sqcup\H)$. We denote the limit point $Y\-\lim_{n\to\infty}x_n$ in $\partial\Gamma(G,Y\sqcup\H)$ by $Y\-\lim \gamma_+$ and the limit point $Y\-\lim_{n\to-\infty}x_n$ in $\partial\Gamma(G,Y\sqcup\H)$ by $Y\-\lim \gamma_-$.
\end{defn}

As we did in Proposition \ref{prop:existence}, we first show that two distinct boundary points can be connected by a bi-infinite geodesic path of the smaller Cayley graph.

\begin{prop}\label{prop:existence for two boundary points}
    For any two distinct points $\xi,\eta \in \partial\Gamma(G,Y\sqcup\H)$, there exists a bi-infinite geodesic path $\gamma$ in $\Gamma(G,X\sqcup\H)$ such that $Y\-\lim\gamma_- =\xi$ and $Y\-\lim\gamma_+ =\eta$.
\end{prop}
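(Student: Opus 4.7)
The plan is to mimic the diagonal construction in the proof of Proposition \ref{prop:existence}, applied symmetrically on the two sides of a suitable ``near-center'' vertex. First I would choose sequences $(g_n)_{n=1}^\infty, (h_n)_{n=1}^\infty$ in $G$ with $Y\-\lim_{n\to\infty} g_n = \xi$ and $Y\-\lim_{n\to\infty} h_n = \eta$, and for each $n$ fix a geodesic path $\sigma_n$ in $\Gamma(G,X\sqcup\H)$ from $g_n$ to $h_n$. Since $\xi\neq\eta$ forces $(\xi,\eta)_1^{Y\cup\H}<\infty$, the $\delta_Y$-slim triangle property applied to $1,g_n,h_n$ shows that the $Y\cup\H$-geodesic from $g_n$ to $h_n$ passes within uniformly bounded $Y\cup\H$-distance of $1$; Lemma \ref{lem:Haus}~(b) then yields a vertex $o_n$ on $\sigma_n$ with $d_{Y\cup\H}(1,o_n) \le R_1$ for some constant $R_1$ independent of $n$.

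Next I split each $\sigma_n$ at $o_n$ into a ``$\xi$-half'' $\sigma_n^+$ from $o_n$ to $g_n$ and an ``$\eta$-half'' $\sigma_n^-$ from $o_n$ to $h_n$, both of whose $Y\cup\H$-length tends to infinity with $n$. By \eqref{eq:qi} in Theorem \ref{thm:osin} we have $|S(o_n,g_n;D)|\to\infty$ and $|S(o_n,h_n;D)|\to\infty$, so I can run the argument of Proposition \ref{prop:existence} on each half: for $n,m$ sufficiently large, Lemma \ref{lem:penetration in triangle} guarantees that $\sigma_n^+$ and $\sigma_m^+$ share a ``deep'' common essentially penetrated coset, Lemma \ref{lem:bounded by 3C} bounds the $\hd_\lambda$-distance between the corresponding entrance points, and local finiteness of $\hd_\lambda$ then extracts a subsequence with a common entrance vertex. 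Iterating produces vertices $a_1,a_2,\ldots$ on the $\xi$-side with $|S(o_n,a_k;D)|$ unbounded and each $a_k$ lying on cofinally many $\sigma_n^+$; a symmetric procedure yields $b_1,b_2,\ldots$ on the $\eta$-side.

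The main obstacle is synchronizing the two diagonal procedures so that the two resulting rays share a common initial vertex, so that their concatenation is an honest bi-infinite $X$-geodesic. I would address this by first passing to a subsequence along which $o_n$ is constant: since $d_{Y\cup\H}(1,o_n)\le R_1$, the inequality \eqref{eq:qi} gives $|S(1,o_n;D)|\le 3R_1$, a uniform bound. Applying Lemma \ref{lem:bounded by 3C} together with local finiteness of $\hd_\lambda$ inductively coset-by-coset along the $\preceq$-chain $S(1,o_n;D)$ (and using Lemma \ref{lem:lem4.6 osin} to pin down the entrance vertex in each coset) forces the candidate $o_n$'s into finitely many values along some subsequence, after which I may assume $o_n \equiv o$ for all $n$. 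After this reduction the two diagonal arguments run from the common base $o$ and can be concatenated into a single bi-infinite $X$-geodesic $\gamma$ through $o$. The conclusions $Y\-\lim \gamma_+=\xi$ and $Y\-\lim \gamma_-=\eta$ then follow exactly as at the end of the proof of Proposition \ref{prop:existence}, via Lemma \ref{lem:seq to infinity} and Lemma \ref{lem:estimate gromov prod in Y} applied to the two tails of $\gamma$ together with the original sequences $(g_n)$ and $(h_n)$.
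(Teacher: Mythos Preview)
Your overall strategy mirrors the paper's closely up to the point where you identify the ``main obstacle'', but your proposed resolution of that obstacle has a genuine gap. You cannot force $o_n$ to take only finitely many values from the bound $d_{Y\cup\H}(1,o_n)\le R_1$. The inequality \eqref{eq:qi} indeed gives $|S(1,o_n;D)|\le 3R_1$, but knowing the number of separating cosets is bounded does not pin down $o_n$: the cosets in $S(1,o_n;D)$ themselves vary with $n$, so Lemma \ref{lem:bounded by 3C} (which compares entrance points of two geodesics \emph{from the same vertex} into the \emph{same} coset) does not let you compare across different $n$. Even in the extreme case $S(1,o_n;D)=\emptyset$, all you learn is $o_n\in Y$, and $Y$ is typically infinite. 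More generally, after exiting the last separating coset the geodesic from $1$ to $o_n$ can land anywhere in a $(Y\cup\H)$-ball, which is again infinite.

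The paper sidesteps this problem entirely: it never tries to fix the near-center vertex. Instead it first invokes Proposition \ref{prop:existence} to get \emph{fixed} geodesic rays $\alpha,\beta$ in $\Gamma(G,X\sqcup\H)$ from $1$ to $\xi$ and $\eta$, with fixed separating-coset chains $\{C_i^\alpha\}$ and $\{C_i^\beta\}$. Then for the geodesic $\gamma_n$ from $x_n\in\alpha$ to $y_n\in\beta$, Lemma \ref{lem:penetration in triangle} (applied with the bounded $d_{Y\cup\H}(1,z_n)$) forces $\gamma_n$ to penetrate the fixed cosets $C_I^\alpha$ and $C_I^\beta$ for a uniform $I$, and Lemma \ref{lem:bounded by 3C} compares the entrance/exit points against the \emph{fixed} reference points $\alpha_{out}(C_I^\alpha)$, $\beta_{out}(C_I^\beta)$. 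Local finiteness of $\hd_\lambda$ then gives finitely many possible values for these entrance/exit points, and the diagonal extraction proceeds with the common vertices $a_k\in C_{I+k-1}^\alpha$, $b_k\in C_{I+k-1}^\beta$ rather than with a common center $o$. The resulting bi-infinite path is geodesic because the segments $\gamma_{nn}[a_n,b_n]$ are nested, not because the two halves share an initial vertex.
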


\begin{proof}
    By Proposition \ref{prop:existence}, fix geodesic rays $\alpha=(1,x_1,x_2,\cdots), \beta=(1,y_1,y_2,\cdots)$ in $\Gamma(G,X\sqcup\H)$ from $1$ such that $Y\-\lim \alpha = \xi$ and $Y\-\lim \beta = \eta$. Since $\xi \neq \eta$, it's straightforward to see that there exist $R,m_0\in\NN$ such that for any $i,j \ge m_0$,
    \begin{align}\label{eq:distinct}
        (x_i,y_j)_1^{Y\cup\H} \le R.
    \end{align}
    For each $n \ge m_0$, fix a geodesic path $\gamma_n$ in $\Gamma(G,X\sqcup\H)$ from $x_n$ to $y_n$. Also, take a geodesic path $[x_n,y_n]$ in $\Gamma(G,Y\sqcup\H)$ for each $n \ge m_0$, then there exists a vertex $z'_n \in [x_n,y_n]$ such that $d_{Y\cup\H}(1,z'_n) \le R+\delta_Y$ by \eqref{eq:distinct} and we can take a vertex $z_n\in \gamma_n$ satisfying $d_{Y\cup\H}(z'_n,z_n) \le M_X$ by Lemma \ref{lem:Haus} (b). This $z_n$ satisfies
    \begin{equation}\label{eq:zn}
        d_{Y\cup\H}(1,z_n) \le R+\delta_Y+M_X
    \end{equation}
    for each $n\ge m_0$. Let $S(\alpha;D)=\{C_1^\alpha\preceq C_2^\alpha \preceq\cdots\}$ and $S(\beta;D)=\{C_1^\beta \preceq C_2^\beta \preceq \cdots\}$ and let each $C_i^\alpha$ (resp. $C_i^\beta$) be a coset of $H_{\lambda_i^\alpha}$ (resp. $H_{\lambda_i^\beta}$). Define $I$ by $I=3(R+\delta_Y+M_X)+2$. We claim that for any $i,n\in\NN$ satisfying $I\le i$ and $d_{X\cup\H}(1,\alpha_{out}(C_{i+1}^\alpha)) \le n$, $\gamma_n$ penetrates $C_i^\alpha$. Indeed, let $S(x_n,1;D)=\{B_1\preceq\cdots\preceq B_m\}$. By applying Lemma \ref{lem: sep coset ordered} to $\alpha$ and $S(1,x_n;D)$, we have $B_{m-(j-1)}=C_j^\alpha$ for any $j\in\{1,\cdots,i\}$. Since we have $3d_{Y\cup\H}(1,z_n)+1 \le i-1$, the path $\gamma_n$ penetrates $C_i^\alpha(=B_{m-(i-1)})$ by applying Lemma \ref{lem:penetration in triangle} to $x_n,1,\gamma_{[x_n,z_n]}$. By applying Lemma \ref{lem:bounded by 3C} to $(\alpha_{[1,x_n]})^{-1}$ and $\gamma_n$, we have
    \begin{equation*}
        \hd_{\lambda_i^\alpha}(\alpha_{out}(C_i^\alpha), (\gamma^{-1}_n)_{out}(C_i^\alpha))
        = \hd_{\lambda_i^\alpha}((\alpha_{[1,x_n]})^{-1}_{in}(C_i^\alpha), (\gamma_n)_{in}(C_i^\alpha))
        \le 3C.
    \end{equation*}
    In the same way, we can also see that for any $i,n\in\NN$ satisfying $I\le i$ and $d_{X\cup\H}(1,\beta_{out}(C_{i+1}^\beta)) \le n$, the path $\gamma_n$ penetrates $C_i^\beta$ and we have $\hd_{\lambda_i^\beta}(\beta_{out}(C_i^\beta), \gamma_{n~out}(C_i^\beta)) \le 3C$.
    Since $\hd_{\lambda_i^\alpha}$ and $\hd_{\lambda_i^\beta}$ are locally finite for any $i\in\NN$, the sets $A_i,B_i$ defined by
    \[
    A_i=\{h\in C_i^\alpha \mid \hd_{\lambda_i^\alpha}(h,\alpha_{out}(C_i^\alpha))\le 3C \}
    {\rm ~~and~~}
    B_i=\{h\in C_i^\beta \mid \hd_{\lambda_i^\beta}(h,\beta_{out}(C_i^\beta))\le 3C \}
    \]
    are finite for any $i\in\NN$. Hence, by the above claim for $i=I$, there exist a subsequence $(\gamma_{1k})_{k=1}^\infty$ of $(\gamma_n)_{n=m_1}^\infty$ and vertices $a_1\in A_I$, $b_1\in B_I$ such that $\{a_1,b_1\}\subset \gamma_{1k}$ for any $k\in\NN$. By repeating this argument for $i=I+1,I+2,\cdots$ and taking subsequences, we can see that there exist a sequence of subsequences $(\gamma_{1k})_{k=1}^\infty \supset (\gamma_{2k})_{k=1}^\infty \supset \cdots$ and vertices $a_n \in A_{I+n-1}$, $b_n \in B_{I+n-1}$ for each $n\in\NN$ such that $\{a_n,\cdots,a_1,b_1,\cdots,b_n\} \subset \gamma_{nk}$ for any $n,k\in\NN$. Take the diagonal sequence $(\gamma_{kk})_{k=1}^\infty$, then for any $n,k\in\NN$ satisfying $k\ge n$, we have
    \begin{align}\label{eq:gamma nn}
        \{a_n,\cdots,a_1,b_1,\cdots,b_n\} \subset \gamma_{kk}.
    \end{align}
    Define the bi-infinite path $\gamma$ in $\Gamma(G,X\sqcup\H)$ by
    \[
    \gamma =\bigcup_{i=2}^\infty \gamma_{nn[a_n,a_{n-1}]} \cup \gamma_{11[a_1,b_1]} \cup \bigcup_{n=2}^\infty \gamma_{nn[b_{n-1},b_n]}.
    \]
    By \eqref{eq:gamma nn}, $\gamma$ is geodesic in $\Gamma(G,X\sqcup\H)$. By $d_{Y\cup\H}(a_n, \alpha_{out}(C_{I+n-1}^\alpha))\le 1, \forall n\in\NN$ and $Y\-\lim_{n\to\infty} \alpha_{out}(C_{I+n-1}^\alpha)=\xi$, we have $Y\-\lim_{n\to\infty} a_n=\xi$. This implies $Y\-\lim\gamma_-=Y\-\lim_{n\to\infty} a_n=\xi$ by Lemma \ref{lem:seq to infinity}. Similarly, $Y\-\lim_{n\to\infty} b_n=\eta$ implies $Y\-\lim\gamma_+=\eta$.
\end{proof}

    \begin{figure}[htbp]
    % Requires \usepackage{graphicx}
    \begin{center}
    \hspace{0mm} 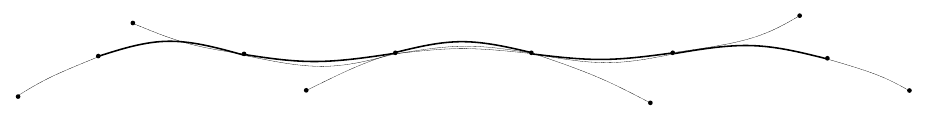
    \end{center}
    \vspace{-3mm}
    \caption{The bi-infinite path $\gamma$ in the proof of Proposition \ref{prop:existence for two boundary points}}
    \label{Prop6_5}
    \end{figure}

As in Definition \ref{def:S(gamma;D)} and Definition \ref{def:order on S gamma}, we next define separating cosets for a bi-infinite geodesic path and align these separating cosets based on the order of their penetration.

\begin{defn}\label{def:sep coset for bi-infinite path}
For a bi-infinite geodesic path $\gamma=(\cdots,x_{-1},x_0,x_1,\cdots)$ in $\Gamma(G,X\sqcup\H)$, we define the set $S(\gamma;D)$ of cosets by
\[
S(\gamma;D)=\bigcup_{n,m\in\ZZ, n<m}S(x_n,x_m;D).
\]
We call an element of $S(\gamma;D)$ a $(\gamma;D)$-\emph{separating coset}.
\end{defn}

\begin{rem}\label{rem:bi-infinite}
In  Definition \ref{def:sep coset for bi-infinite path}, we have $S(\gamma;D)=\bigcup_{n\in\NN}S(x_{-n},x_n;D)$ by Lemma \ref{lem:inclusion}. Also, since $\gamma$ penetrates all cosets in $S(\gamma;D)$ by Lemma \ref{lem:lem4.5 osin}, we can define the relation $\preceq$ on $S(\gamma;D)$ as follows: for any $C_1,C_2 \in S(\gamma;D)$,
\[
C_1\preceq C_2 \iff \exists N\in\ZZ {\rm ~s.t.~} \forall n\le N, d_{X\cup\H}(x_n,\gamma_{in}(C_1))\le d_{X\cup\H}(x_n,\gamma_{in}(C_1)).
\]
We can see that the relation $\preceq$ is a linear order. We write $S(\gamma;D)=\{\cdots \preceq C_{-1} \preceq C_0 \preceq C_1 \preceq\cdots\}$ considering this order.
\end{rem}

As in Corollary \ref{cor:separating coset of ray}, we finally show that two bi-infinite geodesic paths with the same endpoints have the same separating cosets. This enables us to define separating cosets for a pair of boundary points using a bi-infinite geodesic path connecting them.

\begin{lem}\label{lem:well-defined}
    Suppose that $\xi,\eta \in \partial\Gamma(G,Y\sqcup\H)$ are distinct and $\alpha,\beta$ are bi-infinite geodesic paths in $\Gamma(G,X\sqcup\H)$ such that $Y\-\lim \alpha_-=Y\-\lim \beta_-=\xi$ and $Y\-\lim \alpha_+=Y\-\lim \beta_+=\eta$. Then, we have $S(\alpha;D)=S(\beta;D)$.
\end{lem}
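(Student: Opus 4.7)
The plan is to adapt the proof strategy of Corollary~\ref{cor:separating coset of ray}, which handled the analogous ray case, to the bi-infinite setting. By the symmetric hypotheses, it suffices to prove the inclusion $S(\alpha;D)\subset S(\beta;D)$.

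First I would fix a vertex $a_0$ on $\alpha$ and split $\alpha$ into two geodesic rays at its base: the forward ray $\alpha^+=\alpha_{[a_0,\infty)}$ with $Y\-\lim\alpha^+=\eta$ and the backward ray $\alpha^-=(a_0,a_{-1},a_{-2},\ldots)$ with $Y\-\lim\alpha^-=\xi$. Analogously, fix $b_0$ on $\beta$ and define rays $\beta^\pm$. A preliminary step is to establish the decomposition
\[
S(\alpha;D)=S(\alpha^+;D)\cup S(\alpha^-;D),
\]
and similarly for $\beta$. The inclusion $\supset$ follows directly from Lemma~\ref{lem:inclusion}. For $\subset$, any $C\in S(\alpha;D)$ has its component in $\alpha$ (a single edge, by Remark~\ref{rem:component of geodesic}) lying on one side of $a_0$, say in $\alpha^+$; a short re-routing of the essentially-penetrating witness geodesic—concatenating with the appropriate portion of $\alpha$—then exhibits $C\in S(a_0,a_m;D)\subset S(\alpha^+;D)$.

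Granting this reduction, it is enough to show $S(\alpha^+;D)\subset S(\beta;D)$; the argument for $S(\alpha^-;D)$ is entirely symmetric. To compare $\alpha^+$ (based at $a_0$) with rays based at $b_0$, I would apply Lemma~\ref{lem:well-known} to $\alpha^+$ at the vertex $b_0$: this yields $k\in\NN$ and a geodesic $p$ from $b_0$ to $a_k$ such that $\tau:=p\cdot\alpha^+_{[a_k,\infty)}$ is a geodesic ray from $b_0$ with $Y\-\lim\tau=Y\-\lim\alpha^+=\eta=Y\-\lim\beta^+$. Corollary~\ref{cor:separating coset of ray}, applied to $\tau$ and $\beta^+$ (both rays from $b_0$ with the same $Y$-limit), gives $S(\tau;D)=S(\beta^+;D)\subset S(\beta;D)$.

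The crux of the proof then reduces to showing $S(\alpha^+;D)\subset S(\tau;D)$; this is the main obstacle, since $\alpha^+$ and $\tau$ share an infinite tail but have different starting vertices. For any $C\in S(\alpha^+;D)$, I would exploit that $|S(\alpha^+;D)|=\infty$ by Lemma~\ref{lem:seq to infinity} to choose a strictly later separating coset $C'\in S(\alpha^+;D)$ with $C\prec C'$ whose component in $\alpha^+$ lies in the shared tail $\alpha^+_{[a_k,\infty)}\subset\tau$. Then $\tau$ penetrates $C'$ through this same edge, and an application of Lemma~\ref{lem:sep coset inclustion} along a suitable geodesic from $b_0$ through the shared tail forces $C\in S(b_0,\tau(\ell);D)\subset S(\tau;D)$ for sufficiently large $\ell$. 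The delicate part is reconciling the linear orderings on $S(\alpha^+;D)$ (induced by distance from $a_0$) and on $S(\tau;D)$ (induced by distance from $b_0$), but this is manageable because the rays' shared infinite tail forces the two orderings to agree past the joining index $k$.
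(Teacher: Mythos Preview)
Your approach has a genuine gap in the step you flag as the ``crux'': showing $S(\alpha^+;D)\subset S(\tau;D)$. Unpacking Definition~\ref{def:sep coset for vertex and boundry points}, this is the assertion $S(a_0,\eta;D)\subset S(b_0,\eta;D)$, and there is no reason for this to hold---separating cosets near $a_0$ need not be separating for the pair $(b_0,\eta)$ at all. Your proposed use of Lemma~\ref{lem:sep coset inclustion} does not help: that lemma requires a \emph{fixed} basepoint $o$, so it can only move cosets within $S(a_0,\cdot\,;D)$ or within $S(b_0,\cdot\,;D)$, not transfer membership from one to the other. The observation that the orderings agree on the shared tail is true but irrelevant, since the cosets you need to handle are precisely those \emph{before} the tail. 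A similar difficulty afflicts your decomposition $S(\alpha;D)=S(\alpha^+;D)\cup S(\alpha^-;D)$: the re-routing you sketch replaces the essential edge $c$ of the witness geodesic by an edge from $a_j$ to $q_{out}(C)$, but Lemma~\ref{lem:bounded by 3C} only gives $\hd_\lambda(a_j,q_{in}(C))\le 3C$, so the new edge satisfies $\hd_\lambda>D-3C$, which is not essential penetration.

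The paper avoids both problems by never splitting into rays. Instead it applies Corollary~\ref{cor:for hyperfinite} at \emph{both} ends to find $N$ such that $\beta$ penetrates $C_{-n}$ and $C_n$ for all $n\ge N$. Given any $C_i\in S(\alpha;D)$, one chooses $n$ large enough that $C_i\in S(\alpha_{out}(C_{-n}),\alpha_{in}(C_n);D)$ via Lemma~\ref{lem:inclusion}, takes a witness geodesic $p$ between these two points, and then transplants $p$ onto $\beta$ by attaching single $\mathcal H$-edges at $C_{-n}$ and $C_n$. The key point---which has no analogue in your argument---is that Lemma~\ref{lem:distance of cosets} gives $d_{X\cup\mathcal H}(\alpha_{out}(C_{-n}),\alpha_{in}(C_n))=d_{X\cup\mathcal H}(C_{-n},C_n)=d_{X\cup\mathcal H}(\beta_{out}(C_{-n}),\beta_{in}(C_n))$, so the transplanted path $e_1pe_2$ has exactly the right length to be geodesic between two vertices of $\beta$, and it still essentially penetrates $C_i$. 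This directly yields $C_i\in S(\beta;D)$ without ever comparing ray-based separating cosets from different basepoints.
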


\begin{proof}
    Let $\alpha=(\cdots,x_{-1},x_0,x_1,\cdots)$ and $S(\alpha;D)=\{ \cdots \preceq C_{-1} \preceq C_0 \preceq C_1 \preceq\cdots \}$. By $Y\-\lim \alpha_-=Y\-\lim \beta_-$, $Y\-\lim \alpha_+=Y\-\lim \beta_+$, and Corollary \ref{cor:for hyperfinite}, there exists $N\in\NN$ such that $\beta$ penetrates $C_{-n}$ and $C_n$ for any $n\ge N$. For any $i\in\ZZ$, there exists $m \in\NN$ such that $C_i \in S(x_{-m},x_m;D)$ by Remark \ref{rem:bi-infinite}. For $N$ and $m$, there exists $n\ge N$ such that $\alpha_{[x_{-m},x_m]}\subset \alpha_{[\alpha_{out}(C_{-n}),\alpha_{in}(C_n)]}$. Since this implies $C_i \in S(\alpha_{out}(C_{-n}),\alpha_{in}(C_n);D)$ by Lemma \ref{lem:inclusion}, there exists a geodesic path $p$ in $\Gamma(G,X\sqcup\H)$ from $\alpha_{out}(C_{-n})$ to $\alpha_{in}(C_n)$ that essentially penetrates $C_i$. Let $C_{-n},C_n$ be cosets of $H_{\lambda_{-n}}, H_{\lambda_n}$ respectively and let $e_1,e_2$ be the edges of $\Gamma(G,X\sqcup\H)$ with their labels in $H_{\lambda_{-n}}, H_{\lambda_n}$ respectively such that $e_1$ is from $\beta_{in}(C_{-n})$ to $\alpha_{out}(C_{-n})$ and $e_2$ is from $\alpha_{in}(C_{n})$ to $\beta_{out}(C_{n})$. Since we have $d_{X\cup\H}(\alpha_{out}(C_{-n}),\alpha_{in}(C_{n}))=d_{X\cup\H}(\beta_{out}(C_{-n}),\beta_{in}(C_{n}))=d_{X\cup\H}(C_{-n},C_n)$ by Lemma \ref{lem:distance of cosets}, the path $e_1pe_2$ from $\beta_{in}(C_{-n})$ to $\beta_{out}(C_{n})$ is geodesic in $\Gamma(G,X\sqcup\H)$ and essentially penetrates $C_i$. This implies $C_i \in S(\beta_{in}(C_{-n}),\beta_{out}(C_{n});D) \subset S(\beta;D)$. Hence, we have $S(\alpha;D) \subset S(\beta;D)$. Similarly, we can also see $S(\beta;D) \subset S(\alpha;D)$.
\end{proof}

We can now extend the notion of Hull-Osin's separating cosets to a pair of boundary points in the same way as Definition \ref{def:sep coset for vertex and boundry points}.

\begin{defn}\label{def:sep coset for two boundary points}
    For $\xi,\eta \in \partial\Gamma(G,Y\sqcup\H)$ with $\xi\neq\eta$, we take a bi-infinite geodesic path $\gamma$ in $\Gamma(G,X\sqcup\H)$ satisfying $Y\-\lim\gamma_-=\xi$ and $Y\-\lim\gamma_+=\eta$, and define the set $S(\xi,\eta;D)$ of cosets by 
    \[
    S(\xi,\eta;D)=S(\gamma;D).
    \]
    We call an element of $S(\xi,\eta;D)$ a $(\xi,\eta;D)$-\emph{separating coset}. For convenience, we also define $S(\xi,\xi;D)$ by $S(\xi,\xi;D)=\emptyset$ for any $\xi \in \partial\Gamma(G,Y\sqcup\H)$.
\end{defn}

\begin{rem}
    Definition \ref{def:sep coset for two boundary points} is well-defined by Proposition \ref{prop:existence for two boundary points} and Lemma \ref{lem:well-defined}. Also, $S(\xi,\eta;D)$ is exactly the set of all cosets that are essentially penetrated by some bi-infinite geodesic path $\gamma$ in $\Gamma(G,X\sqcup\H)$ satisfying $Y\-\lim\gamma_-=\xi$ and $Y\-\lim\gamma_+=\eta$.
\end{rem}

\begin{figure}[htbp]
  % Requires \usepackage{graphicx}
  \begin{center}
 \hspace{0mm} 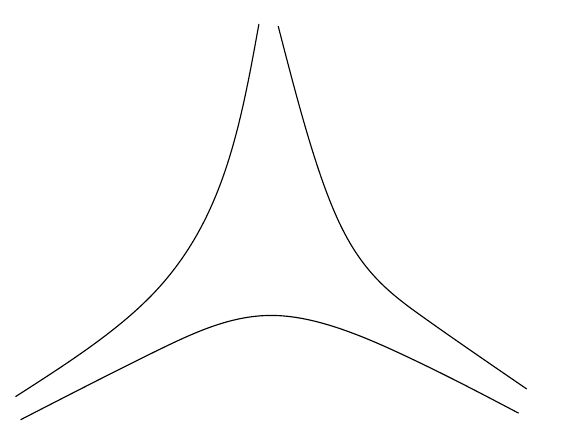
  \end{center}
   \vspace{-3mm}
  \caption{Proof of Lemma \ref{lem:penetrated by bi-infinite geodesic}}
  \label{Lem5_11}
\end{figure}

Our next goal is to show Proposition \ref{prop:F4}, which is an analogue of \cite[Lemma 3.9]{HO13}. We first prepare auxiliary results. For distinct elements $\xi,\eta\in \partial\Gamma(G,Y\sqcup\H)$, if a bi-infinite geodesic path in $\Gamma(G,X\sqcup\H)$ satisfies $Y\-\lim \gamma_-=\xi$ and $Y\-\lim \gamma_+=\eta$, then we say that $\gamma$ is \emph{from} $\xi$ \emph{to} $\eta$.

Lemma \ref{lem:penetrated by bi-infinite geodesic} below is analogous to Lemma \ref{lem:lem4.5 osin}.

\begin{lem}\label{lem:penetrated by bi-infinite geodesic}
Let $D\ge 6C$. For any distinct elements $\xi,\eta,\zeta\in \partial\Gamma(G,Y\sqcup\H)$ and any $B\in S(\xi,\eta;D)$, $B$ is either penetrated by all bi-infinite geodesic paths in $\Gamma(G,X\sqcup\H)$ from $\xi$ to $\zeta$ or penetrated by all bi-infinite geodesic paths in $\Gamma(G,X\sqcup\H)$ from $\zeta$ to $\eta$.
\end{lem}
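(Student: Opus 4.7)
The plan is to argue by contradiction. Fix a bi-infinite geodesic $q$ in $\Gamma(G,X\sqcup\H)$ from $\xi$ to $\eta$ that essentially penetrates $B$, and let $e$ be its single $H_\lambda$-edge in $B$, with endpoints $u=q_{in}(B)$ and $v=q_{out}(B)$; essential penetration gives $\hd_{\lambda}(u,v)>D\ge 6C$. Suppose for contradiction that there exist bi-infinite geodesics $p$ in $\Gamma(G,X\sqcup\H)$ from $\xi$ to $\zeta$ and $\alpha$ from $\zeta$ to $\eta$, neither of which penetrates $B$. I will build a closed geodesic hexagon in $\Gamma(G,X\sqcup\H)$ in which $e$ appears as an isolated $H_\lambda$-component; Proposition~\ref{prop:C} (with $n=6$) will then yield $\hd_{\lambda}(u,v)\le 6C$, contradicting essential penetration.

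Three auxiliary ``meeting'' cosets $B_\xi,B_\eta,B_\zeta$ are selected via Corollary~\ref{cor:for hyperfinite}. Applied to the $\xi$-end rays of $q$ and $p$, it produces a $(q;D)$-separating coset $B_\xi$ of some $H_{\lambda_\xi}$, lying strictly on the $\xi$-side of $B$ in $(S(q;D),\preceq)$, that is also penetrated by $p$. The analogous application at $\eta$ (to $q$ and $\alpha$) yields $B_\eta$ of some $H_{\lambda_\eta}$, strictly on the $\eta$-side of $B$ along $q$, penetrated by $\alpha$. At $\zeta$ (to $p$ and $\alpha$) it gives a $(p;D)$-separating coset $B_\zeta$ of some $H_{\lambda_\zeta}$ penetrated by $\alpha$. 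Since each corollary application admits infinitely many choices, $B_\xi,B_\eta,B_\zeta$ can be taken pairwise distinct; and all three differ from $B$ (the first two by position along $q$, the third because $p$ penetrates $B_\zeta$ but not $B$).

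The hexagon $P$ is then traversed cyclically starting at $q_{out}(B_\xi)$ along six geodesic sides: (1) the $q$-subpath from $q_{out}(B_\xi)$ to $q_{in}(B_\eta)$, which contains $e$; (2) the single $H_{\lambda_\eta}$-edge inside $B_\eta$ from $q_{in}(B_\eta)$ to $\alpha_{in}(B_\eta)$; (3) the reverse of $\alpha_{[\alpha_{out}(B_\zeta),\,\alpha_{in}(B_\eta)]}$; (4) the single $H_{\lambda_\zeta}$-edge inside $B_\zeta$ from $\alpha_{out}(B_\zeta)$ to $p_{out}(B_\zeta)$; (5) the reverse of $p_{[p_{out}(B_\xi),\,p_{out}(B_\zeta)]}$; and (6) the single $H_{\lambda_\xi}$-edge inside $B_\xi$ from $p_{out}(B_\xi)$ to $q_{out}(B_\xi)$. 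The edge $e$ is an $H_\lambda$-component of side~(1), and it is isolated in $P$ because: side~(1) is a subpath of the geodesic $q$, which penetrates $B$ only at $e$; sides~(3) and~(5) are subpaths of $\alpha$ and $p$, which do not penetrate $B$; and sides~(2),~(4),~(6) are single edges contained in $B_\eta,B_\zeta,B_\xi$, all distinct from $B$. Hence Proposition~\ref{prop:C} gives $\hd_{\lambda}(u,v)\le 6C$, the desired contradiction.

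The main obstacle is the bookkeeping to verify that the three meeting cosets can be chosen so that all six pieces fit together into a well-defined closed hexagon in the correct cyclic order; this amounts to controlling the relative positions of $B_\xi,B,B_\eta$ along $q$, of $B_\xi,B_\zeta$ along $p$, and of $B_\zeta,B_\eta$ along $\alpha$, which is handled by the orderings on separating-coset sets (Remark~\ref{rem:bi-infinite}) together with Corollary~\ref{cor:for hyperfinite}.
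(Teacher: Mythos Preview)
Your proof is correct and follows essentially the same approach as the paper: both arguments build a geodesic hexagon from three connecting edges in ``meeting'' cosets (obtained via Corollary~\ref{cor:for hyperfinite} at each of the three boundary points) together with three subpaths of the three bi-infinite geodesics, observe that the essential component at $B$ is isolated in this hexagon, and apply Proposition~\ref{prop:C} with $n=6$ to reach the contradiction $\hd_\lambda(u,v)\le 6C$. The only cosmetic difference is the framing---the paper assumes one side fails and shows the other must hold for every geodesic, while you assume both fail simultaneously---but the hexagon and the contradiction are identical.
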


\begin{proof}
Let $S(\xi,\eta;D)=\{ \cdots \preceq C_{-1} \preceq C_0 \preceq C_1 \preceq\cdots \}$ and suppose that there exist $j\in\ZZ$ and a bi-infinite geodesic path $q$ in $\Gamma(G,X\sqcup\H)$ from $\xi$ to $\zeta$ that doesn't penetrate $C_j$. Take a bi-infinite geodesic path $p$ in $\Gamma(G,X\sqcup\H)$ from $\xi$ to $\eta$ that essentially penetrates $C_j$. For any bi-infinite geodesic path $\alpha$ in $\Gamma(G,X\sqcup\H)$ from $\zeta$ to $\eta$, there exists an $H_\lambda$-coset $B'$ and $i,k\in\ZZ$ with $i<j<k$ such that $B'$ (resp. $C_i$, $C_k$) is penetrated by both $q$ and $\alpha$ (resp. $p$ and $q$, $p$ and $\alpha$) by Corollary \ref{cor:for hyperfinite}. Note $B'\neq C_j$ since $q$ doesn't penetrate $C_j$. Let $C_i,C_j,C_k$ be cosets of $H_{\lambda_i},H_{\lambda_j},H_{\lambda_k}$ respectively and let $e_1,e_2,e_3$ be the edges in $\Gamma(G,X\sqcup\H)$ with their labels in $H_{\lambda_i},H_{\lambda},H_{\lambda_k}$ respectively such that $e_1$ is from $p_{out}(C_i)$ to $q_{out}(C_i)$, $e_2$ is from $q_{in}(B')$ to $\alpha_{out}(B')$, and $e_3$ is from $\alpha_{in}(C_k)$ to $p_{in}(C_k)$. If $\alpha$ doesn't penetrate $C_j$, then the component of $p_{[p_{out}(C_i), p_{in}(C_k)]}$ corresponding to $C_j$ is isolated in the geodesic hexagon $e_1q_{[q_{out}(C_i),q_{in}(B')]} e_2 \alpha_{[\alpha_{out}(B'), \alpha_{in}(C_k)]} e_3 (p_{[p_{out}(C_i),p_{in}(C_k)]})^{-1}$ by $C_j\notin \{C_i,C_k,B'\}$. This implies $\hd_{\lambda_j}(p_{in}(C_j),p_{out}(C_j))\le 6C$ by Proposition \ref{prop:C}. This contradicts that $p$ essentially penetrates $C_j$ since we assume $D\ge 6C$. Thus, $\alpha$ penetrates $C_j$.
\end{proof}

Lemma \ref{lem:appendix penetrate} below means that if a geodesic ray converges to one endpoint of a bi-infinite geodesic path, then the geodesic ray penetrates separating cosets of the bi-infinite path in the same order as the order of the separating cosets.

\begin{figure}[htbp]
  % Requires \usepackage{graphicx}
  \begin{center}
 \hspace{0mm} 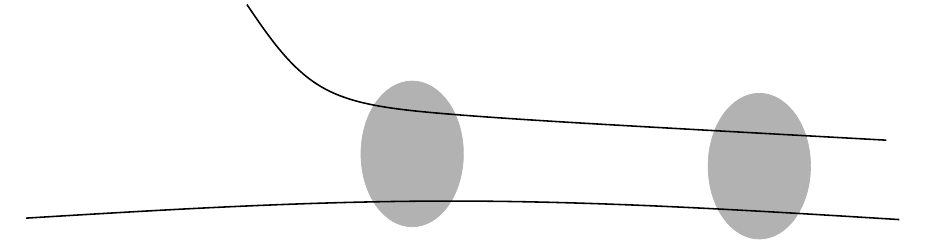
  \end{center}
   \vspace{-3mm}
  \caption{Proof of Lemma \ref{lem:appendix penetrate}}
  \label{Lem5_12}
\end{figure}

\begin{lem}\label{lem:appendix penetrate}
    Let $D\ge 4C$. Suppose that $\xi,\eta \in \partial\Gamma(G,Y\sqcup\H)$ are distinct and $\alpha$ is a geodesic ray in $\Gamma(G,X\sqcup\H)$ from $\alpha_- \in G$ to $\eta$. Let $S(\xi,\eta;D)=\{ \cdots \preceq C_{-1} \preceq C_0 \preceq C_1 \preceq\cdots \}$. If $\alpha$ penetrates $C_i$ for some $i\in \ZZ$, then the subpath $\alpha_{[\alpha_{out}(C_i),\infty)}$ penetrates $C_{i+1}$.
\end{lem}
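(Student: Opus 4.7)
The plan is to argue by contradiction, constructing a geodesic $4$-gon in $\Gamma(G,X\sqcup\H)$ whose sides come from $\alpha$, $\beta$, and two coset-edges, and in which the essentially penetrated $C_{i+1}$-edge of an auxiliary bi-infinite geodesic is forced to be isolated, violating Proposition \ref{prop:C}.

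Assume for contradiction that $\alpha_{[\alpha_{out}(C_i),\infty)}$ does not penetrate $C_{i+1}$. Since $C_{i+1}\in S(\xi,\eta;D)$, by Definition \ref{def:sep coset for two boundary points} there is a bi-infinite geodesic $\beta$ in $\Gamma(G,X\sqcup\H)$ from $\xi$ to $\eta$ that essentially penetrates $C_{i+1}$, so $\hd_{\lambda_{i+1}}(\beta_{in}(C_{i+1}),\beta_{out}(C_{i+1}))>D$. Consider the subray $\beta_+ := \beta_{[\beta_{out}(C_{i+1}),\infty)}$, a geodesic ray in $\Gamma(G,X\sqcup\H)$ converging to $\eta$ in $\partial\Gamma(G,Y\sqcup\H)$. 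Since any geodesic path penetrates each coset at most once, $\beta_+$ does not penetrate $C_{i+1}$; by Lemma \ref{lem:geod ray penetrate} this yields $C_{i+1}\notin S(\beta_+;D)$.

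Next I locate a coset $B\in S(\beta_+;D)$ that is also penetrated by $\alpha$ strictly after $\alpha_{out}(C_i)$ on $\alpha$. Applying Corollary \ref{cor:for hyperfinite} with $\beta_+$ in the role of ``$\alpha$'' and our $\alpha$ in the role of ``$\beta$'', and writing $S(\beta_+;D)=\{B_1\preceq B_2\preceq\cdots\}$, there exists $N\in\NN$ such that for every $n\ge N$, $\alpha$ penetrates $B_n$ with $d_{X\cup\H}(\alpha_-,B_n)<d_{X\cup\H}(\alpha_-,B_{n+1})$. Choosing $n\ge N$ large enough that $d_{X\cup\H}(\alpha_-,B_n)>d_{X\cup\H}(\alpha_-,C_i)$ and setting $B:=B_n$ (a coset of some $H_{\lambda_B}$), the entry $\alpha_{in}(B)$ lies strictly after $\alpha_{out}(C_i)$ along $\alpha$, and $B\ne C_{i+1}$ since $C_{i+1}\notin S(\beta_+;D)$. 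Because $B\in S(\beta_+;D)$, the entrance $\beta_{in}(B)$ lies strictly after $\beta_{out}(C_{i+1})$ on $\beta$, so $\beta_{[\beta_{out}(C_i),\beta_{in}(B)]}$ contains $\beta$'s essentially penetrated $C_{i+1}$-edge in its interior.

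Finally, I form the geodesic $4$-gon
\[
Q \;=\; \alpha_{[\alpha_{out}(C_i),\alpha_{in}(B)]}\cdot e_2 \cdot \beta_{[\beta_{out}(C_i),\beta_{in}(B)]}^{-1}\cdot e_1^{-1},
\]
where $e_1$ is the edge from $\alpha_{out}(C_i)$ to $\beta_{out}(C_i)$ labeled by an element of $H_{\lambda_i}$ and $e_2$ is the edge from $\alpha_{in}(B)$ to $\beta_{in}(B)$ labeled by an element of $H_{\lambda_B}$. The $C_{i+1}$-component of $\beta_{[\beta_{out}(C_i),\beta_{in}(B)]}$ is isolated in $Q$: the $\alpha$-segment carries no $C_{i+1}$-component by the contradiction hypothesis, while $e_1$ and $e_2$ have endpoints in $C_i\ne C_{i+1}$ and $B\ne C_{i+1}$ respectively. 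Proposition \ref{prop:C} applied to $Q$ therefore gives $\hd_{\lambda_{i+1}}(\beta_{in}(C_{i+1}),\beta_{out}(C_{i+1}))\le 4C\le D$, contradicting essential penetration by $\beta$. The hard step is producing $B$; the key trick is the prudent choice of starting vertex $\beta_{out}(C_{i+1})$ for $\beta_+$, which forces $C_{i+1}\notin S(\beta_+;D)$ so that Corollary \ref{cor:for hyperfinite} automatically yields cosets distinct from $C_{i+1}$, allowing the quadrilateral argument to go through without any further identification of which specific $C_j$ is being used.
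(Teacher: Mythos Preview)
Your proof is correct and follows essentially the same quadrilateral-and-isolated-component strategy as the paper: both choose a bi-infinite geodesic $\beta$ from $\xi$ to $\eta$ essentially penetrating $C_{i+1}$, locate a common coset past $C_{i+1}$ penetrated by both $\alpha$ and $\beta$ via Corollary \ref{cor:for hyperfinite}, and obtain a contradiction from Proposition \ref{prop:C}. The only cosmetic difference is that the paper names that common coset as some $C_j$ with $j>i+1$ directly in $S(\xi,\eta;D)$, whereas you pass through the auxiliary ray $\beta_+=\beta_{[\beta_{out}(C_{i+1}),\infty)}$ and take $B\in S(\beta_+;D)$ to guarantee $B\ne C_{i+1}$; since $S(\beta_+;D)\subset S(\xi,\eta;D)$, the two choices amount to the same thing.
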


\begin{proof}
    By $C_{i+1}\in S(\xi,\eta;D)$, there exists a bi-infinite geodesic path $\beta$ in $\Gamma(X\sqcup\H)$ from $\xi$ to $\eta$ that essentially penetrates $C_{i+1}$. By $Y\-\lim \alpha=Y\-\lim \beta_+=\eta$ and Corollary \ref{cor:for hyperfinite}, there exists $j$ with $j>i+1$ such that $C_j$ is penetrated by both $\alpha$ and $\beta$ and satisfies $d_{X\cup\H}(\alpha_-,\alpha_{in}(C_i))<d_{X\cup\H}(\alpha_-,\alpha_{in}(C_j))$. Note that $\beta$ penetrates $C_i$. Let $C_i,C_{i+1},C_j$ be cosets of $H_{\lambda_i},H_{\lambda_{i+1}}, H_{\lambda_j}$ respectively and let $e_1,e_2$ be the edges in $\Gamma(G,X\sqcup\H)$ with their labels in $H_{\lambda_i},H_{\lambda_j}$ respectively such that $e_1$ is from $\alpha_{out}(C_i)$ to $\beta_{out}(C_i)$ and $e_2$ is from $\alpha_{in}(C_j)$ to $\beta_{in}(C_j)$. If the subpath $\alpha_{[\alpha_{out}(C_i),\infty)}$ doesn't penetrate $C_{i+1}$, then the component of $\beta_{[\beta_{out}(C_i),\beta_{in}(C_j)]}$ corresponding to $C_{i+1}$ is isolated in the geodesic quadrilateral $e_1 \beta_{[\beta_{out}(C_i),\beta_{in}(C_j)]} e_2^{-1} (\alpha_{[\alpha_{out}(C_i),\alpha_{in}(C_j)]})^{-1}$ by $C_{i+1}\notin \{C_i,C_j\}$. This implies $\hd_{\lambda_{i+1}}(\beta_{in}(C_{i+1}),\beta_{out}(C_{i+1}))\le 4C$ by Proposition \ref{prop:C}. This contradicts that $\beta$ essentially penetrates $C_{i+1}$ since we assume $D\ge 4C$. Thus, $\alpha_{[\alpha_{out}(C_i),\infty)}$ penetrates $C_{i+1}$.
\end{proof}

Lemma \ref{lem:appendix geodesic} below enables us to create a new bi-infinite geodesic ray by concatenating two geodesic paths.

\begin{figure}[htbp]
  % Requires \usepackage{graphicx}
  \begin{center}
 \hspace{0mm} 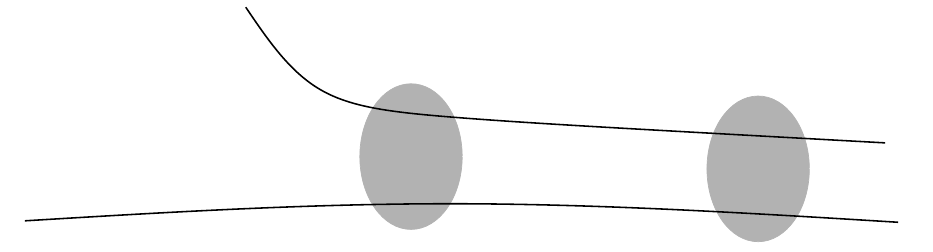
  \end{center}
   \vspace{-3mm}
  \caption{Proof of Lemma \ref{lem:appendix geodesic}}
  \label{Lem5_13}
\end{figure}

\begin{lem}\label{lem:appendix geodesic}
    Suppose that $\xi,\eta \in \partial\Gamma(G,Y\sqcup\H)$ are distinct, $\alpha$ is a geodesic ray in $\Gamma(X\sqcup\H)$ from $\alpha_-\in G$ to $\eta$, and $\beta$ is a bi-infinite geodesic path in $\Gamma(X\sqcup\H)$ from $\xi$ to $\eta$. If $\alpha$ and $\beta$ penetrate an $H_\lambda$-coset $B$ satisfying $\hd_{H_\lambda}(\beta_{in}(B),\beta_{out}(B)) > 3C$ and $e$ is the edge in $\Gamma(G,X\sqcup\H)$ from $\beta_{in}(B)$ to $\alpha_{out}(B)$ whose label is in $H_{\lambda}$, then the bi-infinite path $\beta_{(-\infty,\beta_{in}(B)]} e \alpha_{[\alpha_{out}(B),\infty)}$ is geodesic in $\Gamma(G,X\sqcup\H)$.
\end{lem}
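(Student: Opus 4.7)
The plan is to show that $\beta'=\beta_{(-\infty,\beta_{in}(B)]}\cdot e\cdot \alpha_{[\alpha_{out}(B),\infty)}$ is geodesic by verifying that each finite subpath realizes the graph distance. Any subpath living entirely inside $\beta_{(-\infty,\beta_{in}(B)]}$ or inside $\alpha_{[\alpha_{out}(B),\infty)}$ is a subpath of $\beta$ or $\alpha$, hence geodesic. So the only case to address is a subpath from a vertex $y_i$ on $\beta_{(-\infty,\beta_{in}(B)]}$ at distance $i$ (along $\beta$) before $\beta_{in}(B)$ to a vertex $x_k$ on $\alpha_{[\alpha_{out}(B),\infty)}$ at distance $k$ (along $\alpha$) after $\alpha_{out}(B)$; its $\beta'$-length is $i+1+k$, and I need $d_{X\cup\H}(y_i,x_k)=i+1+k$. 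I will argue by contradiction, assuming $d_{X\cup\H}(y_i,x_k)\le i+k$ and fixing a realizing geodesic $q$.

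The strategy is to construct a geodesic triangle in which the $H_\lambda$-edge $[\beta_{in}(B),\beta_{out}(B)]_\beta$ appears as an \emph{isolated} $H_\lambda$-component in coset $B$, so that Proposition~\ref{prop:C} with $n=3$ gives $\hd_\lambda(\beta_{in}(B),\beta_{out}(B))\le 3C$, contradicting the hypothesis. I will apply Corollary~\ref{cor:for hyperfinite} to the geodesic rays $\alpha$ and $\beta_{[\beta_{out}(B),\infty)}$ (both having $\eta$ as their limit in $\partial\Gamma(G,Y\sqcup\H)$) to produce a separating coset $B_1\ne B$ lying far past $B$ on both rays, together with an $H_{\lambda_1}$-edge $e_1$ connecting $\beta_{in}(B_1)$ and $\alpha_{in}(B_1)$. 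Choosing $B_1$ far enough that $x_k$ lies on $\alpha$ strictly before $\alpha_{in}(B_1)$, I will form the triangle whose sides are $\beta_{[y_i,\beta_{in}(B_1)]}$ (a geodesic subpath of $\beta$ containing $[\beta_{in}(B),\beta_{out}(B)]_\beta$), a geodesic from $\beta_{in}(B_1)$ to $x_k$ realized by $e_1\cdot \alpha_{[x_k,\alpha_{in}(B_1)]}^{-1}$, and $q^{-1}$. On the first two sides, isolation of $[\beta_{in}(B),\beta_{out}(B)]_\beta$ in coset $B$ is immediate: $\beta$ penetrates $B$ only at $\beta_{in}(B),\beta_{out}(B)$, while $e_1\in H_{\lambda_1}$ lies in $B_1\ne B$ and the reversed $\alpha$-subpath stays past $B$.

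The main obstacle will be the third side $q^{-1}$: if $q$ happens to contain an $H_\lambda$-edge $f=[u,v]$ with $u,v\in B$, then $f$ is a second $H_\lambda$-component in $B$ and kills isolation. I plan to treat this subcase by splitting $q=q_1 f q_2$ and forming two auxiliary geodesic triangles $(y_i,\beta_{in}(B),u)$ and $(\alpha_{out}(B),v,x_k)$, each carrying an isolated $H_\lambda$-edge (using that $\beta$ and $\alpha$ penetrate $B$ only once, and that $q_1,q_2$ can only meet $B$ at their endpoints on $B$ by minimality of $q$); Proposition~\ref{prop:C} then bounds $\hd_\lambda(\beta_{in}(B),u)$ and $\hd_\lambda(\alpha_{out}(B),v)$ by $3C$, and these bounds will let me rebuild the main triangle with $[\beta_{in}(B),\beta_{out}(B)]_\beta$ still isolated so that the contradiction goes through.

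The other delicate verification is geodesicity of the second side $e_1\cdot\alpha_{[x_k,\alpha_{in}(B_1)]}^{-1}$, i.e.\ that $d_{X\cup\H}(\beta_{in}(B_1),x_k)=1+d_\alpha(\alpha_{in}(B_1),x_k)$ with a geodesic that avoids $H_\lambda$-edges in $B$; this should follow from choosing $B_1$ sufficiently far past $B$, since any detour going back through $B$ would carry a length cost incompatible with the closeness guaranteed by Corollary~\ref{cor:for hyperfinite}.
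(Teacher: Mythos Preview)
Your contradiction strategy has a genuine gap at exactly the point you flag as ``delicate'': the geodesicity of the second side $e_1\cdot\alpha_{[x_k,\alpha_{in}(B_1)]}^{-1}$. There is no reason to expect $d_{X\cup\H}(\beta_{in}(B_1),x_k)=1+d_{X\cup\H}(\alpha_{in}(B_1),x_k)$; for instance if $\alpha_{in}(B_1)=\beta_{in}(B_1)$ the path begins with a loop, and even when the two entrance points differ the distance could equally well be $d_{X\cup\H}(\alpha_{in}(B_1),x_k)$ or $d_{X\cup\H}(\alpha_{in}(B_1),x_k)-1$. Your proposed justification (``any detour going back through $B$ would carry a length cost'') addresses whether a geodesic from $\beta_{in}(B_1)$ to $x_k$ penetrates $B$, not whether the specific path $e_1\cdot\alpha^{-1}$ realizes the distance. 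If you drop the geodesicity claim and treat $e_1$ and $\alpha^{-1}$ as separate sides, you get a geodesic $4$-gon and Proposition~\ref{prop:C} only yields $\hd_\lambda(\beta_{in}(B),\beta_{out}(B))\le 4C$, which does not contradict the hypothesis $>3C$.

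The paper sidesteps this entirely and argues directly rather than by contradiction. For each $y_i$ on the $\xi$-side of $\beta$, it invokes Lemma~\ref{lem:well-known} to produce $k$ and a geodesic $p$ from $y_i$ to $x_k$ such that $p\alpha_{[x_k,\infty)}$ is itself a geodesic ray; then $q=p\alpha_{[x_k,\alpha_{in}(B_1)]}$ is a single geodesic segment from $y_i$ to $\alpha_{in}(B_1)$ by construction, so the triangle $q\,e_1\,(\beta_{[y_i,\beta_{in}(B_1)]})^{-1}$ is a genuine geodesic $3$-gon. Since the $B$-component of $\beta$ cannot be isolated there (by the $>3C$ hypothesis), $q$ must penetrate $B$, and Lemma~\ref{lem:lem4.6 osin} then gives the exact distance equalities needed to compute the length of the concatenated path. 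The use of Lemma~\ref{lem:well-known} is the missing ingredient in your argument: it is what guarantees a geodesic third side without any ad hoc verification. (Incidentally, your subcase where $q$ penetrates $B$ can be handled much more simply than you outline: Lemma~\ref{lem:lem4.6 osin} gives $d_{X\cup\H}(y_i,B)=i$ and $d_{X\cup\H}(x_k,B)=k$, so $|q|\ge i+1+k$ immediately.)
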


\begin{proof}
    Let $\alpha=(x_0,x_1,\cdots)$ and $\beta=(\cdots,y_{-1},y_0,y_1,\cdots)$, and let $N\in\ZZ$ satisfy $y_N=\beta_{in}(B)$. Fix $i\in\ZZ$ with $i<N$. There exist $k\in\NN$ and a geodesic path $p$ in $\Gamma(G,X\sqcup\H)$ from $y_i$ to $x_k$ such that the path $p\alpha_{[x_k,\infty)}$ is geodesic in $\Gamma(G,X\sqcup\H)$ by Lemma \ref{lem:well-known}. By $Y\-\lim \alpha=Y\-\lim \beta_+=\eta$ and Corollary \ref{cor:for hyperfinite}, there exists a $H_{\lambda_1}$-coset $B_1 \in S(\alpha,D)$ such that $\beta$ penetrates $B_1$ and we have $d_{X\cup\H}(x_0, x_k)<d_{X\cup\H}(x_0,\alpha_{in}(B_1))$ and $d_{X\cup\H}(y_i, \beta_{in}(B))<d_{X\cup\H}(y_i,\beta_{in}(B_1))$. Let $e_1$ be the edge in $\Gamma(G,X\sqcup\H)$ from $\alpha_{in}(B_1)$ to $\beta_{in}(B_1)$ whose label is in $H_{\lambda_1}$. Define the path $q$ by $q=p\alpha_{[x_k,\alpha_{in}(B_1)]}$ and consider the geodesic triangle $\Delta=qe_1 (\beta_{[y_i,\beta_{in}(B_1)]})^{-1}$. The component of $\beta_{[y_i,\beta_{in}(B_1)]}$ corresponding to $B$ cannot be isolated in $\Delta$ by Proposition \ref{prop:C} and $\hd_{H_\lambda}(\beta_{in}(B),\beta_{out}(B)) > 3C$. By this and $B\neq B_1$, the path $q$ penetrates $B$. Hence, we have
    \begin{align*}
        d_{X\cup\H}(y_i,\beta_{in}(B))&=d_{X\cup\H}(y_i,q_{in}(B))~ (=d_{X\cup\H}(y_i,B)) \\
        {\rm and~~~~~}
        d_{X\cup\H}(\alpha_{out}(B), \alpha_{in}(B_1))&=d_{X\cup\H}(q_{out}(B), \alpha_{in}(B_1)) ~(=d_{X\cup\H}(\alpha_{in}(B_1),B))
    \end{align*}
    by Lemma \ref{lem:lem4.6 osin}. This implies
    \begin{align*}
        |\beta_{[y_i,\beta_{in}(B)]} e \alpha_{[\alpha_{out}(B),\alpha_{in}(B_1)]}|
        &\le d_{X\cup\H}(y_i,\beta_{in}(B)) + 1 + d_{X\cup\H}(\alpha_{out}(B), \alpha_{in}(B_1)) \\
        &=d_{X\cup\H}(y_i,q_{in}(B))+1+d_{X\cup\H}(q_{out}(B), \alpha_{in}(B_1))\\
        &=|q|=d_{X\cup\H}(y_i,\alpha_{in}(B_1)).
    \end{align*}
    Hence, the path $r$ defined by $r=\beta_{[y_i,\beta_{in}(B)]} e \alpha_{[\alpha_{out}(B),\alpha_{in}(B_1)]}$ is geodesic in $\Gamma(G,X\sqcup\H)$ and has the same endpoints as $q$. This implies that $\beta_{[y_i,\beta_{in}(B)]} e \alpha_{[\alpha_{out}(B),\infty)}(=r\alpha_{[\alpha_{in}(B_1),\infty)})$ is geodesic in $\Gamma(G,X\sqcup\H)$ since $q\alpha_{[\alpha_{in}(B_1),\infty)}(= p \alpha_{[x_k,\infty)]})$ is geodesic in $\Gamma(G,X\sqcup\H)$. Since $i$ with $i<N$ is arbitrary, this implies that $\beta_{(-\infty,\beta_{in}(B)]} e \alpha_{[\alpha_{out}(B),\infty)}$ is geodesic in $\Gamma(G,X\sqcup\H)$.
\end{proof}

\begin{figure}[htbp]
  % Requires \usepackage{graphicx}
  \begin{center}
 \hspace{0mm} 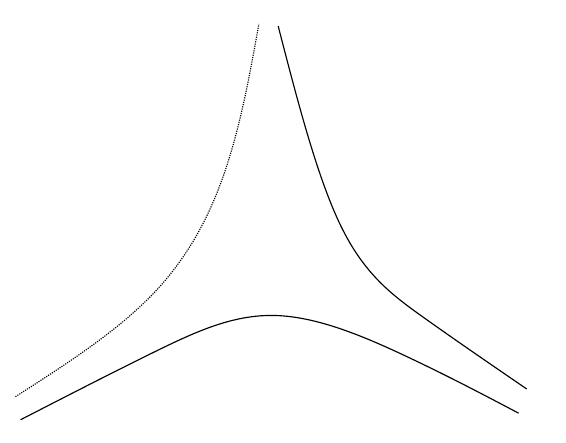
  \end{center}
   \vspace{-3mm}
  \caption{Proof of Proposition \ref{prop:F4}}
  \label{Prop5_14}
\end{figure}

We are now ready to show Proposition \ref{prop:F4}. The proof is similar to \cite[Lemma 3.9]{HO13} modulo the above auxiliary lemmas.

\begin{prop}\label{prop:F4}
    Let $D\ge 11C$. For any $\xi,\eta,\zeta \in G \cup \partial\Gamma(G,Y\sqcup\H)$, $S(\xi,\eta;D)$ can be decomposed into $S(\xi,\eta;D)=S'\sqcup S'' \sqcup F$ such that $S'\subset S(\xi,\zeta;D)$, $S''\subset S(\zeta,\eta;D)$, and $|F|\le 4$.
\end{prop}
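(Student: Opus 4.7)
The proof will adapt the argument of \cite[Lemma 3.9]{HO13} to our setting, using the path representatives and auxiliary lemmas developed in this section. I will focus on the main case where $\xi, \eta, \zeta$ are three pairwise distinct points of $\partial\Gamma(G, Y\sqcup\H)$; the cases when some of them lie in $G$ rather than the boundary are handled analogously with geodesic rays from Proposition \ref{prop:existence} or finite geodesic paths in $\Gamma(G, X\sqcup\H)$ playing the role of bi-infinite geodesic paths, and the degenerate case when two of the three coincide makes the claim trivial by taking $F=\emptyset$.

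Use Proposition \ref{prop:existence for two boundary points} to fix bi-infinite geodesic paths $p$, $\alpha$, $\beta$ in $\Gamma(G, X\sqcup\H)$ from $\xi$ to $\eta$, from $\xi$ to $\zeta$, and from $\zeta$ to $\eta$, respectively. Declare the natural decomposition
\[
S' = S(\xi,\eta;D)\cap S(\xi,\zeta;D),\quad S'' = \bigl(S(\xi,\eta;D) \cap S(\zeta,\eta;D)\bigr) \setminus S',\quad F = S(\xi,\eta;D) \setminus (S' \cup S'').
\]
The disjointness $S(\xi,\eta;D)=S'\sqcup S''\sqcup F$ and the inclusions $S' \subset S(\xi,\zeta;D)$, $S'' \subset S(\zeta,\eta;D)$ are immediate, so the entire content of the proposition is the bound $|F| \le 4$.

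For any $B \in F$, Lemma \ref{lem:penetrated by bi-infinite geodesic} (valid since $D \ge 11C \ge 6C$) forces $B$ to be penetrated by $\alpha$ or by $\beta$. Since $B \notin S(\xi,\zeta;D)$, any penetration by $\alpha$ must be non-essential, namely $\hd_\lambda(\alpha_{in}(B), \alpha_{out}(B)) \le D$ when $B$ is an $H_\lambda$-coset; similarly $B \notin S(\zeta,\eta;D)$ makes any penetration by $\beta$ non-essential. Split $F = F_\alpha \cup F_\beta$ according to which of $\alpha, \beta$ penetrates $B$; it suffices to show $|F_\alpha| \le 2$, and the bound $|F_\beta| \le 2$ then follows by the symmetric argument exchanging $\alpha \leftrightarrow \beta$ and $\xi \leftrightarrow \eta$.

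Suppose for contradiction that $B_1 \prec B_2 \prec B_3$ are three distinct cosets of $F_\alpha$, ordered by the natural order on $S(\xi,\eta;D)$ induced by $p$. Since $\alpha$ and $p$ share the endpoint $\xi$, the analogue of Lemma \ref{lem:appendix penetrate} applied to the $\xi$-directed rays of $\alpha$ and $p$ ensures that $\alpha$ visits $B_1, B_2, B_3$ in this same order. Consider the geodesic quadrilateral $Q$ in $\Gamma(G, X\sqcup\H)$ with vertices $\alpha_{out}(B_1), \alpha_{in}(B_3), p_{in}(B_3), p_{out}(B_1)$ and sides: the $\alpha$-segment from $\alpha_{out}(B_1)$ to $\alpha_{in}(B_3)$, a single $H_{\lambda_3}$-edge from $\alpha_{in}(B_3)$ to $p_{in}(B_3)$, the reversed $p$-segment from $p_{in}(B_3)$ to $p_{out}(B_1)$, and a single $H_{\lambda_1}$-edge from $p_{out}(B_1)$ to $\alpha_{out}(B_1)$. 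Both the $\alpha$-segment and the reversed $p$-segment contain $H_{\lambda_2}$-components (the $B_2$-components of $\alpha$ and $p$), and these are connected in the coset $B_2$. By excising the $B_2$-component of $\alpha$ and replacing it with a $\lambda_2$-admissible path in $B_2$ of length at most $\hd_{\lambda_2}(\alpha_{in}(B_2), \alpha_{out}(B_2)) \le D$, and then suitably cutting at the $B_2$-edge of $p$, we obtain a geodesic polygon of at most $11$ sides in which the single $H_{\lambda_2}$-edge corresponding to $p$'s essential penetration of $B_2$ becomes isolated. Applying Proposition \ref{prop:C} to this polygon yields $\hd_{\lambda_2}(p_{in}(B_2), p_{out}(B_2)) \le 11C \le D$, contradicting the essentiality of $p$'s penetration of $B_2$. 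Hence $|F_\alpha| \le 2$, and combined with $|F_\beta| \le 2$ we conclude $|F| \le 4$.

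The main obstacle is making the polygon construction in the last step fully precise: one must specify the excision/insertion operation so that each side of the resulting polygon is geodesic in $\Gamma(G, X\sqcup\H)$ and exactly one $H_{\lambda_2}$-component (necessarily deriving from $p$'s essential penetration of $B_2$) remains and is isolated. The constant $11C$ in the hypothesis is calibrated so that the number of sides of the resulting polygon (arising from $B_1$, $B_2$, $B_3$ and the two geodesic arcs) is at most $11$, making Proposition \ref{prop:C} applicable with the desired contradiction.
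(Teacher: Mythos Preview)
Your polygon step has a genuine gap that I do not see how to close. To isolate the $B_2$-component of $p$ you propose replacing the $B_2$-component of $\alpha$ by a $\lambda_2$-admissible path, but such a path has length $\hd_{\lambda_2}(\alpha_{in}(B_2),\alpha_{out}(B_2))$, and the only bound you have on this quantity is $\le D$; it is not bounded by any absolute multiple of $C$. Since Proposition~\ref{prop:C} requires every side of the $n$-gon to be geodesic, the admissible path must be broken into its individual edges, producing a polygon with roughly $D+7$ sides rather than $11$. The hypothesis $D\ge 11C$ then gives nothing. If instead you try to avoid the admissible-path trick and argue directly with the quadrilateral on $B_1,B_3$, the two $4C$-estimates from Lemma~\ref{lem:bounded by 4C} only yield $\hd_{\lambda_2}(p_{in}(B_2),p_{out}(B_2))\le \hd_{\lambda_2}(\alpha_{in}(B_2),\alpha_{out}(B_2))+8C\le D+8C$, which again does not contradict essential penetration.

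The paper takes a different route that sidesteps this issue entirely. Rather than bounding $|F|$ by isolating a component, it sets $N=\min\P_\eta$ where $\P_\eta$ is the set of indices $i$ such that every bi-infinite geodesic from $\zeta$ to $\eta$ penetrates $C_i$, and then proves constructively that $C_i\in S(\zeta,\eta;D)$ for every $i\ge N+2$: using Lemma~\ref{lem:appendix penetrate} to force $\beta$ through $C_{N+1}$, an $8C$-estimate to check $\hd_\lambda(\beta_{in}(C_{N+1}),\beta_{out}(C_{N+1}))>D-8C\ge 3C$ (this is where $D\ge 11C$ is used), and then Lemma~\ref{lem:appendix geodesic} to splice $\beta_{(-\infty,\beta_{in}(C_{N+1})]}$ with a tail of a geodesic from $\xi$ to $\eta$ essentially penetrating $C_i$, producing a bi-infinite geodesic from $\zeta$ to $\eta$ that essentially penetrates $C_i$. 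The exceptional set is then the explicit window $\{C_{N-2},C_{N-1},C_N,C_{N+1}\}$.
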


\begin{proof}
We will only show the case where $\xi,\eta,\zeta \in \partial\Gamma(G,Y\sqcup\H)$ and $\xi,\eta,\zeta$ are all distinct, because the proof of other cases is similar. Let $S(\xi,\eta;D)=\{ \cdots \preceq C_{-1} \preceq C_0 \preceq C_1 \preceq\cdots \}$ and define $\P_\xi=\{i\in\ZZ \mid \forall\gamma\colon {\rm bi}\-{\rm infinite~geodesic~path~in~}\Gamma(G,X\sqcup\H) {\rm ~from~}\xi {\rm ~to~}\zeta {\rm ~penetrates~} C_i \}$ and $\P_\eta=\{i\in\ZZ \mid \forall\gamma\colon {\rm bi}\-{\rm infinite~geodesic~path~in~}\Gamma(G,X\sqcup\H) {\rm ~from~}\zeta {\rm ~to~}\eta {\rm ~penetrates~} C_i \}$. By Lemma \ref{lem:penetrated by bi-infinite geodesic}, we have $\ZZ=\P_\xi\cup \P_\eta$. Suppose for contradiction that there exists a sequence $(i_k)_{k=1}^\infty$ in $\P_\xi$ such that $\lim_{k\to\infty}i_k=\infty$. Take bi-infinite geodesic paths $p,q$ in $\Gamma(G,X\sqcup\H)$ such that $p$ is from $\xi$ to $\zeta$ and $q$ is from $\xi$ to $\eta$. Since $i_k\in \P_\xi$ implies $d_{Y\cup\H}(p_{in}(C_{i_k}), q_{in}(C_{i_k}))\le 1$ for any $k\in\NN$, we have $\zeta=\lim_{k\to\infty} p_{in}(C_{i_k})=\lim_{k\to\infty} q_{in}(C_{i_k})=\xi$. This contradicts our assumption that $\xi,\eta,\zeta$ are distinct. Hence, there exists $i_1\in \ZZ$ such that $\P_\xi \cap[i_1,\infty)=\emptyset$. Similarly, we can see $\P_\eta\cap (-\infty,i_2]=\emptyset$ for some $i_2\in\ZZ$. In particular, $\P_\eta$ is nonempty and $\min \P_\eta$ exists. Define $N$ by $N=\min \P_\eta$.

We claim $\{C_i \mid i\ge N+2\}\subset S(\zeta,\eta;D)$. Fix a bi-infinite geodesic path $\beta$ in $\Gamma(G,X\sqcup\H)$ from $\zeta$ to $\eta$. By $N\in \P_\eta$, the path $\beta$ penetrates $C_N$. Hence, the subpath $\beta_{[\beta_{out}(C_N),\infty)}$ penetrates $C_{N+1}$ by Lemma \ref{lem:appendix penetrate}. Let $C_{N+1}$ be an $H_\lambda$-coset, then we have $\hd_{\lambda}(\beta_{in}(C_{N+1}), \beta_{out}(C_{N+1}))> 3C$. Indeed, take a bi-infinite geodesic path $p'$ in $\Gamma(G,X\sqcup\H)$ from $\xi$ to $\eta$ that essentially penetrates $C_N$. By applying Lemma \ref{lem:bounded by 4C} to $C_N$ and $C_{N+1}$, we have $\hd_{\lambda}(p'_{in}(C_{N+1}),\beta_{in}(C_{N+1})) \le 4C$. By the same argument for $C_{N+1}$ and $C_{N+2}$, we can also see $\hd_{\lambda}(p'_{out}(C_{N+1}),\beta_{out}(C_{N+1}))\le 4C$. Since $p'$ essentially penetrates $C_{N+1}$, this implies
\begin{equation}
\begin{split}
    &\hd_{\lambda}(\beta_{in}(C_{N+1}), \beta_{out}(C_{N+1})) \\
    &\ge \hd_{\lambda}(p'_{in}(C_{N+1}),p'_{out}(C_{N+1})) - \hd_{\lambda}(\beta_{in}(C_{N+1}),p'_{in}(C_{N+1}))-\hd_{\lambda}(\beta_{out}(C_{N+1}),p'_{out}(C_{N+1}))\\
    &> D-4C-4C \ge 3C.
\end{split}
\end{equation}
For any $i\ge N+2$, there exists a bi-infinite geodesic path $\alpha$ in $\Gamma(G,X\sqcup\H)$ from $\xi$ to $\eta$ that essentially penetrates $C_i$. Note that $\alpha$ penetrates $C_{N+1}$. Let $e$ be the edge in $\Gamma(G,X\sqcup\H)$ from $\beta_{in}(C_{N+1})$ to $\alpha_{out}(C_{N+1})$ whose label is in $H_{\lambda}$. By Lemma \ref{lem:appendix geodesic}, the bi-infinite path $\gamma$ defined by $\gamma=\beta_{(-\infty,\beta_{in}(C_{N+1})]} e \alpha_{[\alpha_{out}(C_{N+1}),\infty)}$ is geodesic in $\Gamma(G,X\sqcup\H)$ and essentially penetrates $C_i$. This implies $C_i\in S(\gamma;D)$. On the other hand, we have $S(\gamma;D)=S(\zeta,\eta;D)$ by $Y\-\lim \gamma_-=Y\-\lim \beta_-=\zeta$ and $Y\-\lim \gamma_+=Y\-\lim \alpha_+=\eta$. Thus, we have $\{C_i \mid i\ge N+2\}\subset S(\zeta,\eta;D)$. Similarly, we can also see $\{C_i \mid i\le N-3\}\subset S(\xi,\zeta;D)$ since we have $N-1\in \P_\xi$ by $\ZZ=\P_\xi\cup \P_\eta$ and $N=\min\P_\eta$. Thus, we get the desired decomposition by defining $S',S'',F$ by $S'=\{C_i \mid i\le N-3\}$, $S''=\{C_i \mid i\ge N+2\}$, and $F=\{C_{N-2},C_{N-1},C_{N},C_{N+1}\}$.
\end{proof}

%% bibliographystyle should be either amsplain or amsalpha

%%\bibliography{main.bib}
%%\bibliographystyle{amsalpha}

%\bibliography{main.bib}
%\bibliographystyle{amsplain}

\providecommand{\bysame}{\leavevmode\hbox to3em{\hrulefill}\thinspace}
\providecommand{\MR}{\relax\ifhmode\unskip\space\fi MR }
% \MRhref is called by the amsart/book/proc definition of \MR.
\providecommand{\MRhref}[2]{%
  \href{http://www.ams.org/mathscinet-getitem?mr=#1}{#2}
}
\providecommand{\href}[2]{#2}

\vspace{5mm}

\noindent  Department of Mathematics, Vanderbilt University, Nashville 37240, U.S.A.

\noindent E-mail: \emph{koichi.oyakawa@vanderbilt.edu}

\end{document}